\documentclass[smallextended,final]{svjour3}
\usepackage[letterpaper,top=2in, bottom=1.5in, left=1in, right=1in]{geometry}

\usepackage{epsfig,epsf,fancybox}
\usepackage{amsmath}
\usepackage{mathrsfs}
\usepackage{amssymb}
\usepackage{amsfonts}
\usepackage{graphicx}
\usepackage{color}
\usepackage[linktocpage,pagebackref,colorlinks,linkcolor=blue,anchorcolor=blue,citecolor=blue,urlcolor=blue,hypertexnames=false]{hyperref}
\usepackage{boxedminipage}
\usepackage{stmaryrd}
\usepackage{multirow}
\usepackage{booktabs}
\usepackage{algorithm}
\usepackage{algpseudocode}
\usepackage{cite}
\usepackage{float}

\usepackage{xcolor}
\usepackage{tcolorbox}
\usepackage{braket,mathdots}
\usepackage{mathtools}
\usepackage[thinlines]{easytable}
\usepackage{array}

\allowdisplaybreaks[4]
\usepackage{mathtools}


\newcommand{\vnu}{{\boldsymbol{\nu}}}
\newcommand{\vgamma}{{\boldsymbol{\gamma}}}
\newcommand{\vxi}{{\boldsymbol{\xi}}}
\newcommand{\vzeta}{{\boldsymbol{\zeta}}}

\usepackage{mathtools}

\usepackage[normalem]{ulem} 


\newcommand{\vb}{{\mathbf{b}}}

\newcommand{\vd}{{\mathbf{d}}}

\newcommand{\vu}{{\mathbf{u}}}
\newcommand{\vv}{{\mathbf{v}}}

\newcommand{\vx}{{\mathbf{x}}}
\newcommand{\vy}{{\mathbf{y}}}
\newcommand{\vz}{{\mathbf{z}}}

\newcommand{\vA}{{\mathbf{A}}}
\newcommand{\vB}{{\mathbf{B}}}
\newcommand{\vC}{{\mathbf{C}}}

\newcommand{\vH}{{\mathbf{H}}}
\newcommand{\vI}{{\mathbf{I}}}
\newcommand{\vJ}{{\mathbf{J}}}


\newcommand{\cL}{{\mathcal{L}}}

\newcommand{\cP}{{\mathcal{P}}}

\newcommand{\cS}{{\mathcal{S}}}

\newcommand{\cX}{{\mathcal{X}}}


%


\newcommand{\RR}{\mathbb{R}} 
\newcommand{\zz}{^{\top}} 
\newcommand{\vzero}{\mathbf{0}} 

\newcommand{\dist}{\mathrm{dist}}    
\newcommand{\prox}{{\mathbf{prox}}} 
\newcommand{\dom}{{\mathrm{dom}}} 




\newcommand{\st}{\mbox{ s.t. }}


\DeclareMathOperator*{\argmin}{arg\,min} 
\DeclareMathOperator*{\Argmin}{Arg\,min} 


\newcommand{\bc}{\begin{center}}
\newcommand{\ec}{\end{center}}

\newcommand{\bdm}{\begin{displaymath}}
\newcommand{\edm}{\end{displaymath}}

\newcommand{\beq}{\begin{equation}}
\newcommand{\eeq}{\end{equation}}

\newcommand{\bfl}{\begin{flushleft}}
\newcommand{\efl}{\end{flushleft}}

\newcommand{\bt}{\begin{tabbing}}
\newcommand{\et}{\end{tabbing}}

\newcommand{\beqn}{\begin{eqnarray}}
\newcommand{\eeqn}{\end{eqnarray}}

\newcommand{\beqs}{\begin{align*}} 
\newcommand{\eeqs}{\end{align*}}  






\newtheorem{assumption}{Assumption}

\numberwithin{equation}{section}
\numberwithin{corollary}{section}
\numberwithin{theorem}{section}
\numberwithin{lemma}{section}
\numberwithin{proposition}{section}
\numberwithin{remark}{section}

\begin{document}
	
	\title{First-order Methods for Affinely Constrained Composite Non-convex Non-smooth Problems: Lower Complexity Bound and Near-optimal Methods}
	
	\author{Wei Liu\and Qihang Lin \and Yangyang Xu}
	
	\institute{W. Liu, Y. Xu \at Department of Mathematical Sciences, Rensselaer Polytechnic Institute, Troy, NY\\
		\email{\{liuw16, xuy21\}@rpi.edu}\\
		Q. Lin \at Department of Business Analytics, University of Iowa, Iowa City\\ \email{qihanglin@uiowa.edu} 
		}
	
	\date{\today}
		\maketitle
	
	
		
	\begin{abstract}
Many recent studies on first-order methods (FOMs) focus on \emph{composite non-convex non-smooth} optimization with linear and/or nonlinear function constraints. Upper (or worst-case) complexity bounds have been established for these methods. However, little can be claimed about their optimality as no lower bound is known, except for a few special \emph{smooth non-convex} cases. In this paper, we make the first attempt to establish lower complexity bounds of FOMs for solving a class of composite non-convex non-smooth optimization with linear constraints. Assuming two different first-order oracles, we establish lower complexity bounds of FOMs to produce a (near) $\epsilon$-stationary point of a problem (and its reformulation) in the considered problem class, for any given tolerance $\epsilon>0$.  
In addition, we present an inexact proximal gradient (IPG) method  
by using the more relaxed one of the two assumed first-order oracles. The oracle complexity of the proposed IPG, to find a (near) $\epsilon$-stationary point of the considered problem and its reformulation, matches our established lower bounds up to a logarithmic factor.  
	Therefore, our lower complexity bounds and the proposed IPG method are almost non-improvable.
	\end{abstract}
	
	\noindent {\bf Keywords} non-convex optimization, non-smooth optimization, first-order methods, proximal gradient method, information-based complexity, lower complexity bound,  worst-case complexity 
	\vspace{0.3cm}
	
	\noindent {\bf Mathematics Subject Classification} 90C26, 90C06, 90C60, 49M37, 68Q25, 65Y20


	\section{Introduction}

	
	First-order methods (FOMs) have attracted increasing attention because of their efficiency in solving large-scale problems arising from machine learning and other areas. 
	The recent studies on FOMs have focused on non-convex problems, and one of the actively studied topics is the oracle complexity for finding a near-stationary point under various assumptions. In this paper, we explore this topic for problems with a composite non-convex non-smooth objective function and linear equality constraints, formulated as 
	\begin{equation}\label{eq:model}
		\begin{aligned}
			\min_{ \vx\in\RR^d} \,\,& F_0( \vx):= f_0( \vx) + g( \vx), \ \st  \vA \vx +  \vb=\mathbf{0}.
		\end{aligned}
	\tag{P}
	\end{equation}
	Here,  $\vA\in\RR^{n\times d}$, $\vb\in\RR^{n}$, $f_0:\mathbb{R}^d \rightarrow \mathbb{R}$ is smooth, and $g: \mathbb{R}^d \rightarrow \mathbb{R} \cup\{+\infty\}$ is a proper lower semicontinuous convex function but potentially non-smooth. 
	We assume the following  properties throughout this paper.
	
	\begin{assumption}The following statements hold.
		\label{assume:problemsetup}
		\begin{itemize}
			\item[\textnormal{(a)}]  $\nabla f_0$ is $L_f$-Lipschitz continuous,  i.e., 
			$
			\left\|\nabla f_0\left(\vx\right)- \nabla f_0\left(\vx'\right)\right\| \leq L_f\left\|\vx-\vx'\right\|,\quad \forall\, \vx,\,\vx'\in \mathbb{R}^d.
			$ 
			\item[\textnormal{(b)}] $\inf_{\vx} F_0(\vx) > -\infty$.
			\item[\textnormal{(c)}]	$
			g(\vx)= \bar{g}(\bar{\vA} \vx +\bar{\vb}),
			$
			where $\bar{\vA}\in \RR^{\bar{n}\times d}$, $\bar{\vb}\in\RR^{\bar{n}}$ and $\bar{g}: \mathbb{R}^{\bar n} \rightarrow \mathbb{R} \cup\{+\infty\}$ is a proper lower semicontinuous convex function but potentially non-smooth.
		\end{itemize}
			\end{assumption}

	Due to non-convexity, computing or even approximating a global optimal solution for problem~\eqref{eq:model} is intractable in general~\cite{murty1985some}. Hence, we focus on using an FOM to find a (near) $\epsilon$-stationary point of problem~\eqref{eq:model} for a given tolerance $\epsilon > 0$; see Definition~\ref{def:eps-pt-P}.  An FOM finds a (near) $\epsilon$-stationary point by querying information from some oracles, which typically dominates the runtime of the method, so its efficiency can be measured by the number of oracles it queries, 
	which is defined as the method's \emph{oracle complexity}. The goal of this paper is to establish a lower bound for the oracle complexity of a class of FOMs to find a (near) $\epsilon$-stationary point of problem~\eqref{eq:model} and, additionally, to present an FOM that can nearly achieve this lower complexity bound. 
 
Undoubtedly, an algorithm's oracle complexity depends on what oracle information the algorithm can utilize and what operations it can perform by using the oracle. 
Since we focus on FOMs, we assume that a first-order oracle is accessible and each generated iterate is a certain combination of the oracle information. More specifically, we make the following assumption that is standard for an FOM to solve problem~\eqref{eq:model}. 
	\begin{assumption}
		\label{ass:linearspan}
		There is an oracle such that given any $\vx$ and $\vxi$ in $\RR^d$, it can return $(\vA\zz\vb, \nabla f_0(\vx), \vA\zz \vA\vx )$	and $\prox_{\eta g}(\vxi)$ for any $\eta>0$.
		In addition, the sequence $\left\{\vx^{(t)}\right\}_{t=0}^{\infty}$ generated by the underlying algorithm satisfies that,  for any $t\geq 1$, $\vx^{(t)} \in  
		\mathbf{span}\left( \big\{\vxi^{(t)},\vzeta^{(t)} \big\} \right)$, where
		\begin{equation*}
			\begin{aligned}		
				&\vxi^{(t)}\in \mathbf{span}\left(\left\{ \vA\zz\vb\right\}\bigcup\cup_{s=0}^{t-1}\left\{\vx^{(s)}, \nabla f_0(\vx^{(s)}), \vA\zz \vA\vx^{(s)}\right\}\right),\ 
				\vzeta^{(t)}\in \left\{\prox_{\eta g}(\vxi^{(t)})~|~\eta>0\right\}.
			\end{aligned}		
		\end{equation*}
	\end{assumption}
	Here, $\mathbf{span}(\mathcal{S})$ represents  the set of all linear combinations of finitely many vectors in  a set $\mathcal{S}$. Under the linear-span assumption above, an algorithm can access $\nabla f_0$ at any historical solutions and can compute matrix-vector multiplications with $\vA$ and $\vA^{\top}$ as well as the proximal mapping of $g$, i.e.,  
\begin{eqnarray}
	\label{eq:proxg}
	\prox_{\eta g}(\vx):=\arg\min_{ \vx'} \left\{ \textstyle g(\vx')+\frac{1}{2\eta}\|\vx'-\vx\|^2 \right\}
\end{eqnarray}
for any $\vx\in\mathbb{R}^d$ and $\eta>0$. We say  $\vx^{(t)}$ is generated by the \emph{$t$-th iteration}\footnote{Without further specification, one iteration of an FOM will call the oracle once. We will specify the type of iteration if this does not hold, e.g., by outer- or inner-iteration.} of the algorithm. 


It should be noted that the above linear-span assumption allows the algorithm to compute the proximal mapping of $g$ 
but does not permit the projection onto the affine set $\{\vx\in\mathbb{R}^d~|~\vA\vx +  \vb=\mathbf{0}\}$. 
It is also worth noting here that computing the proximal mapping of $g$ is more difficult than computing a subgradient of $g$. In fact, if we disallow computing the proximal mapping of $g$ but instead give the algorithm access to its subgradients, the algorithm is limited to the class of subgradient methods for non-smooth optimization, in which case finding an $\epsilon$-stationary point is  impossible with finite oracle complexity~\cite{zhang2020complexity}. 

FOMs under Assumption~\ref{ass:linearspan} have been developed with theoretically proved oracle complexity for finding an $\epsilon$-stationary point of problem~\eqref{eq:model} under different settings. These results are stated as  \emph{upper bounds} for the maximum number of oracles those algorithms require to reach an $\epsilon$-stationary point. In two special cases of problem~\eqref{eq:model}, some existing algorithms' oracle complexity is known to be non-improvable (also called optimal) because the complexity matches, up to constant factors, a theoretical \emph{lower bound} of oracle complexity~\cite{nemirovskij1983problem}, which is the minimum number of oracles an algorithm in a class 
needs to find an 
$\epsilon$-stationary point. These two cases are as follows.
\begin{enumerate} 
	\item When the linear constraint $\vA \vx +  \vb=\mathbf{0}$ does not exist,  problem~\eqref{eq:model} becomes a composite non-smooth non-convex optimization problem 
	\begin{equation}
		\label{eq:composite}
	\min_{\vx}f_0(\vx)+g(\vx), 
	\end{equation}
	for which the proximal gradient method finds an $\epsilon$-stationary point by $O(L_f\epsilon^{-2})$ first-order oracles~\cite{nesterov2012make}. This oracle complexity cannot be improved because it matches the lower bound provided in~\cite{carmon2020lower,carmon2021lower}. 
	\item When $g\equiv0$, problem~\eqref{eq:model} becomes a linear equality-constrained smooth non-convex optimization problem 
	\begin{align}
		\label{eq:affinenonconvex}
			\min_\vx&~f_0(\vx), \ \text{ s.t. } \vA\vx+\vb=\mathbf{0}.
	\end{align}
	It is known that, if exact projection onto the feasible set $\{\vx~|~\vA\vx +  \vb=\mathbf{0}\}$ is allowed, the projected gradient method can find an $\epsilon$-stationary point in $O(L_f\epsilon^{-2})$ iterations. When exact projection is prohibited, 
	one can perform inexact projection through the matrix-vector multiplications with $\vA$ and $\vA^{\top}$, which are allowed under Assumption~\ref{ass:linearspan}. 
	This way, with $O(\kappa(\vA)\log(\epsilon^{-1}))$ multiplications, one can project any point to the feasible set with a $\mathrm{poly}(\epsilon)$ error\footnote{$\mathrm{poly}(\epsilon)$ denotes a polynomial of $\epsilon$.}. 
	Here, $\kappa(\vA)$ is the condition number of $\vA$ defined as
	$$
	\kappa(\vA):= \sqrt{\frac{\lambda_{\max}(\vA\vA\zz)}{\lambda_{\min}(\vA\vA\zz)}},
	$$
	where $\lambda_{\min}(\vA\vA\zz)$  and $\lambda_{\max}(\vA\vA\zz)$ are the smallest positive and the largest eigenvalues of $\vA\vA\zz$, respectively. Using this inexact projection as a subroutine, one can easily develop an inexact projected gradient method to this special case of \eqref{eq:model} with oracle complexity of  $O(\kappa(\vA)\log(\epsilon^{-1})L_f\epsilon^{-2})$. This complexity matches the lower bound $O(\kappa(\vA)L_f\epsilon^{-2})$ in~\cite{sun2019distributed} up to a logarithmic factor and thus is nearly optimal. 
\end{enumerate}

These two examples suggest that a lower bound of oracle complexity is valuable as it informs algorithm designers which algorithms can be potentially improved for higher efficiency and which are non-improvable without additional assumptions. In literature, there exist multiple algorithms with the theoretically proved oracle complexity for problem~\eqref{eq:model} or a more general problem.  We refer readers to~\cite{bian2015linearly, haeser2019optimality, kong2019complexity, jiang2019structured,melo2020iteration1,zhang2020proximal,o2021log,zhang2022global} for problems with affine constraints and~\cite{cartis2014complexity, cartis2017corrigendum,sahin2019inexact,xie2021complexity,xiao2023dissolving, grapiglia2021complexity, lin2022complexity,kong2022iteration} for problems with nonlinear constraints. However, to the best of our knowledge, there is no lower bound of the oracle complexity for finding a (near) $\epsilon$-stationary point of problem~\eqref{eq:model} under Assumption~\ref{ass:linearspan}. Although there are lower bounds established for decentralized smooth optimization~\cite{sun2019distributed}, linearly constrained smooth optimization~\cite{sun2019distributed}, and non-convex strongly-concave min-max problems~\cite{li2021complexity,zhang2021complexity}, these lower bounds either cannot be applied to problem~\eqref{eq:model} or can only be applied to a special case of problem~\eqref{eq:model}, so they will not be tight enough as we will show later in this paper. 
To fill this gap in literature, we pose the following question:
	\begin{center}
		\emph{
			What is the minimum oracle complexity for a first-order method satisfying a linear-span assumption, e.g., Assumption~\ref{ass:linearspan}, 
			to find a (near) $\epsilon$-stationary point of problem~\eqref{eq:model} that satisfies Assumption~\ref{assume:problemsetup}?}
	\end{center}



	\subsection{Contributions}
		
		Our first major contribution is to provide an answer to the question above by establishing a lower bound of the oracle complexity of FOMs for finding a (near) $\epsilon$-stationary point of problem~\eqref{eq:model}. This is achieved by adapting the worst-case instance in~\cite{sun2019distributed} for affinely constrained smooth optimization to the affinely constrained structured non-smooth problem~\eqref{eq:model}. In particular, we use a subset of the affine constraints of the instance in~\cite{sun2019distributed} to design the regularized term $g(\vx)$ in problem~\eqref{eq:model} and leave the remaining constraints in that instance as affine constraints in problem~\eqref{eq:model}. We show that under Assumption~\ref{ass:linearspan}, any algorithm needs at least $O({\kappa([\bar{\vA};\vA]) L_f \Delta_{F_0}} \epsilon^{-2})$ iterations/oracles to find a (near) $\epsilon$-stationary point of the instance we design; see Theorem~\ref{thm:lower}. Here 
		\begin{align}
			\label{eq:joint_condition_number}
		\Delta_{F_0}:= F_0(\vx^{(0)})-\inf_\vx F_0(\vx), \ [\bar{\vA};\vA]:= \left[
		\begin{array}{c}
			\bar{\vA}\\
			\vA
		\end{array}
		\right]\text{ and }	\kappa([\bar{\vA};\vA]):= \sqrt{\frac{\lambda_{\max}([\bar{\vA};\vA] [\bar{\vA};\vA]^\top )}{\lambda_{\min}([\bar{\vA};\vA] [\bar{\vA};\vA]^\top )}}.
		\end{align}
		Our lower complexity bound for~\eqref{eq:model} can be viewed as a generalization of the lower bound $O(\kappa(\vA)L_f\epsilon^{-2})$ for problem~\eqref{eq:affinenonconvex} in~\cite{sun2019distributed}. Our result provides a new insight that the difficulty of finding a (near) $\epsilon$-stationary point of problem~\eqref{eq:model} depends on the interaction between the affine constraints and the non-smooth regularization term characterized by $\kappa([\bar{\vA};\vA])$. 
		
Under Assumption~\ref{ass:linearspan}, an algorithm is allowed to call $\prox_{\eta g}(\cdot)$, which may not be easy to compute, especially when $\bar\vA$ in  Assumption~\ref{assume:problemsetup} is a generic matrix without a special structure. On the contrary, when $\bar g$ in  Assumption~\ref{assume:problemsetup} is simple enough so that  $\prox_{\eta \bar g}(\cdot)$ can be computed easily, one can apply a variable splitting technique such as the one used in the alternating direction method of multipliers (ADMM) to find a (near) $\epsilon$-stationary point of problem~\eqref{eq:model}. 
		To do so, we introduce a new variable $\vy\in\mathbb{R}^{\bar n}$ and reformulate problem~\eqref{eq:model} to its Splitting Problem
		\begin{equation}
				\label{eq:model-spli}
		\begin{aligned}		
			\min_{ \vx\in\RR^d, \vy\in\RR^{\bar n}} &~F(\vx,\vy):=f_0(\vx) + \bar{g}( \vy),\ 
			 \st~\vA \vx + \textbf{b} =0,\,\, \vy = \bar{\vA} \vx + \bar{\textbf{b}}.
		\end{aligned}		\tag{SP}
		\end{equation}
For $t=0,1,\dots$,	let $(\vx^{(t)},\vy^{(t)})$ be the solution generated in the $t$-th iteration by an algorithm for solving~\eqref{eq:model-spli}. We assume they satisfy the following linear-span assumption (see \cite{ouyang-xu2021lower,carmon2020lower}), instead of Assumption~\ref{ass:linearspan}. 
 
		\begin{assumption}
		\label{ass:linearspan3}
There is an oracle such that given any $\vx\in\RR^d$ and $\vy, \vxi$ in $\RR^{\bar n}$, it can return $(\bar\vb, \vA\zz\vb,\bar\vA\zz\bar\vb)$, $(\nabla f_0(\vx), \bar\vA\vx, \vA\zz \vA\vx,\bar\vA\zz \bar\vA\vx,\bar\vA\zz\vy)$, and $\prox_{\eta \bar{g}}(\vxi)$ for any $\eta>0$. In addition, the sequence $\left\{(\vx^{(t)},\vy^{(t)})\right\}_{t=0}^{\infty}$ generated by the underlying algorithm satisfies that,  for all $t\geq 1$, 
		\begin{equation*}
			\begin{aligned}
				&\vx^{(t)}\in \mathbf{span}\left(\left\{ \vA\zz\vb,\bar\vA\zz\bar\vb\right\}\bigcup\cup_{s=0}^{t-1}\left\{\vx^{(s)}, \nabla f_0(\vx^{(s)}), \vA\zz \vA\vx^{(s)},\bar\vA\zz \bar\vA\vx^{(s)},\bar\vA\zz\vy^{(s)}\right\}\right),\\
				&\vy^{(t)} \in 
				\mathbf{span}\left(\big\{\vxi^{(t)},\vzeta^{(t)} \big\}\right), \text{ where }\\ &
				\vxi^{(t)}\in \mathbf{span}\left(\left\{ \bar\vb\right\}\bigcup\cup_{s=0}^{t-1}\left\{\vy^{(s)}, \bar\vA\bar\vA\zz\vy^{(s)}, \bar\vA\vx^{(s)}\right\}\right),\ 
				\vzeta^{(t)}\in \left\{\prox_{\eta \bar{g}}(\vxi^{(t)})~|~\eta>0\right\}.
			\end{aligned}		
		\end{equation*}
	\end{assumption}
We say $(\vx^{(t)},\vy^{(t)})$ is generated by the $t$-th iteration of the algorithm. Our second major contribution is to show that under Assumption~\ref{ass:linearspan3}, the minimum oracle complexity is  $O({\kappa([\bar{\vA};\vA]) L_f \Delta_F} \epsilon^{-2})$ 
for an algorithm to find an $\epsilon$-stationary point of problem~\eqref{eq:model-spli} that satisfies Assumption~\ref{assume:problemsetup}(a,c) and the condition $$\Delta_F :=F(\vx^{(0)},\vy^{(0)})-\inf_{\vx,\vy} F(\vx,\vy)<\infty.$$ The lower bound  can be proved by using the same worst-case instance as that we will use to prove the lower bound for problem~\eqref{eq:model}. Moreover, under Assumption~\ref{ass:linearspan3}, the minimum oracle complexity 
for an algorithm to find a near $\epsilon$-stationary point of problem~\eqref{eq:model} is $O({\kappa([\bar{\vA};\vA]) L_f \Delta_{F_0}} \epsilon^{-2})$, {the same as that we establish under Assumption~\ref{ass:linearspan} which allows the more expensive operator $\prox_{\eta g}$.} 
	
Given a lower bound of oracle complexity, a critical question is whether the bound is tight, or equivalently, whether it can be achieved by an algorithm that meets the underlying assumptions upon which the lower bound is established. To address this question and shed light on the tightness of our lower bounds, 
we make a third significant contribution by introducing a novel method called the \emph{inexact proximal gradient} (IPG) method. This method is specifically designed to solve problem~\eqref{eq:model-spli}, while satisfying the conditions outlined in Assumption~\ref{ass:linearspan3}. 
Remarkably, IPG is able to achieve an oracle complexity that matches our established lower bound $O({\kappa([\bar{\vA};\vA]) L_f \Delta_F} \epsilon^{-2})$ up to logarithmic factors for solving problem~\eqref{eq:model-spli}. That is, the lower complexity bound is tight for problem \eqref{eq:model-spli}. Meanwhile, IPG is able to find a near $\epsilon$-stationary point of problem \eqref{eq:model} by  $O({\kappa([\bar{\vA};\vA]) L_f \Delta_{F_0}} \epsilon^{-2})$ iterations/oracles, up to logarithmic factors. This means that the lower complexity bound is also tight for problem \eqref{eq:model} under Assumption \ref{ass:linearspan3}.


\subsection{Related Work}\label{sec:relatedwork}

The \emph{proximal gradient} (PG) method can find an $\epsilon$-stationary point of the composite non-smooth non-convex problem~\eqref{eq:composite} 
within $O(L_f\epsilon^{-2})$~\cite{nesterov2012make} iterations, which matches the lower bound in~\cite{carmon2020lower,carmon2021lower}. 
When there is no affine constraints and $g\equiv0$ in problem~\eqref{eq:model}, our lower-bound complexity result is reduced to the lower bound in~\cite{carmon2020lower}.

		
For the affinely constrained non-convex smooth problem~\eqref{eq:affinenonconvex}, Sun and Hong~\cite{sun2019distributed} show a lower-bound complexity of  $O(\kappa(\vA)L_f\epsilon^{-2})$ for finding an $\epsilon$-stationary point. 
In the same paper, they give an FOM that achieves this lower bound. When $g\equiv 0$, our complexity lower bound for problem~\eqref{eq:model} is reduced to their lower bound.
	
	Before our work, there only exist upper bounds of oracle complexity for finding a (near) $\epsilon$-stationary point of \eqref{eq:model} and~\eqref{eq:model-spli}. For instance, Kong et al.~\cite{kong2019complexity} develop a quadratic-penalty accelerated inexact proximal point method that finds an $\epsilon$-stationary point of problem~\eqref{eq:model} with oracle complexity $O(\epsilon^{-3})$. Lin et al.~\cite{lin2022complexity} study a method similar to~\cite{kong2019complexity} and show that oracle complexity of $O(\epsilon^{-5/2})$ is sufficient. The  \emph{augmented Lagrangian method} (ALM) is another effective approach for problem~\eqref{eq:model}. The oracle complexity of ALM for problem~\eqref{eq:model} has been studied by~\cite{hong2016decomposing,hajinezhad2019perturbed,melo2020iteration1,zhang2022global}. For example, the inexact proximal accelerated ALM by~\cite{melo2020iteration1} achieves oracle complexity of $O(\epsilon^{-5/2}$) and, in a special case where $g(\vx)$ is the indicator function of a polyhedron, the smoothed proximal ALM by~\cite{zhang2022global} improves the complexity to $O(\epsilon^{-2}$). ADMM is an effective algorithm for optimization with a separable structure  like that in problem~\eqref{eq:model-spli}. ADMM and its variants have been studied by~\cite{melo2017iteration,goncalves2017convergence,jiang2019structured, zhang2020proximal, yashtini2022convergence,zhang2022global,hong2016convergence} for constrained non-convex  optimization problems including problem~\eqref{eq:model-spli}. For example, it is shown by~\cite{goncalves2017convergence,yashtini2022convergence,jiang2019structured} that ADMM finds an $\epsilon$-stationary point of  problem~\eqref{eq:model-spli} with oracle complexity of $O(\epsilon^{-2})$. 
	
	The aforementioned methods for problem~\eqref{eq:model} all satisfy Assumption~\ref{ass:linearspan}, and the aforementioned methods for problem~\eqref{eq:model-spli} all satisfy Assumption~\ref{ass:linearspan3}.  However, the oracle complexity of those methods for finding an  $\epsilon$-stationary point either does not match or is not comparable with our lower-bound complexity for the corresponding problems. 
	Specifically, the oracle complexity of ADMM in~\cite{yashtini2022convergence} for solving problem~\eqref{eq:model-spli} depends on the Kurdyka-Łojasiewicz (KŁ)
	coefficient, which is not directly comparable with our lower bound. The oracle complexity $O(\kappa^{2}([\vA;\bar{\vA}])L_f^2\Delta_F \epsilon^{-2})$ of ADMM for solving problem~\eqref{eq:model-spli} is presented in~\cite{goncalves2017convergence}, under the assumption that $[\bar{\vA}; \vA]$ has a full-row rank. 
	The results of~\cite{jiang2019structured} of ADMM are only applicable to problems~\eqref{eq:model} and~\eqref{eq:model-spli} with a separable structure and $\bar{\vA}=\vI_d$ or $g\equiv 0$, for which case their oracle complexity is $O(\kappa^{2}(\vA)L_f^2\Delta \epsilon^{-2})$ with $\Delta = \Delta_{F_0}$ or $\Delta_F$. Zhang and Luo in~\cite{zhang2022global} study the complexity of ALM for problem~\eqref{eq:model} when $g$ is the indicator function of a polyhedral set and the exact projection onto the polyhedral set can be computed. They obtained complexity of $O(\widehat\kappa^{2}L_f^3\Delta_{F_0} \epsilon^{-2})$, where $\widehat\kappa$ is a joint condition number of the equality $\vA \vx + \textbf{b} =\vzero$  and the inequality defining the polyhedral set. When $g\equiv 0$, their complexity is reduced to $O(\kappa^{2}(\vA)L_f^3\Delta_{F_0} \epsilon^{-2})$. Zhang et al.\cite{zhang2022iteration} further extend~\cite{zhang2020proximal, zhang2022global} to problems with nonlinear convex inequality constraints. However, their algorithm requires exact projection to the set defined by the inequality constraints, which is impractical for many applications and does not satisfy Assumption~\ref{ass:linearspan}. 
	
	\subsection{Notations and Definitions}
	

	For any $a\in\RR$, we use $\lceil a\rceil$ to denote the smallest integer that is no less than $a$. 
	$\mathbf{Null}(\vH)$ represents the null space of a matrix $\vH$. For any vector $\vz$, $[\vz]_j$ denotes its $j$-th coordinate. 	
	We denote $\mathbf{1}_{p}$ for an all-one vector in $\mathbb{R}^p$, and $\mathbf{0}$ to represent an all-zero vector when its dimension is clear from the context. $\vI_p$ denotes a $p\times p$ identity matrix and  
	\begin{equation}
		\label{eq:matrixJ}
		\vJ_{p}:=\left[\begin{array}{ccccc}
			-1 & 1 &&&\\
			& -1 &1 &&\\
			&& \ddots & \ddots &  \\
			&&  &-1 & 1 \\
		\end{array}\right] \in \RR^{(p-1)\times p}.
	\end{equation}
A vector $\vx$ is said $\omega$-close to another vector $\widehat\vx$ if $\|\vx -\widehat\vx\| \le \omega$. For any set $\cX$, we denote $\iota_\cX$ as its indicator function, i.e., $\iota_\cX(\vx) = 0$ if $\vx\in\cX$ and $+\infty$ otherwise.	
	We use $\otimes$ for the Kronecker product, $\mathbf{co}(\mathcal{S})$ for the convex hull of a set $\mathcal{S}$ and $\overline{\mathbf{co}}(\cS)$ for the closure of $\mathbf{co}(\mathcal{S})$. 
	For a function $f:\mathbb{R}^d\rightarrow\mathbb{R}\cup \{+\infty\}$, its directional derivative at $\vx$ along a direction $\vv\in\mathbb{R}^d$  is defined as 
	\begin{equation}\label{eq:def-dir-der}
	f^{\prime}(\vx ; \vv)=\lim _{s \downarrow 0} \frac{f(\vx+s \vv)-f(\vx)}{s},
	\end{equation}
where $s\downarrow 0$ means $s\rightarrow 0$ and $s>0$.
$\partial g$ denotes the subdifferental of a closed convex function $g$.	
	
\begin{definition}\label{def:eps-pt-P}
Given $\epsilon \ge0$, a point $\vx^*$ is called an $\epsilon$-stationary point of~\eqref{eq:model} if for some $\vgamma\in\RR^n$,
\begin{equation}
		\label{eq:epsta}
		\max\left\{\dist\left(\mathbf{0},\nabla f_0(\vx^*)+\vA\zz \vgamma 
		+\partial g(\vx^*) \right), \left\| \vA\vx^*+\vb\right\| 
		\right\}\leq \epsilon,
	\end{equation}
and a point $(\vx^*, \vy^*)$ is called an $\epsilon$-stationary point of~\eqref{eq:model-spli} if for some $\vz_1\in \RR^{\bar{n}}$ and $\vz_2\in\RR^n$,
\begin{equation}
	\label{eq:kktviofgesub}
	\begin{aligned}
		&	\max\left\{ \dist (\vzero, \partial \bar{g}(\vy^*) - \vz_1), \|\nabla f_0(\vx^*) + \bar{\vA}\zz \vz_1 + \vA\zz \vz_2\|,  \|\vy^*-\bar{\vA}\vx^*-\bar{\vb}\|,  \|\vA\vx^*+\vb\| \right\}
		 \leq\epsilon.
	\end{aligned}
\end{equation}	
When $\epsilon = 0$, we simply call $\vx^*$ and $(\vx^*, \vy^*)$ stationary points of problems~\eqref{eq:model} and~\eqref{eq:model-spli}, respectively.	We say that $\bar{\vx}$ is a near $\epsilon$-stationary point of~\eqref{eq:model} if it is $\omega$-close to an $\epsilon$-stationary point $\vx^*$ of~\eqref{eq:model}
	with $\omega= O(\epsilon)$.
\end{definition}	
	
	\subsection{Organization}
	
	The rest of this paper is organized as follows. In Section~\ref{sec:lb1}, we present lower-bound complexity results for solving problem~\eqref{eq:model} of FOMs  under Assumption \ref{ass:linearspan}. Then in Section~\ref{sec:extension}, we show the lower-bound complexity results for solving problems~\eqref{eq:model} and~\eqref{eq:model-spli} of FOMs under Assumption \ref{ass:linearspan3}.
	In Section~\ref{sec:ub}, we propose an inexact proximal gradient algorithm for solving these two problems and present its worst-case oracle complexity, which shows the tightness of our lower complexity bounds. Concluding remarks are given in Section~\ref{sec:conclusion}.
	
	\section{Lower Bound of Oracle Complexity for Problem~\eqref{eq:model} under Assumption \ref{ass:linearspan}}\label{sec:lb1}
	
	In this section, under Assumption~\ref{ass:linearspan}, we derive a lower bound of the oracle complexity of an FOM  to find a (near) $\epsilon$-stationary point of problem~\eqref{eq:model} for a given $\epsilon>0$. Motivated by~\cite{carmon2020lower,sun2019distributed}, our approach is to design a worst-case instance of \eqref{eq:model} such that 
	a large number of oracles satisfying Assumption~\ref{ass:linearspan} 
	will be needed to find a desired solution. 
	

	\subsection{A Challenging Instance  $\cP$ of Problem~\eqref{eq:model}}
		
	Let $m_1$ and $m_2$ be positive integers such that $m_1m_2$ is even and $m_1\geq2$. Let $m=3m_1m_2$ and $\bar{d}$ be an odd positive integer such that $\bar{d}\geq 5$. Also, set $d=m{\bar{d}}$, and  let  
\begin{eqnarray}
	\label{eq:xblock}
	\vx=\left(\vx_1\zz, \ldots,\vx_m\zz\right)\zz\in\mathbb{R}^{d}, \text{ with }\vx_i\in\mathbb{R}^{\bar d}, \, i=1,\dots,m.
\end{eqnarray}
Moreover, we define a matrix  $\vH\in\mathbb{R}^{(m-1)\bar{d}\times m\bar{d}}$ by
\begin{equation}
	\label{eq:matrixAstar}
	\newcommand{\zm}{
			\left[\begin{array}{ccccc}
			-\vI_{\bar{d}} & \vI_{\bar{d}} &&&\\
			& -\vI_{\bar{d}} &\vI_{\bar{d}} &&\\
			&& \ddots & \ddots &  \\
			&&  &-\vI_{\bar{d}} & \vI_{\bar{d}} \\
		\end{array}\right]
	}		
		\vH:=mL_f\cdot \vJ_{m}\otimes \vI_{\bar{d}}=
		mL_f\cdot
		\left.
		\,\smash[b]{\underbrace{\!\zm\!}_{\textstyle\text{$m$ blocks}}}\,
		\right\}\text{$m-1$ blocks.}
		\vphantom{\underbrace{\zm}_{\textstyle\text{$m$ blocks}}}		
\end{equation}
%
Define 
\begin{equation}
\label{eq:indexsetM}
\begin{aligned}
\mathcal{M}:=\{im_1| i=1,2,\dots,3m_2-1\},& \quad \mathcal{M}^C:=\{1,2,\dots,m-1\}\backslash  \mathcal{M}, \\ n=(m-3m_2){\bar{d}},& \quad \bar n=(3m_2-1){\bar{d}},
\end{aligned}
\end{equation}
and let  
\begin{eqnarray}
	\label{eq:AandAbar}
\bar{\vA}:=mL_f\cdot \vJ_{\mathcal{M}}\otimes \vI_{\bar{d}},\quad 
\vA:=mL_f\cdot \vJ_{\mathcal{M}^C}\otimes \vI_{\bar{d}}, \quad \bar\vb = \vzero \in \RR^{\bar n}, \quad \vb = \vzero \in \RR^n,
\end{eqnarray}
where $\vJ_{\mathcal{M}}$ and $\vJ_{\mathcal{M}^C}$ are the rows of $\vJ_{m}$ indexed by $\mathcal{M}$ and $\mathcal{M}^C$, respectively.


Furthermore, we define $\bar g:\mathbb{R}^{\bar{n}}\rightarrow \mathbb{R}$ and $g:\RR^d \rightarrow \RR$ as
\begin{equation}
	\label{eq:gbar}
\bar g(\vy):=\frac{\beta}{mL_f}\|\vy\|_1=\max\left\{\vu^\top\vy~\Big|~\|\vu\|_\infty\leq \frac{\beta}{mL_f}\right\},
\end{equation}
and \begin{equation}
	\label{eq:g}
	g(\vx):=\bar{g}(\bar\vA\vx)=\beta\sum_{i\in\mathcal{M}}  \|\vx_{i }-\vx_{i+1}\|_1,
\end{equation}
where $\vx$ has the block structure in~\eqref{eq:xblock}, $\mathcal{M}$ is defined in \eqref{eq:indexsetM}, and $\beta$ is a constant satisfying 
\begin{equation}
	\label{eq:betachoice}
	\beta>(50\pi+1+\|\vA\|)\sqrt{m}\epsilon.
\end{equation} 


Finally, we design the smooth function $f_0$ to complete the instance of~\eqref{eq:model}. Let $\Psi:\RR\mapsto\RR$ and $\Phi:\RR\mapsto\RR$ be defined as
	\begin{equation}
		\label{eq:PsiPhi}
	\Psi(u):=\left\{\begin{array}{ll}
		0, & \text{ if }u \leq 0, \\
		1-e^{-u^2}, & \text{ if } u>0,
	\end{array}\right.  \quad \text{and}\quad\Phi(v):=4 \arctan v+2 \pi.
	\end{equation}
In addition, for each $j=1,\ldots,\bar d$, we define function $\varphi\left(\cdot, j\right):\RR^{{\bar{d}}} \rightarrow \RR$ as
   \begin{equation}
   	\label{eq:Theta}
   	\varphi\left(\vz, j\right):=\left\{
   	\begin{array}{ll}
	-\Psi(1) \Phi\left([\vz]_1\right), &\text{ if } j=1,  \\[0.1cm]
		\Psi\left(-[\vz]_{j-1}\right) \Phi\left(-[\vz]_j\right)-\Psi\left([\vz]_{j-1}\right) \Phi\left([\vz]_j\right), &\text{ if } j = 2, \dots,{\bar{d}},
		\end{array}\right.
	\end{equation}	
and	for $i=1,\dots,m$, we define $h_i:\RR^{{\bar{d}}} \rightarrow \RR$ as
	\begin{equation}
		\label{eq:hi}
h_i\left(\vz\right):= \left\{\begin{array}{ll}
	\varphi\left(\vz, 1\right)+3 \sum_{j=1}^{\lfloor {\bar{d}} / 2\rfloor} \varphi\left(\vz, 2 j\right), & \text{ if }i \in\left[1, \frac{m}{3}\right], \\ [0.1cm]
\varphi\left(\vz, 1\right), &\text{ if } i \in\left[\frac{m}{3}+1, \frac{2 m}{3}\right], \\ [0.1cm]
\varphi\left(\vz, 1\right)+3 \sum_{j=1}^{\lfloor {\bar{d}} / 2\rfloor} \varphi\left(\vz, 2 j+1\right), &\text{ if } i \in\left[\frac{2 m}{3}+1, m\right].
\end{array}\right.
	\end{equation}	
Now for a given $\epsilon\in(0,1)$ and $L_f > 0$, for $i=1,\dots,m$, we define $f_i:\RR^{{\bar{d}}} \rightarrow \RR$ as
\begin{equation}
	\label{eq:fi}
	f_i\left(\vz\right):=\frac{300  \pi \epsilon^2}{mL_f} h_i\left(\frac{\sqrt{m}L_f \vz }{150\pi\epsilon}\right), \forall\, \vz\in \RR^{\bar d},
\end{equation}	
and let $f_0: \RR^d \rightarrow \RR$ be
\begin{equation}
	\label{eq:f0}
f_0(\vx):= \sum^{m}_{i=1}  f_i(\vx_i), \forall\, \vx\in \RR^d \text{ with the structure in }\eqref{eq:xblock}.
\end{equation}

Putting all the components given above, we obtain a specific instance of~\eqref{eq:model}. We formalize it in the following definition.
\begin{definition}[instance $\cP$]
	\label{def:hardinstance}
Given $\epsilon\in(0,1)$ and $L_f > 0$, let $m_1, m_2$ and $\bar d$ be integers such that $m_1\ge2$ is even and $\bar d\ge 5$ is odd. We refer to as \emph{instance $\cP$} the instance of problem~\eqref{eq:model} where $f_0$ is given in~\eqref{eq:f0} with each $f_i$ defined in~\eqref{eq:fi}, $g$ is given in~\eqref{eq:g} with $\beta$ satisfying~\eqref{eq:betachoice}, and $(\vA, \bar\vA, \vb, \bar\vb)$ is given in~\eqref{eq:AandAbar}. 
\end{definition}

\subsection{Properties of Instance $\cP$}

In order to show the challenge of instance $\cP$ for an algorithm under Assumption~\ref{ass:linearspan}, we give a few facts and properties about $\cP$. First, notice that $\bar{\vA}$ and $\vA$ in~\eqref{eq:AandAbar} are two block submatrices of $\vH$ in rows. 
 It is easy to obtain the following proposition.
 \begin{proposition}\label{prop:consensus}
 By the definitions in~\eqref{eq:xblock} through~\eqref{eq:g}, it holds
 \begin{enumerate}
 \item[\textnormal{(a)}] $\vx_1=\vx_2=\cdots=\vx_m$ if and only if $\vH\vx=\mathbf{0}$; 
 \item[\textnormal{(b)}] $\vx_i=\vx_{i+1}$ for $i\in \mathcal{M}$ if and only if $\bar\vA\vx=\mathbf{0}$ or equivalently $g(\vx)=0$; 
 \item[\textnormal{(c)}] $\vx_i=\vx_{i+1}$ for $i\in \mathcal{M}^C$ if and only if $\vA\vx=\mathbf{0}$; 
 \item[\textnormal{(d)}] $\vx_1=\vx_2=\cdots=\vx_m$ if and only if $\vA\vx=\mathbf{0}$ and $g(\vx)=0$.
 \end{enumerate} 
 \end{proposition}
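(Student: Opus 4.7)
The plan is to unpack each matrix--vector product into its blockwise components via the Kronecker-product structure, and then read off the four equivalences directly from the definition~\eqref{eq:matrixJ} of $\vJ_m$ and the definition~\eqref{eq:g} of $g$. The argument is essentially a bookkeeping exercise, so the whole proposition can be disposed of in one compact block.

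First I would handle part (a). Writing $\vH = mL_f\,\vJ_m \otimes \vI_{\bar d}$ and using the block decomposition~\eqref{eq:xblock}, the $i$-th block (of size $\bar d$) of $\vH\vx$ equals $mL_f(\vx_{i+1}-\vx_i)$ for $i=1,\dots,m-1$, because $\vJ_m$ is a bidiagonal matrix of the shape in~\eqref{eq:matrixJ}. Since $mL_f>0$, $\vH\vx=\vzero$ is then equivalent to $\vx_i=\vx_{i+1}$ for all $i=1,\dots,m-1$, i.e.\ $\vx_1=\vx_2=\cdots=\vx_m$, which yields~(a).

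For parts (b) and (c), I would observe that $\bar\vA$ and $\vA$ are obtained from $\vJ_m$ by selecting the rows indexed by $\mathcal{M}$ and $\mathcal{M}^C$, respectively, and then taking the Kronecker product with $\vI_{\bar d}$. Hence applying $\bar\vA$ (resp.\ $\vA$) to $\vx$ returns the sub-collection of differences $\{mL_f(\vx_{i+1}-\vx_i)\}_{i\in\mathcal{M}}$ (resp.\ $\{mL_f(\vx_{i+1}-\vx_i)\}_{i\in\mathcal{M}^C}$). Thus $\bar\vA\vx=\vzero$ is equivalent to $\vx_i=\vx_{i+1}$ for every $i\in\mathcal{M}$, and the analogous statement holds for $\vA$ and $\mathcal{M}^C$. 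The equivalence $\bar\vA\vx=\vzero \Leftrightarrow g(\vx)=0$ then drops out of~\eqref{eq:g}: since $g(\vx)=\beta\sum_{i\in\mathcal{M}}\|\vx_i-\vx_{i+1}\|_1$ with $\beta>0$ and each term nonnegative, the sum vanishes iff every $\vx_i-\vx_{i+1}$ with $i\in\mathcal{M}$ vanishes.

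Finally, part (d) follows by combining (b) and (c), once I check that $\mathcal{M}$ and $\mathcal{M}^C$ as defined in~\eqref{eq:indexsetM} form a disjoint partition of $\{1,\dots,m-1\}$; this is immediate since $\mathcal{M}=\{im_1\mid i=1,\dots,3m_2-1\}$ has maximum element $(3m_2-1)m_1=m-m_1\le m-2$, so $\mathcal{M}\subseteq\{1,\dots,m-1\}$. Consequently the joint condition $\vA\vx=\vzero$ and $g(\vx)=0$ forces $\vx_i=\vx_{i+1}$ for every $i\in\{1,\dots,m-1\}$, i.e.\ all blocks coincide. There is no real obstacle; the only point requiring any care is the index verification just mentioned to ensure that $\mathcal{M}\cup\mathcal{M}^C$ exhausts the admissible row indices of $\vJ_m$.
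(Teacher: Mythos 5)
Your proof is correct and follows the only natural route: reading off the blockwise differences $mL_f(\vx_{i+1}-\vx_i)$ from the Kronecker structure of $\vH$, $\bar\vA$, $\vA$ and noting that $g$ vanishes iff each nonnegative summand does. The paper itself omits a proof (stating the proposition is easy to obtain), and your write-up, including the check that $\mathcal{M}\cup\mathcal{M}^C$ exhausts $\{1,\dots,m-1\}$, supplies exactly the intended bookkeeping.
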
	
		
Second, it is straightforward to have	the partial derivatives $\left\{\frac{\partial h_i(\vz)}{\partial [\vz]_j}\right\}$ based on three cases of $j$. 
	
	\noindent Case i) When $j=1$,
	\begin{equation}
		\label{eq:deriveh3}
		\frac{\partial h_i(\vz)}{\partial [\vz]_j} =\left\{\begin{array}{ll}
			-\Psi(1) \Phi^{\prime}([\vz]_{1})+3\left[-\Psi^{\prime}(-[\vz]_{1}) \Phi(-[\vz]_{2})-\Psi^{\prime}([\vz]_{1}) \Phi([\vz]_{2})\right],& \text{ if }i \in\left[1, \frac{m}{3}\right], \\[0.1cm]
			-\Psi(1) \Phi^{\prime}([\vz]_{1}), &\text{ if }i \in\left[\frac{m}{3}+1, m\right].
		\end{array}\right.
	\end{equation}

\noindent	Case ii) When $j$ is even, 
	\begin{equation}
		\label{eq:deriveh1}
		\frac{\partial h_i(\vz)}{\partial [\vz]_j} =\left\{\begin{array}{ll}
			3\left[-\Psi(-[\vz]_{j-1}) \Phi^{\prime}(-[\vz]_j)-\Psi([\vz]_{j-1}) \Phi^{\prime}([\vz]_j)\right], & \text{ if }i \in\left[1, \frac{m}{3}\right], \\[0.1cm]
			0,			& \text{ if }i \in\left[\frac{m}{3}+1, \frac{2m}{3}\right], \\[0.1cm]
			3\left[-\Psi^{\prime}(-[\vz]_j) \Phi(-[\vz]_{j+1})-\Psi^{\prime}([\vz]_j) \Phi([\vz]_{j+1})\right], & \text{ if }i \in\left[\frac{2m}{3}+1, m\right].
		\end{array}\right.
	\end{equation}
	
\noindent	Case iii) When $j$ is odd and $j\neq 1$, 
	\begin{equation}
		\label{eq:deriveh2}
	\frac{\partial h_i(\vz)}{\partial [\vz]_j} = \left\{\begin{array}{ll}
		3\left[-\Psi^{\prime}(-[\vz]_j) \Phi(-[\vz]_{j+1})-\Psi^{\prime}([\vz]_j) \Phi([\vz]_{j+1})\right], & \text{ if }i \in\left[1, \frac{m}{3}\right], \\[0.1cm] 
		0,	& \text{ if }i \in\left[\frac{m}{3}+1, \frac{2m}{3}\right], \\[0.1cm] 
		3\left[-\Psi(-[\vz]_{j-1}) \Phi^{\prime}(-[\vz]_j)-\Psi([\vz]_{j-1}) \Phi^{\prime}([\vz]_j)\right], & \text{ if }i \in\left[\frac{2m}{3}+1, m\right].
		\end{array}\right.
	\end{equation}




Thirdly, instance~$\mathcal{P}$ satisfies Assumption~\ref{assume:problemsetup}. By~\eqref{eq:g}, Assumption~\ref{assume:problemsetup}(c) clearly holds. 
To verify Assumptions~\ref{assume:problemsetup}(a) and~\ref{assume:problemsetup}(b), we need the next lemma.
\begin{lemma}
	\label{lem:functions}
Let $\Psi$, $\Phi$ and $h_i$ be given in~\eqref{eq:PsiPhi} and~\eqref{eq:hi}.	The following statements hold:
\begin{enumerate}	
	\item[\textnormal{(a)}]  
	$\Psi(u)=0$ and $\Psi'(u)=0$ for all $u\leq 0$, and $0 \leq \Psi(u)<1$, $0 \leq \Psi^{\prime}(u) \leq \sqrt{2/e}$,
	$0<\Phi(v)<4 \pi$, and $0<\Phi^{\prime}(v) \leq 4$ for all $v\in\RR$.
\item	[\textnormal{(b)}] 
	$
	\Psi(u) \Phi^{\prime}(v)>1
	$
	for all $u$ and  $v$ satisfying $u \geq 1$ and $|v|<1$.
	
\item[\textnormal{(c)}] $h_i(\mathbf{0})-\inf_{\vz} h_i\left(\vz\right)\leq 10\pi {\bar{d}}$ for $i=1,2,\ldots,m$.
	
\item[\textnormal{(d)}] $\nabla {h}_i$ is $75 \pi$-Lipschitz continuous  for $i=1,2,\ldots,m$.
	
\item[\textnormal{(e)}]  ${h}_i$ is $25\pi\sqrt{{\bar{d}}}$-Lipschitz continuous for $i=1,2,\ldots,m$.
\end{enumerate}
\end{lemma}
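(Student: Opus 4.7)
The plan is to verify the five parts by direct computation from the definitions in \eqref{eq:PsiPhi}, \eqref{eq:Theta}, and \eqref{eq:hi}, exploiting uniform bounds on $\Psi$, $\Phi$, and their derivatives, together with the banded structure of the gradient of $h_i$.

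Parts (a) and (b) reduce to one-dimensional calculus. For $u \le 0$ the definition \eqref{eq:PsiPhi} gives $\Psi(u) = 0$, hence $\Psi'(u) = 0$. For $u > 0$ one has $\Psi(u) = 1 - e^{-u^2} \in [0,1)$ and $\Psi'(u) = 2u e^{-u^2}$; solving $\frac{d}{du}[2u e^{-u^2}]=0$ yields $u = 1/\sqrt{2}$ and so $\max \Psi' = \sqrt{2/e}$. For $\Phi(v) = 4\arctan v + 2\pi$, the range of $\arctan$ gives $\Phi(v)\in(0,4\pi)$, and $\Phi'(v) = 4/(1+v^2)\in(0,4]$. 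Part (b) follows because $u\ge 1$ gives $\Psi(u) \ge 1 - 1/e$, and $|v| < 1$ gives $\Phi'(v) > 4/2 = 2$, so $\Psi(u)\Phi'(v) > 2(1-1/e) > 1$.

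For (c), I first note $\Psi(0) = 0$ and $\Phi(0) = 2\pi$, so \eqref{eq:Theta} gives $\varphi(\mathbf{0},1) = -2\pi\,\Psi(1)$ and $\varphi(\mathbf{0},j) = 0$ for $j \ge 2$. Hence $h_i(\mathbf{0}) = -2\pi\,\Psi(1)$ for all $i$. To bound $\inf_{\vz} h_i(\vz)$ from below, I apply the uniform bounds from (a): both $\Psi(-[\vz]_{j-1})\Phi(-[\vz]_j)$ and $\Psi([\vz]_{j-1})\Phi([\vz]_j)$ are non-negative and strictly below $4\pi$, so $\varphi(\vz,j) > -4\pi$ for every $j$. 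Since each $h_i$ is a sum of $\varphi(\cdot,1)$ and at most $\lfloor \bar d/2\rfloor$ further terms with coefficient $3$, we get $h_i(\vz) > -4\pi - 12\pi\lfloor \bar d/2\rfloor$; combining with $h_i(\mathbf{0}) < 0$ and the condition $\bar d \ge 5$ yields $h_i(\mathbf{0}) - \inf h_i \le 10\pi\,\bar d$.

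For (e), the explicit formulas \eqref{eq:deriveh3}--\eqref{eq:deriveh2} show that each partial derivative $\partial h_i/\partial[\vz]_j$ is a sum of a bounded number of products of the form $\Psi(\pm[\vz]_k)\Phi'(\pm[\vz]_l)$ or $\Psi'(\pm[\vz]_k)\Phi(\pm[\vz]_l)$, each involving only $[\vz]_{j-1}, [\vz]_j, [\vz]_{j+1}$. Applying the uniform bounds $|\Psi| < 1$, $|\Psi'| \le \sqrt{2/e}$, $|\Phi| < 4\pi$, $|\Phi'| \le 4$ from (a), and carefully absorbing the factor $3$ in front of the sum in \eqref{eq:hi}, each $|\partial h_i/\partial[\vz]_j|$ is bounded by $25\pi$, and then $\|\nabla h_i(\vz)\| \le 25\pi\sqrt{\bar d}$. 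For (d) I compute the second partial derivatives and observe that $\nabla^2 h_i(\vz)$ is a banded matrix because each entry of $\nabla h_i$ depends on at most three consecutive coordinates, so the operator norm of $\nabla^2 h_i$ is controlled by the maximum of its entries (up to a small absolute constant accounting for the bandwidth). Each entry is a sum of a bounded number of products $\Psi^{(k)}\Phi^{(l)}$ with $k+l\le 2$, so one needs explicit maxima of $|\Psi''|$ and $|\Phi''|$ from elementary calculus on \eqref{eq:PsiPhi}. The main bookkeeping challenge of the proof is tracking these numerical constants carefully so that the final constant is exactly $75\pi$; once the per-entry bound is established, the Lipschitz constant of $\nabla h_i$ follows from the banded structure and the mean value theorem.
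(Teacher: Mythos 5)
Your parts (a), (b), (c), and (e) are correct. For (e) you follow exactly the route the paper takes: bound each partial derivative via \eqref{eq:deriveh3}--\eqref{eq:deriveh2} and the uniform bounds from (a) to get $|\partial h_i/\partial[\vz]_j|<25\pi$, hence $\|\nabla h_i\|\le 25\pi\sqrt{\bar d}$. For (a)--(c) you give elementary self-contained arguments, whereas the paper simply cites \cite[Lemma 3.1]{sun2019distributed} for (a)--(d); your computations there check out (e.g., $\max_u 2ue^{-u^2}=\sqrt{2/e}$ at $u=1/\sqrt2$, $\varphi(\vz,j)\ge-4\pi$, $h_i(\mathbf{0})=-2\pi\Psi(1)$, and $4\pi+12\pi\lfloor\bar d/2\rfloor\le 10\pi\bar d$).

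The genuine gap is part (d). You describe the strategy --- bound the entries of the banded Hessian $\nabla^2 h_i$ by products of $\sup|\Psi^{(k)}|$ and $\sup|\Phi^{(l)}|$ with $k+l\le 2$ and then pass to the operator norm --- but you never carry it out: you do not compute $\sup|\Psi''|=2$ or $\sup|\Phi''|=3\sqrt3/2$, you do not produce the per-entry bound, and the phrase ``up to a small absolute constant accounting for the bandwidth'' hides precisely the step where the claimed constant $75\pi$ must be verified. This matters because the entries are not all small: a term such as $3\bigl[\Psi''(-[\vz]_j)\Phi(-[\vz]_{j+1})+\Psi''([\vz]_j)\Phi([\vz]_{j+1})\bigr]$ is only bounded by $24\pi$, so a naive row-sum or Frobenius-type estimate over a bandwidth-$3$ matrix does not obviously land at or below $75\pi$ without careful accounting (this is exactly the bookkeeping done in the cited prior work). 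As written, part (d) is a plan rather than a proof; either execute the Hessian bound explicitly or, as the paper does, invoke \cite[Lemma 3.1]{sun2019distributed}.
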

\begin{proof}
	(a)-(d) are directly from~\cite[Lemma 3.1]{sun2019distributed}. By derivatives~\eqref{eq:deriveh3}--\eqref{eq:deriveh2}, we obtain that, for any $\vz\in\RR^{\bar{d}}$, 
	\begin{equation}
		\begin{aligned}
			\label{eq:partialhmax}
			\left|\frac{\partial h_i(\vz)}{\partial [\vz]_j}\right| \leq \max \left\{\sup _u\left|\Psi(1) \Phi^{\prime}(u)\right|+6 \sup _u\left|\Psi^{\prime}(u)\right| \sup _v|\Phi(v)|, 6 \sup _u\left|\Psi(u)\right| \sup _v\left|\Phi^{\prime}(v)\right|\right\}< 25\pi, \forall\, i, j,
		\end{aligned}
	\end{equation}
	where the second inequality is from Lemma~\ref{lem:functions}(a) and the fact that $\sup _v\left|\Psi(1) \Phi^{\prime}(v)\right|\leq 4(1-e^{-1})<\pi$ by the definition of $\Phi$. Hence, $\|\nabla h_i(\vz) \|\leq 25\pi\sqrt{{\bar{d}}}$. Thus (e) holds, and we complete the proof.
\end{proof}	
From the definition of $f_i$ in \eqref{eq:fi} and the chain rule, we have 
\begin{equation}\label{eq:grad-f-i-chain}
 	[\nabla f_i(\vz)]_j  =  \frac{2\epsilon}{\sqrt{m}} \left[\nabla {h}_i\left(\frac{\sqrt{m} L_f \vz}{150\pi\epsilon}\right)\right]_j,\quad \forall j=1,\dots,m,
 \end{equation}
 which together with Lemma~\ref{lem:functions} clearly indicates the properties below about $\{f_i\}_{i=0}^m$.  
	
	\begin{lemma}
	\label{cor:boundf}
Let $\{f_i\}_{i=0}^m$	be defined in~\eqref{eq:fi} and~\eqref{eq:f0}. Then	
\begin{enumerate}	
	\item[\textnormal{(a)}] $ f_i(\mathbf{0})-\inf _{\vz} f_i\left(\vz\right)\leq 3000\pi^2 {\bar{d}}\epsilon^2/mL_f$ for $i=1,\dots,m$, and $ f_0(\mathbf{0})-\inf _{\vx} f_0\left(\vx\right)\leq 3000\pi^2 {\bar{d}}\epsilon^2/L_f$.
	
	\item[\textnormal{(b)}] $\nabla f_i$ is $L_f$-Lipschitz continuous for $i=0,1,\dots,m$.
	
	\item[\textnormal{(c)}] $f_i$ is $\frac{50\pi\epsilon\sqrt{{\bar{d}}}}{\sqrt{m}}$-Lipschitz continuous for $i=1,\dots,m$, and $f_0$ is ${50\pi\epsilon\sqrt{m{\bar{d}}}}$-Lipschitz continuous.
\end{enumerate}	
	\end{lemma}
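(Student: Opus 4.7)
The plan is to reduce every claim to the corresponding property of $h_i$ already established in Lemma~\ref{lem:functions}, by exploiting two ingredients: the affine scaling identity $f_i(\vz)=\tfrac{300\pi\epsilon^2}{mL_f}\,h_i\!\bigl(\tfrac{\sqrt m\,L_f}{150\pi\epsilon}\vz\bigr)$ from~\eqref{eq:fi} and the block separability $f_0(\vx)=\sum_{i=1}^m f_i(\vx_i)$ from~\eqref{eq:f0}. Because the blocks $\vx_i$ are decoupled, the infimum, the Lipschitz constant of $\nabla f_0$, and the Lipschitz constant of $f_0$ all reduce directly to the per-block quantities, up to an $m$-dependent factor that matches the claim.

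For part (a), I will pull the outer constant outside the infimum to obtain
\[
f_i(\mathbf{0})-\inf_{\vz} f_i(\vz)=\tfrac{300\pi\epsilon^2}{mL_f}\bigl(h_i(\mathbf{0})-\inf_{\vz} h_i(\vz)\bigr)\le \tfrac{300\pi\epsilon^2}{mL_f}\cdot 10\pi\bar d=\tfrac{3000\pi^2\bar d\,\epsilon^2}{mL_f}
\]
by Lemma~\ref{lem:functions}(c), and then sum over $i=1,\dots,m$, using $\inf_\vx f_0(\vx)=\sum_{i=1}^m\inf_\vz f_i(\vz)$ that follows from block separability, to obtain the $f_0$ bound. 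For part (b), I will combine~\eqref{eq:grad-f-i-chain} with Lemma~\ref{lem:functions}(d) and let the inner and outer scalings telescope:
\[
\|\nabla f_i(\vz)-\nabla f_i(\vz')\|\le \tfrac{2\epsilon}{\sqrt m}\cdot 75\pi\cdot \tfrac{\sqrt m\, L_f}{150\pi\epsilon}\|\vz-\vz'\|=L_f\|\vz-\vz'\|;
\]
separability then makes the Jacobian of $\nabla f_0$ block diagonal, so $\nabla f_0$ inherits the same constant $L_f$.

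For part (c), I will again invoke~\eqref{eq:grad-f-i-chain}, now paired with Lemma~\ref{lem:functions}(e), to bound $\|\nabla f_i(\vz)\|\le \tfrac{2\epsilon}{\sqrt m}\cdot 25\pi\sqrt{\bar d}=\tfrac{50\pi\epsilon\sqrt{\bar d}}{\sqrt m}$, which integrates along line segments to the $\tfrac{50\pi\epsilon\sqrt{\bar d}}{\sqrt m}$-Lipschitz claim on $f_i$. For $f_0$, applying the triangle inequality blockwise and bounding crudely by $\sum_{i=1}^m\tfrac{50\pi\epsilon\sqrt{\bar d}}{\sqrt m}\|\vx_i-\vx_i'\|\le \sqrt m\cdot 50\pi\epsilon\sqrt{\bar d}\,\|\vx-\vx'\|=50\pi\epsilon\sqrt{m\bar d}\,\|\vx-\vx'\|$ yields the stated global Lipschitz constant. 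The whole argument is essentially bookkeeping; the only delicate point is part (b), where the outer factor $\tfrac{300\pi\epsilon^2}{mL_f}$ and the inner factor $\tfrac{\sqrt m\,L_f}{150\pi\epsilon}$ must telescope exactly to $L_f$, so the particular choice of constants in the definition~\eqref{eq:fi} is not accidental; once that single telescoping is verified, everything else follows by citing Lemma~\ref{lem:functions} and summing over $i$.
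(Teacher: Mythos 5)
Your proposal is correct and follows exactly the route the paper intends: the paper dispenses with a written proof by noting that the chain-rule identity~\eqref{eq:grad-f-i-chain} together with Lemma~\ref{lem:functions} "clearly indicates" the claims, and your computation simply fills in that bookkeeping — the telescoping $\tfrac{2\epsilon}{\sqrt m}\cdot 75\pi\cdot\tfrac{\sqrt m L_f}{150\pi\epsilon}=L_f$ in part (b) checks out, as do the scalings in (a) and (c). No gaps.
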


By Lemma~\ref{cor:boundf} and $\bar g \ge0$, we immediately have the   following proposition. 
\begin{proposition}
Instance~$\mathcal{P}$ given in Definition~\ref{def:hardinstance} satisfies Assumption~\ref{assume:problemsetup}.
\end{proposition}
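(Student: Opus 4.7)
The plan is to verify each of the three conditions of Assumption~\ref{assume:problemsetup} in turn, noting that all three follow routinely from the construction in Definition~\ref{def:hardinstance} and from Lemma~\ref{cor:boundf}. Since $g$ and $\bar g$ are defined explicitly by \eqref{eq:g} and \eqref{eq:gbar} with $\bar{\vb}=\vzero$, condition (c) is the easiest: I would simply observe that $\bar g(\vy)=\frac{\beta}{mL_f}\|\vy\|_1$ is a positive multiple of the $\ell_1$-norm and is therefore proper, lower semicontinuous, and convex, and that $g(\vx)=\bar g(\bar\vA\vx+\bar\vb)$ by construction.

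For (a), I would exploit the block-separable structure of $f_0$. By \eqref{eq:f0} and \eqref{eq:xblock}, $\nabla f_0(\vx)$ is the concatenation of the blocks $\nabla f_i(\vx_i)$. For any $\vx,\vx'\in\RR^d$, this gives
\begin{equation*}
\|\nabla f_0(\vx)-\nabla f_0(\vx')\|^2=\sum_{i=1}^{m}\|\nabla f_i(\vx_i)-\nabla f_i(\vx'_i)\|^2\le L_f^2\sum_{i=1}^{m}\|\vx_i-\vx'_i\|^2=L_f^2\|\vx-\vx'\|^2,
\end{equation*}
using the $L_f$-Lipschitz continuity of each $\nabla f_i$ from Lemma~\ref{cor:boundf}(b). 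Taking square roots yields (a).

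For (b), I would combine two observations. First, $g(\vx)\ge 0$ for every $\vx$ because it is a nonnegative combination of $\ell_1$ norms by \eqref{eq:g}. Second, $\inf_{\vx}f_0(\vx)>-\infty$ since, by Lemma~\ref{cor:boundf}(a), $f_0(\mathbf{0})-\inf_{\vx}f_0(\vx)\le 3000\pi^2\bar d\epsilon^2/L_f$, a finite quantity. Therefore $\inf_{\vx}F_0(\vx)\ge\inf_{\vx}f_0(\vx)>-\infty$, giving (b).

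Since each step is a direct consequence of the definitions or of Lemma~\ref{cor:boundf}, no real obstacle is expected; the only minor point worth being careful about is the block-separability argument in (a), where one must use that the blocks $\vx_i$ are disjoint coordinates of $\vx$ so that the squared Euclidean norms add exactly, which is what yields the clean Lipschitz constant $L_f$ (rather than $\sqrt{m}L_f$ or similar) for the full gradient.
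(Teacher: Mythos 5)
Your proposal is correct and follows essentially the same route as the paper, which simply invokes Lemma~\ref{cor:boundf} together with $\bar g\ge 0$ (and the explicit form of $g$ in~\eqref{eq:g} for condition (c)); note that Lemma~\ref{cor:boundf}(b) already asserts the $L_f$-Lipschitz continuity of $\nabla f_0$ for $i=0$ directly, so your block-separability computation is just an explicit justification of that case. No gaps.
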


The following lemma characterizes the joint condition number of $\bar{\vA}$ and $\vA$ defined in~\eqref{eq:joint_condition_number}, which will appear in our lower bound of oracle complexity.  

\begin{lemma}
\label{lem:condH}
Let $\vA$ and $\bar\vA$ be given in~\eqref{eq:AandAbar}. Then $\frac{m}{4} \le \kappa([\bar{\vA}; \vA])=\kappa(\vH)={\frac{\sin(\frac{(3m_1m_2-1)\pi}{6m_1m_2})}{\sin(\frac{\pi}{6m_1m_2})}}<m$.
\end{lemma}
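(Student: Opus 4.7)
The plan is to reduce the claim to a computation of the spectrum of the standard discrete Laplacian, which will take three steps.

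First, I would observe that $[\bar{\vA};\vA]$ is obtained from $\vH$ by a mere permutation of block rows. Indeed, by~\eqref{eq:AandAbar}, $\bar{\vA}=mL_f\,\vJ_{\cM}\otimes\vI_{\bar d}$ and $\vA=mL_f\,\vJ_{\cM^C}\otimes\vI_{\bar d}$ consist of exactly those block rows of $\vH=mL_f\,\vJ_m\otimes\vI_{\bar d}$ indexed by $\cM$ and $\cM^C$, and by~\eqref{eq:indexsetM} the sets $\cM$ and $\cM^C$ partition $\{1,\dots,m-1\}$. Consequently $[\bar{\vA};\vA][\bar{\vA};\vA]^\top$ is conjugate to $\vH\vH^\top$ by a permutation matrix, so the two matrices share the same spectrum and $\kappa([\bar{\vA};\vA])=\kappa(\vH)$.

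Second, I would use the Kronecker identity $\vH\vH^\top=(mL_f)^2(\vJ_m\vJ_m^\top)\otimes\vI_{\bar d}$ to reduce the problem to computing the spectrum of $\vJ_m\vJ_m^\top$. A direct computation from~\eqref{eq:matrixJ} shows that $\vJ_m\vJ_m^\top$ is the $(m-1)\times(m-1)$ symmetric tridiagonal matrix with diagonal entries $2$ and off-diagonal entries $-1$, whose eigenvalues are classically $\lambda_k=4\sin^2\!\big(k\pi/(2m)\big)$ for $k=1,\dots,m-1$. Taking the ratio of the largest ($k=m-1$) and smallest ($k=1$) singular values of $\vH$ yields
$$
\kappa(\vH)=\frac{\sin\!\big((m-1)\pi/(2m)\big)}{\sin\!\big(\pi/(2m)\big)},
$$
and substituting $m=3m_1m_2$ produces the stated closed form.

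Third, for the bounds $m/4\le\kappa(\vH)<m$, I would use $\sin\!\big((m-1)\pi/(2m)\big)=\cos\!\big(\pi/(2m)\big)$ to rewrite $\kappa(\vH)=\cot\!\big(\pi/(2m)\big)$. The strict upper bound follows from $\tan x>x$ on $(0,\pi/2)$, so $\cot\!\big(\pi/(2m)\big)<2m/\pi<m$. For the lower bound, $\sin x\le x$ gives $\cot\!\big(\pi/(2m)\big)\ge(2m/\pi)\cos\!\big(\pi/(2m)\big)$; since $m_1\ge2$ and $m_2\ge1$ imply $m\ge6$, we have $\pi/(2m)\le\pi/12$ and hence $\cos\!\big(\pi/(2m)\big)\ge\cos(\pi/12)>\tfrac{\pi}{8}$, which gives $\kappa(\vH)>m/4$.

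All steps are routine; the only mildly non-trivial ingredient is recognizing the row-permutation reduction in the first step, after which the rest is a textbook eigenvalue computation for the discrete Laplacian and two elementary trigonometric estimates.
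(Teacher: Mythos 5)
Your proposal is correct and follows essentially the same route as the paper: reduce $[\bar{\vA};\vA]$ to $\vH$ by a block-row permutation, compute the eigenvalues $4\sin^2(k\pi/(2m))$ of the tridiagonal Toeplitz matrix $\vJ_m\vJ_m^\top$, and then bound the resulting sine ratio. The only cosmetic difference is in the final estimates, where you rewrite the ratio as $\cot(\pi/(2m))$ and use $\tan x>x$ and $\sin x\le x$ together with $\cos(\pi/12)>\pi/8$, while the paper bounds the numerator and denominator separately via $\sin z\le 1$, $\sin z\ge 2z/3$ on $[0,\pi/12]$, and $\sin z\ge z/2$ on $[0,\pi/2]$; both are valid elementary arguments.
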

\begin{proof}
The first equality holds because $\vH$ are split into $\bar{\vA}$ and $\vA$ in rows. 
	Let 
	\begin{equation*}
		\begin{aligned}
			\bar{\vH}=mL_f\left[\begin{array}{cccc}
				-1 & 1 &\\
				& \ddots & \ddots &  \\
				&  &-1 & 1 \\
			\end{array}\right]\in\RR^{(m-1)\times m}.
		\end{aligned}
	\end{equation*}
	We then have $\vH \vH\zz=(\bar{\vH}\otimes \vI_{\bar{d}})(\bar{\vH}\otimes \vI_{\bar{d}})\zz=\left(\bar{\vH}\bar{\vH}\zz\right) \otimes\left( \vI_{\bar{d}} \vI_{\bar{d}}^{\top}\right)=\left(\bar{\vH}\bar{\vH}\zz\right) \otimes\vI_{\bar{d}} $. Let $\lambda_i(\bar{\vH}\bar{\vH}\zz)$ be the $i$-th largest eigenvalue of $\bar{\vH}\bar{\vH}\zz$. 
	Since $\bar{\vH}\bar{\vH}\zz$ is tridiagonal and Toeplitz, its eigenvalues have closed forms~\cite{gray2006toeplitz}:  
	$$\textstyle \lambda_i(\bar{\vH}\bar{\vH}\zz)= 4m^2L_f^2\sin^2\left(\frac{i\pi}{6m_1m_2}\right), \forall\, i=1,2,\ldots, m-1.$$ It then yields that $\kappa([\bar{\vA}; \vA])=\kappa(\vH )=\kappa(\bar{\vH})= {\frac{\sin(\frac{(3m_1m_2-1)\pi}{6m_1m_2})}{\sin(\frac{\pi}{6m_1m_2})}}$. Because  $\sin(z) \le 1, \forall\, z$, $\sin(z)\geq \frac{2z}{3}$ for $z\in[0,\pi/12]$, and $m_1m_2\geq 2$, we have 
		\begin{equation*}
	{\frac{\sin(\frac{(3m_1m_2-1)\pi}{6m_1m_2})}{\sin(\frac{\pi}{6m_1m_2})}}\leq\frac{1}{\frac{\pi}{9m_1m_2}}=\frac{3m}{\pi}<m.
	\end{equation*}
Also,	because  $z\geq\sin(z)\geq \frac{z}{2}$ for $z\in[0,\pi/2]$ and $m_1m_2\geq 2$, we have 
	\begin{equation*}
{\frac{\sin(\frac{(3m_1m_2-1)\pi}{6m_1m_2})}{\sin(\frac{\pi}{6m_1m_2})}}\geq	\frac{\frac{(m-1)\pi}{4m}}{\frac{\pi}{2m}}=\frac{m-1}{2}\geq \frac{m}{4}.
	\end{equation*}
Hence, we have obtained all desired results and complete the proof.	
\end{proof}

		\subsection{An Auxiliary Problem and Its Properties}

	To establish a lower bound of the oracle complexity for solving \eqref{eq:model} under Assumptions~\ref{assume:problemsetup} and \ref{ass:linearspan}, we consider an auxiliary problem of instance~$\mathcal{P}$ in this subsection and analyze its properties. The Auxiliary Problem is given as follows:
	\begin{equation}
		\label{eq:model3}
		\begin{aligned}
			\min_{ \vx\in\RR^d} \,\,&   f_0(\vx), 
			\,\,\st  \vH \vx =  \mathbf{0},
		\end{aligned}
		\tag{AP}
	\end{equation}
	where $\vH$ and $f_0$  are defined in~\eqref{eq:f0} and~\eqref{eq:matrixAstar}, respectively. An $\epsilon$-stationary point $\vx^*$ of problem~\eqref{eq:model3} satisfies
	\begin{equation}
		\label{eq:KKTviomodel2}
		\max\left\{\left\|\vH\vx^*\right\|, \min _{\vgamma'\in\RR^{(m-1){\bar{d}}}}\left\|\nabla  f_0(\vx^*)+\vH^{\top} \vgamma'\right\|\right\}\leq \epsilon.
	\end{equation}
	The next lemma characterizes the relationship between the (near-)stationary points of  instance $\cP$ and the auxiliary problem~\eqref{eq:model3}. The proof techniques  have been utilized in~\cite{liu2022linearly,liu2022inexact}.
		\begin{lemma}\label{cor:kktvio2}
Let $\epsilon >0$ be given in Definition~\ref{def:hardinstance} for instance $\cP$. Then for any $\widehat\epsilon \in [0, \epsilon]$, an $\widehat\epsilon$-stationary point of instance~$\mathcal{P}$ is also an $\widehat\epsilon$-stationary point of  the auxiliary problem~\eqref{eq:model3}. 
\end{lemma}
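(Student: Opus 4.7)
The plan is to derive the two requirements of auxiliary-problem stationarity~\eqref{eq:KKTviomodel2} separately, starting from the data obtained by unpacking the $\cP$-stationarity condition of $\vx^*$. By Assumption~\ref{assume:problemsetup}(c) and the chain rule for convex composite functions, $\partial g(\vx^*)=\bar\vA^\top\partial\bar g(\bar\vA\vx^*)$, so the $\cP$-stationarity will yield explicit multipliers $\vgamma\in\RR^n$, $\vu\in\partial\bar g(\bar\vA\vx^*)$ (hence $\|\vu\|_\infty\leq\beta/(mL_f)$, with equality in each coordinate where $\bar\vA\vx^*$ is nonzero), and a residual $\vr$ with $\|\vr\|\leq\widehat\epsilon$ such that $\nabla f_0(\vx^*)+\vA^\top\vgamma+\bar\vA^\top\vu=\vr$ and $\|\vA\vx^*\|\leq\widehat\epsilon$.

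For the first (AP)-stationarity inequality I will exploit the fact that, up to a block-row permutation, $\vH$ is the concatenation of $\bar\vA$ (rows indexed by $\mathcal{M}$) and $\vA$ (rows indexed by $\mathcal{M}^C$). I will then define $\vgamma'\in\RR^{(m-1)\bar d}$ block-by-block, placing the block $\vu^{(\ell)}$ at position $\ell\in\mathcal{M}$ and the block $\vgamma^{(\ell)}$ at position $\ell\in\mathcal{M}^C$; a direct computation will give $\vH^\top\vgamma'=\bar\vA^\top\vu+\vA^\top\vgamma$, whence $\|\nabla f_0(\vx^*)+\vH^\top\vgamma'\|=\|\vr\|\leq\widehat\epsilon$.

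The condition $\|\vH\vx^*\|\leq\widehat\epsilon$ will be the main obstacle. Since $\|\vH\vx^*\|^2=\|\bar\vA\vx^*\|^2+\|\vA\vx^*\|^2$ and $\|\vA\vx^*\|\leq\widehat\epsilon$ is already in hand, it suffices to prove $\bar\vA\vx^*=\vzero$, which I will establish by contradiction, exploiting the choice of $\beta$ in~\eqref{eq:betachoice}. Suppose $[\bar\vA\vx^*]_{(k,j)}\neq 0$ for some $k\in\mathcal{M}$ and some $j\in\{1,\ldots,\bar d\}$; then $|[\vu^{(k)}]_j|=\beta/(mL_f)$. Writing the identity $\nabla f_0(\vx^*)+\vA^\top\vgamma+\bar\vA^\top\vu=\vr$ block-by-block and summing from $i=1$ to $i=k$, the $\vgamma$-differences will telescope to $\vgamma^{(0)}-\vgamma^{(k)}$, which vanishes (using the natural convention $\vgamma^{(\ell)}=\vzero$ for $\ell\notin\mathcal{M}^C$ and $\vu^{(\ell)}=\vzero$ for $\ell\notin\mathcal{M}$, together with $0\notin\mathcal{M}^C$ and $k\in\mathcal{M}$), while the $\vu$-differences will telescope to $-\vu^{(k)}$. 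Extracting the $j$-th coordinate produces $mL_f[\vu^{(k)}]_j=\sum_{i=1}^k[\nabla f_i(\vx_i^*)]_j-\sum_{i=1}^k[\vr_i]_j$.

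To close the argument I will bound each term on the right. From~\eqref{eq:partialhmax} one has $|[\nabla h_i]_j|<25\pi$, which via the scaling in~\eqref{eq:grad-f-i-chain} gives $|[\nabla f_i(\cdot)]_j|\leq 50\pi\epsilon/\sqrt{m}$; summing over $i\leq k\leq m$ then yields $|\sum_{i=1}^k[\nabla f_i(\vx_i^*)]_j|\leq 50\pi\sqrt{m}\,\epsilon$. Cauchy--Schwarz on the residuals gives $|\sum_{i=1}^k[\vr_i]_j|\leq\sqrt{k}\,\|\vr\|\leq\sqrt{m}\,\widehat\epsilon\leq\sqrt{m}\,\epsilon$. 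Combining, $\beta=mL_f|[\vu^{(k)}]_j|\leq(50\pi+1)\sqrt{m}\,\epsilon$, contradicting $\beta>(50\pi+1+\|\vA\|)\sqrt{m}\,\epsilon$ from~\eqref{eq:betachoice}. Therefore $\bar\vA\vx^*=\vzero$, so $\|\vH\vx^*\|=\|\vA\vx^*\|\leq\widehat\epsilon$, and both (AP)-stationarity conditions hold. The delicate point will be aligning the coordinate-wise partial-derivative estimate with the precise $\sqrt{m}\,\epsilon$ scaling of $\beta$; the rest is bookkeeping of the block and index structure of $\bar\vA$, $\vA$, and $\vH$.
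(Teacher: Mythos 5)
Your proof is correct, and the part that matters -- establishing $\bar\vA\vx^*=\vzero$ -- proceeds by a genuinely different mechanism than the paper's. The paper argues variationally: it builds a direction $\vv^*\in\mathbf{Null}(\vA)$ (constant blocks up to the offending index $\bar i\in\mathcal{M}$, zero afterwards) along which $g$ decreases at rate $\beta\|\vx^*_{\bar i+1}-\vx^*_{\bar i}\|_1$ while $f_0$ can increase at rate at most $50\pi\epsilon\sqrt{m}\|\vx^*_{\bar i+1}-\vx^*_{\bar i}\|_1$, so the directional derivative $F_0'(\vx^*;\vv^*)$ is too negative to be consistent with approximate stationarity, given \eqref{eq:betachoice}. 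You instead exploit the exact chain rule $\partial g(\vx^*)=\bar\vA^\top\partial\bar g(\bar\vA\vx^*)$ to pin the multiplier block to $|[\vu^{(k)}]_j|=\beta/(mL_f)$ at any nonzero coordinate of $\bar\vA\vx^*$, and then telescope the blockwise identity $\nabla f_i(\vx_i^*)+mL_f(\vgamma^{(i-1)}+\vu^{(i-1)}-\vgamma^{(i)}-\vu^{(i)})=\vr_i$ over $i=1,\dots,k$; the boundary terms vanish precisely because $0\notin\mathcal{M}^C$ and $k\in\mathcal{M}$, leaving $mL_f[\vu^{(k)}]_j=\sum_{i\le k}[\nabla f_i(\vx_i^*)]_j-\sum_{i\le k}[\vr_i]_j$, and the bounds $\|\nabla f_i\|_\infty\le 50\pi\epsilon/\sqrt{m}$ from \eqref{eq:fi_infinitynorm} and $|\sum_{i\le k}[\vr_i]_j|\le\sqrt{m}\,\widehat\epsilon$ give $\beta\le(50\pi+1)\sqrt{m}\,\epsilon$, contradicting \eqref{eq:betachoice}. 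Your version is more elementary (no directional derivatives, no mean-value theorem) and makes transparent exactly which structural features are used (the chain structure of $\vJ_m$ and the $\ell_\infty$ bound on the gradient blocks); its cost is that it leans on the exact subdifferential chain rule and the explicit $\ell_1$ subdifferential, whereas the paper's argument only needs $\vxi=\bar\vA^\top\vu$ for \emph{some} $\vu$ and would adapt more readily to other choices of $\bar g$. Your handling of the multiplier rearrangement $\vH^\top\vgamma'=\bar\vA^\top\vu+\vA^\top\vgamma$ for the gradient condition is the same as the paper's (implicit) step.
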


\begin{proof}
			Suppose $\vx^*$ is an $\widehat\epsilon$-stationary point of instance~$\mathcal{P}$, i.e., 
			 for some $\vgamma\in\RR^n$ and $\vxi\in\partial g(\vx^*)$ such that 
			\begin{equation}
				\label{eq:KKTe}
				\|\nabla f_0(\vx^*)+ \vxi+\vA\zz \vgamma\|\leq  \widehat\epsilon \quad\text{ and }\quad\|\vA\vx^*\|\leq \widehat\epsilon.
			\end{equation}
			By the definitions of $g$ in \eqref{eq:gbar} and~\eqref{eq:g}, there exists $\vu\in\mathbb{R}^{\bar{n}}$ such that $\vxi=\bar\vA^{\top}\vu$ and thus the first condition in~\eqref{eq:KKTe} becomes
			\begin{equation}
				\label{eq:KKTHe}
				\|\nabla f_0(\vx^*)+ \bar\vA^{\top}\vu+\vA\zz \vgamma \|\leq  \widehat\epsilon.
			\end{equation}

Hence, in order to show that $\vx^*$ is an $\widehat\epsilon$-stationary point of  problem~\eqref{eq:model3}, we only need to prove $\|\vH\vx^*\|\leq\widehat\epsilon$. To show this, we first notice from the definition in~\eqref{eq:def-dir-der} that, for any $\vv\in\mathbf{Null}(\vA)$, 
\begin{equation}
	\label{eq:dderivegeq0epsilon}
	\begin{aligned}
		F_0^{\prime}(\vx^*;\vv)= \vv\zz\nabla  f_0(\vx^*)+ g^{\prime}(\vx^*;\vv) \geq \vv\zz\nabla  f_0(\vx^*) +\vv\zz \vxi =\vv\zz\left(\nabla f_0(\vx^*)+ \vxi+\vA\zz \vgamma\right)\geq -\widehat\epsilon\|\vv\|,
	\end{aligned}
\end{equation}
where the first inequality follows from~\cite{clarke1990optimization} 
\begin{equation}\label{eq:result-dir-der}
g^{\prime}(\vx;\vv)=\sup_{\vxi'\in\partial g(\vx)}\vv\zz \vxi', \forall\, \vx\in \dom(g), \forall\, \vv,
\end{equation} 
and the second inequality is by~\eqref{eq:KKTe} and the Cauchy-Schwarz inequality.


Second, we claim $\bar\vA\vx^*=\mathbf{0}$, namely,  $\vx^*_{i}=\vx^*_{i+1}$ for all $i\in\mathcal{M}$, where $\mathcal{M}$ is defined in~\eqref{eq:indexsetM}. Suppose this claim is not true. Then for some $\bar{i}\in \mathcal{M}$, it holds $\vx^*_{\bar i}\neq\vx^*_{\bar i+1}$. 
We let 
\begin{equation}\label{eq:set-delta}
\bar{\vxi}:=\vx^*_{\bar{i}+1}-\vx^*_{\bar{i}}\neq \mathbf{0},
\end{equation}
and $\vv^*=(\vv_1^{*\top},\vv_2^{*\top},\dots,\vv_m^{*\top})^\top$ where each $\vv_i^*\in\mathbb{R}^{\bar{d}}$ is defined as
$$
\vv_i^*= \bar{\vxi}, \text{ if }i\leq \bar{i}, \text{ and } \vv_i^*= \mathbf{0},	\text{ if }i>\bar{i}. 
$$
It is easy to see that $\vv^*_i=\vv^*_{i+1}$ for any $i\neq \bar{i}$. Thus by Proposition~\ref{prop:consensus}(b), $\vA\vv^*=\mathbf{0}$ and, for any $s\in(0,1)$,
\begin{align}\label{eq:gtvchange}
	\nonumber
	 g\left(\vx^*+s \vv^*\right) 
	= & \beta  \sum_{i<\bar{i}}\left\|\vx^*_i+s\bar{\vxi}-\vx^*_{i+1}-s\bar{\vxi}\right\|_1+\beta \|\vx^*_{\bar{i}}+s\bar{\vxi}-\vx^*_{\bar{i}+1}\|_1
	+\beta \sum_{i>\bar{i}}\left\|\vx^*_i-\vx^*_{i+1}\right\|_1 \\	
	= & \beta  \sum_{i<\bar{i}}\left\|\vx^*_i-\vx^*_{i+1}\right\|_1+\beta (1-s)\|\vx^*_{\bar{i}}-\vx^*_{\bar{i}+1}\|_1
	+\beta \sum_{i>\bar{i}}\left\|\vx^*_i-\vx^*_{i+1}\right\|_1 
	= g\left(\vx^*\right)-s\beta\|\bar{\vxi}\|_1, 
\end{align}
where the first and last equalities follow from \eqref{eq:g} and the definition of $\vv^*$, and the second one is by~\eqref{eq:set-delta} and $s\in(0,1)$.
In addition, from \eqref{eq:fi} and \eqref{eq:partialhmax}, we have that, for any $\vz\in\mathbb{R}^{\bar d}$,  
\begin{equation}
	\label{eq:fi_infinitynorm}
\|\nabla f_i(\vz)\|_\infty=\frac{2 \epsilon}{\sqrt{m}} \left\|\nabla h_i\left(\frac{\sqrt{m}L_f \vz }{150\pi\epsilon}\right)\right\|_\infty
\leq \frac{50\pi\epsilon}{\sqrt{m}}.
\end{equation}
Moreover, by the definition of $\vv^*_i$ for $i=1,\dots,m$, we have 
$$
f_i(\vx^*_i+s \vv^*_i)- f_i(\vx^*_i)=s\nabla f_i(\vx^*_i+s' \vv^*_i)^\top\vv^*_i\leq s\|\nabla f_i(\vx^*_i+s' \vv^*_i)\|_\infty\|\vv^*_i\|_1 \overset{\eqref{eq:fi_infinitynorm}}\leq \frac{50s\pi\epsilon}{\sqrt{m}}\|\bar{\vxi}\|_1,
$$
where the equality holds from the mean value theorem for some $s'\in(0,s)$. The inequality above, together with \eqref{eq:f0}  and \eqref{eq:gtvchange}, implies
\begin{equation*}
	\begin{aligned}
		&\frac{1}{s}\bigg(F_0(\vx^*+s \vv^*)-F_0(\vx^*)\bigg)=\frac{1}{s}\bigg( f_0(\vx^*+s \vv^*)- f_0(\vx^*)+g(\vx^*+s \vv^*)-g(\vx^*)\bigg)\\
		=&\frac{1}{s}\bigg( \sum_{i=1}^m\big(f_i(\vx_i^*+s \vv_i^*)- f_i(\vx_i^*)\big)+g(\vx^*+s \vv^*)-g(\vx^*)\bigg)\\
		\leq& \frac{1}{s}\left(50\pi s\epsilon\sqrt{m}\|\bar{\vxi}\|_1-\beta s\|\bar{\vxi}\|_1\right)= \left(50\pi \epsilon\sqrt{m}-\beta \right)\|\bar{\vxi}\|_1.
	\end{aligned}
\end{equation*}
Taking the limit of the left-hand side of the inequality above as $s$ approaching zero, we have $$\left(50\pi \epsilon\sqrt{m}-\beta \right)\|\bar{\vxi}\|_1\geq F_0^{\prime}(\vx^*;\vv).$$ Thus by \eqref{eq:dderivegeq0epsilon} and the choice of $\vv^*$, we have 
$$
\left(50\pi \epsilon\sqrt{m}-\beta \right)\|\bar{\vxi}\|_1\geq -\widehat\epsilon\|\vv^*\|\geq -\widehat\epsilon\sqrt{\bar{i}}\|\bar{\vxi}\|
\geq -\widehat\epsilon\sqrt{m}\|\bar{\vxi}\|_1.
$$
This leads to a contradiction as $\beta > (50\pi+1)\epsilon\sqrt{m}$ from \eqref{eq:betachoice} and $\widehat\epsilon \le \epsilon$.  Therefore, the claim $\bar\vA\vx^*=\mathbf{0}$ is true. Thus the second condition in~\eqref{eq:KKTe} indicates $\|\vH\vx^*\|\leq\widehat\epsilon$, and we complete the proof. 
\end{proof}

By Lemma~\ref{cor:kktvio2}, if $\vx^*$ is not an $\epsilon$-stationary point of the auxiliary problem~\eqref{eq:model3}, it cannot be an $\epsilon$-stationary point of instance~$\mathcal{P}$. In other words, the number of oracles needed to find an $\epsilon$-stationary point of~$\mathcal{P}$ is at least the number of oracles needed to find an $\epsilon$-stationary point of problem~\eqref{eq:model3}. 
Note that the auxiliary problem~\eqref{eq:model3} of instance~$\mathcal{P}$ is the worst-case instance used in~\cite{sun2019distributed} to establish the lower-bound complexity for affinely constrained smooth optimization. In fact, according to~\cite{sun2019distributed}, any algorithm that can access $\nabla f_0$ and matrix-vector multiplication with $\vH$ and $\vH^\top$ at any historical solutions needs at least  $\Theta({\kappa(\vH) L_f \Delta_{f_0}} \epsilon^{-2})$ oracles to find an $\epsilon$-stationary point of \eqref{eq:model3}, where $\Delta_{f_0}:= f_0(\vx^{(0)}) - \inf_\vx f_0(\vx)$. However, we cannot directly apply the lower bound here because the problem~\eqref{eq:model} that we consider has both affine constraints and a non-smooth term, and our algorithms cannot apply $\vH$ and $\vH^\top$ for matrix-vector multiplications but instead can use $\vA$ and $\vA^{\top}$ as well as the proximal mapping of $g$ (see Assumption~\ref{ass:linearspan}). Next, we show that any algorithm under Assumption~\ref{ass:linearspan} also needs at least $\Theta({\kappa(\vH) L_f \Delta_{f_0}} \epsilon^{-2})$ oracles to find an $\epsilon$-stationary point of problem~\eqref{eq:model3}.

To do so, we need the following lemma, which is a direct consequence of~\cite[Lemma 3.3]{sun2019distributed} and the proof is provided for the sake of completeness. 

\begin{lemma}
	\label{lem:nablaf}
	Let $\{f_i\}_{i=1}^m$ be defined in~\eqref{eq:fi} with $\epsilon>0$. For any $\vz\in\RR^{\bar{d}}$, if $|[\vz]_{\bar{j}}|<\frac{150\pi\epsilon}{\sqrt{m} L_f}$ for some $\bar{j} \in \{1,2,\ldots,{\bar{d}}\}$, then 
	$
	\left\|\frac{1}{m}\sum_{i=1}^m\nabla f_i(\vz)\right\| > \frac{2 \epsilon}{\sqrt{m}}.
	$
\end{lemma}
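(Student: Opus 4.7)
The plan is to reduce the vector-valued averaged gradient to the scalar chain function that appears in the cited Sun--Hong / Carmon--Duchi--Hinder--Sidford construction, and then invoke the zero-chain robustness lemma. By the chain rule identity~\eqref{eq:grad-f-i-chain}, one has
\begin{equation*}
\frac{1}{m}\sum_{i=1}^m \nabla f_i(\vz) \;=\; \frac{2\epsilon}{\sqrt{m}} \cdot \nabla\!\left(\frac{1}{m}\sum_{i=1}^m h_i\right)\!(\vy), \qquad \vy := \frac{\sqrt{m}\,L_f\,\vz}{150\pi\epsilon},
\end{equation*}
and the hypothesis $|[\vz]_{\bar j}| < \frac{150\pi\epsilon}{\sqrt{m}L_f}$ is precisely $|[\vy]_{\bar j}| < 1$. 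It therefore suffices to prove $\|\nabla \tilde h(\vy)\| > 1$, where $\tilde h := \frac{1}{m}\sum_{i=1}^m h_i$.

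First I would do the combinatorial bookkeeping to identify $\tilde h$ explicitly. Each of the three index blocks in~\eqref{eq:hi}, namely $[1,m/3]$, $[m/3+1,2m/3]$, and $[2m/3+1,m]$, has cardinality $m/3$ (since $m=3m_1m_2$). Summing $h_i$ over $i$ and dividing by $m$, the $\varphi(\vy,1)$ term appears in all three blocks with total weight $3\cdot(1/3) = 1$; the first block contributes an additional $(1/3)\cdot 3 \cdot \varphi(\vy,2j)$ for $j=1,\dots,\lfloor\bar d/2\rfloor$, i.e.\ indices $2,4,\dots,\bar d{-}1$; and the third block contributes $(1/3)\cdot 3 \cdot \varphi(\vy,2j{+}1)$ for $j=1,\dots,\lfloor\bar d/2\rfloor$, i.e.\ indices $3,5,\dots,\bar d$ (using that $\bar d$ is odd, so $\lfloor\bar d/2\rfloor = (\bar d-1)/2$). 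These three ranges tile $\{1,2,\dots,\bar d\}$ with multiplicity one, yielding
\begin{equation*}
\tilde h(\vy) \;=\; \sum_{j=1}^{\bar d} \varphi(\vy,j).
\end{equation*}

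This is exactly the scalar zero-chain function from~\cite[Lemma 3.3]{sun2019distributed} (originally Carmon--Duchi--Hinder--Sidford), whose content is precisely that if $|[\vy]_{\bar j}| < 1$ for some $\bar j$, then $\|\nabla \tilde h(\vy)\| > 1$. Applying that lemma and combining with the scaling above gives
\begin{equation*}
\left\|\frac{1}{m}\sum_{i=1}^m \nabla f_i(\vz)\right\| \;=\; \frac{2\epsilon}{\sqrt m}\,\|\nabla \tilde h(\vy)\| \;>\; \frac{2\epsilon}{\sqrt m},
\end{equation*}
which is the desired bound.

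The only nontrivial step is the tiling argument; in particular one must verify that the parities in \eqref{eq:hi} are arranged so that the even-indexed $\varphi(\vy, 2j)$ from block~1 and the odd-indexed $\varphi(\vy, 2j{+}1)$ from block~3 together cover exactly $\{2,3,\dots,\bar d\}$, which is where the hypothesis that $\bar d$ is odd and $m$ is divisible by $3$ (guaranteed by Definition~\ref{def:hardinstance}) is used. Once this algebra is in hand, the claim reduces in one line to the cited scalar zero-chain lemma.
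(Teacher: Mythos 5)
Your proposal is correct, and the tiling computation is right: since each of the three blocks in \eqref{eq:hi} has exactly $m/3$ indices and $\bar d$ is odd, the average $\tilde h=\frac{1}{m}\sum_{i=1}^m h_i$ is exactly $\sum_{j=1}^{\bar d}\varphi(\cdot,j)$, i.e.\ the Carmon--Duchi--Hinder--Sidford chain function, and the rescaling via \eqref{eq:grad-f-i-chain} converts the hypothesis into $|[\vy]_{\bar j}|<1$ as you say. The difference from the paper is one of packaging rather than substance: instead of citing the scalar robust-zero-chain property as a black box, the paper inlines its proof, picking out a single coordinate of $\frac{1}{m}\sum_i\nabla h_i(\bar\vz)$ and bounding it below by $1$ in absolute value via a two-case analysis (either $|[\bar\vz]_1|<1$, in which case the first coordinate of the averaged gradient is $\le-\Psi(1)\Phi'([\bar\vz]_1)<-1$; or there is a transition index $j\in\{2,\dots,\bar j\}$ with $|[\bar\vz]_{j-1}|\ge 1>|[\bar\vz]_j|$, in which case the $j$-th coordinate is $\le-\Psi(|[\bar\vz]_{j-1}|)\Phi'([\bar\vz]_j)<-1$), using the explicit partial derivatives \eqref{eq:deriveh3}--\eqref{eq:deriveh2} and Lemma~\ref{lem:functions}(a,b). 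Your route buys brevity and makes transparent why the paper can call the lemma ``a direct consequence'' of the cited result; the paper's route buys self-containedness, which is why it reproves the inequality ``for the sake of completeness.'' If you keep the citation-based version, you should at least record the two-case coordinate argument (or verify that the cited lemma is stated for an arbitrary coordinate $\bar j$ with $|[\vy]_{\bar j}|<1$, not only for the first unachieved chain coordinate), since that case split is the only analytic content of the lemma.
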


\begin{proof} 	Let $\bar{\vz}=\frac{\sqrt{m} L_f \vz}{150\pi\epsilon}$. We first consider the  case where  $|[\vz]_j|<\frac{150\pi\epsilon}{\sqrt{m} L_f}$ for all $j=1,2,\ldots,\bar{j}$. In this case,  $| [\bar \vz]_1|=\frac{\sqrt{m} L_f |[\vz]_1|}{150\pi\epsilon} <1$. By \eqref{eq:fi}, we have 
	\begin{equation}
		\label{eq:nablaf1}
		\begin{aligned}
			\left\|\frac{1}{m}\sum_{i=1}^m\nabla f_i(\vz)\right\|  \geq \left| \frac{1}{m}\sum_{i=1}^m\left[\nabla f_i(\vz)\right]_1\right| = \left| \frac{2\epsilon}{m\sqrt{m}}\sum_{i=1}^m \left[\nabla {h}_i\left(\bar{\vz}\right)\right]_1\right|.
		\end{aligned}
	\end{equation}
	In addition, according to \eqref{eq:deriveh3}, we have
	\begin{equation}
		\label{eq:nablaf2}
		\begin{aligned}
			\frac{1}{m}\sum_{i=1}^m \left[\nabla {h}_i\left(\bar{\vz}\right)\right]_1 &=-\Psi(1) \Phi^{\prime}([\bar{\vz}]_1)+ \left[-\Psi^{\prime}(-[\bar \vz]_1) \Phi(-[\bar \vz]_2)-\Psi^{\prime}([\bar \vz]_1) \Phi([\bar \vz]_2)\right] \leq-\Psi(1) \Phi'([\bar{\vz}]_1)< -1,
		\end{aligned}
	\end{equation}
	where the first inequality comes from the non-negativity of $\Psi^{\prime}$ and $\Phi$ by Lemma~\ref{lem:functions}(a), and the second inequality is  by  Lemma~\ref{lem:functions}(b) and $|\bar \vz_1|<1$. Combing~\eqref{eq:nablaf1} and~\eqref{eq:nablaf2}
	yields the desired inequality.
	
	Second, we consider the case where there exists $j\in\{2,\ldots,\bar{j}\}$ such that $|[\vz]_j| <\frac{150\pi\epsilon}{\sqrt{m} L_f} \leq |[\vz]_{j-1}|$. In this case,  $|[\bar \vz]_j| <1 \leq |[\bar  \vz]_{j-1}|$. By~\eqref{eq:fi} again, we have 
	\begin{equation}
		\label{eq:nablaf1new}
		\left\|\frac{1}{m}\sum_{i=1}^m\nabla f_i(\vz)\right\|  \geq \left| \frac{1}{m}\sum_{i=1}^m\left[\nabla f_i(\vz)\right]_j\right| = \left| \frac{2\epsilon}{m\sqrt{m}}\sum_{i=1}^m \left[\nabla {h}_i\left(\bar{\vz}\right)\right]_j\right|.
	\end{equation}		
	According to \eqref{eq:deriveh1} and~\eqref{eq:deriveh2}, we have
	\begin{equation}
		\label{eq:nablaf3}
		\begin{aligned}
			\frac{1}{m}\sum_{i=1}^m \left[\nabla {h}_i\left(\bar{\vz}\right)\right]_j =& -\Psi(-[\bar{\vz}]_{j-1}) \Phi^{\prime}(-[\bar{\vz}]_{j})-\Psi([\bar{\vz}]_{j-1}) \Phi^{\prime}([\bar{\vz}]_{j})-\Psi^{\prime}(-[\bar{\vz}]_{j}) \Phi(-[\bar{\vz}]_{j+1})-\Psi^{\prime}([\bar{\vz}]_{j}) \Phi([\bar{\vz}]_{j+1})\\
			\leq &-\Psi(-[\bar{\vz}]_{j-1}) \Phi^{\prime}(-[\bar{\vz}]_{j})-\Psi([\bar{\vz}]_{j-1}) \Phi^{\prime}([\bar{\vz}]_{j}) = -\Psi(|[\bar{\vz}]_{j-1}|) \Phi^{\prime}([\bar{\vz}]_{j}) < -1,
		\end{aligned}
	\end{equation}
	where the first inequality comes from the nonnegativity of $\Psi^{\prime}$ and $\Phi$ by  Lemma~\ref{lem:functions}(a), the second equality holds by the fact that $\Phi^{\prime}(u)=\Phi^{\prime}(-u)$ and $\Psi(u)=0$ for all $u\leq 0$ from \eqref{eq:PsiPhi} and Lemma~\ref{lem:functions}(a), and the second inequality is  by  Lemma~\ref{lem:functions}(b) and the fact that $|[\bar{\vz}]_{j-1}|\geq 1$ and $|[\bar{\vz}]_{j}|<1$. Combining~\eqref{eq:nablaf1new} and~\eqref{eq:nablaf3} 	yields the desired inequality and completes the proof.		
\end{proof}

The following lemma provides a lower bound to the stationarity measure of a point $\vx$ as a solution to problem~\eqref{eq:model3}. 

\begin{lemma}
	\label{lem:kktvio}
	Let $\vx\in\mathbb{R}^d$ be given in~\eqref{eq:xblock}, $\vH$ in~\eqref{eq:matrixAstar}, and $\{f_i\}_{i=0}^m$ in~\eqref{eq:fi} and~\eqref{eq:f0}. 
	Then
	$$
	\max\left\{\left\|\vH\vx\right\|, \min _{\vgamma}\left\|\nabla  f_0(\vx)+\vH^{\top} \vgamma\right\|\right\} \geq \frac{\sqrt{m}}{2}\left\|\frac{1}{m}\sum_{i=1}^m\nabla f_i(\bar{\vx})\right\|, \text{ with } \bar{\vx}:= \frac{1}{m} \sum_{i=1}^m \vx_i.
	$$
\end{lemma}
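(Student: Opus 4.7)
My strategy is to control the two ingredients of the stationarity measure separately by exploiting the block structure of $\vH$ and $\nabla f_0$, and then combine them. Write $a:=\|\vH\vx\|$, $b:=\min_{\vgamma}\|\nabla f_0(\vx)+\vH^{\top}\vgamma\|$, and $T:=\sqrt{m}\bigl\|\tfrac{1}{m}\sum_{i=1}^m \nabla f_i(\bar\vx)\bigr\|$; the target is $\max\{a,b\}\ge T/2$.

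The core observation is that every row of $\vJ_m$ has entries summing to zero, so $\vone_m^{\top}\vJ_m^{\top}=\vzero^{\top}$, which means that the $m$ blocks of size $\bar d$ of $\vH^{\top}\vgamma$ always sum to $\vzero$. Combining this with $\nabla f_0(\vx)=(\nabla f_1(\vx_1)^{\top},\ldots,\nabla f_m(\vx_m)^{\top})^{\top}$ and the elementary inequality $\sum_{i=1}^m\|\vw_i\|^2\ge \tfrac{1}{m}\|\sum_i \vw_i\|^2$ applied to $\vw_i:=\nabla f_i(\vx_i)+(\vH^{\top}\vgamma)_i$ would yield the $\vgamma$-free lower bound $b\ge \sqrt{m}\bigl\|\tfrac{1}{m}\sum_i \nabla f_i(\vx_i)\bigr\|$. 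To pass from $\vx_i$ to $\bar\vx$ on the right-hand side, I would use the triangle inequality and the $L_f$-Lipschitz continuity of each $\nabla f_i$ (Lemma~\ref{cor:boundf}(b)), followed by Cauchy--Schwarz, to obtain $\bigl\|\tfrac{1}{m}\sum_i \nabla f_i(\vx_i)\bigr\|\ge \tfrac{T}{\sqrt{m}}-\tfrac{L_f}{\sqrt{m}}\sqrt{\sum_i \|\vx_i-\bar\vx\|^2}$. The error term is then controlled by a Poincar\'e-type inequality for the path-graph Laplacian $\vJ_m^{\top}\vJ_m$, whose smallest nonzero eigenvalue equals $4\sin^2(\pi/(2m))\ge 4/m^2$; this yields $\sum_i \|\vx_i-\bar\vx\|^2\le \tfrac{m^2}{4}\sum_{i=1}^{m-1}\|\vx_{i+1}-\vx_i\|^2=\tfrac{a^2}{4L_f^2}$ thanks to the definition of $\vH$ in \eqref{eq:matrixAstar}. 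Assembling these pieces produces the clean key inequality $b\ge T-a/2$.

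To conclude, if $b\ge T/2$ the claim $\max\{a,b\}\ge T/2$ is immediate; otherwise $b<T/2$ and the preceding inequality forces $a\ge 2(T-b)>T$, so $\max\{a,b\}=a\ge T\ge T/2$. Either way, $\max\{a,b\}\ge T/2$, which is exactly the asserted bound. The only delicate step is the Poincar\'e constant on the path: its value $m^2/4$ is precisely what is needed to cancel the $m^2L_f^2$ built into $\vH$, so that the error term in the triangle inequality collapses to $a/2$ with sharp constants; everything else is routine bookkeeping with block norms and triangle inequalities.
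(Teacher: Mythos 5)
Your proof is correct and follows essentially the same route as the paper: reduce the minimization over $\vgamma$ to the block-average of the gradients (the paper computes the null-space projection of $\nabla f_0(\vx)$ exactly, where you lower-bound it via Jensen and the fact that the blocks of $\vH^{\top}\vgamma$ sum to zero), couple $\nabla f_i(\vx_i)$ to $\nabla f_i(\bar{\vx})$ via Lipschitz continuity, and control $\sum_i\|\vx_i-\bar{\vx}\|^2$ by the consecutive differences encoded in $\|\vH\vx\|$. The only differences are in the bookkeeping --- you invoke the path-Laplacian Poincar\'e constant $m^2/4$ and close with the linear inequality $b\ge T-a/2$ plus a case split, whereas the paper uses a telescoping double-sum bound ($\sum_i\|\vx_i-\bar\vx\|^2\le m^2\sum_k\|\vx_k-\vx_{k+1}\|^2$) and the squared combination $a^2+b^2\ge T^2/2$ --- and all your constants check out.
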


\begin{proof}
	By simple calculation and the fact $\mathbf{Null}(\vH)=\left\{ \mathbf{1}_{m} \otimes \vu:\vu\in\RR^{\bar{d}}\right\}$, we obtain  		
		\begin{align}
			 \min_{\vgamma}\left\|\nabla  f_0(\vx)+\vH^{\top} \vgamma\right\|^2 
			= &\left\|\mathbf{Proj}_{\mathbf{Null}(\vH)}(\nabla  f_0(\vx))\right\|^2 =\frac{1}{m}\left\|\sum_{i=1}^m \nabla f_i\left(\vx_i\right)\right\|^2, \label{eq:kktvio1} \\
			\left\|\vH\vx\right\|^2=&m^2L_f^2\sum_{i=1}^{m-1} \left\|\vx_{i}-\vx_{i+1}\right\|^2. \label{eq:kktvio1-Hx}
		\end{align}
By the $L_f$-Lipschitz continuity of $\nabla  f_0$, 
we  have 
	\begin{align*}
		&\frac{1}{2}\left\|\frac{1}{m}\sum_{i=1}^m\nabla f_i(\bar{\vx})\right\|^2 - \left\|\frac{1}{m} \sum_{i=1}^m \nabla f_i\left(\vx_i\right)\right\|^2\leq \left\|\frac{1}{m} \sum_{i=1}^m\left(\nabla f_i(\bar{\vx})-\nabla f_i\left(\vx_i\right)\right)\right\|^2 \\
		\leq &\frac{1}{m} \sum_{i=1}^m\left\|\nabla f_i\left(\bar{\vx}\right)-\nabla f_i\left(\vx_i\right)\right\|^2
		\leq \frac{1}{m} \sum_{i=1}^m L_f^2\left\|\bar{\vx}-\vx_i\right\|^2 
		\stackrel{(a)}{\leq} \frac{L_f^2}{m^2} \sum_{i=1}^{m} \sum_{j=1}^m\left\|\vx_j-\vx_i\right\|^2\\
		\stackrel{(b)}{\leq} &\frac{L_f^2}{m^2} \sum_{i=1}^{m} \sum_{j=1}^m \left[|{j-i}|\sum_{k=\min\{i,j\}}^{\max\{i,j\}-1}\left\|\vx_k-\vx_{k+1}\right\|^2\right]\stackrel{(c)}{\leq} \frac{L_f^2}{m^2} \sum_{i=1}^{m} \sum_{j=1}^m m\sum_{k=1}^{m-1} \left\|\vx_{k}-\vx_{k+1}\right\|^2=   mL_f^2 \sum_{i=1}^{m-1} \left\|\vx_{i}-\vx_{i+1}\right\|^2, 
	\end{align*}
where (a) comes from $\|\bar{\vx}-\vx_i\|^2 = \frac{1}{m^2}\|\sum_{j=1}^m \vx_j- \vx_i\|^2\leq \frac{1}{m}  \sum_{j=1}^m \|\vx_j- \vx_i\|^2 $, (b) results from the fact that $\|\vx_j- \vx_i\|^2 \leq |j-i| \sum_{k=\min\{i,j\}}^{\max\{i,j\}-1}\left\|\vx_k-\vx_{k+1}\right\|^2$, (c) holds by $|j-i|<m$, $\max\{i,j\}\leq m$ and $\min\{i,j\}\ge 1$.
 
Hence, by~\eqref{eq:kktvio1} and~\eqref{eq:kktvio1-Hx} and the fact $a+b \le 2\max\{a,  b\}$ for any $a, b\in \RR$, we obtain the desired result from the inequality above and  complete the proof. 
\end{proof}

The previous two lemmas imply that if there exists $\bar{j}\in\{1,2,\ldots,{\bar{d}}\}$ such that $[\bar{\vx}]_{\bar{j}}=0$, where $\bar{\vx}~=~\frac{1}{m} \sum_{i=1}^m \vx_i$. Then $\vx$ cannot be an $\epsilon$-stationary point of the auxiliary problem~\eqref{eq:model3} of instance~$\mathcal{P}$. 

\subsection{Lower Bound of Oracle Complexity}

In this subsection, we provide a lower bound of the oracle complexity of FOMs for solving problem~\eqref{eq:model} under Assumptions~\ref{assume:problemsetup} and \ref{ass:linearspan}, by showing that 
a large number of oracles will be needed to find a (near) $\epsilon$-stationary point of instance~$\mathcal{P}$. 	

For any integer $t\ge0$, let 
\begin{equation}\label{eq:t-th-iter}
	\vx^{(t)}=(\vx_1^{(t)\top}, \ldots,\vx_m^{(t)\top})^\top \text{ with each }\vx_i^{(t)}\in\RR^{\bar d}, \text{ and } \bar{\vx}^{(t)}= \frac{1}{m} \sum_{i=1}^m \vx_i^{(t)}
\end{equation}
be the $t$-th iterate of an algorithm and the block average. 
Without loss of generality, we assume $\vx^{(0)}=\mathbf{0}$. Otherwise, we can change variable\footnote{This involves changing functions $f(\vx)$  to $f(\vx-\vx^{(0)})$, $g(\vx)$ to  $g(\vx-\vx^{(0)})$, and $\vA\vx=\vb$ to  $\vA(\vx-\vx^{(0)})=\vb$ in instance $\cP$. Then, the resulted instance becomes $\min_{ \vx\in\RR^d}  F_0( \vx):= f_0( \vx-\vx^{(0)}) + g( \vx-\vx^{(0)}), \ \st  \vA (\vx-\vx^{(0)}) +  \vb=\mathbf{0}$.} $\vx$ in instance~$\cP$ to $\vx-\vx^{(0)}$, and process the rest of the proof with this new resulting instance. 
Our lower bound will be established based on the fact that, if $t$ is not large enough, $\textnormal{supp}(\bar{\vx}^{(t)})\subset\{1,\dots,\bar{j}-1\}$ for some $\bar{j}\in\{1,2,\ldots,{\bar{d}}\}$. Hence, $[\bar{\vx}^{(t)}]_{\bar{j}}=0$ and $\vx^{(t)}$ cannot be a (near) $\epsilon$-stationary point due to Lemmas \ref{lem:nablaf} and \ref{lem:kktvio}.
According to Assumption~\ref{ass:linearspan},  $\textnormal{supp}(\bar{\vx}^{(t)})$ is influenced by $\textnormal{supp}(\nabla f_i(\vx_i^{(t)}))$, which is characterized by the following lemma.
					
\begin{lemma}
	\label{lem:iterateguess}
Let $\{f_i\}_{i=1}^m$ be defined in~\eqref{eq:fi}. Given any $\bar{j}\in\{1,\dots,\bar{d}\}$ and $\vz\in\RR^{\bar{d}}$ with $\textnormal{supp}(\vz)\subset\{1,\dots,\bar{j}-1\}$\footnote{When $\bar{j}=1$, this means $\textnormal{supp}(\vz)=\emptyset$ and $\vz=\mathbf{0}$.}, it holds that 
\begin{enumerate}
	\item   When $\bar{j}=1$, 
	$
	\textnormal{supp}(\nabla f_i(\vz))\subset\{1\}$, for any $i\in[1,m];
	$ 
	\item  When $\bar{j}$ is even, 
	$$
	\textnormal{supp}(\nabla f_i(\vz))\subset\left\{
	\begin{array}{ll}
	\{1,\dots,\bar{j}\}, & \text{ if }\ i\in\left[1, \frac{m}{3}\right],\\[0.1cm]
	\{1,\dots,\bar{j}-1\}, & \text{ if }\ i\in\left[\frac{m}{3}+1, m\right];
	\end{array}
	\right.
	$$
	\item  When $\bar{j}$ is odd and $\bar{j}\neq 1$, 
	$$
	\textnormal{supp}(\nabla f_i(\vz))\subset\left\{
	\begin{array}{ll}
		\{1,\dots,\bar{j}-1\}, & \text{ if }\ i\in\left[1, \frac{2m}{3}\right]\\[0.1cm]
		\{1,\dots,\bar{j}\}, & \text{ if }\ i\in\left[\frac{2m}{3}+1, m\right].
	\end{array}
	\right.
	$$
\end{enumerate}
\end{lemma}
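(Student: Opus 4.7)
By the chain rule applied to the definition \eqref{eq:fi}, we have $[\nabla f_i(\vz)]_k = \tfrac{2\epsilon}{\sqrt m}[\nabla h_i(\bar\vz)]_k$ where $\bar\vz := \tfrac{\sqrt m L_f}{150\pi\epsilon}\vz$. Since this scaling preserves support, it suffices to prove the three inclusions for $\textnormal{supp}(\nabla h_i(\bar\vz))$ under the hypothesis $\textnormal{supp}(\bar\vz)\subset\{1,\ldots,\bar j-1\}$, i.e.\ $[\bar\vz]_k=0$ for all $k\geq\bar j$. The plan is to read off each partial derivative from the closed-form formulas \eqref{eq:deriveh3}--\eqref{eq:deriveh2} and to use two simple facts about the building blocks from Lemma~\ref{lem:functions}(a): (i) $\Psi(u)=0$ and $\Psi'(u)=0$ for every $u\leq 0$, which together with continuity (and the expression $\Psi'(u)=2u\,e^{-u^2}$ for $u>0$) gives $\Psi(0)=\Psi'(0)=0$, so $\Psi(\pm 0)=\Psi'(\pm 0)=0$; and (ii) $\Phi,\Phi'$ are strictly positive on $\RR$, so they never cause cancellation but also never preclude a coordinate from being nonzero. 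The upshot is that a partial derivative $\partial h_i/\partial [\bar\vz]_k$ vanishes as soon as every additive term in the formula contains a factor $\Psi(\pm[\bar\vz]_{\ell})$ or $\Psi'(\pm[\bar\vz]_{\ell})$ with $[\bar\vz]_\ell=0$.

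With this principle in hand, I would verify each of the three claims by a short case analysis on $k$. For $\bar j=1$, $\bar\vz=\mathbf 0$; for $k\geq 2$, formula \eqref{eq:deriveh1} (if $k$ is even) involves only $\Psi(\pm[\bar\vz]_{k-1})$ or $\Psi'(\pm[\bar\vz]_k)$, and formula \eqref{eq:deriveh2} (if $k$ odd) involves only $\Psi'(\pm[\bar\vz]_k)$ or $\Psi(\pm[\bar\vz]_{k-1})$; both factors are zero, so $[\nabla h_i(\mathbf 0)]_k=0$, leaving only $k=1$ possibly nonzero. For $\bar j$ even and $i\in[1,m/3]$, I would check $k\geq \bar j+1$: if $k$ odd use \eqref{eq:deriveh2} and note $[\bar\vz]_k=0$ kills $\Psi'(\pm[\bar\vz]_k)$; if $k$ even use \eqref{eq:deriveh1} and note $[\bar\vz]_{k-1}=0$ (since $k-1\geq\bar j$) kills $\Psi(\pm[\bar\vz]_{k-1})$. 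At $k=\bar j$ itself (even) the formula involves $\Psi(\pm[\bar\vz]_{\bar j-1})$, which need not vanish, so $\bar j$ must be included in the support bound, matching the claim. For the same $\bar j$ and $i\in[m/3+1,2m/3]$ every partial other than the first is identically $0$; for $i\in[2m/3+1,m]$ at $k=\bar j$ formula \eqref{eq:deriveh1} contains the factor $\Psi'(\pm[\bar\vz]_{\bar j})=\Psi'(0)=0$, and for $k>\bar j$ either $[\bar\vz]_k=0$ or $[\bar\vz]_{k-1}=0$ again does the job. The case $\bar j$ odd, $\bar j\neq 1$ is symmetric: the roles of the $i\in[1,m/3]$ and $i\in[2m/3+1,m]$ branches are swapped by \eqref{eq:deriveh1} vs.\ \eqref{eq:deriveh2}, producing the asymmetric inclusion with the extra coordinate $\bar j$ appearing in the $[2m/3+1,m]$ range, exactly as stated.

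The argument is essentially bookkeeping; there is no substantive obstacle. The only point that requires a moment of care is checking the ``boundary'' index $k=\bar j$, where one must decide whether the partial derivative forces one to enlarge the support to $\{1,\ldots,\bar j\}$ or stays inside $\{1,\ldots,\bar j-1\}$. This outcome is dictated by whether the relevant additive term is of the form $\Psi(\pm[\bar\vz]_{\bar j-1})\Phi'(\pm[\bar\vz]_{\bar j})$ (where $[\bar\vz]_{\bar j-1}$ can be nonzero, so the derivative need not vanish) or of the form $\Psi'(\pm[\bar\vz]_{\bar j})\Phi(\pm[\bar\vz]_{\bar j+1})$ (where $\Psi'(0)=0$ forces vanishing). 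Tracking this distinction across the three ranges of $i$ and the parity of $\bar j$ yields precisely the asymmetry in claims (2) and (3) and completes the proof.
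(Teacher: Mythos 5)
Your proposal is correct and follows essentially the same route as the paper: reduce to $h_i$ via the chain rule, invoke $\Psi(0)=\Psi'(0)=0$ from Lemma~\ref{lem:functions}(a) so that all $\Psi$- and $\Psi'$-factors evaluated at coordinates $j\ge\bar j$ vanish, and then read off from \eqref{eq:deriveh3}--\eqref{eq:deriveh2} which partial derivatives survive at the boundary index $j=\bar j$. Your extra discussion of the boundary case is exactly the bookkeeping the paper performs implicitly when it lists the structure of $[\nabla f_i(\vz)]_j$ for $j\ge\bar j$.
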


\begin{proof}
	Denote $\bar \vz = \frac{\sqrt{m} L_f \vz}{150\pi\epsilon}$ and recall definition~\eqref{eq:grad-f-i-chain}.
 %
 	By Lemma~\ref{lem:functions}(a), we have 
 	\begin{eqnarray}
 		\label{eq:zeroPsi}
 		\Psi\left(-[\bar \vz]_j\right)=\Psi\left([\bar \vz]_j\right)=
 		\Psi'\left(-[\bar \vz]_j\right)=\Psi'\left([\bar \vz]_j\right)=0, \forall\, j\geq \bar{j}.
 	\end{eqnarray}
Therefore, by definitions~\eqref{eq:deriveh3}--\eqref{eq:deriveh2}, the support of $\vz$ leads to the following structure of 
	$[\nabla f_i(\vz)]_j$ for $j\geq \bar{j}$:
	
	\begin{itemize}
			\item 
	If $j=1$,
	$
	[\nabla f_i(\vz)]_j=	-\Psi(1) \Phi^{\prime}\left([\bar \vz]_j\right)$, for any $i \in\left[1, m\right];
	$
		\item 
	If $j$ is even, 
	$$
[\nabla f_i(\vz)]_j=\left\{
\begin{array}{ll}
-\frac{6\epsilon}{\sqrt{m}}\left[\Psi\left(-[\bar \vz]_{j-1}\right) \Phi^{\prime}\left(-[\bar \vz]_{j}\right)+\Psi\left([\bar \vz]_{j-1}\right) \Phi^{\prime}\left([\bar \vz]_{j}\right)\right],& \text{ for }  i \in\left[1, \frac{m}{3}\right],\\[0.1cm]
0, & \text{ for } i \in\left[\frac{m}{3}+1, m\right];
\end{array}
\right.
	$$
	\normalsize
	\item 
	If $j$ is odd and $j\neq 1$, 
	$$
	[\nabla f_i(\vz)]_j=\left\{
	\begin{array}{ll}
		0, & \text{ for } i   \in\left[1, \frac{2m}{3}\right],\\[0.1cm]
-\frac{6\epsilon}{\sqrt{m}}\left[\Psi\left(-[\bar \vz]_{j-1}\right) \Phi^{\prime}\left(-[\bar \vz]_{j}\right)+\Psi\left([\bar \vz]_{j-1}\right) \Phi^{\prime}\left([\bar \vz]_{j}\right)\right],& \text{ for } i  \in\left[\frac{2m}{3}+1, m\right].
	\end{array}
	\right.
	$$
	\normalsize
\end{itemize} 	
Since $\Psi\left(-[\bar \vz]_{j-1}\right)=\Psi\left([\bar \vz]_{j-1}\right)=0$ for any $j > \bar{j}$, the structures above imply $[\nabla f_i(\vz)]_j = 0,\forall\, j > \bar j$ and thus give the desired claims.
\end{proof}

According to the structure of $\vA$ given in~\eqref{eq:AandAbar},  $\textnormal{supp}((\vA^{\top}\vA\vx)_i)$ is determined by $\textnormal{supp}(\vx_{i-1})$, $\textnormal{supp}(\vx_{i})$ and $\textnormal{supp}(\vx_{i+1})$. Also, $\textnormal{supp}(\prox_{\eta g}(\vx))$ has a similar property according to the definition of $g$  in~\eqref{eq:g}. These properties are formally stated in the following lemma.

\begin{lemma}
	\label{lem:supp}
Let $\vx$ be the structured vector given in~\eqref{eq:xblock}, $\vA$ in~\eqref{eq:AandAbar}, and $g$ be given in~\eqref{eq:g}. Define $\vx_0=\vx_{m+1}=\mathbf{0}\in\RR^{{\bar{d}}}$. The following statements hold:
	\begin{enumerate}
		\item[\textnormal{(a)}] Let  $\widehat\vx=\vA^{\top}\vA\vx=(\widehat\vx_1\zz, \ldots,\widehat\vx_m\zz)\zz$ with each $\widehat\vx_i\in\RR^{{\bar{d}}}$.  Then 
		\begin{equation}\label{eq:supp-relation}
		\textnormal{supp}(\widehat\vx_i)\subset \textnormal{supp}(\vx_{i-1})\cup\textnormal{supp}(\vx_{i})\cup\textnormal{supp}(\vx_{i+1}),\, \forall\, i\in [1, m].
		\end{equation}		
		\item[\textnormal{(b)}] For any $\eta>0$, let $\widetilde\vx=\prox_{\eta g}(\vx)=(\widetilde\vx_1\zz, \ldots,\widetilde\vx_m\zz)\zz$ with each $\widetilde\vx_i\in\RR^{{\bar{d}}}$. Then 
		$$\textnormal{supp}(\widetilde\vx_i)\subset \textnormal{supp}(\vx_{i-1})\cup\textnormal{supp}(\vx_{i})\cup\textnormal{supp}(\vx_{i+1}),\, \forall\, i\in [1, m].$$
	\end{enumerate}
\end{lemma}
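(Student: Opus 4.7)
The plan is to exploit the fact that both $\vA^\top\vA$ and $\prox_{\eta g}$ couple only blocks whose indices differ by at most one.

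For part~(a), I would use the Kronecker form $\vA = mL_f\,\vJ_{\mathcal{M}^C}\otimes\vI_{\bar d}$ to write
\begin{equation*}
\vA^\top\vA \;=\; m^2 L_f^2\,(\vJ_{\mathcal{M}^C}^{\top}\vJ_{\mathcal{M}^C})\otimes\vI_{\bar d}.
\end{equation*}
Each row of $\vJ_{\mathcal{M}^C}$ is a row of $\vJ_m$, and hence has exactly two nonzero entries, in columns $k$ and $k+1$ for the corresponding $k\in\mathcal{M}^C$. Therefore $\vJ_{\mathcal{M}^C}^{\top}\vJ_{\mathcal{M}^C}$ is tridiagonal: its $(i,j)$-entry vanishes whenever $|i-j|>1$. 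Combined with the block-diagonal action of $\vI_{\bar d}$, $\widehat{\vx}_i$ is then a linear combination of $\vx_{i-1}$, $\vx_i$, and $\vx_{i+1}$ alone (with the convention $\vx_0 = \vx_{m+1}=\mathbf{0}$), from which~\eqref{eq:supp-relation} is immediate.

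Part~(b) rests on a structural observation: since $\mathcal{M}=\{m_1,2m_1,\dots,(3m_2-1)m_1\}$ with $m_1\geq 2$, consecutive elements of $\mathcal{M}$ are spaced by at least two. Hence the pairs $\{i,i+1\}_{i\in\mathcal{M}}$ are pairwise disjoint, and no block index $k$ can simultaneously equal some $i\in\mathcal{M}$ and $i'+1$ with $i'\in\mathcal{M}$. Consequently, the prox objective $g(\vx')+\frac{1}{2\eta}\|\vx'-\vx\|^2$ separates additively: for every $i\in\mathcal{M}$, $(\widetilde{\vx}_i,\widetilde{\vx}_{i+1})$ is the unique minimizer of
\begin{equation*}
\min_{\vu,\vv\in\RR^{\bar d}}\;\beta\|\vu-\vv\|_1 + \tfrac{1}{2\eta}\|\vu-\vx_i\|^2 + \tfrac{1}{2\eta}\|\vv-\vx_{i+1}\|^2,
\end{equation*}
while for every index $k$ not of the form $i$ or $i+1$ for some $i\in\mathcal{M}$ one simply has $\widetilde{\vx}_k=\vx_k$. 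The two-block subproblem further decouples coordinate-wise into $\bar d$ independent scalar problems, each strictly convex and hence with a unique minimizer, and when both scalar data vanish this minimizer is plainly $(0,0)$. I conclude that $\textnormal{supp}(\widetilde{\vx}_i)\cup\textnormal{supp}(\widetilde{\vx}_{i+1})\subset\textnormal{supp}(\vx_i)\cup\textnormal{supp}(\vx_{i+1})$ for $i\in\mathcal{M}$, and $\textnormal{supp}(\widetilde{\vx}_k)=\textnormal{supp}(\vx_k)$ otherwise; either case yields the claimed containment (and is in fact slightly stronger, since $\vx_{i-1}$ need not appear on the right).

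The only real subtlety is the decoupling argument in part~(b): once the disjointness of the coupling pairs is established via $m_1\geq 2$, everything reduces to a one-dimensional proximal problem whose unique minimizer is zero when the input is zero. Part~(a) is a direct consequence of the Kronecker structure and poses no obstacle.
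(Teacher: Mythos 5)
Your proposal is correct and follows essentially the same route as the paper: part~(a) from the (block-)tridiagonal structure of $\vA^\top\vA$, and part~(b) from the observation that $m_1\ge 2$ makes the coupling pairs $\{i,i+1\}_{i\in\mathcal{M}}$ disjoint, so $\prox_{\eta g}$ separates into independent two-block (and then scalar) subproblems. The only cosmetic difference is that the paper writes out the explicit closed-form solution of the scalar two-point prox, whereas you invoke uniqueness of the minimizer of the strictly convex scalar subproblem and the fact that zero data yields the zero minimizer, which suffices for the support claim.
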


\begin{proof}
	(a) 
The relation in~\eqref{eq:supp-relation} immediately follows from the observation
\begin{equation}\label{eq:ata-struc}
\newcommand{\AtA}{\left[ 
\begin{array}{cccc}
\vB & & & \\
& \vB & & \\
& & \ddots &\\
& & & \vB
\end{array}
\right]
}
\newcommand{\Bmat}{\left[ 
\begin{array}{rrrrr} 
\vI_{\bar d} & - \vI_{\bar d} & & & \\
- \vI_{\bar d} & 2 \vI_{\bar d} & - \vI_{\bar d} & \\
& \ddots & \ddots & \ddots & \\
&  &	- \vI_{\bar d} & 2 \vI_{\bar d} & - \vI_{\bar d}  \\
& & & - \vI_{\bar d} & \vI_{\bar d}
		\end{array} \right]
}
\vA^\top\vA =  \left. \AtA 
		\right\}\text{$3m_2$ blocks}, \text{ with } 
\vB = m^2 L_f^2 
\left.
		\,\smash[b]{\underbrace{\!\Bmat\!}_{\textstyle\text{$m_1$ blocks}}}\,
		\right\}\text{$m_1$ blocks.}
		\vphantom{\underbrace{\Bmat}_{\textstyle\text{$m$ blocks}}}	
\end{equation}
	
	(b) Given any $x_1$ and $x_2$ in $\mathbb{R}$ and any $c>0$, consider the following optimization problem in $\mathbb{R}^2$:
	\begin{eqnarray*}
		(\widetilde x_1, \widetilde x_2)=\argmin_{z_1,z_2\in\mathbb{R}}\frac{1}{2}(z_1-x_1)^2+\frac{1}{2}(z_2-x_2)^2+c|z_1-z_2|.
	\end{eqnarray*}
	The optimal solution of this problem is 
	\begin{eqnarray}
		\label{eq:2dproxg}
		(\widetilde x_1, \widetilde x_2)=\left\{
		\begin{array}{ll}
			( (x_1+x_2)/2,  (x_1+x_2)/2),&\text{ if } |x_1-x_2|\leq 2c\\[0.1cm]
			( x_1-c\cdot\text{sign}(x_1-x_2),  x_2+c\cdot\text{sign}(x_1-x_2)),&\text{ if } |x_1-x_2|> 2c.
		\end{array}
		\right.
	\end{eqnarray}
	Recall the definition of $g$ in \eqref{eq:g} and $\textbf{prox}_{\eta g}$, we obtain that 
	$$
	\prox_{\eta g}(\vx) = \argmin_{\vy} \eta\beta\sum_{i\in\mathcal{M}}  \|\vx_{i }-\vx_{i+1}\|_1 + \frac{1}{2}\|\vx-\vy\|^2.
	$$
	It then holds that 
	\begin{eqnarray*}
		\widetilde\vx_i = 
		\left\{
		\begin{array}{ll}
			 \argmin_{\vy_i} \frac{1}{2}\|\vx_i-\vy_i\|^2, & \text{ if }i-1,i\notin\mathcal{M},\\\null
			\argmin_{\vy_i} \eta\beta \|\vx_{i }-\vx_{i-1}\|_1 + \frac{1}{2}\|\vx_i-\vy_i\|^2, &\text{ if }i-1\in\mathcal{M},\\\null
			\argmin_{\vy_i} \eta\beta \|\vx_{i }-\vx_{i+1}\|_1 + \frac{1}{2}\|\vx_i-\vy_i\|^2 ,&\text{ if }i\in\mathcal{M}.
		\end{array}
		\right.
	\end{eqnarray*}
	By~\eqref{eq:2dproxg} and the separability property of $\|\cdot\|_1$ and $\|\cdot\|^2$, 
	we have that for any $j\in\{1,\dots,\bar{d}\}$, 
	\begin{eqnarray*}
		[\widetilde\vx_i]_j=
		\left\{
		\begin{array}{ll}
			[\vx_i]_j, &\text{ if }i-1,i\notin\mathcal{M},\\\null
			([\vx_{i-1}]_j+[\vx_i]_j)/2, &\text{ if }i-1\in\mathcal{M}\text{ and }\left|[\vx_{i-1}]_j-	[\vx_{i}]_j\right|\leq \eta\beta,\\\null
			([\vx_{i}]_j+[\vx_{i+1}]_j)/2, &\text{ if }i\in\mathcal{M}\text{ and }\left|[\vx_{i}]_j-	[\vx_{i+1}]_j\right|\leq \eta\beta,\\\null
			[\vx_i]_j+\eta\beta\cdot\text{sign}([\vx_{i-1}]_j-[\vx_i]_j), &\text{ if }i-1\in\mathcal{M}\text{ and }\left|[\vx_{i-1}]_j-	[\vx_{i}]_j\right|> \eta\beta,\\\null
			[\vx_i]_j-\eta\beta\cdot\text{sign}([\vx_{i-1}]_j-[\vx_i]_j), &\text{ if }i\in\mathcal{M}\text{ and }\left|[\vx_{i-1}]_j-	[\vx_{i}]_j\right|> \eta\beta,
		\end{array}
		\right.
	\end{eqnarray*}
	which implies 
	\begin{eqnarray*}
		\textnormal{supp}(\widetilde\vx_i)&\subset&
		\left\{
		\begin{array}{ll}
			\textnormal{supp}(\vx_{i}),&\text{ if }i-1,i\notin\mathcal{M},\\
			\textnormal{supp}(\vx_{i-1})\cup\textnormal{supp}(\vx_{i}),&\text{ if }i-1\in\mathcal{M},\\
			\textnormal{supp}(\vx_{i})\cup\textnormal{supp}(\vx_{i+1}),&\text{ if }i\in\mathcal{M}.
		\end{array}
		\right.
	\end{eqnarray*}
	The proof is then completed.
\end{proof}

Now we are ready to show the following result on how fast $\textnormal{supp}(\bar{\vx}^{(t)})$ can expand with $t$.

\begin{proposition}
	\label{thm:iterateguess}
Under Assumption~\ref{ass:linearspan}, suppose an algorithm is applied to instance~$\mathcal{P}$ from $\vx^{(0)}=\mathbf{0}$ and generates a sequence $\{\vx^{(t)}\}_{t\geq0}$. By notations in~\eqref{eq:t-th-iter}, 
it holds for any $\bar{j}\in\{2,3,\dots,\bar{d}\}$ that 
\begin{eqnarray}
	\label{eq:outerinduction}
	\textnormal{supp}(\vx_i^{(t)})\subset\{1,\dots,\bar{j}-1\} \text{ for }i=1,\dots,m\text{ and }t\leq 1+ m(\bar{j}-2)/6.
\end{eqnarray}
\end{proposition}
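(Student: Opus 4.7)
My plan is to argue by induction on $\bar{j}$, strengthened so that the invariant tracks not merely the global support bound but also the spatial distribution of the current maximum index across blocks.

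For the base case $\bar{j} = 2$ (so $s_2 := 1 + m(\bar{j} - 2)/6 = 1$), I would verify directly: $\vx^{(0)} = \vzero$ gives the conclusion at $t = 0$; at $t = 1$, since $\vb = \vzero$ and $\vA^{\top} \vA \vx^{(0)} = \vzero$, Assumption~\ref{ass:linearspan} reduces $\vxi^{(1)}$ to a linear combination of $\nabla f_0(\vzero)$. Lemma~\ref{lem:iterateguess} (case $\bar{j} = 1$) gives each block of $\nabla f_0(\vzero)$ support contained in $\{1\}$, and Lemma~\ref{lem:supp}(b) transfers this bound to $\vzeta^{(1)}$ and thence to $\vx^{(1)}$.

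For the inductive step, the plan is to isolate three block-wise ``transport rules'' describing how support can grow in one iteration: (i) a \emph{gradient step} (Lemma~\ref{lem:iterateguess}) that can raise a block's maximum index by one, but only at blocks $i \in [1, m/3]$ when the new index is even and only at $i \in [2m/3+1, m]$ when it is odd; (ii) a \emph{linear step} exploiting that $\vA^{\top} \vA$ decomposes as a block-diagonal matrix with $3m_2$ diagonal blocks, each being block-tridiagonal in the $m_1$ consecutive blocks of its group (as exhibited in the proof of Lemma~\ref{lem:supp}(a)), so it propagates support only to within-group neighbors; (iii) a \emph{prox step} (Lemma~\ref{lem:supp}(b)) that mixes support only between adjacent blocks across an $\mathcal{M}$-boundary. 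Because $\vA^{\top} \vA \vx^{(t)}$ enters $\vxi^{(t+1)}$ before $\vzeta^{(t+1)} = \prox_{\eta g}(\vxi^{(t+1)})$ is formed, one iteration can chain at most one instance of (ii) with one of (iii), so the ``wavefront'' of any fixed maximum index advances by at most two block positions per iteration. To introduce the new index $\bar{j}$ via rule~(i), its predecessor $\bar{j} - 1$ must already have reached a block in a specific third of $\{1,\ldots,m\}$; by the parity alternation in~(i) and the induction, $\bar{j} - 1$ first enters the support only in the \emph{opposite} third, at block distance at least $m/3$ from the target region. Propagating at most two blocks per iteration then requires at least $m/6$ additional iterations before a further gradient step~(i) can introduce $\bar{j}$, which matches the gap $s_{\bar{j}+1} - s_{\bar{j}} = m/6$.

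The principal obstacle is that the inductive statement as written only upper-bounds $\textnormal{supp}(\vx_i^{(t)})$ uniformly in $i$, whereas the above argument needs to know \emph{where} the current maximum index can first appear. I would therefore strengthen the inductive invariant to also record, for each $t$ near $s_{\bar{j}}$, a block-dependent upper bound on $\max \textnormal{supp}(\vx_i^{(t)})$ reflecting the wavefront position in both the ``leading'' and ``trailing'' thirds, and prove this stronger invariant by a nested induction on $t \in (s_{\bar{j}}, s_{\bar{j}+1}]$ via rules~(i)--(iii). The bookkeeping is notationally heavy but conceptually routine; the only delicate case is when the chained linear--prox step is positioned at an $\mathcal{M}$-boundary, since that is the configuration in which the wavefront attains its maximum advance of two blocks in a single iteration.
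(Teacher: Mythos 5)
Your proposal is correct and follows essentially the same route as the paper: an outer induction on $\bar{j}$ combined with an inner induction on $t$ that tracks a block-dependent wavefront (the paper's claims~\eqref{eq:innerinduction1}--\eqref{eq:innerinduction2}), with the new index introduced by the gradient only in one third of the blocks by parity and the wavefront advancing at most two blocks per iteration (one via $\vA^{\top}\vA$, one via the prox), yielding the $m/6$ gap. The ``notationally heavy'' strengthened invariant you describe is exactly what the paper writes down, so no further ideas are needed.
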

 
\begin{proof}
We prove the claim by induction on $\bar{j}$. 
%
Let $\vxi^{(t)}=(\vxi^{(t)\top}_1, \ldots,\vxi^{(t)\top}_m)\zz$ with $\vxi^{(t)}_i\in\RR^{{\bar{d}}}$ and $\vzeta^{(t)}=(\vzeta^{(t)\top}_1, \ldots,\vzeta^{(t)\top}_m)\zz$  with $\vzeta^{(t)}_i\in\RR^{{\bar{d}}}$ be the vectors defined in Assumption~\ref{ass:linearspan} for $t\geq1$. 
Since $\vx^{(0)}=\mathbf{0}$, we have $\textnormal{supp}(\nabla f_i(\vx^{(0)}_i))\subset\{1\}, \forall\, i$ from Lemma~\ref{lem:iterateguess}. Notice $\vb = \vzero$. Hence, 
$\textnormal{supp}(\vxi_i^{(1)})\subset\{1\}, \forall\, i$, which further indicates $\textnormal{supp}(\vzeta_i^{(1)})\subset\{1\}, \forall\, i$ by Lemma~\ref{lem:supp}(b), and thus $\textnormal{supp}(\vx_i^{(1)})\subset\{1\}, \forall\, i$. This proves the claim in~\eqref{eq:outerinduction} for $\bar{j}=2$.
%
Now suppose that the claim~\eqref{eq:outerinduction} holds for some $\bar{j}\geq 2$. We go to prove it for $\bar{j}+1$.
	
According to the hypothesis of the induction, we have 
\begin{equation}\label{eq:induction-result}
\textnormal{supp}(\vx_i^{(r)})\subset\{1,\dots,\bar{j}-1\},  \forall\, i\in [1, m] \text{ and } \forall\, r\leq \bar t:=1+ m(\bar{j}-2)/6.
\end{equation} 
Below we let $\widehat\vx^{(r)}=\vA\zz \vA\vx^{(r)}$ for any $r\geq0$ and consider two cases: $\bar{j}$ is even and $\bar{j}$ is odd. 
 	
\textbf{Case 1}: Suppose $\bar{j}$ is even.  We claim that, for $s=0,1,\dots,\frac{m}{6}$,
\begin{eqnarray}
	\label{eq:innerinduction1}
\textnormal{supp}(\vx_i^{(r)})\subset\left\{
\begin{array}{ll}
	\{1,\dots,\bar{j}\}, &\text{ if } i\in\left[1, \frac{m}{3}+2s\right],\\[0.1cm]
	\{1,\dots,\bar{j}-1\}, & \text{ if } i\in\left[\frac{m}{3}+2s+1, m\right],
\end{array}
\right.\ \forall\, r \le \bar t+s.
\end{eqnarray}
Notice~\eqref{eq:induction-result} implies~\eqref{eq:innerinduction1} for $s=0$. Suppose~\eqref{eq:innerinduction1} holds for some integer $s \in [0, \frac{m}{6}]$. Then by Lemma~\ref{lem:iterateguess} and $\frac{m}{3} + 2s \le \frac{2m}{3}$, it holds 
$$
\textnormal{supp}(\nabla f_i(\vx_i^{(r)}))\subset\left\{
\begin{array}{ll}
	\{1,\dots,\bar{j}\}, & \text{ if } i\in\left[1, \frac{m}{3}+2s\right],\\[0.1cm]
	\{1,\dots,\bar{j}-1\}, &\text{ if }  i\in\left[\frac{m}{3}+2s+1, m\right],
\end{array}
\right.\  \forall\, r \le \bar t+s.
$$
In addition, by  Lemma~\ref{lem:supp}(a), we have from~\eqref{eq:innerinduction1} that
$$
\textnormal{supp}(\widehat\vx_i^{(r)})\subset\left\{
\begin{array}{ll}
	\{1,\dots,\bar{j}\}, &\text{ if } i\in\left[1, \frac{m}{3}+2s+1\right],\\[0.1cm]
	\{1,\dots,\bar{j}-1\}, &\text{ if } i\in\left[\frac{m}{3}+2s+2, m\right],
\end{array}
\right.\ \forall\, r \le \bar t+s.
$$
 Hence, by Assumption~\ref{ass:linearspan}, we have
$$
\textnormal{supp}(\vxi_i^{(\bar t+s+1)})\subset\left\{
\begin{array}{ll}
	\{1,\dots,\bar{j}\}, & \text{ if } i\in\left[1, \frac{m}{3}+2s+1\right],\\[0.1cm]
	\{1,\dots,\bar{j}-1\},& \text{ if } i\in\left[\frac{m}{3}+2s+2, m\right],
\end{array}
\right.
$$
and thus it follows from Lemma~\ref{lem:supp}(b) that
$$
\textnormal{supp}(\vzeta_i^{(\bar t+s+1)})\subset\left\{
\begin{array}{ll}
	\{1,\dots,\bar{j}\},& \text{ if } i\in\left[1, \frac{m}{3}+2s+2\right],\\[0.1cm]
	\{1,\dots,\bar{j}-1\}, &\text{ if } i\in\left[\frac{m}{3}+2s+3, m\right].
\end{array}
\right.
$$
Now since $\vx_i^{(\bar t+s+1)} \in \mathbf{span}\left(\{\vxi_i^{(\bar t+s+1)}, \vzeta_i^{(\bar t+s+1)}\}\right)$ by Assumption~\ref{ass:linearspan}, we have
$$
\textnormal{supp}(\vx_i^{(\bar t+s+1)})\subset\left\{
\begin{array}{ll}
	\{1,\dots,\bar{j}\}, & \text{ if } i\in\left[1, \frac{m}{3}+2s+2\right],\\[0.1cm]
	\{1,\dots,\bar{j}-1\}, & \text{ if } i\in\left[\frac{m}{3}+2s+3, m\right],
\end{array}
\right.
$$
which means~\eqref{eq:innerinduction1} holds for $s+1$ as well. By induction,~\eqref{eq:innerinduction1} holds for $s=0,1,\dots,\frac{m}{6}$. Let $s=\frac{m}{6}$ in~\eqref{eq:innerinduction1}. We have $\textnormal{supp}(\vx_i^{(r)})\subset\{1,\dots,\bar{j}\}$ for any $i$ and $r\leq \bar t+\frac{m}{6} = 1+ m(\bar{j}-2)/6+\frac{m}{6}=1+m(\bar{j}-1)/6$. 

\textbf{Case 2}: Suppose $\bar{j}$ is odd.  We claim that, for $s=0,1,\dots,\frac{m}{6}$,
\begin{eqnarray}
	\label{eq:innerinduction2}
	\textnormal{supp}(\vx_i^{(r)})\subset\left\{
	\begin{array}{ll}
		\{1,\dots,\bar{j}-1\},& \text{ if } i\in\left[1, \frac{2m}{3}-2s\right],\\[0.1cm]
		\{1,\dots,\bar{j}\},& \text{ if } i\in\left[\frac{2m}{3}-2s+1, m\right],
	\end{array}
	\right. \ \forall\, r \le \bar t+s.
\end{eqnarray}
Again~\eqref{eq:induction-result} implies~\eqref{eq:innerinduction2} for $s=0$. Suppose it holds for an integer $s\in [0, \frac{m}{6}]$. 
Then by Lemma~\ref{lem:iterateguess}, 
$$
\textnormal{supp}(\nabla f_i(\vx_i^{(r)}))\subset\left\{
\begin{array}{ll}
	\{1,\dots,\bar{j}-1\}, & \text{ if }  i\in\left[1, \frac{2m}{3}-2s\right],\\[0.1cm]
	\{1,\dots,\bar{j}\}, & \text{ if }  i\in\left[\frac{2m}{3}-2s+1, m\right],
\end{array}
\right. \ \forall\, r \le \bar t+s.
$$
In addition, by  Lemma~\ref{lem:supp}(a) and~\eqref{eq:innerinduction2}, we have 
$$
\textnormal{supp}(\widehat\vx_i^{(r)})\subset\left\{
\begin{array}{ll}
	\{1,\dots,\bar{j}-1\},& \text{ if } i\in\left[1, \frac{2m}{3}-2s-1\right],\\[0.1cm]
	\{1,\dots,\bar{j}\},& \text{ if } i\in\left[\frac{2m}{3}-2s, m\right],
\end{array}
\right. \ \forall\, r \le \bar t+s.
$$
Hence, by Assumption~\ref{ass:linearspan}, we have
$$
\textnormal{supp}(\vxi_i^{(\bar t+s+1)})\subset\left\{
\begin{array}{ll}
	\{1,\dots,\bar{j}-1\},& \text{ if } i\in\left[1, \frac{2m}{3}-2s-1\right],\\[0.1cm]
	\{1,\dots,\bar{j}\},& \text{ if } i\in\left[\frac{2m}{3}-2s, m\right],
\end{array}
\right.
$$
and then it follows from Lemma~\ref{lem:supp}(b) that
$$
\textnormal{supp}(\vzeta_i^{(\bar t+s+1)})\subset\left\{
\begin{array}{ll}
	\{1,\dots,\bar{j}-1\},&\text{ if } i\in\left[1, \frac{2m}{3}-2s-2\right],\\[0.1cm]
	\{1,\dots,\bar{j}\},&\text{ if } i\in\left[\frac{2m}{3}-2s-1, m\right].
\end{array}
\right.
$$
Again since $\vx_i^{(\bar t+s+1)} \in \mathbf{span}\left(\{\vxi_i^{(\bar t+s+1)}, \vzeta_i^{(\bar t+s+1)}\}\right)$ by Assumption~\ref{ass:linearspan}, we have
$$
\textnormal{supp}(\vx_i^{(\bar t+s+1)})\subset\left\{
\begin{array}{ll}
	\{1,\dots,\bar{j}-1\},&\text{ if } i\in\left[1, \frac{2m}{3}-2s-2\right],\\[0.1cm]
	\{1,\dots,\bar{j}\},&\text{ if } i\in\left[\frac{2m}{3}-2s-1, m\right],
\end{array}
\right.
$$
which means~\eqref{eq:innerinduction2} holds for $s+1$ as well. By induction,~\eqref{eq:innerinduction2} holds for $s=0,1,\dots,\frac{m}{6}$. Let $s=\frac{m}{6}$ in~\eqref{eq:innerinduction2}. We have $\textnormal{supp}(\vx_i^{(r)})\subset\{1,\dots,\bar{j}\}$ for any $i$ and $r\leq \bar t+\frac{m}{6} = 1+ m(\bar{j}-2)/6+\frac{m}{6}=1+m(\bar{j}-1)/6$. 

Therefore, we have proved that~\eqref{eq:outerinduction} holds for $\bar{j}+1$, when $\bar j$ is either even or odd. By induction,~\eqref{eq:outerinduction} holds for any integer $\bar j \in [2, \bar d]$, and we complete the proof.
\end{proof}
					
					Finally, we are ready to give our main result about  the lower bound  of oracle complexity.
					
					\begin{theorem}
						\label{thm:lower}
Let $\epsilon>0$ and $L_f > 0$ be given.				Suppose an algorithm  is applied to problem~\eqref{eq:model} that satisfies Assumption~\ref{assume:problemsetup} 
					and generates a sequence $\{\vx^{(t)}\}_{t\geq0}$ that satisfies Assumption~\ref{ass:linearspan}. Then for any $\omega\in [0, \frac{150\pi \epsilon}{L_f})$, there exists an instance of problem~\eqref{eq:model}, i.e., instance~$\mathcal{P}$ in Definition~\ref{def:hardinstance}, such that the algorithm requires at least 
					$\left \lceil \frac{\kappa([\bar{\vA}; \vA]) L_f \Delta_{F_0}}{36000\pi^2} \epsilon^{-2} \right\rceil$ 
					oracles to obtain a point that is $\omega$-close to an $\epsilon$-stationary point of instance~$\mathcal{P}$, where $\Delta_{F_0}=F_0(\vx^{(0)})-\inf_\vx F_0(\vx)$. 
					\end{theorem}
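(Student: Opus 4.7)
The plan is to exhibit instance $\mathcal{P}$ from Definition~\ref{def:hardinstance} as the worst case and argue by contradiction: whenever the iteration count $t$ satisfies $t \le 1 + m(\bar d - 2)/6$, no point that is $\omega$-close to $\vx^{(t)}$ can be an $\epsilon$-stationary point of $\mathcal{P}$. I will take $\vx^{(0)}=\vzero$ without loss of generality, as the paragraph preceding the theorem explains. Applying Proposition~\ref{thm:iterateguess} with $\bar j = \bar d$ yields $\textnormal{supp}(\vx_i^{(t)}) \subset \{1,\dots,\bar d-1\}$ for every block $i$, so in particular $[\bar{\vx}^{(t)}]_{\bar d} = 0$.

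Next, suppose for contradiction that some $\vx^*$ is $\epsilon$-stationary for $\mathcal{P}$ with $\|\vx^{(t)} - \vx^*\| \le \omega$. Lemma~\ref{cor:kktvio2} promotes $\vx^*$ to an $\epsilon$-stationary point of the auxiliary problem~\eqref{eq:model3}, i.e.\ $\max\{\|\vH\vx^*\|,\min_\vgamma \|\nabla f_0(\vx^*)+\vH^\top\vgamma\|\} \le \epsilon$. Combined with Lemma~\ref{lem:kktvio}, this forces the averaged-gradient bound $\|\tfrac1m\sum_{i=1}^m \nabla f_i(\bar{\vx}^*)\| \le 2\epsilon/\sqrt m$. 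On the other hand, the elementary Cauchy--Schwarz inequality gives $\|\bar{\vx}^{(t)} - \bar{\vx}^*\| \le \|\vx^{(t)}-\vx^*\|/\sqrt m \le \omega/\sqrt m$, which together with $[\bar{\vx}^{(t)}]_{\bar d}=0$ implies $|[\bar{\vx}^*]_{\bar d}| \le \omega/\sqrt m < 150\pi\epsilon/(\sqrt m L_f)$ by the hypothesis $\omega < 150\pi\epsilon/L_f$. Lemma~\ref{lem:nablaf} then delivers the reverse bound $\|\tfrac1m\sum_{i=1}^m \nabla f_i(\bar{\vx}^*)\| > 2\epsilon/\sqrt m$, a contradiction. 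Hence the number of oracle calls needed to reach a point $\omega$-close to an $\epsilon$-stationary point of $\mathcal{P}$ is at least $2 + m(\bar d-2)/6$, which is an integer because $m_1m_2$ is even.

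To match this lower bound with the statement, I combine $\kappa([\bar\vA;\vA]) < m$ from Lemma~\ref{lem:condH} with the estimate $\Delta_{F_0} \le f_0(\vzero)-\inf_\vx f_0(\vx) \le 3000\pi^2 \bar d\epsilon^2/L_f$, which follows from Lemma~\ref{cor:boundf}(a) together with $g\ge 0$ and $g(\vzero)=0$. This yields
\begin{equation*}
\frac{\kappa([\bar\vA;\vA])\, L_f \Delta_{F_0}}{36000\pi^2\epsilon^2} \;<\; \frac{m\bar d}{12} \;\le\; \frac{m(\bar d-2)}{6} \;\le\; 1+\frac{m(\bar d-2)}{6},
\end{equation*}
where the middle inequality uses $\bar d \ge 4$ (valid since $\bar d \ge 5$ in Definition~\ref{def:hardinstance}). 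The ceiling in the theorem is therefore at most $1+m(\bar d-2)/6$, which is strictly less than the iteration lower bound $2+m(\bar d-2)/6$ derived above, and the proof concludes.

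The main obstacle is the closeness step in the second paragraph: Lemma~\ref{lem:nablaf} demands smallness at the scale $150\pi\epsilon/(\sqrt m L_f)$, a factor of $\sqrt m$ tighter than the admissible perturbation $\omega < 150\pi\epsilon/L_f$. The missing $\sqrt m$ is recovered precisely by block averaging through $\bar{\vx} = \tfrac1m\sum_i \vx_i$, which is why Lemma~\ref{lem:kktvio} phrases the stationarity measure in terms of the averaged gradient at $\bar{\vx}^*$ rather than $\nabla f_0(\vx^*)$ itself; this alignment between the perturbation scale and the threshold of Lemma~\ref{lem:nablaf} is the delicate point that forces the admissible range of $\omega$ stated in the theorem.
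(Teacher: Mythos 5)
Your proposal is correct and follows essentially the same route as the paper's proof: reduce to instance $\mathcal{P}$ with $\vx^{(0)}=\vzero$, invoke Proposition~\ref{thm:iterateguess} to get $[\bar\vx^{(t)}]_{\bar d}=0$ for $t\le 1+m(\bar d-2)/6$, combine Lemmas~\ref{cor:kktvio2}, \ref{lem:kktvio} and \ref{lem:nablaf} to show any $\epsilon$-stationary $\vx^*$ has $|[\bar\vx^*]_{\bar d}|\ge 150\pi\epsilon/(\sqrt m L_f)$ and hence lies farther than $\omega$ from $\vx^{(t)}$ (your Cauchy--Schwarz bound $\|\bar\vx^{(t)}-\bar\vx^*\|\le\|\vx^{(t)}-\vx^*\|/\sqrt m$ is exactly the paper's convexity-of-the-square step, just phrased as a contradiction), and finish with Lemma~\ref{lem:condH} and Lemma~\ref{cor:boundf}(a). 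Your handling of the ceiling at the end is if anything slightly more careful than the paper's.
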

					
					\begin{proof}
As we discussed below~\eqref{eq:t-th-iter}, 	we assume $\vx^{(0)}=\mathbf{0}$ without loss of generality. 
%
						Thus by notation in~\eqref{eq:t-th-iter}, Proposition~\ref{thm:iterateguess} indicates that 	$\textnormal{supp}(\vx_i^{(t)})\subset\{1,\dots,\bar{d}-1\}$ for any $i \in [1,m]$ and any $t\leq 1+ m(\bar{d}-2)/6$, which means $[\bar{\vx}^{(t)}]_{\bar{d}}=0$ if  $t\leq 1+ m(\bar{d}-2)/6$, where $\bar{\vx}^{(t)}= \frac{1}{m} \sum_{i=1}^m \vx_i^{(t)}$. 
						
On the other hand, suppose $\vx^*$ with the structure as in~\eqref{eq:t-th-iter} is an $\epsilon$-stationary point of instance~$\mathcal{P}$. Then by 	Lemma~\ref{cor:kktvio2}, it must also be an $\epsilon$-stationary point of~\eqref{eq:model3}. Hence, by  Lemmas~\ref{lem:nablaf} and~\ref{lem:kktvio}, we have $\left|[\bar\vx^*]_j\right| \ge \frac{150\pi\epsilon}{\sqrt{m} L_f}$ for all $j=1,\ldots,\bar d$, where $\bar{\vx}^*= \frac{1}{m} \sum_{i=1}^m \vx_i^*$. Therefore, by the convexity of the square function, it follows that
$$\|\vx^{(t)} - \vx^*\|^2 \ge \sum_{i=1}^m \left([\vx_i^{(t)}]_{\bar d} - [\vx_i^*]_{\bar d}\right)^2 \ge m \left([\bar\vx^{(t)}]_{\bar d} - [\bar\vx^*]_{\bar d}\right)^2 \ge m \left(\frac{150\pi\epsilon}{\sqrt{m} L_f}\right)^2 > \omega^2,$$
and thus $\vx^{(t)}$ is not $\omega$-close to $\vx^*$ if  $t\leq 1+ m(\bar{d}-2)/6$.
						
						Moreover, by 				
						Lemma~\ref{cor:boundf}(a) and the fact that $g(\vx^{(0)})=0$ and $g(\vx) \ge 0, \forall\, \vx$, it holds that
						$$ {\bar{d}}\geq \frac{L_f \left( F_0(\vx^{(0)})-\inf_{\vx} F_0\left(\vx\right)\right)}{3000\pi^2} \epsilon^{-2}=\frac{L_f\Delta_{F_0}}{3000\pi^2}\epsilon^{-2}.
						$$
						In other words, in order for $\vx^{(t)}$ to be $\omega$-close to an $\epsilon$-stationary point of instant~$\mathcal{P}$, the algorithm needs at least $t=2+m({\bar{d}}-2)/6$ oracles.
We complete the proof by noticing
						\begin{equation*}
							2+m({\bar{d}}-2)/6\geq m{\bar{d}}/12\geq\frac{m L_f \Delta_{F_0}}{36000\pi^2} \epsilon^{-2}>\frac{\kappa([\bar{\vA}; \vA]) L_f \Delta_{F_0}}{36000\pi^2\epsilon^2},
						\end{equation*}
						 where the first inequality is because ${\bar{d}}\geq 5$, and the last one is by Lemma~\ref{lem:condH}.
					\end{proof}
					
						

	\section{Lower Bound of Oracle Complexity for Problems~\eqref{eq:model} and~\eqref{eq:model-spli} under Assumption \ref{ass:linearspan3}}
				\label{sec:extension}
Under Assumption~\ref{ass:linearspan}, an algorithm is allowed to call the operator $\prox_{\eta g}(\cdot)$ that may not be easy when $g$ has the structure as that in Assumption~\ref{assume:problemsetup}. Calculating $\prox_{\eta g}(\cdot)$ to a high accuracy or exactly may require many (or even infinitely many) calls to $\bar{\vA}$ and $\bar{\vb}$. In contrast, $\bar g$ 
is simpler than $g$, making $\prox_{\eta \bar g}(\cdot)$ easier to compute such as when $\bar g(\cdot) = \lambda\|\cdot\|_1$ for some $\lambda >0$ as in instance $\cP$. 
	These observations motivate us to reformulate \eqref{eq:model} into~\eqref{eq:model-spli} and seek an $\epsilon$-stationary point of \eqref{eq:model-spli} under Assumption~\ref{ass:linearspan3} which only requires the computation of $\prox_{\eta \bar g}(\cdot)$.
Furthermore, the approach that is designed based on \eqref{eq:model-spli} exhibits the potential to find a near $\epsilon$-stationary point of problem~\eqref{eq:model}, as shown in the next section. 

Consequently, two intriguing questions arise: (i) whether finding an $\epsilon$-stationary point of \eqref{eq:model-spli} under Assumption~\ref{ass:linearspan3} is easier or more challenging compared to finding an $\epsilon$-stationary point of \eqref{eq:model} under Assumption~\ref{ass:linearspan}, and (ii) whether finding a near $\epsilon$-stationary point of \eqref{eq:model} under Assumption~\ref{ass:linearspan3} is easier or harder  compared to that  
under Assumption~\ref{ass:linearspan}. We provide an answer to the first question, by showing that the same-order lower bound of complexity in Theorem~\ref{thm:lower} holds for finding  an $\epsilon$-stationary point of~\eqref{eq:model-spli} 
under Assumption~\ref{ass:linearspan3}. Moreover, we provide an answer to the second question, by showing that the lower bound of oracle complexity for finding a near $\epsilon$-stationary point of \eqref{eq:model} is $O({\kappa([\bar{\vA}; \vA]) L_f \Delta_{F_0}} \epsilon^{-2})$ under either Assumption~\ref{ass:linearspan} or~\ref{ass:linearspan3}; see Theorem \ref{thm:lower} and Corollary \ref{cor:lower2}.

Before we give our main results in this section, we introduce an instance of~\eqref{eq:model-spli} that is a reformulation of instant~$\mathcal{P}$ in Definition~\ref{def:hardinstance}. 	We show that an $\epsilon$-stationary point of the reformulation~\eqref{eq:model-spli} of instant~$\mathcal{P}$ is a $2\epsilon$-stationary point of the auxiliary problem~\eqref{eq:model3} of instant~$\mathcal{P}$. This is formally stated in the lemma below. 


\begin{lemma}
	\label{cor:kktequiv3}
Let $\epsilon>0$ be given in Definition~\ref{def:hardinstance} for instance~$\mathcal{P}$, and let  $\widehat{\epsilon}\in[0,\epsilon]$. Suppose $(\vx^*,\vy^*)$ is an $\widehat{\epsilon}$-stationary point of the reformulation~\eqref{eq:model-spli} of instant~$\mathcal{P}$.
Then $\vx^*$ is a $2\widehat{\epsilon}$-stationary point of the auxiliary problem~\eqref{eq:model3} of instant~$\mathcal{P}$. 
\end{lemma}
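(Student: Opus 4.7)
The plan is to verify the two defining inequalities of a $2\widehat\epsilon$-stationary point of~\eqref{eq:model3}, namely $\min_{\vgamma'}\|\nabla f_0(\vx^*) + \vH^\top \vgamma'\| \le 2\widehat\epsilon$ and $\|\vH\vx^*\| \le 2\widehat\epsilon$. The dual inequality is essentially free: since the rows of $\bar\vA$ and $\vA$ are precisely the rows of $\vH$ indexed by $\mathcal M$ and $\mathcal M^C$ (see~\eqref{eq:AandAbar}), I would choose $\vgamma'\in\RR^{(m-1)\bar d}$ by planting $\vz_1$ and $\vz_2$ into the corresponding row blocks, so that $\vH^\top \vgamma' = \bar\vA^\top\vz_1 + \vA^\top\vz_2$; the second bound in~\eqref{eq:kktviofgesub} then yields $\|\nabla f_0(\vx^*) + \vH^\top\vgamma'\| \le \widehat\epsilon$.

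For the primal condition I decompose $\|\vH\vx^*\|^2 = \|\bar\vA\vx^*\|^2 + \|\vA\vx^*\|^2$ and observe that $\|\vA\vx^*\|\le\widehat\epsilon$ from~\eqref{eq:kktviofgesub} and $\vb=\vzero$, reducing the task to bounding $\|\bar\vA\vx^*\|$. I plan to prove the stronger claim $\bar\vA\vx^*=\vzero$ by contradiction, in the spirit of Lemma~\ref{cor:kktvio2} but with a carefully redesigned perturbation. If some block $\bar i \in \mathcal M$ has $\bar\vxi := \vx^*_{\bar i + 1} - \vx^*_{\bar i}\neq\vzero$, I construct $\vv^*\in\mathbf{Null}(\vA)$ concentrated at that jump, namely $\vv^*_i = \vzero$ for $i\le\bar i$ and $\vv^*_i=-\bar\vxi/(mL_f)$ for $i>\bar i$, so that $[\bar\vA\vv^*]_{\bar i}=-\bar\vxi$ and all other blocks of $\bar\vA\vv^*$ vanish, while $\|\vv^*\| \le \|\bar\vxi\|/(\sqrt mL_f)$ and $\|\bar\vA\vv^*\|=\|\bar\vxi\|$. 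This tiny ratio $\|\vv^*\|/\|\bar\vA\vv^*\|$ is the essential new ingredient: unlike in Lemma~\ref{cor:kktvio2}, the Lagrangian error $\widehat\epsilon\|\vv^*\|$ must not drown out the signal of size $\beta\|\bar\vxi\|_1/(mL_f)$ that I intend to extract.

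Taking the inner product of the residual $\vr_1 := \nabla f_0(\vx^*) + \bar\vA^\top\vz_1 + \vA^\top\vz_2$ (with $\|\vr_1\|\le\widehat\epsilon$) against $\vv^*$ and using $\vA\vv^*=\vzero$ yields $\nabla f_0(\vx^*)^\top\vv^* + \langle\vz_1,\bar\vA\vv^*\rangle = \langle\vr_1,\vv^*\rangle$, and together with the gradient bound from Lemma~\ref{cor:boundf}(c) this gives an upper bound of the form $|\langle[\vz_1]_{\bar i},\bar\vxi\rangle| \le C_1\epsilon\|\bar\vxi\|_1/(\sqrt mL_f)$ for an explicit constant $C_1$. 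On the other hand, the subgradient $\vxi\in\partial\bar g(\vy^*)=(\beta/(mL_f))\partial\|\vy^*\|_1$ with $\|\vxi-\vz_1\|_\infty\le\|\vxi-\vz_1\|_2\le\widehat\epsilon$ must be sign-locked to $\sign([\bar\vxi]_j)$ with magnitude $\beta/(mL_f)$ at every coordinate $j$ of block $\bar i$ where $mL_f|[\bar\vxi]_j|$ dominates the feasibility error $|[\vy^*-\bar\vA\vx^*]_{\bar i,j}|\le\widehat\epsilon$. Summing these contributions gives a lower bound of order $\beta\|\bar\vxi\|_1/(mL_f)$ for $|\langle[\vz_1]_{\bar i},\bar\vxi\rangle|$, and matching the two bounds produces an inequality on $\beta$ that directly contradicts the choice $\beta>(50\pi+1+\|\vA\|)\sqrt m\epsilon$ from~\eqref{eq:betachoice}.

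The main obstacle is precisely the coordinate-wise mismatch between $\partial\bar g(\vy^*)$ and $\partial\bar g(\bar\vA\vx^*)$: the subgradient $\vxi$ tracks the sign pattern of $\vy^*$ rather than that of $\bar\vA\vx^*$, and these two patterns can disagree at coordinates where $|[\bar\vxi]_j|$ is smaller than the feasibility error $\widehat\epsilon$, so a direct reduction to Lemma~\ref{cor:kktvio2} via the chain rule $\partial g = \bar\vA^\top\partial\bar g(\bar\vA\cdot)$ is unavailable. Handling this requires a careful split of the coordinates in block $\bar i$ into ``signal-dominated'' ones (where sign-locking holds) and ``noise-dominated'' ones (where it may fail); the contribution of the noise-dominated coordinates is exactly what is absorbed by the extra slack $\|\vA\|\sqrt m\epsilon$ that is baked into the definition of $\beta$ in~\eqref{eq:betachoice} and is absent in the simpler Lemma~\ref{cor:kktvio2}.
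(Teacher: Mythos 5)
Your reduction of the dual condition and the decomposition of $\|\vH\vx^*\|$ into the $\bar\vA$ and $\vA$ parts are fine, but the core of your primal argument has a genuine gap: the claim $\bar\vA\vx^*=\vzero$ that you set out to prove by contradiction is false in general. The stationarity conditions in~\eqref{eq:kktviofgesub} constrain the subgradient at $\vy^*$, not at $\bar\vA\vx^*$, and the two are only tied together through the feasibility residual $\|\vy^*-\bar\vA\vx^*\|\le\widehat\epsilon$. Take $\vy^*=\vzero$ and let $\bar\vA\vx^*$ have a single nonzero coordinate of magnitude $\widehat\epsilon/2$: all four conditions in~\eqref{eq:kktviofgesub} can be met (since $\partial\bar g(\vzero)$ is the full box), yet $\bar\vA\vx^*\neq\vzero$. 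This is exactly the regime your own ``noise-dominated'' coordinates describe, and when \emph{every} coordinate of the block $\bar i$ is noise-dominated (i.e.\ $mL_f\|\bar\vxi\|_\infty\le\widehat\epsilon$), your sign-locking lower bound $|\langle[\vz_1]_{\bar i},\bar\vxi\rangle|\gtrsim\beta\|\bar\vxi\|_1/(mL_f)$ collapses entirely --- $[\vxi]_{\bar i}$ can even be $\vzero$ --- so there is no contradiction to extract, and the extra slack $\|\vA\|\sqrt m\epsilon$ in~\eqref{eq:betachoice} cannot rescue an inequality whose left-hand side has vanished.

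The fix is to aim the contradiction at $\vy^*$ rather than at $\bar\vA\vx^*$: this is what the paper does. Assume $\vy^*_{\bar i}\neq\vzero$ and build the perturbation from $\vy^*_{\bar i}$ itself (set $\vv^*_i=\vy^*_{\bar i}/(mL_f)$ for $i\le\bar i m_1$ and $\vzero$ otherwise), so that $\vu^*=\bar\vA\vv^*$ has the single nonzero block $-\vy^*_{\bar i}$ and $\bar g(\vy^*+s\vu^*)=\bar g(\vy^*)-\tfrac{s\beta}{mL_f}\|\vy^*_{\bar i}\|_1$ decreases at the full rate with no sign-mismatch issue. Comparing this guaranteed decrease against the directional-derivative lower bound $-\widehat\epsilon(1+\|\bar\vA\|)\|\vv^*\|$ (which is where the $\|\vA\|$ term in $\beta$ is actually consumed, via $\|\vxi-\vz_1\|\le\widehat\epsilon$ multiplied by $\|\bar\vA\|$) forces $\vy^*=\vzero$. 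The feasibility condition then gives $\|\bar\vA\vx^*\|=\|\vy^*-\bar\vA\vx^*\|\le\widehat\epsilon$ for free, and combined with $\|\vA\vx^*\|\le\widehat\epsilon$ you get $\|\vH\vx^*\|\le2\widehat\epsilon$. Your perturbation built from $\bar\vxi=\vx^*_{\bar i+1}-\vx^*_{\bar i}$ is the one used in Lemma~\ref{cor:kktvio2} for problem~\eqref{eq:model}, where $g$ really is evaluated at $\bar\vA\vx$; it does not transfer to the split formulation.
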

\begin{proof}
By Definition~\ref{def:eps-pt-P}, there exist $\vz_1\in \RR^{\bar{n}}$ and $\vz_2\in\RR^n$ such that the conditions in \eqref{eq:kktviofgesub} hold.
 Hence, for some $\vxi\in \partial \bar{g}(\vy^*)$, we have  $\|\vxi - \vz_1\| \le \widehat{\epsilon}$, and
\begin{equation}
	\label{eq:dderivegeq0_changenew}
\|\nabla f_0(\vx^*) +\bar{\vA}\zz\vxi + \vA\zz\vz_2\|\leq\|\nabla f_0(\vx^*) + \bar{\vA}\zz \vz_1 + \vA\zz \vz_2\|+ \|\bar\vA\|\widehat{\epsilon}\leq \widehat{\epsilon}+ \|\bar\vA\|\widehat{\epsilon}.
\end{equation}
Moreover, \eqref{eq:kktviofgesub} implies that, for any $\vv\in\mathbf{Null}(\vA)$,  we have
\begin{equation}
	\label{eq:dderivegeq0epsilon_changenew}
	\begin{aligned}
		F^{\prime}(\vx^*,\vy^*;\vv, \bar{\vA}\vv)=& \vv\zz\nabla  f_0(\vx^*)+ \bar g^{\prime}(\vy^*;\bar\vA\vv) \\
		\geq&
		 \vv\zz\nabla  f_0(\vx^*) +\vv\zz \bar\vA\zz\vxi =\vv\zz\left(\nabla f_0(\vx^*) +\bar{\vA}\zz\vxi + \vA\zz\vz_2\right)\geq -\widehat{\epsilon}(1+\|\bar\vA\|)\|\vv\|,
	\end{aligned}
\end{equation}
where the first inequality follows from \eqref{eq:result-dir-der} with $g$ replaced by $\bar g$, 
and the second one is by Cauchy-Schwarz inequality and~\eqref{eq:dderivegeq0_changenew}.

Below we prove $\vy^*=\mathbf{0}$. We write it into the block-structured form
$$\vy^*=(\vy_1^{*\top}, \ldots,\vy_{3m_2-1}^{*\top})\zz \text{ with }\vy^*_{i}\in\RR^{{\bar{d}}}, \forall\, i=1,2,\ldots,3m_2-1.$$ 
If  $\vy^*\neq \mathbf{0}$, then $\vy^*_{\bar i}\neq \vzero$ for some $\bar i\in \{1,2,\ldots,3m_2-1\}$. Let  
$\vv^*=(\vv_1^{*\top},\vv_2^{*\top},\dots,\vv_m^{*\top})^\top$ where 
$
\vv_i^*= \vy^*_{\bar i}/(mL_f) \text{ for }i\leq \bar{i}m_1,  \text{ and } \vv_i^*= \vzero \text{ otherwise.}
$
We then have $\vv^*_i=\vv^*_{i+1}$ for any $i\neq \bar{i}m_1$, so $\vA\vv^*=\mathbf{0}$ by Proposition~\ref{prop:consensus}(c). Moreover, let $\vu^*=\bar\vA\vv^*=(\vu_1^{*\top}, \ldots,\vu_{3m_2-1}^{*\top})\zz$ with $\vu^*_{i}\in\RR^{{\bar{d}}}$ for $i=1,2,\ldots,3m_2-1$. We must have 
$\vu_i^*= -\vy^*_{\bar i}$ for $i= \bar{i}$ and $\vu_i^*= \vzero$ for $i\neq\bar{i}$. 
Therefore by \eqref{eq:gbar}, for any $s\in (0,1)$, 
\begin{align}\label{eq:gtvchangey}
	 \bar g\left(\vy^*+s \vu^*\right) 
	= & \frac{\beta}{mL_f}  \sum_{i<\bar{i}}\left\|\vy^*_i\right\|_1+\frac{\beta}{mL_f} \|\vy^*_{\bar{i}}-s\vy^*_{\bar i}\|_1
	+\frac{\beta}{mL_f}\sum_{i>\bar{i}}\left\|\vy^*_i\right\|_1 
	=  \bar g\left(\vy^*\right)-\frac{s\beta}{mL_f}\|\vy^*_{\bar i}\|_1. 
\end{align}

Now by \eqref{eq:fi_infinitynorm}, the choice of $\vv^*_i$, and the mean value theorem, we have for any $i=1,\dots,m$ that
$$
f_i(\vx^*_i+s \vv^*_i)- f_i(\vx^*_i)=s\nabla f_i(\vx^*_i+s' \vv^*_i)^\top\vv^*_i\leq s\|\nabla f_i(\vx^*_i+s' \vv^*_i)\|_\infty\|\vv^*_i\|_1\leq \frac{50s\pi{\epsilon}}{\sqrt{m}}\cdot\frac{\|\vy^*_{\bar i}\|_1}{mL_f},
$$
where $s'\in(0,s)$. The inequality above, together with \eqref{eq:gtvchangey} and the definition of $f_0$ in~\eqref{eq:f0}, implies
\begin{equation*}
	\begin{aligned}
		&\frac{1}{s}\bigg(F(\vx^*+s \vv^*,\vy^*+s \vu^*)-F(\vx^*,\vy^*)\bigg)=\frac{1}{s}\bigg( f_0(\vx^*+s \vv^*)- f_0(\vx^*)+\bar g(\vy^*+s \vu^*)-\bar g(\vy^*)\bigg)\\
		=&\frac{1}{s}\bigg( \sum_{i=1}^m\big(f_i(\vx_i^*+s \vv_i^*)- f_i(\vx_i^*)\big)+\bar g(\vy^*+s \vu^*)-\bar g(\vy^*)\bigg)\\
		\leq& \frac{1}{smL_f}\left(50\pi s{\epsilon}\sqrt{m}\|\vy^*_{\bar i}\|_1-\beta s\|\vy^*_{\bar i}\|_1\right)= \frac{\left(50\pi {\epsilon}\sqrt{m}-\beta \right)\|\vy^*_{\bar i}\|_1}{mL_f}.
	\end{aligned}
\end{equation*}
Letting $s\downarrow 0$ in the inequality above gives $\frac{\left(50\pi {\epsilon}\sqrt{m}-\beta \right)\|\vy^*_{\bar i}\|_1}{mL_f}\geq F^{\prime}(\vx^*,\vy^*;\vv^*,\vu^*)$. Recall $\vA\vv^*=\mathbf{0}$ and $\vu^*=\bar\vA\vv^*$. Hence, we have from \eqref{eq:dderivegeq0epsilon_changenew} and the choice of $\vv^*$ that 
$$
\frac{\left(50\pi {\epsilon}\sqrt{m}-\beta \right)\|\vy^*_{\bar i}\|_1}{mL_f}\geq -\widehat{\epsilon}(1+\|\bar\vA\|)\|\vv^*\|\geq -\widehat{\epsilon}(1+\|\bar\vA\|)\frac{\sqrt{\bar{i}}\|\vy^*_{\bar i}\|}{mL_f}
\geq -\widehat{\epsilon}(1+\|\bar\vA\|)\frac{\sqrt{m}\|\vy^*_{\bar i}\|_1}{mL_f}.
$$
Since $\beta > (50\pi+1+\|\vA\|)\sqrt{m}{\epsilon}$, $\epsilon\ge \widehat{\epsilon}$ and $\|\vA\| \ge \|\bar \vA\|$, the inequalities above can hold only when $\vy^*_{\bar i} = \vzero$. This contradicts to the hypothesis $\vy^*_{\bar i}\neq \vzero$. Hence, $\vy^* = \vzero$, 
which together with \eqref{eq:kktviofgesub} gives $\|\bar\vA\vx^*\|\leq \widehat{\epsilon}$. Furthermore, $\|\vA\vx^*\|\leq \widehat{\epsilon}$ from~\eqref{eq:kktviofgesub}. Thus 
$\|\vH\vx^*\|\leq \|\bar\vA\vx^*\|+\|\vA\vx^*\|\leq 2\widehat{\epsilon}$, which, together with  $\|\nabla f_0(\vx^*) + \bar{\vA}\zz \vz_1 + \vA\zz \vz_2\| \le \widehat{\epsilon}$ from~\eqref{eq:kktviofgesub}, indicates that $\vx^*$ a $2\widehat{\epsilon}$-stationary point of~\eqref{eq:model3}.
\end{proof}

With Lemma~\ref{cor:kktequiv3}, we can establish the lower-bound complexity for solving problem~\eqref{eq:model-spli} in a similar way to show Theorem~\ref{thm:lower}. In particular, Lemma~\ref{cor:kktequiv3} indicates that a solution $(\vx^{(t)},\vy^{(t)})$ cannot be an $\epsilon/2$-stationary point of 
the reformulation~\eqref{eq:model-spli} of instant~$\mathcal{P}$, 
if $\vx^{(t)}$ is not an $\epsilon$-stationary point of~\eqref{eq:model3}. By Lemmas~\ref{lem:nablaf} and~\ref{lem:kktvio}, if there exists $\bar{j}\in\{1,2,\ldots,{\bar{d}}\}$ such that $[\bar{\vx}^{(t)}]_{\bar{j}}=0$, where $\bar{\vx}^{(t)}= \frac{1}{m} \sum_{i=1}^m \vx_i^{(t)}$, then $\vx^{(t)}$ cannot be an $\epsilon$-stationary point of problem~\eqref{eq:model3} and thus cannot be an $\epsilon/2$-stationary point of the reformulation~\eqref{eq:model-spli} of instant~$\mathcal{P}$.
Finally, similar to Proposition~\ref{thm:iterateguess}, we can show that, for any algorithm that is applied to 
the reformulation~\eqref{eq:model-spli} of instance~$\mathcal{P}$, 
if it starts from $(\vx^{(0)},\vy^{(0)})=(\mathbf{0},\mathbf{0})$ and generates a sequence $\{({\vx}^{(t)}, {\vy}^{(t)} )\}_{t\ge 0}$ satisfying Assumption~\ref{ass:linearspan3}, then $\textnormal{supp}({\vx}^{(t)})\subset\{1,\dots,\bar{j}-1\}$ for some $\bar{j}\in\{1,2,\ldots,{\bar{d}}\}$ if $t$ is not large enough. Hence, ${\vx}^{(t)}$ cannot be $\omega$-close to any $\epsilon$-stationary point of instance~$\mathcal{P}$ for any $\omega\in [0, \frac{150\pi \epsilon}{L_f})$, according to the proof of Theorem \ref{thm:lower}. This way, we can obtain a lower bound of oracle complexity to produce an $\epsilon$-stationary point of \eqref{eq:model-spli} and also a lower bound to obtain a near $\epsilon$-stationary point of \eqref{eq:model}. 
Since the aforementioned arguments are similar to those for proving Theorem~\ref{thm:lower}, we simply present the lower complexity bounds in the theorem and the corollary below and put the proofs in Appendix~\ref{sec:appen1}.  


\begin{theorem}
						\label{thm:lbcomposite}
Let $\epsilon>0$ and $L_f > 0$ be given.								Suppose an algorithm  is applied to problem~\eqref{eq:model-spli} that satisfies Assumption~\ref{assume:problemsetup}\textnormal{(a, c)} and $\inf_{\vx, \vy} F(\vx, \vy) > -\infty$.
						Suppose the generated sequence $\{(\vx^{(t)},\vy^{(t)})\}_{t\geq0}$ satisfies Assumption~\ref{ass:linearspan3}. Then there exists an instance of problem~\eqref{eq:model-spli}, i.e., the reformulation~\eqref{eq:model-spli} of instance~$\mathcal{P}$ given in Definition~\ref{def:hardinstance}, such that the algorithm requires at least $\left\lceil \frac{\kappa([\bar{\vA}; \vA]) L_f \Delta_F}{72000\pi^2} \epsilon^{-2}\right\rceil$ oracles to obtain an $\epsilon$-stationary point of that instance, where $\Delta_F =F(\vx^{(0)}, \vy^{(0)})-\inf_{\vx, \vy} F(\vx, \vy)$.
\end{theorem}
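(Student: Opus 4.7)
The plan is to adapt the proof of Theorem~\ref{thm:lower} to the splitting reformulation~\eqref{eq:model-spli}, using the (SP) reformulation of instance~$\mathcal{P}$ from Definition~\ref{def:hardinstance} as the worst-case instance. The argument proceeds in three stages that mirror those sketched in the paragraphs preceding the theorem statement.

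First, I would reduce stationarity of (SP) to stationarity of (AP). By Lemma~\ref{cor:kktequiv3}, every $\epsilon$-stationary point $(\vx^{(t)}, \vy^{(t)})$ of the (SP) reformulation of instance~$\mathcal{P}$ (where the tolerance parameter of $\mathcal{P}$ is taken as a suitable constant multiple of $\epsilon$ so that the hypothesis $\widehat\epsilon\le\epsilon_\mathcal{P}$ of Lemma~\ref{cor:kktequiv3} applies with $\widehat\epsilon=\epsilon$) yields an $\epsilon_\mathcal{P}$-stationary point $\vx^{(t)}$ of the auxiliary problem~\eqref{eq:model3}. Combining this with Lemmas~\ref{lem:nablaf} and~\ref{lem:kktvio} forces every coordinate of $\bar\vx^{(t)}:=\frac{1}{m}\sum_{i=1}^m\vx_i^{(t)}$ to obey $|[\bar\vx^{(t)}]_j|\ge \frac{150\pi\epsilon_\mathcal{P}}{\sqrt{m}L_f}$ for every $j=1,\dots,\bar d$. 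In particular, if a single coordinate of $\bar\vx^{(t)}$ vanishes, then $(\vx^{(t)},\vy^{(t)})$ is \emph{not} an $\epsilon$-stationary point of (SP).

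Second, I would establish the (SP)-analog of Proposition~\ref{thm:iterateguess}: for any algorithm obeying Assumption~\ref{ass:linearspan3} and started at $(\vx^{(0)},\vy^{(0)})=(\mathbf{0},\mathbf{0})$ (achievable after translation) on the (SP) reformulation of $\mathcal{P}$,
\begin{equation*}
\textnormal{supp}(\vx_i^{(t)}) \subset \{1,\dots,\bar j - 1\} \ \text{ for all } i\in[1,m] \ \text{ and } \ t\le 1+m(\bar j-2)/6, \quad \forall\,\bar j\in[2,\bar d].
\end{equation*}
To prove this I would couple the $\vx$- and $\vy$-sequences in a joint induction with the same outer/inner structure as in Proposition~\ref{thm:iterateguess}. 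The new oracle operations in Assumption~\ref{ass:linearspan3} all obey support-propagation bounds analogous to Lemmas~\ref{lem:iterateguess} and~\ref{lem:supp}: $\bar\vA^\top \bar\vA$ is block-tridiagonal along the indices in $\mathcal{M}$ and yields the same inclusion as Lemma~\ref{lem:supp}(a); the maps $\bar\vA\vx$ and $\bar\vA^\top\vy$ couple the $k$-th $\vy$-block only with the $\vx$-blocks at indices $km_1$ and $km_1+1$, contributing at most one new block-index to the frontier per iteration; and since $\bar g=\frac{\beta}{mL_f}\|\cdot\|_1$ is coordinate-separable, $\prox_{\eta\bar g}$ is entrywise soft-thresholding with $\textnormal{supp}(\prox_{\eta\bar g}(\vxi))\subset\textnormal{supp}(\vxi)$, strictly stronger than Lemma~\ref{lem:supp}(b). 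Hence the even/odd case split for $\bar j$ and the $s=0,1,\dots,m/6$ inner induction of Proposition~\ref{thm:iterateguess} carry over verbatim, with the $\vy$-blocks tracked as a subordinate sequence whose supports are dominated by the corresponding $\vx$-block supports.

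Third, I would conclude as in the proof of Theorem~\ref{thm:lower}. Taking $\bar j=\bar d$ in the support bound, every iterate with $t\le 1+m(\bar d-2)/6$ satisfies $[\bar\vx^{(t)}]_{\bar d}=0$ and so is not an $\epsilon$-stationary point of (SP). By Lemma~\ref{cor:boundf}(a) together with the elementary inequality $\Delta_F\le f_0(\mathbf{0})+\bar g(\mathbf{0})-\inf_{\vx}f_0(\vx)-\inf_{\vy}\bar g(\vy)=f_0(\mathbf{0})-\inf_\vx f_0(\vx)$, one may choose $\bar d$ roughly as small as $\lceil L_f\Delta_F/(C\pi^2\epsilon^2)\rceil$ for an absolute constant $C$ while still respecting the objective-gap budget of $\mathcal{P}$; combined with $2+m(\bar d-2)/6\ge m\bar d/12$ (valid for $\bar d\ge 5$) and Lemma~\ref{lem:condH} converting $m$ into $\kappa([\bar\vA;\vA])$, the announced bound $\lceil \kappa([\bar\vA;\vA]) L_f\Delta_F/(72000\pi^2)\cdot\epsilon^{-2}\rceil$ drops out after constant tracking.

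The main obstacle is the second stage: carefully verifying the joint support-propagation invariant under the richer oracle in Assumption~\ref{ass:linearspan3}. The additional terms $\bar\vA\vx$, $\bar\vA^\top\vy$, and $\bar\vA\bar\vA^\top\vy$ each need to be checked, as does the compatibility of the $\vy$-side induction with the $\vx$-side frontier at each step. The key simplification is that $\bar g$ is the separable $\ell_1$-norm in the hard instance, so $\prox_{\eta\bar g}$ is coordinate-wise and never leaks support across blocks, in sharp contrast with $\prox_{\eta g}$ under Assumption~\ref{ass:linearspan} whose cross-block coupling forced the nontrivial case analysis of Lemma~\ref{lem:supp}(b).
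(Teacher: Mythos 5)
Your overall route is the same as the paper's: reduce $\epsilon$-stationarity of the (SP) reformulation of instance~$\cP$ to stationarity of the auxiliary problem~\eqref{eq:model3} via Lemma~\ref{cor:kktequiv3} (building the instance with tolerance $2\epsilon$), prove a support-propagation proposition under Assumption~\ref{ass:linearspan3}, and then count coordinates using Lemma~\ref{cor:boundf}(a), the bound $\Delta_F\le f_0(\mathbf{0})-\inf_\vx f_0(\vx)$, and Lemma~\ref{lem:condH}. Your list of the new oracle maps that must be checked ($\bar\vA\vx$, $\bar\vA^\top\vy$, $\bar\vA\bar\vA^\top\vy$, and the separable $\prox_{\eta\bar g}$) and your reason why the $\vy$-side feedback cannot accelerate the $\vx$-frontier are both correct and correspond to the paper's Lemma~\ref{lem:supp2} and Proposition~\ref{thm:iterateguess3}.

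There is, however, a concrete quantitative gap in your second stage. You assert that the invariant $\textnormal{supp}(\vx_i^{(t)})\subset\{1,\dots,\bar j-1\}$ for $t\le 1+m(\bar j-2)/6$ carries over ``verbatim'' from Proposition~\ref{thm:iterateguess}. That threshold is not the right one here. In Proposition~\ref{thm:iterateguess} the block frontier advances by \emph{two} per iteration because $\vx^{(t)}$ is obtained by composing a linear span (which gains one block via $\vA^\top\vA$) with $\prox_{\eta g}$ (which gains a second block via the cross-block coupling of Lemma~\ref{lem:supp}(b)); hence the inner induction runs over $s=0,\dots,m/6$ with increments of two. Under Assumption~\ref{ass:linearspan3} the $\vx$-iterate is a \emph{single} linear span --- the only composed prox sits on the $\vy$-side, where $\prox_{\eta\bar g}$ is coordinatewise and support-preserving --- so the frontier advances by at most \emph{one} block per iteration, and the correct threshold is $t\le 1+m(\bar j-2)/3$, proved by an inner induction over $s=0,\dots,m/3$ with single-block increments (this is the paper's Proposition~\ref{thm:iterateguess3}). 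Your weaker invariant is still true, being implied by the stronger one, but it only certifies that the algorithm needs $2+m(\bar d-2)/6\ge m\bar d/12$ oracles; after substituting the instance tolerance $2\epsilon$ this yields the constant $1/(144000\pi^2)$, i.e., half the bound $1/(72000\pi^2)$ claimed in the theorem, so your ``constant tracking'' does not close. To recover the stated constant you must rerun the inner induction with increments of one rather than copying the two-block increments of Proposition~\ref{thm:iterateguess}.
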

			

\begin{corollary}
	\label{cor:lower2}
	Let $\epsilon>0$ and $L_f > 0$ be given. Suppose an algorithm is applied to problem~\eqref{eq:model} that satisfies Assumption~\ref{assume:problemsetup} 
	and generates a sequence $\{(\vx^{(t)},\vy^{(t)})\}_{t\geq0}$ that satisfies Assumption~\ref{ass:linearspan3}. Then for any $\omega\in [0, \frac{150\pi \epsilon}{L_f})$, there exists an instance of problem~\eqref{eq:model}, i.e., instance~$\mathcal{P}$ in Definition~\ref{def:hardinstance}, such that the algorithm requires at least 
	$\left \lceil \frac{\kappa([\bar{\vA}; \vA]) L_f \Delta_{F_0}}{18000\pi^2} \epsilon^{-2} \right\rceil$ 
	oracles to obtain a point $\vx^{(t)}$ that is $\omega$-close to an $\epsilon$-stationary point of instance~$\mathcal{P}$, where $\Delta_{F_0}=F_0(\vx^{(0)})-\inf_\vx F_0(\vx)$. 
\end{corollary}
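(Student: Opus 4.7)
The proof strategy parallels that of Theorem~\ref{thm:lower}, but with the $\prox_{\eta g}$ oracle of Assumption~\ref{ass:linearspan} replaced by the combined oracles of Assumption~\ref{ass:linearspan3}: namely $\vA^\top\vA$, $\bar\vA^\top\bar\vA$, $\bar\vA^\top\vy^{(s)}$, $\bar\vA\vx^{(s)}$, $\bar\vA\bar\vA^\top\vy^{(s)}$, and the separable $\prox_{\eta\bar g}$. The plan is to apply the algorithm to the same hard instance $\mathcal{P}$ used in Theorem~\ref{thm:lower} and show that $\vx^{(t)}$ cannot be $\omega$-close to any $\epsilon$-stationary point of $\mathcal{P}$ unless $t$ exceeds the claimed bound.

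First, WLOG I would assume $\vx^{(0)}=\vzero$ and $\vy^{(0)}=\vzero$ by the same change-of-variable trick used in Theorem~\ref{thm:lower}. The core technical step is to prove an analog of Proposition~\ref{thm:iterateguess} under Assumption~\ref{ass:linearspan3}, jointly tracking the block-supports of both $\vx^{(t)}$ and $\vy^{(t)}$. Specifically, I would establish by induction on $\bar j$ that $\textnormal{supp}(\vx_i^{(t)}) \subset \{1,\ldots,\bar j - 1\}$ for every $i$ and every $t \leq 1 + m(\bar j - 2)/3$, which is slower by a factor of two than the rate in Proposition~\ref{thm:iterateguess}. The essential structural facts to exploit are: Lemma~\ref{lem:iterateguess} for the support of $\nabla f_i$; Lemma~\ref{lem:supp}(a) for $(\vA^\top\vA\vx)_i$; the block structure of $\bar\vA$ in~\eqref{eq:AandAbar}, which implies that $(\bar\vA^\top\bar\vA\vx)_i$ is supported in $\textnormal{supp}(\vx_{jm_1})\cup\textnormal{supp}(\vx_{jm_1+1})$ when $i=jm_1$ or $jm_1+1$ and is zero elsewhere, that $(\bar\vA^\top\vy)_i \subset \textnormal{supp}(\vy_j)$ at those same positions, that $(\bar\vA\vx)_j \subset \textnormal{supp}(\vx_{jm_1})\cup\textnormal{supp}(\vx_{jm_1+1})$, and that $\bar\vA\bar\vA^\top$ is block-diagonal and therefore preserves the block-supports of $\vy$; and the coordinate-separability of $\bar g$ in~\eqref{eq:gbar}, so that $\prox_{\eta\bar g}$ preserves block-supports. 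The factor-of-two slowdown compared with Theorem~\ref{thm:lower} arises because all these operators act directly on previous iterates, rather than on a compound $\vxi^{(t)}$ to which $\prox_{\eta g}$ is then applied; consequently, each iteration can push the support-front by only one block across an $\mathcal{M}$-boundary instead of two.

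Once the support lemma is in hand, the remainder follows Theorem~\ref{thm:lower} essentially verbatim. By Lemma~\ref{cor:kktvio2}, any $\epsilon$-stationary point $\vx^*$ of $\mathcal{P}$ is also an $\epsilon$-stationary point of the auxiliary problem~\eqref{eq:model3}, and then Lemmas~\ref{lem:nablaf} and~\ref{lem:kktvio} force $|[\bar\vx^*]_{\bar d}| \geq 150\pi\epsilon/(\sqrt m L_f)$. Since the support lemma gives $[\bar\vx^{(t)}]_{\bar d}=0$ whenever $t \leq 1 + m(\bar d - 2)/3$, it follows that $\|\vx^{(t)}-\vx^*\|^2 \geq m\cdot (150\pi\epsilon/(\sqrt m L_f))^2 = (150\pi\epsilon/L_f)^2 > \omega^2$, ruling out $\omega$-closeness. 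Combining this with $\bar d \geq L_f\Delta_{F_0}/(3000\pi^2\epsilon^2)$ from Lemma~\ref{cor:boundf}(a), $\kappa([\bar\vA;\vA])<m$ from Lemma~\ref{lem:condH}, and $\bar d\geq 5$ yields $1+m(\bar d-2)/3 \geq m\bar d/6 \geq mL_f\Delta_{F_0}/(18000\pi^2\epsilon^2) > \kappa([\bar\vA;\vA])L_f\Delta_{F_0}/(18000\pi^2\epsilon^2)$, as claimed.

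The main obstacle is the joint support-expansion induction: one must simultaneously control the block-supports of $\vx^{(t)}$ and $\vy^{(t)}$ through the expanded set of operators and verify case-by-case (even $\bar j$ versus odd $\bar j$, and positions inside a chain versus at an $\mathcal{M}$-boundary) that no combination of oracles available under Assumption~\ref{ass:linearspan3} can advance the support-front faster than one block per iteration across the $\mathcal{M}$-boundaries.
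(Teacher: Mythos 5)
Your proposal is correct and follows essentially the same route as the paper: the paper's Appendix~\ref{sec:appen1} proves exactly the joint support-expansion result you describe (Proposition~\ref{thm:iterateguess3}, with the same rate $t\le 1+m(\bar j-2)/3$, supported by Lemma~\ref{lem:supp2}), and then concludes via Lemmas~\ref{cor:kktvio2}, \ref{lem:nablaf}, \ref{lem:kktvio}, \ref{cor:boundf}(a) and \ref{lem:condH} with the identical distance bound and final arithmetic. The only part you leave unexecuted is the case-by-case induction itself, which you correctly flag as the main technical burden and for which you have identified all the needed structural facts.
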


\section{Tightness of Lower Bounds of Oracle Complexity under Assumption \ref{ass:linearspan3}}\label{sec:ub}
					As we discussed in Section~\ref{sec:relatedwork}, various existing FOMs can be applied to \eqref{eq:model} and~\eqref{eq:model-spli} under a non-smooth non-convex setting. However, the best-known upper bound of oracle complexity  is $O\left(\kappa^2([\vA;\bar{\vA}]) L_f^2\Delta \epsilon^{-2} \right)$ (see \cite{goncalves2017convergence}) with $\Delta= \Delta_{F_0}$ or $\Delta_F$, which does not match our lower bounds. 
					To close the gap between the upper and lower complexity bounds, we present a new \emph{inexact proximal gradient} (IPG) method in this section that falls in the class of algorithms under Assumption~\ref{ass:linearspan3}. The oracle complexity of the IPG matches the lower bounds in Theorem~\ref{thm:lbcomposite} and Corollary~\ref{cor:lower2}, up to logarithmic factors, under a few more assumptions. More precisely, we will need Assumption~\ref{ass:222} and either (but not necessary both) of Assumptions~\ref{ass:dual2} and~\ref{ass:polyhedralg} below\footnote{To claim (near) tightness of our lower bounds, we also need $\vb=\vzero$ and $\bar\vb=\vzero$ under Assumptions~\ref{ass:222} and \ref{ass:polyhedralg}.}. 
					\begin{assumption}
						\label{ass:222}
						$\inf_{\vx,\vy}F(\vx, \vy) > -\infty$; $f_0$ is $l_f$-Lipschitz continuous; $\bar{g}$ is $l_g$-Lipschitz continuous; $\vA$ has  a full-row rank. There exists a feasible point $(\bar\vx, \bar\vy)$ of~\eqref{eq:model-spli} such that $\bar\vy$ is in the relative interior of $\dom(\bar{g})$. 
					\end{assumption}
					\begin{assumption}
						\label{ass:dual2}
						$[\bar{\vA}; \vA]$ has a full-row rank.
					\end{assumption}
					\begin{assumption}
						\label{ass:polyhedralg}
						$\bar{g}(\vy)=\max\{\vu^{\top}\vy : \vC \vu\leq\vd, \vu\in\RR^{\bar{n}}\}$ for some $\vC$ and $\vd$. 
					\end{assumption}
We point out that instance~$\mathcal{P}$ given in Definition~\ref{def:hardinstance} satisfies Assumptions~\ref{assume:problemsetup},~\ref{ass:222},~\ref{ass:dual2} and~\ref{ass:polyhedralg}. By Lemma~\ref{cor:boundf}, $f_0$ is ${50\pi\epsilon\sqrt{m{\bar{d}}}}$-Lipschitz continuous. Since ${\bar{d}}=\Theta(\epsilon^{-2})$ as mentioned in Theorem \ref{thm:lower}, the Lipschitz constant of $f_0$ is independent of  $\epsilon$. Also, the  Lipschitz constant of $\bar{g}$ is $l_g=\sqrt{(3m_2-1)\bar{d}}\frac{\beta}{mL_f}=\Theta(\sqrt{3m_2\bar{d}}\epsilon/\sqrt{m})$, which is not dependent on $\epsilon$ either. In addition, $\dom(\bar g)$ is the whole space. 
These observations, together with Lemmas~\ref{lem:functions} and~\ref{cor:boundf}, imply that Assumption~\ref{ass:222} is satisfied by instance~$\mathcal{P}$. Moreover, it can be easily checked that Assumptions~\ref{ass:dual2} and \ref{ass:polyhedralg} are also satisfied by instance~$\mathcal{P}$. 
%
Hence, the lower bound $O({\kappa([\bar{\vA};\vA]) L_f \Delta} \epsilon^{-2})$ with $\Delta=\Delta_F$ or $\Delta_{F_0}$ remains valid for problems~\eqref{eq:model} and~\eqref{eq:model-spli} even with these additional assumptions, indicating that the oracle complexity of the IPG method (resp. the established lower bound) under these assumptions is optimal (resp. tight). 
		
	\subsection{A New Inexact Proximal Gradient Method}


The IPG method we propose 
generates a sequence $\{(\vx^{(k)},\vy^{(k)})\}$ by 
		\begin{equation}
			\label{eq:subpga-2}
			\begin{aligned}
				(\vx^{(k+1)},\vy^{(k+1)})\approx\argmin_{\vx, \vy} \,\,&\underbrace{\left\langle \nabla f_0(\vx^{(k)}), \vx-\vx^{(k)}\right\rangle+\frac{\tau}{2}\left\|\vx-{\vx}^{(k)}\right\|^2}_{:=\bar{f}(\vx)}+\bar{g}(\vy)
				\\
				\st \,\,&\vy=
				\bar{\vA}\vx +
				\bar{\vb}, \,\, {\vA}\vx +
				{\vb}=\mathbf{0},
			\end{aligned}
		\end{equation}		
for each $k\ge0$, where $\tau \ge L_f$. Solving the minimization problem in~\eqref{eq:subpga-2} exactly can be difficult due to the coexistence of affine constraints and the regularization term. 
	%
To obtain the inexact solution $(\vx^{(k+1)},\vy^{(k+1)})$ efficiently to a desired accuracy, we consider the following Lagrangian function of the problem in~\eqref{eq:subpga-2} 
		\begin{equation}\label{eq:lag-func}
		\mathcal{L}_k(\vx, \vy, \vz)=\bar{f}(\vx)+\bar{g}(\vy)-\vz_1^{\top}\left(\vy-\left(
		\bar{\vA} \vx+
		\bar{\vb} \right)\right)+ \vz_2^{\top}\left(
		{\vA} \vx+
		{\vb} \right),
		\end{equation}
where $\vz=(\vz_1\zz,\vz_2\zz)\zz$, $\vz_1\in \RR^{\bar{n}}$ and $\vz_2\in\RR^n$ are dual variables. Let $\mathcal{D}_k$ be the negative Lagrangian dual function
\footnote{For ease of discussion, we formulate the Lagrangian dual problem into a minimization one by negating the dual function.}, i.e., 
$$
\mathcal{D}_k(\vz):= - \min_{\vx, \vy}\cL_k(\vx, \vy, \vz) = {\frac{1}{2\tau}\|\bar{\vA}\zz\vz_1 +\vA\zz\vz_2 +\nabla f_0(\vx^{(k)})-\tau \vx^{(k)}\|^2} + \bar{g}^{\star} (\vz_1)- \vz_1^{\top}\bar{\vb}- \vz_2^{\top}\vb, \forall\,\vz,
$$
where $ \bar{g}^{\star}$ is the convex conjugate function of $\bar{g}$, i.e., $\bar{g}^{\star}(\vz_1)=\max_{\vy}\{\vy^\top\vz_1-\bar{g}(\vy)\}$. 
We then define
		\begin{equation}
			\label{eq:sublb-2}
			\Omega^{(k+1)}:=\Argmin_{\vz} \mathcal{D}_k(\vz)\quad \text{ and }\quad 	\mathcal{D}_k^*:=\min_{\vz} \mathcal{D}_k(\vz).
		\end{equation}
		
		Note that $\mathbf{prox}_{\eta\bar{g}^{\star}}(\vz_1)= \vz_1-\mathbf{prox}_{\eta\bar{g}(\cdot/\eta)}(\vz_1)= \vz_1-\eta\mathbf{prox}_{\eta^{-1}\bar{g}(\cdot)}(\vz_1/\eta)$~\cite{rockafellar1970convex}. Thus 
		the proximal operator $\mathbf{prox}_{\eta\bar{g}^{\star}}(\vz)$ can be calculated easily if $\mathbf{prox}_{\eta\bar{g}(\cdot/\eta)}(\vz_1)$ can be. 
		Moreover, when Assumption~\ref{ass:dual2} holds, the objective function in~\eqref{eq:sublb-2} is strongly convex composite, so an \emph{accelerated proximal gradient} (APG) method, e.g., the one in~\cite{nesterov2013gradient}, can approach $\Omega^{(k+1)}$ in~\eqref{eq:sublb-2} at a linear rate. When Assumption~\ref{ass:polyhedralg} holds, it is shown in~\cite[Theorem 10]{necoara2019linear} that  the objective function in~\eqref{eq:sublb-2} has a quadratic growth, so a restarted APG method can approach an optimal solution at a linear rate. This motivates us to apply a (restarted) APG method to find a nearly optimal solution of problem in~\eqref{eq:sublb-2}, which is then used to obtain $(\vx^{(k+1)},\vy^{(k+1)})$. 
		
		More specifically, we find a near-optimal point $\vz^{(k+1)}=((\vz_1^{(k+1)})\zz, (\vz_2^{(k+1)})\zz)\zz$ of $\min_{\vz} \mathcal{D}_k(\vz)$ such that  
		\begin{equation}
			\label{eq:boundz-2}
			\vz_1^{(k+1)}\in \dom(\bar{g}^{\star})\quad\text{ and }\quad
			\dist(\vz^{(k+1)}, \Omega^{(k+1)})\leq \delta,
		\end{equation} 
		where $\delta$ is a small number (to be specified). Then, we obtain a primal solution from $\vz^{(k+1)}$ by 
		\begin{eqnarray}
			\label{eq:xupdate-2}
			{\vx}^{(k+1)}&=& \vx^{(k)}-\frac{1}{\tau} \left(\bar{\vA}\zz\vz_1^{(k+1)} +\vA\zz\vz_2^{(k+1)} +\nabla f_0(\vx^{(k)})\right),\\
			\label{eq:yupdate-2}
			{\vy}^{(k+1)}&=& \prox_{\sigma^{-1} \bar{g}}\left(\sigma^{-1} \vz_1^{(k+1)} +\bar{\vA}{\vx}^{(k+1)} +
			\bar{\vb}  \right).
		\end{eqnarray}
		This procedure is presented in Algorithm~\ref{alg:ipga-2}. 
		
		\begin{algorithm}[H]
			\caption{An inexact proximal gradient (IPG) method for problem~\eqref{eq:model-spli}}\label{alg:ipga-2}
			\begin{algorithmic}[1]
				\State \textbf{Input:} a feasible initial point $({\vx}^{(0)}, \vy^{(0)})$, $\sigma>0$, $\epsilon>0$, and $\tau> L_f$. 
				\State Choose $\delta>0$ and let $k\leftarrow 0$.
				\While {an $\epsilon$-stationary  point of problem~\eqref{eq:model-spli} is not obtained}
				
				\State Calculate  a near-optimal point $\vz^{(k+1)}$ of problem~$\min_{\vz} \mathcal{D}_k(\vz)$ that satisfies the conditions in~\eqref{eq:boundz-2}. 
				
				\State Set $\vx^{(k+1)}$ by~\eqref{eq:xupdate-2} and $\vy^{(k+1)}$ by~\eqref{eq:yupdate-2}.
				
				\State Let $k\leftarrow k+1$.
				
				\EndWhile
				\State \textbf{Output:} $(\vx^{(k)},\vy^{(k)})$.
			\end{algorithmic}
		\end{algorithm}

	\subsection{Number of Outer Iterations for Finding an $\epsilon$-stationary Point}
	To characterize the convergence property of Algorithm~\ref{alg:ipga-2}, we need the following two lemmas. 
		\begin{lemma}
			\label{lem:barxyz-2}
Suppose that Assumption~\ref{ass:222} holds.	For any $\sigma>0$ and any $\bar{\vz}^{(k+1)}\in\Omega^{(k+1)}$, let  $(\bar{\vx}^{(k+1)}, \bar{\vy}^{(k+1)})$ be the optimal solution of the strongly convex problem 
			\begin{equation}
				\label{eq:subxy-2}
				\min_{\vx, \vy }\left\{\mathcal{L}_k(\vx, \vy, \bar{\vz}^{(k+1)})+\frac{\sigma}{2}\|{\vy}-
				(\bar{\vA}{\vx} +
				\bar{\vb})\|^2+ \frac{\sigma}{2}\|
				{\vA}{\vx} +
				{\vb}\|^2\right\},
			\end{equation}
where $\cL_k$ and $\Omega^{(k+1)}$ are defined in~\eqref{eq:lag-func} and~\eqref{eq:sublb-2}, respectively.			Then it holds that for any $k\ge0$,
			\begin{equation}
				\label{eq:feasiblexy-2}
				\bar{\vy}^{(k+1)}-
				(\bar{\vA}\bar{\vx}^{(k+1)} +
				\bar{\vb})=\mathbf{0},\quad {\vA}\bar{\vx}^{(k+1)} +
				{\vb}=\mathbf{0},
			\end{equation}
			\begin{equation}
				\label{eq:barx-2}
				\begin{aligned}
					\bar{\vx}^{(k+1)}= \vx^{(k)}-\frac{1}{\tau} \left(\bar{\vA}\zz\bar{\vz}_1^{(k+1)}+\vA\zz\bar{\vz}_2^{(k+1)}+\nabla f_0(\vx^{(k)})\right),
				\end{aligned}
			\end{equation}
			and
			\begin{equation}
				\label{eq:bary-2}
				\,\,\,\bar{\vy}^{(k+1)}= \prox_{\sigma^{-1} \bar{g}}\left(\sigma^{-1} \bar{\vz}_1^{(k+1)} +\bar{\vA}\bar{\vx}^{(k+1)} +
				\bar{\vb}  \right).
			\end{equation}
		\end{lemma}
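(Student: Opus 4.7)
The plan is to recognize that \eqref{eq:subxy-2} is the augmented Lagrangian subproblem of the inner convex program in \eqref{eq:subpga-2} with the multiplier frozen at $\bar{\vz}^{(k+1)}$. By strong duality, I will exhibit a primal optimum of that inner program whose KKT conditions simultaneously match the stationarity system of \eqref{eq:subxy-2} at a feasible point and deliver \eqref{eq:feasiblexy-2}--\eqref{eq:bary-2} by inspection. Uniqueness of the minimizer of \eqref{eq:subxy-2} then forces the identification and completes the proof.

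First I would invoke Slater's condition to obtain strong duality for the inner program in \eqref{eq:subpga-2}. Assumption \ref{ass:222} supplies a pair $(\bar{\vx},\bar{\vy})$ feasible for both affine constraints with $\bar{\vy}$ in the relative interior of $\dom(\bar g)$, which is the standard Slater-type hypothesis; combined with the $\tau$-strong convexity of $\bar f$ and the full-row-rank assumption on $\vA$, this yields zero duality gap, existence of a primal optimum, and uniqueness of its $\vx$-component $\vx^{\star}$ (with $\vy^{\star}:=\bar{\vA}\vx^{\star}+\bar{\vb}$ determined by the constraint). Because $\bar{\vz}^{(k+1)}\in\Omega^{(k+1)}$, strong duality then gives the KKT system: $\vy^{\star}=\bar{\vA}\vx^{\star}+\bar{\vb}$, $\vA\vx^{\star}+\vb=\vzero$, $\nabla f_0(\vx^{(k)})+\tau(\vx^{\star}-\vx^{(k)})+\bar{\vA}\zz\bar{\vz}_1^{(k+1)}+\vA\zz\bar{\vz}_2^{(k+1)}=\vzero$, and $\bar{\vz}_1^{(k+1)}\in\partial\bar g(\vy^{\star})$.

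Next I would show $(\vx^{\star},\vy^{\star})=(\bar{\vx}^{(k+1)},\bar{\vy}^{(k+1)})$. Since $(\vx^{\star},\vy^{\star})$ is primal feasible, both quadratic penalty terms in \eqref{eq:subxy-2} vanish at $(\vx^{\star},\vy^{\star})$ together with their gradients in $\vx$ and in $\vy$. Hence the first-order optimality system of \eqref{eq:subxy-2} evaluated at $(\vx^{\star},\vy^{\star})$ reduces exactly to the KKT system above, certifying $(\vx^{\star},\vy^{\star})$ as a minimizer of \eqref{eq:subxy-2}. Because the objective of \eqref{eq:subxy-2} is strictly convex in $\vx$ (via $\tau$-strong convexity of $\bar f$) and $\sigma$-strongly convex in $\vy$ for each fixed $\vx$, its minimizer is unique, giving the identification. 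Then \eqref{eq:feasiblexy-2} is immediate from primal feasibility of $(\vx^{\star},\vy^{\star})$, and solving the $\vx$-stationarity equation for $\vx^{\star}$ yields \eqref{eq:barx-2}. For \eqref{eq:bary-2}, note that with $\vx=\bar{\vx}^{(k+1)}$ held fixed, $\bar{\vy}^{(k+1)}$ minimizes $\bar g(\vy)-(\bar{\vz}_1^{(k+1)})\zz\vy+\tfrac{\sigma}{2}\|\vy-\bar{\vA}\bar{\vx}^{(k+1)}-\bar{\vb}\|^2$, which after completing the square equals the proximal evaluation on the right-hand side of \eqref{eq:bary-2}.

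The delicate step is the strong duality argument: one must verify that the Slater condition embedded in Assumption \ref{ass:222} continues to qualify the subproblem \eqref{eq:subpga-2}, which has the same constraints as \eqref{eq:model-spli} but a different objective. Since that regularity condition concerns only the constraints and $\dom(\bar g)$, it transfers directly for every $k\ge 0$. Once strong duality is in place, the remaining steps are mechanical first-order calculations coupled with the uniqueness argument described above.
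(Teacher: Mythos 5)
Your proposal is correct, and it rests on the same two pillars as the paper's proof: strong duality for the subproblem in \eqref{eq:subpga-2} under Assumption~\ref{ass:222}, and the identification of the minimizer of the augmented problem \eqref{eq:subxy-2} with a primal optimum of \eqref{eq:subpga-2} when the multiplier is set to a dual optimum $\bar{\vz}^{(k+1)}$. The mechanics differ, though. The paper argues \emph{forward} from the definition of $(\bar{\vx}^{(k+1)},\bar{\vy}^{(k+1)})$: it sandwiches $\min_{\vx,\vy}\mathcal{L}_k(\vx,\vy,\bar{\vz}^{(k+1)})$ between the value of \eqref{eq:subxy-2} at $(\bar{\vx}^{(k+1)},\bar{\vy}^{(k+1)})$ and at a primal optimum $(\widehat{\vx}^{(k+1)},\widehat{\vy}^{(k+1)})$, forces all inequalities to be equalities, and reads off feasibility of $(\bar{\vx}^{(k+1)},\bar{\vy}^{(k+1)})$ plus its membership in $\Argmin_{\vx,\vy}\mathcal{L}_k(\vx,\vy,\bar{\vz}^{(k+1)})$, from which \eqref{eq:barx-2}--\eqref{eq:bary-2} follow. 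You argue \emph{backward}: you construct a KKT point of \eqref{eq:subpga-2}, check that it satisfies the first-order conditions of \eqref{eq:subxy-2} (the penalty gradients vanish at a feasible point), and then invoke uniqueness of the minimizer of \eqref{eq:subxy-2} to conclude it equals $(\bar{\vx}^{(k+1)},\bar{\vy}^{(k+1)})$. Your uniqueness step is valid — strict convexity in $\vx$ from the $\tau$-proximal term pins down $\vx$, and then the $\sigma$-quadratic pins down $\vy$ — but note it is an extra hypothesis your route genuinely needs and the paper's value-sandwich does not: the paper's argument would still show that \emph{every} minimizer of \eqref{eq:subxy-2} is feasible and minimizes $\mathcal{L}_k$, even without strict convexity. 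In exchange, your verification of \eqref{eq:barx-2} and \eqref{eq:bary-2} by direct inspection of the KKT system is somewhat more transparent than the paper's appeal to "the optimality condition" at the end.
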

		\begin{proof}
		Let $(\widehat{\vx}^{(k+1)}, \widehat{\vy}^{(k+1)})$ be an optimal solution of the problem in~\eqref{eq:subpga-2}. 
		Under Assumption~\ref{ass:222}, the strong duality holds~\cite[Section 5.2.3]{boyd2004convex}.  Then the optimal objective value of the minimization problem in~\eqref{eq:subpga-2} is 
		$\max_{\vz} \mathcal{L}_k(\widehat{\vx}^{(k+1)}, \widehat{\vy}^{(k+1)}, {\vz})$. By the definition of $\bar{\vz}^{(k+1)}$ and the strong duality,  it holds that
		\begin{equation*}
			\mathcal{L}_k(\widehat{\vx}^{(k+1)}, \widehat{\vy}^{(k+1)}, \bar{\vz}^{(k+1)})\geq \min_{\vx, \vy} \mathcal{L}_k(\vx, \vy, \bar{\vz}^{(k+1)})=\max_{\vz} \mathcal{L}_k(\widehat{\vx}^{(k+1)}, \widehat{\vy}^{(k+1)}, {\vz})\geq  \mathcal{L}_k(\widehat{\vx}^{(k+1)}, \widehat{\vy}^{(k+1)}, \bar{\vz}^{(k+1)}).
		\end{equation*}
	Hence, the inequalities above must hold with equalities. 
	Thus
		$
		(\widehat{\vx}^{(k+1)}, \widehat{\vy}^{(k+1)})\in\Argmin_{\vx, \vy} \mathcal{L}_k(\vx, \vy, \bar{\vz}^{(k+1)}).
		$
By this fact and also that $(\widehat{\vx}^{(k+1)}, \widehat{\vy}^{(k+1)})$ solves the problem in~\eqref{eq:subpga-2}, we obtain
		\begin{eqnarray*}
			\mathcal{L}_k(\widehat{\vx}^{(k+1)}, \widehat{\vy}^{(k+1)}, \bar{\vz}^{(k+1)}) 
			&=&\min_{\vx, \vy} \mathcal{L}_k(\vx, \vy, \bar{\vz}^{(k+1)})
			\leq  \mathcal{L}_k(\bar{\vx}^{(k+1)}, \bar{\vy}^{(k+1)}, \bar{\vz}^{(k+1)})\\
			&\leq&\mathcal{L}_k(\bar{\vx}^{(k+1)}, \bar{\vy}^{(k+1)}, \bar{\vz}^{(k+1)}) +\frac{\sigma}{2}\|\bar{\vy}^{(k+1)}-(\bar{\vA}{\bar{\vx}^{(k+1)}} +\bar{\vb})\|^2+ \frac{\sigma}{2}\|{\vA}{\bar{\vx}^{(k+1)}} +{\vb}\|^2\\
			&=& \min_{\vx, \vy} \left\{\mathcal{L}_k(\vx, \vy, \bar{\vz}^{(k+1)})+\frac{\sigma}{2}\|{\vy}-
			(\bar{\vA}{\vx} +\bar{\vb})\|^2+ \frac{\sigma}{2}\|
			{\vA}{\vx} +{\vb}\|^2\right\}\\		
			&\leq  &\mathcal{L}_k(\widehat{\vx}^{(k+1)}, \widehat{\vy}^{(k+1)}, \bar{\vz}^{(k+1)}) +\frac{\sigma}{2}\|\widehat{\vy}^{(k+1)}-(\bar{\vA}{\widehat{\vx}^{(k+1)}} +\bar{\vb})\|^2+ \frac{\sigma}{2}\|{\vA}{\widehat{\vx}^{(k+1)}} +{\vb}\|^2\\
			&=&\mathcal{L}_k(\widehat{\vx}^{(k+1)}, \widehat{\vy}^{(k+1)}, \bar{\vz}^{(k+1)}),
		\end{eqnarray*}
	where the second equality is by the definition of $(\bar{\vx}^{(k+1)}, \bar{\vy}^{(k+1)})$. Hence all the inequalities above must hold with   equalities. Thus~\eqref{eq:feasiblexy-2} follows and
	$
(\bar{\vx}^{(k+1)}, \bar{\vy}^{(k+1)})\in\Argmin_{\vx, \vy} \mathcal{L}_k(\vx, \vy, \bar{\vz}^{(k+1)}),
$
the optimality condition of which gives 
\eqref{eq:barx-2} and~\eqref{eq:bary-2}. This completes the proof.
%
		\end{proof}
\begin{remark}
	\label{rem:ipga2}
The proof of Lemma~\ref{lem:barxyz-2} also implies that  $(\bar{\vx}^{(k+1)}, \bar{\vy}^{(k+1)})$ is an optimal solution of the problem in~\eqref{eq:subpga-2}. The lemma below bounds the inexactness of $(\vx^{(k+1)},\vy^{(k+1)})$ generated in Algorithm~\ref{alg:ipga-2}.
\end{remark}
		
		\begin{lemma}
			\label{lem:boundxyz-2}
Suppose that Assumption~\ref{ass:222} holds. Let $\{(\vx^{(k+1)},\vy^{(k+1)})\}_{k\ge 0}$ be generated from Algorithm~\ref{alg:ipga-2}. Denote the vector used to produce $(\vx^{(k+1)},\vy^{(k+1)})$ in~\eqref{eq:boundz-2}-\eqref{eq:yupdate-2} by $\vz^{(k+1)}=(({\vz}_1^{(k+1)})\zz,({\vz}_2^{(k+1)})\zz)\zz$. Let  $(\bar{\vx}^{(k+1)}, \bar{\vy}^{(k+1)})$ be defined as in Lemma~\ref{lem:barxyz-2} with $\bar{\vz}^{(k+1)} = \mathbf{proj}_{\Omega^{(k+1)}}(\vz^{(k+1)})$. Then  the following inequalities hold for all $k\ge0$:
\begin{align}
&\label{eq:lemmaineq1}
					\|\bar{\vx}^{(k+1)}-{\vx}^{(k+1)}\|\leq \frac{1}{\tau}\left\|[\bar{\vA}; \vA]\right\|\delta,\quad  \|\bar{\vy}^{(k+1)}-{\vy}^{(k+1)}\|\leq \frac{1}{\tau}\left\|\bar{\vA}\right\|\left\|[\bar{\vA}; \vA]\right\|\delta +\sigma^{-1} \delta, \\
&\label{eq:lemmaineq2}
			\|{\vy}^{(k+1)}-
			(\bar{\vA}{\vx}^{(k+1)} +	
			\bar{\vb})\|+ \|{\vA}{\vx}^{(k+1)} +
			{\vb}\| 
			\leq B_1 \delta,	\\
&\label{eq:lemmaineq3}
\|\vz_1^{(k+1)}\|\leq l_g, \quad \|\vz_2^{(k+1)}\|\leq B_2 + B_3\delta,	\\	
&\label{eq:lemmaineq4}
	\|\vx^{(k+1)}-\vx^{(k)}\| \leq B_4 +  \frac{1}{\tau} \left\|[\bar{\vA}; \vA]\right\|B_3 \delta,					
\end{align}
where $B_1,B_2,B_3,B_4$ are some constants defined by
\begin{align*}
& \textstyle B_1:=\frac{1}{\tau}\|\bar{\vA}\|\left\|[\bar{\vA}; \vA]\right\| +\sigma^{-1}  + 	\frac{1}{\tau}(\|\bar{\vA}\|+\|{\vA}\|)\left\|[\bar{\vA}; \vA]\right\|, \ B_2:= \|( \vA \vA\zz)^{-1}\vA\| (l_f+ \|\bar{\vA}\|l_g), \\
& \textstyle B_3:= (1+\|( \vA \vA\zz)^{-1}\vA\| \left\|[\bar{\vA}; \vA]\right\|), \ B_4:={\frac{1}{\tau} \left(l_f+\left\|[\bar{\vA}; \vA]\right\| \left( l_g+ B_2\right)\right)}.
\end{align*}
		\end{lemma}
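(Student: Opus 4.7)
My plan is to establish the four displayed inequalities in sequence, using the explicit formulas from Lemma~\ref{lem:barxyz-2} (for the ``exact'' quantities $\bar\vx^{(k+1)}, \bar\vy^{(k+1)}, \bar\vz^{(k+1)}$) and their inexact counterparts in Algorithm~\ref{alg:ipga-2}. The key quantitative input throughout is that $\|\vz^{(k+1)}-\bar\vz^{(k+1)}\|\leq\delta$, which follows from \eqref{eq:boundz-2} together with the choice $\bar\vz^{(k+1)}=\mathbf{proj}_{\Omega^{(k+1)}}(\vz^{(k+1)})$. For \eqref{eq:lemmaineq1}, I will subtract \eqref{eq:xupdate-2} from \eqref{eq:barx-2}; the terms involving $\nabla f_0(\vx^{(k)})$ and $\vx^{(k)}$ cancel, leaving $\bar\vx^{(k+1)}-\vx^{(k+1)}=-\tfrac{1}{\tau}[\bar\vA;\vA]\zz(\bar\vz^{(k+1)}-\vz^{(k+1)})$, whose norm is controlled by $\tfrac{1}{\tau}\|[\bar\vA;\vA]\|\delta$. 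For the $\vy$-half, subtracting \eqref{eq:yupdate-2} from \eqref{eq:bary-2} and invoking non-expansiveness of $\prox_{\sigma^{-1}\bar g}$ produces $\|\bar\vy^{(k+1)}-\vy^{(k+1)}\|\leq\sigma^{-1}\|\bar\vz_1^{(k+1)}-\vz_1^{(k+1)}\|+\|\bar\vA\|\|\bar\vx^{(k+1)}-\vx^{(k+1)}\|$, and substituting the first bound closes \eqref{eq:lemmaineq1}.

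For \eqref{eq:lemmaineq2}, I will exploit the exact feasibility of $(\bar\vx^{(k+1)},\bar\vy^{(k+1)})$ from \eqref{eq:feasiblexy-2} to rewrite
\[
\vy^{(k+1)}-(\bar\vA\vx^{(k+1)}+\bar\vb)=(\vy^{(k+1)}-\bar\vy^{(k+1)})-\bar\vA(\vx^{(k+1)}-\bar\vx^{(k+1)}),\qquad \vA\vx^{(k+1)}+\vb=\vA(\vx^{(k+1)}-\bar\vx^{(k+1)}),
\]
and apply \eqref{eq:lemmaineq1} with the triangle inequality. For the first part of \eqref{eq:lemmaineq3}, the bound $\|\vz_1^{(k+1)}\|\leq l_g$ comes from $\vz_1^{(k+1)}\in\dom(\bar g^\star)$ in \eqref{eq:boundz-2} together with the fact that $l_g$-Lipschitz continuity of $\bar g$ forces $\dom(\bar g^\star)\subseteq\{\vz_1:\|\vz_1\|\leq l_g\}$. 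For $\vz_2^{(k+1)}$, I will invoke the first-order optimality condition $\nabla_{\vz_2}\mathcal{D}_k(\bar\vz^{(k+1)})=\vzero$, which combined with $\vA\bar\vx^{(k+1)}+\vb=\vzero$ gives
\[
\vA\vA\zz\bar\vz_2^{(k+1)}=-\vA\bar\vA\zz\bar\vz_1^{(k+1)}-\vA\nabla f_0(\vx^{(k)})+\tau(\vA\vx^{(k)}+\vb).
\]
When $k=0$, feasibility of the initial iterate kills the last term and yields $\|\bar\vz_2^{(1)}\|\leq B_2$; for $k\geq 1$, using $\vA\bar\vx^{(k)}+\vb=\vzero$ and the already-established first part of the lemma at index $k{-}1$, one has $\|\vA\vx^{(k)}+\vb\|\leq\tfrac{\|\vA\|\,\|[\bar\vA;\vA]\|}{\tau}\delta$. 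Inverting $\vA\vA\zz$ (available under Assumption~\ref{ass:222} since $\vA$ has full row rank), bounding $\|\nabla f_0(\vx^{(k)})\|\leq l_f$, and finally absorbing $\|\vz_2^{(k+1)}-\bar\vz_2^{(k+1)}\|\leq\delta$ via triangle inequality produces \eqref{eq:lemmaineq3}. Inequality \eqref{eq:lemmaineq4} will then follow by taking norms directly in \eqref{eq:xupdate-2} and plugging in the bounds on $\|\nabla f_0(\vx^{(k)})\|$, $\|\vz_1^{(k+1)}\|$, and $\|\vz_2^{(k+1)}\|$, using $\|\vA\|,\|\bar\vA\|\leq\|[\bar\vA;\vA]\|$ to match the constant $B_4$.

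The principal technical obstacle is the $\vz_2^{(k+1)}$ bound in \eqref{eq:lemmaineq3}: unlike $\vz_1^{(k+1)}$, which is controlled purely by the domain of $\bar g^\star$, the bound on $\vz_2^{(k+1)}$ is coupled across iterations through the infeasibility residual $\tau(\vA\vx^{(k)}+\vb)$ picked up by the optimality identity. Controlling this residual requires an inductive use of the lemma's first inequality at the preceding iteration, and careful bookkeeping of norms (in particular using $\|\vA\|\leq\|[\bar\vA;\vA]\|$ rather than a loose product) is needed to fit the residual into the prescribed $B_3\delta$ rather than a larger expression that would prevent the iteration-wise recursion from closing.
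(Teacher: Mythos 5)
Your proposal is correct and follows essentially the same route as the paper's proof: subtract the exact updates \eqref{eq:barx-2}--\eqref{eq:bary-2} from the inexact ones and use non-expansiveness of the prox for \eqref{eq:lemmaineq1}, use the feasibility relations \eqref{eq:feasiblexy-2} for \eqref{eq:lemmaineq2}, bound $\vz_1^{(k+1)}$ via $\dom(\bar g^\star)$, and recover $\bar\vz_2^{(k+1)}$ from stationarity of $\mathcal{D}_k$ in $\vz_2$, treating the residual $\tau(\vA\vx^{(k)}+\vb)$ exactly as the paper does by rewriting it as $\tau\vA(\vx^{(k)}-\bar\vx^{(k)})$ via the feasibility of $\bar\vx^{(k)}$ and the already-proved first bound. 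The only bookkeeping caveat is that to land on the stated constant $B_3$ you should keep $(\vA\vA\zz)^{-1}\vA$ together as a single operator when bounding that residual (yielding $\|(\vA\vA\zz)^{-1}\vA\|\,\|[\bar\vA;\vA]\|\delta$), rather than first bounding $\|\vA\vx^{(k)}+\vb\|$ and then multiplying by $\|(\vA\vA\zz)^{-1}\|$, which gives a slightly looser constant.
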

		
		\begin{proof}
			By \eqref{eq:boundz-2}, we have $\|\vz^{(k+1)}-\bar{\vz}^{(k+1)}\|\leq\delta$. 
			Then we obtain from \eqref{eq:xupdate-2} and~\eqref{eq:barx-2} that
			$$
			\|\bar{\vx}^{(k+1)}-{\vx}^{(k+1)}\|\leq\frac{1}{\tau}\left\|[\bar{\vA}; \vA]\right\|\|\vz^{(k+1)}-\bar{\vz}^{(k+1)}\|\leq  \frac{1}{\tau}\left\|[\bar{\vA}; \vA]\right\|\delta,
			$$
			and from \eqref{eq:boundz-2},~\eqref{eq:yupdate-2} and~\eqref{eq:bary-2} that
			$$
			\|\bar{\vy}^{(k+1)}-{\vy}^{(k+1)}\|\leq \|\bar{\vA}\|\|\bar{\vx}^{(k+1)}-{\vx}^{(k+1)}\|+\sigma^{-1}\|\vz^{(k+1)}-\bar{\vz}^{(k+1)}\|\leq \frac{1}{\tau}\left\|\bar{\vA}\right\|\left\|[\bar{\vA}; \vA]\right\|\delta +\sigma^{-1} \delta.
			$$
			Hence, the two inequalities in~\eqref{eq:lemmaineq1} hold. 
				
			In addition, by~\eqref{eq:feasiblexy-2}, we have 
			\begin{equation*}	
				\begin{aligned}
					&\|{\vy}^{(k+1)}-
					(\bar{\vA}{\vx}^{(k+1)} +	
					\bar{\vb})\|+ \|{\vA}{\vx}^{(k+1)} +
					{\vb}\|  \\
					= & \left\|{\vy}^{(k+1)}-
					(\bar{\vA}{\vx}^{(k+1)} +
					\bar{\vb})-\bar{\vy}^{(k+1)}+
					(\bar{\vA}\bar{\vx}^{(k+1)} +
					\bar{\vb})\right\| +  \left\|({\vA}\bar{\vx}^{(k+1)} +
					{\vb})-({\vA}{\vx}^{(k+1)} +{\vb})\right\|\\
					\leq  &
					\left\|{\vy}^{(k+1)}-\bar{\vy}^{(k+1)}\right\| + \left\| \bar{\vA}{\vx}^{(k+1)}-   \bar{\vA}\bar{\vx}^{(k+1)} \right\| +\left\| {\vA}{\vx}^{(k+1)}-   {\vA}\bar{\vx}^{(k+1)} \right\| \\
					\leq  &
					\left\|{\vy}^{(k+1)}-\bar{\vy}^{(k+1)}\right\| + \left(\|\bar{\vA}\|+\|\vA\|\right)\left\| {\vx}^{(k+1)} -  \bar{\vx}^{(k+1)} \right\|
					\leq
					\delta { B_1},
				\end{aligned}
			\end{equation*}
		where the last inequality is from~\eqref{eq:lemmaineq1} and the definition of $B_1$. Hence, the claim in \eqref{eq:lemmaineq2} holds.
		
		Moreover, recall $\vz_1^{(k+1)}\in \dom(\bar{g}^{\star})$ in~\eqref{eq:boundz-2}. Since $\bar{g}$ is $l_g$-Lipschitz continuous by Assumption~\ref{ass:222}, we must have $\|\vz_1^{(k+1)}\|\leq l_g$~\cite{rockafellar1970convex}. 
		Write $\bar{\vz}^{(k+1)}=((\bar{\vz}_1^{(k+1)})\zz,(\bar{\vz}_2^{(k+1)})\zz)\zz$. Then $\bar\vz_1^{(k+1)}\in \dom(\bar{g}^{\star})$. Thus for the same reason, 
		we have $\|\bar{\vz}_1^{(k+1)}\|\leq l_g$. By  Assumption~\ref{ass:222}, $\vA \vA\zz$ is non-singular, so the optimality condition of~\eqref{eq:sublb-2}  implies 
			\begin{equation}\label{eq:formula-z-2-k+1}
			\bar{\vz}_2^{(k+1)}=-\left( \vA \vA\zz\right)^{-1} \left(\vA\bar{\vA}\zz\bar{\vz}_1^{(k+1)}+\vA\nabla f_0(\vx^{(k)})-\tau\vA\vx^{(k)} -\tau \vb\right).
			\end{equation}
		For $k\geq1$, we obtain from the second inequality in~\eqref{eq:feasiblexy-2} and the first inequality in~\eqref{eq:lemmaineq1} that 
		\begin{align*}
		\|\left( \vA \vA\zz\right)^{-1} (\vA\vx^{(k)} +\vb)\|=&\|\left( \vA \vA\zz\right)^{-1}(\vA\vx^{(k)} + \vb-\vA\bar\vx^{(k)} -\vb)\|\\
		\leq& \|( \vA \vA\zz)^{-1}\vA\|\|\vx^{(k)}-\bar\vx^{(k)}\|\leq \frac{1}{\tau}\|( \vA \vA\zz)^{-1}\vA\|\left\|[\bar{\vA}; \vA]\right\|\delta.
		\end{align*}
		Since $\vA\vx^{(0)} +\vb=\mathbf{0}$, the inequality above also holds for $k=0$. 
		Hence, 
			\begin{align*}
		\|\vz_2^{(k+1)}\|\leq&\|\vz_2^{(k+1)}-\bar{\vz}_2^{(k+1)}\|+\|\bar{\vz}_2^{(k+1)}\| \leq\delta+\left\|\left( \vA \vA\zz\right)^{-1} \left(\vA\bar{\vA}\zz\bar{\vz}_1^{(k+1)}+\vA\nabla f_0(\vx^{(k)})-\tau\vA\vx^{(k)} -\tau \vb\right)\right\|\\
		\leq &\delta+\|( \vA \vA\zz)^{-1}\vA\| (l_f+ \|\bar{\vA}\|l_g)+\|( \vA \vA\zz)^{-1}\vA\| \left\|[\bar{\vA}; \vA]\right\|\delta\leq B_2 +B_3\delta.
			\end{align*} 
Thus both inequalities in~\eqref{eq:lemmaineq3} hold.
			
		Finally, it follows from the updating rule in \eqref{eq:xupdate-2} and the inequalities in~\eqref{eq:lemmaineq3} that 
			\begin{align*}&\|\vx^{(k+1)}-\vx^{(k)}\| =\frac{1}{\tau}\left\|  [\bar{\vA}; \vA]\zz \vz^{(k+1)}+\nabla f_0(\vx^{(k)})\right\| \\
				\leq & \frac{1}{\tau} \left(\left\|[\bar{\vA}; \vA]\right\|\left\| \vz_1^{(k+1)}\right\|+\left\|[\bar{\vA}; \vA]\right\|\left\|\vz_2^{(k+1)}\right\|+\left\|\nabla f_0(\vx^{(k)})\right\|\right)\\
					\leq &\frac{1}{\tau}l_f+ \frac{1}{\tau} \left\|[\bar{\vA}; \vA]\right\|l_g+ \frac{1}{\tau} \left\|[\bar{\vA}; \vA]\right\| (B_2 +B_3\delta). 
			\end{align*}
Hence by the definition of $B_4$, \eqref{eq:lemmaineq4} is obtained, and we complete the proof.			
		\end{proof}	

With Lemmas~\ref{lem:barxyz-2} and \ref{lem:boundxyz-2}, we can characterize the number of outer iterations that Algorithm~\ref{alg:ipga-2} needs to find an $\epsilon$-stationary point of problem~\eqref{eq:model-spli}.
		
		\begin{theorem}
			\label{thm:allcomple}
			Suppose that Assumptions~\ref{assume:problemsetup} and~\ref{ass:222} hold. Given $\epsilon >0$, in Algorithm~\ref{alg:ipga-2}, let $\tau=2L_f$ and $\delta= \delta_\epsilon$ with 
			\begin{align}
				\label{eq:delta}
			\delta_\epsilon:= \min\left\{\frac{\epsilon}{B_1\sigma},\ \frac{\epsilon}{B_1},\ \frac{\epsilon^2}{48L_fB_1\left(B_2 +\sigma \|\bar{\vA}\|B_4+l_g\right)},\ \sqrt{\frac{\epsilon^2}{48L_fB_1B_3\left(1+\frac{1}{\tau} \sigma \|\bar{\vA}\|\left\|[\bar{\vA}; \vA]\right\|\right)}} \right\},
			\end{align}
			where $B_1,B_2,B_3$ and $B_4$ are given in Lemma~\ref{lem:boundxyz-2}.  Let 
			\begin{equation}\label{eq:set-K-eps}
				K_\epsilon:= \left \lceil 12L_f \Delta_F\epsilon^{-2} \right\rceil. 
			\end{equation}			
			For each $k\ge0$, let $(\vx^{(k)},\vy^{(k)})$ be generated from  Algorithm~\ref{alg:ipga-2}. Let $k'=\argmin_{k=0,\dots,K_\epsilon-1}\left\|\vx^{(k+1)}-{\vx}^{(k)}\right\|$. Then $(\vx^{(k'+1)},\vy^{(k'+1)})$ is an  $\epsilon$-stationary point  of problem~\eqref{eq:model-spli}.
		\end{theorem}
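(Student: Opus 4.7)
The plan is to verify the four stationarity conditions in Definition~\ref{def:eps-pt-P} for the output iterate $(\vx^{(k'+1)},\vy^{(k'+1)})$ with multipliers $\vz_1^{(k'+1)}, \vz_2^{(k'+1)}$ one at a time. Three of them are handled immediately by Lemma~\ref{lem:boundxyz-2}. Indeed,~\eqref{eq:lemmaineq2} yields the two primal residuals $\|\vy^{(k'+1)} - \bar{\vA}\vx^{(k'+1)} - \bar{\vb}\|, \|\vA\vx^{(k'+1)} + \vb\| \le B_1\delta$, while the optimality of the prox step~\eqref{eq:yupdate-2} gives $\vz_1^{(k'+1)} + \sigma(\bar{\vA}\vx^{(k'+1)} + \bar{\vb} - \vy^{(k'+1)}) \in \partial\bar{g}(\vy^{(k'+1)})$, so $\dist(\vzero, \partial\bar{g}(\vy^{(k'+1)}) - \vz_1^{(k'+1)}) \le \sigma B_1\delta$. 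The first two entries of the minimum in~\eqref{eq:delta} ensure each of these is at most $\epsilon$. For the remaining dual-feasibility residual, the update rule~\eqref{eq:xupdate-2} with $\tau = 2L_f$ and the $L_f$-Lipschitz continuity of $\nabla f_0$ imply
\[
\|\nabla f_0(\vx^{(k'+1)}) + \bar{\vA}\zz\vz_1^{(k'+1)} + \vA\zz\vz_2^{(k'+1)}\| = \|\nabla f_0(\vx^{(k'+1)}) - \nabla f_0(\vx^{(k')}) - \tau(\vx^{(k'+1)} - \vx^{(k')})\| \le 3L_f\|\vx^{(k'+1)} - \vx^{(k')}\|,
\]
so it suffices to prove $\min_{0 \le k \le K_\epsilon-1}\|\vx^{(k+1)} - \vx^{(k)}\|^2 \le \epsilon^2/(9L_f^2)$.

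To that end, I will derive an approximate descent inequality. Let $\bar\vz^{(k+1)} := \Proj_{\Omega^{(k+1)}}(\vz^{(k+1)})$ and let $(\bar\vx^{(k+1)}, \bar\vy^{(k+1)})$ be the exact minimizer of~\eqref{eq:subpga-2} from Lemma~\ref{lem:barxyz-2} and Remark~\ref{rem:ipga2}. Since $\cL_k(\cdot, \cdot, \bar\vz^{(k+1)})$ is $\tau$-strongly convex in $\vx$ with minimizer $\bar\vx^{(k+1)}$ and $\bar\vy^{(k+1)}$ is a minimizer in $\vy$ by the KKT conditions of~\eqref{eq:subpga-2}, combining this with the smoothness descent $f_0(\bar\vx^{(k+1)}) \le f_0(\vx^{(k)}) + \bar{f}(\bar\vx^{(k+1)}) - \frac{\tau - L_f}{2}\|\bar\vx^{(k+1)} - \vx^{(k)}\|^2$ and simplifying via the exact feasibility~\eqref{eq:feasiblexy-2} gives
\[
F(\bar\vx^{(k+1)}, \bar\vy^{(k+1)}) \le F(\vx^{(k)}, \vy^{(k)}) - \frac{2\tau - L_f}{2}\|\bar\vx^{(k+1)} - \vx^{(k)}\|^2 + R_k,
\]
where $R_k := (\bar\vz_1^{(k+1)})\zz(\bar{\vA}\vx^{(k)} + \bar{\vb} - \vy^{(k)}) + (\bar\vz_2^{(k+1)})\zz(\vA\vx^{(k)} + \vb)$. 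Since the initial point is feasible, $R_0 = 0$, and for $k \ge 1$ one has $|R_k| \le (\|\bar\vz_1^{(k+1)}\| + \|\bar\vz_2^{(k+1)}\|) B_1\delta \le (l_g + B_2 + O(\delta))B_1\delta$ via~\eqref{eq:lemmaineq2}, \eqref{eq:lemmaineq3} and~\eqref{eq:formula-z-2-k+1}. Bounding $|F(\vx^{(k+1)}, \vy^{(k+1)}) - F(\bar\vx^{(k+1)}, \bar\vy^{(k+1)})| \le l_f\|\vx^{(k+1)} - \bar\vx^{(k+1)}\| + l_g\|\vy^{(k+1)} - \bar\vy^{(k+1)}\|$ via Assumption~\ref{ass:222} and~\eqref{eq:lemmaineq1}, I obtain (with $\tau = 2L_f$) the clean form
\[
F(\vx^{(k+1)}, \vy^{(k+1)}) \le F(\vx^{(k)}, \vy^{(k)}) - \frac{3L_f}{2}\|\bar\vx^{(k+1)} - \vx^{(k)}\|^2 + C_1\delta + C_2\delta^2
\]
for explicit constants $C_1, C_2$ built from $B_1, B_2, B_3, B_4, l_f, l_g, \sigma, \|\bar{\vA}\|, \|[\bar{\vA};\vA]\|$.

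Telescoping over $k = 0, \dots, K_\epsilon - 1$ and using $F(\vx^{(K_\epsilon)}, \vy^{(K_\epsilon)}) \ge \inf F$ yields $\sum_{k} \|\bar\vx^{(k+1)} - \vx^{(k)}\|^2 \le \frac{2\Delta_F}{3L_f} + \frac{2K_\epsilon(C_1\delta + C_2\delta^2)}{3L_f}$. Applying $\|\vx^{(k+1)} - \vx^{(k)}\|^2 \le 2\|\bar\vx^{(k+1)} - \vx^{(k)}\|^2 + \frac{2}{\tau^2}\|[\bar{\vA};\vA]\|^2\delta^2$ from~\eqref{eq:lemmaineq1} and averaging,
\[
\min_{0 \le k \le K_\epsilon - 1}\|\vx^{(k+1)} - \vx^{(k)}\|^2 \le \frac{4\Delta_F}{3L_f K_\epsilon} + \frac{4(C_1\delta + C_2\delta^2)}{3L_f} + \frac{\|[\bar{\vA};\vA]\|^2\delta^2}{2L_f^2}.
\]
The choice $K_\epsilon = \lceil 12L_f\Delta_F\epsilon^{-2}\rceil$ makes the leading term at most $\epsilon^2/(9L_f^2)$, and the explicit form of $\delta_\epsilon$ in~\eqref{eq:delta}, in particular its third and fourth entries with the $48$ prefactors, is calibrated so that the two remaining $\delta$- and $\delta^2$-terms combined are strictly absorbed in the slack of the same budget $\epsilon^2/(9L_f^2)$. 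This gives $\|\vx^{(k'+1)} - \vx^{(k')}\| \le \epsilon/(3L_f)$ and completes the verification. The main obstacle is the careful bookkeeping of $C_1$ and $C_2$: one must track how the approximate feasibility $\|\vA\vx^{(k)} + \vb\| \le B_1\delta$ at the previous iterate feeds through~\eqref{eq:formula-z-2-k+1} into $\|\bar\vz_2^{(k+1)}\|$ so that $C_1$ and $C_2$ align exactly with the products of $B_1, B_2, B_3, B_4, \sigma, l_g, \|\bar{\vA}\|, \|[\bar{\vA};\vA]\|$ appearing inside the $48$ prefactors of $\delta_\epsilon$.
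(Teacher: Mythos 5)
Your overall skeleton matches the paper's: the two primal residuals and the $\partial\bar g$ residual follow directly from \eqref{eq:lemmaineq2}, \eqref{eq:yupdate-2} and the first two entries of $\delta_\epsilon$, and the dual residual reduces via \eqref{eq:xupdate-2} and Lipschitz continuity to showing $3L_f\min_k\|\vx^{(k+1)}-\vx^{(k)}\|\le\epsilon$, established by an approximate-descent-plus-telescoping argument. Where you diverge is in how the descent inequality is obtained, and that divergence creates a quantitative gap that prevents the proof from closing with the constants stated in the theorem.

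Your descent inequality controls $\|\bar\vx^{(k+1)}-\vx^{(k)}\|^2$ (the exact subproblem minimizer), and you then convert via $\|\vx^{(k+1)}-\vx^{(k)}\|^2\le 2\|\bar\vx^{(k+1)}-\vx^{(k)}\|^2+2\|\vx^{(k+1)}-\bar\vx^{(k+1)}\|^2$. That factor of $2$ is fatal here: after telescoping, your leading term is $\frac{4\Delta_F}{3L_fK_\epsilon}$, and with $K_\epsilon=\lceil 12L_f\Delta_F\epsilon^{-2}\rceil$ this is bounded by $\frac{\epsilon^2}{9L_f^2}$ \emph{with possible equality} — it already consumes the entire budget needed for $3L_f\|\vx^{(k'+1)}-\vx^{(k')}\|\le\epsilon$. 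There is no slack left to absorb the $C_1\delta+C_2\delta^2$ terms and the extra $\frac{\|[\bar\vA;\vA]\|^2\delta^2}{2L_f^2}$ term, however small $\delta_\epsilon$ is, so the final step "strictly absorbed in the slack of the same budget" does not go through. The paper avoids this by never detouring through $(\bar\vx^{(k+1)},\bar\vy^{(k+1)})$ in the descent estimate: it uses the \emph{exact} identity $\vzero=\nabla f_0(\vx^{(k)})+[\bar\vA;\vA]\zz\vz^{(k+1)}+\tau(\vx^{(k+1)}-\vx^{(k)})$ from \eqref{eq:xupdate-2} to expand $\langle\nabla f_0(\vx^{(k)}),\vx^{(k+1)}-\vx^{(k)}\rangle$ directly, obtaining $-\frac{3L_f}{2}\|\vx^{(k+1)}-\vx^{(k)}\|^2$ on the actual iterate difference, with the inexactness entering only through additive $O(\delta)$ and $O(\delta^2)$ terms controlled by \eqref{eq:lemmaineq21}--\eqref{eq:lemmaineq4}. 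Its budget split is then $\tfrac{1}{2}\epsilon^2+\tfrac{1}{4}\epsilon^2+\tfrac{1}{4}\epsilon^2$, which is exactly what the $12$ in $K_\epsilon$ and the $48$'s in $\delta_\epsilon$ encode. Your route would prove the theorem only after enlarging $K_\epsilon$ (e.g., doubling it) or otherwise recalibrating the constants; as written against the stated $K_\epsilon$ and $\delta_\epsilon$, the last inequality fails.
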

		
		\begin{proof}
			Since  $\delta\leq \frac{\epsilon}{B_1}$ from~\eqref{eq:delta}, we obtain from inequality~\eqref{eq:lemmaineq2} that, for any $k\ge 0$,  
			\begin{equation}
				\label{eq:feasibility}
				\|{\vy}^{(k+1)}-
				(\bar{\vA}{\vx}^{(k+1)} +	
				\bar{\vb})\|+ \|{\vA}{\vx}^{(k+1)} +
				{\vb}\| 
				\leq\delta B_1\leq \epsilon.
			\end{equation}
	In addition, by 
	\eqref{eq:yupdate-2}, 
	it holds for any $k\ge 0$ that
			\begin{equation}
				\label{eq:kktvy-2}
				\mathbf{0}\in \partial \bar{g}(\vy^{(k+1)}) - \vz_1^{(k+1)} +\sigma \left({\vy}^{(k+1)}-
				(\bar{\vA}{{\vx}^{(k+1)}} +
				\bar{\vb})\right),
			\end{equation}	
		which implies that there exists $\vxi^{(k+1)}\in\partial \bar{g}(\vy^{(k+1)})$ such that 
			\begin{equation}
				\label{eq:kktvy-2vio}
			\| \vxi^{(k+1)} - \vz_1^{(k+1)}\| = \sigma \|{\vy}^{(k+1)}-
			(\bar{\vA}{{\vx}^{(k+1)}} +
			\bar{\vb})\| \overset{\eqref{eq:lemmaineq2}} \leq  \delta B_1\sigma \overset{\eqref{eq:delta}}\leq \epsilon.
			\end{equation}
%
			Moreover, it holds by~\eqref{eq:xupdate-2} that for any $k\ge 0$, 
			\begin{equation}
				\label{eq:kktvxvio-2}
				\mathbf{0}= \nabla f_0(\vx^{(k)}) + [
				\bar{\vA} ;
				\mathbf{A}]\zz \vz^{(k+1)} +\tau (\vx^{(k+1)}-\vx^{(k)}).
			\end{equation}
		Hence, by the $L_f$-Lipschitz continuity of $\nabla f_0$ and the fact that $\tau=2L_f$, we have for any $k\ge0$,
		\begin{align}\label{eq:xerror-2}
		\left\|\nabla f_0(\vx^{(k+1)}) + [
		\bar{\vA} ;
		\mathbf{A}]\zz \vz^{(k+1)}\right\|
		\leq &\left\|\nabla f_0(\vx^{(k)}) + [
		\bar{\vA} ;
		\mathbf{A}]\zz \vz^{(k+1)}\right\|+L_f\|\vx^{(k+1)}-\vx^{(k)} \| \nonumber\\
		=& (\tau+L_f)\|\vx^{(k+1)}-\vx^{(k)} \|
		=3L_f\|\vx^{(k+1)}-\vx^{(k)} \|.
		\end{align}
			Below we bound the average of the square of the right-hand side of~\eqref{eq:xerror-2} over $K$ terms.
			First, by \eqref{eq:lemmaineq2} and the feasibility of $({\vx}^{(0)}, {\vy}^{(0)})$, i.e, ${\vA}{\vx}^{(0)} +{\vb} = \vzero$ and ${\vy}^{(0)} = \bar{\vA}{\vx}^{(0)} +\bar{\vb}$, we have for any $k\ge0$,
			\begin{align}
				\label{eq:lemmaineq21}
					\left\|{\vy}^{(k+1)}-{\vy}^{(k)}-
				\bar{\vA}({\vx}^{(k+1)}-{\vx}^{(k)})\right\|
				=\left\|{\vy}^{(k+1)}-
				(\bar{\vA}{\vx}^{(k+1)} +
				\bar{\vb})-{\vy}^{(k)}+
				(\bar{\vA}{\vx}^{(k)} +
				\bar{\vb})\right\|\leq &2\delta B_1,\\	\label{eq:lemmaineq22}
				\left\|\vA({\vx}^{(k+1)}-{\vx}^{(k)})\right\| = \left\|({\vA}{\vx}^{(k)} +
				{\vb})-({\vA}{\vx}^{(k+1)} +{\vb})\right\|\leq &2\delta B_1	.		
			\end{align}
	Second, it follows from the $L_f$-Lipschitz continuity of $\nabla f_0$ that 
			\begin{align}
			\nonumber
			&f_0(\vx^{(k+1)})-f_0(\vx^{(k)})
			\leq \left\langle \nabla f_0(\vx^{(k)}), \vx^{(k+1)}-\vx^{(k)}\right\rangle+\frac{L_f}{2}\left\|\vx^{(k+1)}-{\vx}^{(k)}\right\|^2
			\\	\nonumber
			\overset{\eqref{eq:kktvxvio-2}}= &\left\langle -{\tau}\left(\vx^{(k+1)}-{\vx}^{(k)}\right) -\bar{\vA}\zz \vz_1^{(k+1)}-\vA\zz \vz_2^{(k+1)}, \vx^{(k+1)}-\vx^{(k)}\right\rangle +\frac{L_f}{2}\left\|\vx^{(k+1)}-{\vx}^{(k)}\right\|^2
			\\	\nonumber
			= &\frac{L_f-2\tau}{2}\left\|\vx^{(k+1)}-{\vx}^{(k)}\right\|^2-\left\langle \bar{\vA}\zz \vxi^{(k+1)}, \vx^{(k+1)}-\vx^{(k)}\right\rangle-\left\langle \vz_2^{(k+1)}, \vA(\vx^{(k+1)}-\vx^{(k)})		
			\right\rangle\\	\nonumber
			&~-\left\langle \bar{\vA}\zz (\vz_1^{(k+1)}-\vxi^{(k+1)}), \vx^{(k+1)}-\vx^{(k)}\right\rangle\\	\nonumber
			\leq &-\frac{3L_f}{2}\left\|\vx^{(k+1)}-{\vx}^{(k)}\right\|^2-\left\langle \bar{\vA}\zz \vxi^{(k+1)}, \vx^{(k+1)}-\vx^{(k)}\right\rangle+2\delta B_1\| \vz_2^{(k+1)}\|+2\delta B_1\sigma \|\bar{\vA}\|\|\vx^{(k+1)}-{\vx}^{(k)}\|\\	\nonumber
			\leq &-\frac{3L_f}{2}\left\|\vx^{(k+1)}-{\vx}^{(k)}\right\|^2-\left\langle \bar{\vA}\zz \vxi^{(k+1)}, \vx^{(k+1)}-\vx^{(k)}\right\rangle+2\delta B_1\left(B_2 +B_3\delta\right)\\\label{eq:proofstationarity1}
			&~+2\delta B_1\sigma \|\bar{\vA}\|\left(B_4 +  \frac{1}{\tau} \left\|[\bar{\vA}; \vA]\right\|B_3\delta\right),
		\end{align}
	where 
	the second inequality holds from $\tau=2L_f$,~\eqref{eq:kktvy-2vio} and~\eqref{eq:lemmaineq22}, and the last inequality follows from~\eqref{eq:lemmaineq3} and~\eqref{eq:lemmaineq4}.
Third, by the convexity of $\bar g$ and the fact that $\vxi^{(k+1)}\in\partial \bar{g}(\vy^{(k+1)})$, we have 
	\begin{align}
		\nonumber
		& \bar g(\vy^{(k+1)})- \bar g(\vy^{(k)})
		\leq \left\langle \vxi^{(k+1)}, \vy^{(k+1)}-\vy^{(k)} \right\rangle 
		\\	\nonumber
		= &	 \left\langle \vxi^{(k+1)}, 	\bar{\vA}({\vx}^{(k+1)}-{\vx}^{(k)})  \right\rangle + \left\langle \vxi^{(k+1)}, {\vy}^{(k+1)}-{\vy}^{(k)}-
		\bar{\vA}({\vx}^{(k+1)}-{\vx}^{(k)}) \right\rangle 
		\\	\nonumber
		\leq & \left\langle \vxi^{(k+1)}, 	\bar{\vA}({\vx}^{(k+1)}-{\vx}^{(k)})  \right\rangle +2\delta B_1\|\vxi^{(k+1)}\|\\\label{eq:proofstationarity2}
		\leq &\left\langle \vxi^{(k+1)}, 	\bar{\vA}({\vx}^{(k+1)}-{\vx}^{(k)})  \right\rangle +2\delta B_1l_g,
	\end{align}
where the second inequality is from~\eqref{eq:lemmaineq21}, and the last one holds by $\|\vxi^{(k+1)}\|\le l_g$ from the $l_g$-Lipschitz continuity of $\bar g$. 
Adding~\eqref{eq:proofstationarity1} and~\eqref{eq:proofstationarity2} and combining terms give
	\begin{align*}
	\nonumber
	&f_0(\vx^{(k+1)}) +\bar g(\vy^{(k+1)})-f_0(\vx^{(k)})-\bar g(\vy^{(k)})\\
	\leq &-\frac{3L_f}{2}\left\|\vx^{(k+1)}-{\vx}^{(k)}\right\|^2+2\delta B_1\left(B_2 +\sigma \|\bar{\vA}\|B_4+l_g\right)
	+2\delta^2 B_1B_3\left(1+\frac{1}{\tau} \sigma \|\bar{\vA}\|\left\|[\bar{\vA}; \vA]\right\|\right).
	\end{align*}

			Multiplying $3L_f$ to the inequality above and summing it over $k=0,1,\ldots,K-1$, we  obtain 
			\begin{equation}\label{eq:stationarybound}
			\begin{aligned}
				\frac{(3L_f)^2}{K}&\sum_{k=0}^{K-1} \left\|\vx^{(k+1)}-{\vx}^{(k)}\right\|^2 \leq   \frac{6L_f\left(F(\vx^{(0)}, \vy^{(0)})-\inf_{\vx,\vy}F(\vx, \vy)\right)}{K }\\
				& + 12L_f\delta B_1\left(B_2 +\sigma \|\bar{\vA}\|B_4+l_g\right)	
				+12L_f\delta^2 B_1B_3\left(1+\frac{1}{\tau} \sigma \|\bar{\vA}\|\left\|[\bar{\vA}; \vA]\right\|\right), \forall\, K\ge1.
			\end{aligned}	
			\end{equation}
			Let $K=K_\epsilon$ in~\eqref{eq:stationarybound}. Since  $K_\epsilon\geq 12L_f\left(F(\vx^{(0)}, \vy^{(0)})-\inf_{\vx,\vy}F(\vx, \vy)\right)\epsilon^{-2}$, it holds that 
			\begin{equation}
				\label{eq:Kep}
				\frac{6L_f\left(F(\vx^{(0)}, \vy^{(0)})-\inf_{\vx,\vy}F(\vx, \vy)\right)}{K_\epsilon}\leq \frac{1}{2}\epsilon^2.
			\end{equation}
		Moreover, the choice of $\delta=\delta_\epsilon$ in~\eqref{eq:delta} ensures			
				\begin{equation}
				\label{eq:deltaep}
	12L_f\delta B_1\left(B_2 +\sigma \|\bar{\vA}\|B_4+l_g\right)\leq \frac{1}{4}\epsilon^2\quad\text{and}\quad 12L_f\delta^2 B_1B_3\left(1+\frac{1}{\tau} \sigma \|\bar{\vA}\|\left\|[\bar{\vA}; \vA]\right\|\right)\leq \frac{1}{4}\epsilon^2.
		\end{equation}			
Applying the bounds in~\eqref{eq:Kep} and~\eqref{eq:deltaep} to~\eqref{eq:stationarybound} with $K=K_\epsilon$ gives 
\begin{align}
\label{eq:Kepsilon1}
\frac{(3L_f)^2}{K_\epsilon}\sum_{k=0}^{K_\epsilon-1} \left\|\vx^{(k+1)}-{\vx}^{(k)}\right\|^2 \leq \epsilon^2.
\end{align}
Hence, it follows that ${3L_f}\min_{k=0,\dots,K_\epsilon-1}\left\|\vx^{(k+1)}-{\vx}^{(k)}\right\| \leq \epsilon$. Then it results from the definition of $k'$ and \eqref{eq:xerror-2} that 
\begin{equation}
\label{eq:kktviosq-1}
\begin{aligned}
\left\|\nabla f_0(\vx^{(k'+1)}) + [
\bar{\vA} ;
\mathbf{A}]\zz \vz^{(k'+1)}\right\| 
\leq  {3L_f}\left\|\vx^{(k'+1)}-{\vx}^{(k')}\right\| = {3L_f}\min_{k=0,\dots,K_\epsilon-1}\left\|\vx^{(k+1)}-{\vx}^{(k)}\right\| \leq   
\epsilon,
\end{aligned}
\end{equation}
which together with~\eqref{eq:feasibility} and~\eqref{eq:kktvy-2vio} 
shows that $(\vx^{(k'+1)},\vy^{(k'+1)})$ is an  $\epsilon$-stationary point  of problem~\eqref{eq:model-spli} by Definition~\ref{def:eps-pt-P}. This completes the proof.
		\end{proof}
	
Theorem~\ref{thm:allcomple} above only shows that  Algorithm~\ref{alg:ipga-2} can produce an $\epsilon$-stationary point  of problem~\eqref{eq:model-spli}. In fact, with a $\delta$ that is slightly smaller  than $\delta_\epsilon$ in Algorithm~\ref{alg:ipga-2}, we can also show that Algorithm~\ref{alg:ipga-2} can produce a near $\epsilon$-stationary point  of~\eqref{eq:model} with an outer-iteration number similar to that in Theorem~\ref{thm:allcomple}. This result is presented in the following theorem. 

\begin{theorem}
\label{thm:allcompleforp}
Suppose that Assumptions~\ref{assume:problemsetup} and~\ref{ass:222} hold.  Given $\epsilon >0$, in Algorithm~\ref{alg:ipga-2}, let $\tau=2L_f$ and $\delta = \bar{\delta}_\epsilon:=\min\left\{ \frac{\epsilon}{6\left\|[\bar{\vA}; \vA]\right\|} , \frac{\Delta_{F_0}}{B_1l_g} ,\delta_\epsilon \right\}$, where $B_1$ is given in Lemma~\ref{lem:boundxyz-2} and $\delta_\epsilon$ is given in~\eqref{eq:delta}. Let $$\bar{K}_\epsilon:=\lceil 192 L_f\Delta_{F_0}\epsilon^{-2} \rceil.$$
For each $k\ge0$, let $(\vx^{(k)},\vy^{(k)})$ and $\vz^{(k)}$ be generated from  Algorithm~\ref{alg:ipga-2},  $(\bar{\vx}^{(k+1)}, \bar{\vy}^{(k+1)})$ be defined as in Lemma~\ref{lem:barxyz-2} with $\bar{\vz}^{(k+1)} = \mathbf{proj}_{\Omega^{(k+1)}}(\vz^{(k+1)})$. Let $k'=\underset{{k=0,\dots,\bar{K}_\epsilon-1}}\argmin\left\|\vx^{(k+1)}-{\vx}^{(k)}\right\|$. Then $(\vx^{(k'+1)},\vy^{(k'+1)})$ is an  $\epsilon$-stationary point  of problem~\eqref{eq:model-spli},  $\bar{\vx}^{(k'+1)}$ is an $\epsilon$-stationary point  of problem~\eqref{eq:model}, and $\vx^{(k'+1)}$ is a near $\epsilon$-stationary point  of problem~\eqref{eq:model}. More specifically, $\|\vx^{(k'+1)}-\bar\vx^{(k'+1)}\|\leq \frac{\epsilon}{12L_f}\in [0, \frac{150\pi \epsilon}{L_f})$.
\end{theorem}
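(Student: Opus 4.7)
The plan is to adapt the descent bookkeeping of Theorem~\ref{thm:allcomple}, but to replace $\Delta_F$ by $\Delta_{F_0}$ throughout and then transfer the stationarity from $(\vx^{(k'+1)},\vy^{(k'+1)})$ to the auxiliary iterate $\bar{\vx}^{(k'+1)}$ using the exact optimality conditions supplied by Lemma~\ref{lem:barxyz-2}. The quantitative heart of the argument is that the inexactness gap $\|\bar{\vx}^{(k'+1)}-\vx^{(k'+1)}\|$ is controlled by the new cap $\bar{\delta}_\epsilon$: in particular the factor $\delta\le\frac{\epsilon}{6\|[\bar{\vA};\vA]\|}$ together with $\tau=2L_f$ gives exactly $\frac{\epsilon}{12L_f}$ via~\eqref{eq:lemmaineq1}, which is the figure appearing in the claimed bound.

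I would first telescope the descent estimate derived for Theorem~\ref{thm:allcomple} up to $k=\bar K_\epsilon-1$ without invoking $\inf_{\vx,\vy}F(\vx,\vy)$. Because the algorithm is initialized at a feasible $(\vx^{(0)},\vy^{(0)})$ with $\vy^{(0)}=\bar{\vA}\vx^{(0)}+\bar{\vb}$, we have $F(\vx^{(0)},\vy^{(0)})=F_0(\vx^{(0)})$. To lower bound $F(\vx^{(\bar K_\epsilon)},\vy^{(\bar K_\epsilon)})$ in terms of $\inf_\vx F_0(\vx)$, I combine the $l_g$-Lipschitz continuity of $\bar{g}$ from Assumption~\ref{ass:222} with~\eqref{eq:lemmaineq2} to get $\bar{g}(\vy^{(\bar K_\epsilon)})\ge g(\vx^{(\bar K_\epsilon)})-l_g B_1\delta$, which yields
\[
F(\vx^{(0)},\vy^{(0)})-F(\vx^{(\bar K_\epsilon)},\vy^{(\bar K_\epsilon)})\le \Delta_{F_0}+l_g B_1\delta\le 2\Delta_{F_0},
\]
where the last step uses $\delta\le\frac{\Delta_{F_0}}{B_1 l_g}$. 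Plugging this bound for $\Delta_F$ into the telescoped estimate~\eqref{eq:stationarybound}, and using $\delta\le\delta_\epsilon$ so that~\eqref{eq:deltaep} still holds, I get $\frac{(3L_f)^2}{\bar K_\epsilon}\sum_{k=0}^{\bar K_\epsilon-1}\|\vx^{(k+1)}-\vx^{(k)}\|^2\le \frac{9\epsilon^2}{16}$, once $\bar K_\epsilon=\lceil 192L_f\Delta_{F_0}\epsilon^{-2}\rceil$ makes the telescoping term at most $\frac{\epsilon^2}{16}$ and the two error terms sum to at most $\frac{\epsilon^2}{2}$. Hence $3L_f\|\vx^{(k'+1)}-\vx^{(k')}\|\le \tfrac{3\epsilon}{4}$, and~\eqref{eq:feasibility},~\eqref{eq:kktvy-2vio},~\eqref{eq:xerror-2} then yield $\epsilon$-stationarity of $(\vx^{(k'+1)},\vy^{(k'+1)})$ for~\eqref{eq:model-spli} exactly as in Theorem~\ref{thm:allcomple}.

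Next, I read the exact KKT conditions for $(\bar{\vx}^{(k'+1)},\bar{\vy}^{(k'+1)})$ off Lemma~\ref{lem:barxyz-2}: equation~\eqref{eq:feasiblexy-2} gives $\vA\bar{\vx}^{(k'+1)}+\vb=\mathbf{0}$, so the feasibility part of Definition~\ref{def:eps-pt-P} for~\eqref{eq:model} holds exactly, while~\eqref{eq:bary-2} together with $\bar{\vy}^{(k'+1)}=\bar{\vA}\bar{\vx}^{(k'+1)}+\bar{\vb}$ forces $\bar{\vz}_1^{(k'+1)}\in\partial\bar{g}(\bar{\vy}^{(k'+1)})$. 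The elementary inclusion $\bar{\vA}^\top\partial\bar{g}(\bar{\vA}\vx+\bar{\vb})\subseteq\partial g(\vx)$, valid without any constraint qualification, then places $\bar{\vA}^\top\bar{\vz}_1^{(k'+1)}$ inside $\partial g(\bar{\vx}^{(k'+1)})$, and~\eqref{eq:barx-2} rearranges to
\[
\nabla f_0(\bar{\vx}^{(k'+1)})+\vA^\top\bar{\vz}_2^{(k'+1)}+\bar{\vA}^\top\bar{\vz}_1^{(k'+1)} = \nabla f_0(\bar{\vx}^{(k'+1)})-\nabla f_0(\vx^{(k')})-\tau(\bar{\vx}^{(k'+1)}-\vx^{(k')}),
\]
whose norm is bounded by $(L_f+\tau)\|\bar{\vx}^{(k'+1)}-\vx^{(k')}\|=3L_f\|\bar{\vx}^{(k'+1)}-\vx^{(k')}\|$ via $L_f$-Lipschitzness of $\nabla f_0$. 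Splitting this by the triangle inequality, Lemma~\ref{lem:boundxyz-2} combined with $\delta\le\frac{\epsilon}{6\|[\bar{\vA};\vA]\|}$ and $\tau=2L_f$ bounds $3L_f\|\bar{\vx}^{(k'+1)}-\vx^{(k'+1)}\|$ by $\frac{\epsilon}{4}$, while the previous paragraph bounds $3L_f\|\vx^{(k'+1)}-\vx^{(k')}\|$ by $\frac{3\epsilon}{4}$; the total is $\epsilon$, establishing $\bar{\vx}^{(k'+1)}$ as an $\epsilon$-stationary point of~\eqref{eq:model}.

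The remaining near-stationarity claim is immediate: by~\eqref{eq:lemmaineq1}, $\tau=2L_f$, and $\delta\le\frac{\epsilon}{6\|[\bar{\vA};\vA]\|}$, we have $\|\vx^{(k'+1)}-\bar{\vx}^{(k'+1)}\|\le\frac{1}{\tau}\|[\bar{\vA};\vA]\|\delta\le\frac{\epsilon}{12L_f}$, and clearly $\frac{\epsilon}{12L_f}<\frac{150\pi\epsilon}{L_f}$. The main obstacle I anticipate is in the first step: because $\inf_{\vx,\vy}F(\vx,\vy)$ may lie strictly below $\inf_\vx F_0(\vx)$, one cannot simply invoke Theorem~\ref{thm:allcomple} with $\Delta_F$ swapped for $\Delta_{F_0}$, and one must instead telescope the descent only between the \emph{iterate} values of $F$, pulling $F(\vx^{(\bar K_\epsilon)},\vy^{(\bar K_\epsilon)})$ back to a value comparable to $F_0(\vx^{(\bar K_\epsilon)})$ via the Lipschitz continuity of $\bar{g}$ and the approximate feasibility~\eqref{eq:lemmaineq2}. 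This extra slack is precisely what forces the auxiliary cap $\delta\le\frac{\Delta_{F_0}}{B_1 l_g}$ to appear in the definition of $\bar{\delta}_\epsilon$.
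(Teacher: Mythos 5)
Your proposal is correct and follows essentially the same route as the paper's proof: you telescope the descent only down to the iterate value of $F$, recover $\Delta_{F_0}$ (up to the extra $l_gB_1\delta\le\Delta_{F_0}$ slack, which is exactly why the cap $\delta\le\frac{\Delta_{F_0}}{B_1l_g}$ is imposed) to get $3L_f\|\vx^{(k'+1)}-\vx^{(k')}\|\le\frac{3\epsilon}{4}$, and then transfer stationarity to $\bar\vx^{(k'+1)}$ via the exact KKT conditions of Lemma~\ref{lem:barxyz-2} together with the $\frac{\epsilon}{4}$ correction from $\delta\le\frac{\epsilon}{6\|[\bar{\vA};\vA]\|}$. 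All the constants check out, so no gaps to report.
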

\begin{proof}
Notice $\mathbf{0}= \nabla f_0(\vx^{(k)}) + [
				\bar{\vA} ;
				\mathbf{A}]\zz \bar\vz^{(k+1)} +\tau (\bar\vx^{(k+1)}-\vx^{(k)})$ from~\eqref{eq:barx-2}. Hence through the same arguments to obtain~\eqref{eq:xerror-2}, we arrive at  
	\begin{equation}
		\label{eq:fobar}
	\left\|\nabla f_0(\bar{\vx}^{(k+1)}) + [
	\bar{\vA} ;
	\mathbf{A}]\zz \bar{\vz}^{(k+1)}\right\| \leq {3L_f}\left\|\bar{\vx}^{(k+1)}-{\vx}^{(k)}\right\|, \forall\, k\ge0.
	\end{equation}
By the triangle inequality, it holds $\left\|\bar{\vx}^{(k+1)}-{\vx}^{(k)}\right\| \le \left\|\bar{\vx}^{(k+1)}-{\vx}^{(k+1)}\right\|+ \left\|{\vx}^{(k+1)}-{\vx}^{(k)}\right\|$. Thus from~\eqref{eq:lemmaineq1}, we have	
$\left\|\bar{\vx}^{(k+1)}-{\vx}^{(k)}\right\| \le \left\|{\vx}^{(k+1)}-{\vx}^{(k)}\right\| + \frac{1}{\tau}\left\|[\bar{\vA}; \vA]\right\|{\delta},$ which together with~\eqref{eq:fobar} and $\tau=2L_f$ gives
\begin{equation}
		\label{eq:fobar-2-1}
	\textstyle \left\|\nabla f_0(\bar{\vx}^{(k+1)}) + [
	\bar{\vA} ;
	\mathbf{A}]\zz \bar{\vz}^{(k+1)}\right\| \leq {3L_f}\left\|{\vx}^{(k+1)}-{\vx}^{(k)}\right\| + \frac{3}{2} \left\|[\bar{\vA}; \vA]\right\|{\delta}, \forall\,\ k\ge0.
	\end{equation}
By using the same method to obtain argument~\eqref{eq:stationarybound} and the definition of $\bar{\delta}_{\epsilon}$, we have
\begin{equation}\label{eq:stationarybound-15}
	\begin{aligned}
		\frac{(3L_f)^2}{K}&\sum_{k=0}^{K-1} \left\|\vx^{(k+1)}-{\vx}^{(k)}\right\|^2 \leq   \frac{6L_f\left(F(\vx^{(0)}, \vy^{(0)})-[f_0(\vx^{(K+1)}) +\bar g(\vy^{(K+1)})]\right)}{K }+\frac{\epsilon^2}{2}, \forall\, K\ge1.
	\end{aligned}	
\end{equation}

Recall that $(\vx^{(0)}, \vy^{(0)})$ is a feasible point of problem \eqref{eq:model-spli}. We have
\begin{align*}
	&F(\vx^{(0)}, \vy^{(0)})-\left[f_0(\vx^{(K+1)}) +\bar g(\vy^{(K+1)})\right]\\=& \left(f_0(\vx^{(0)})+\bar{g}(\bar{\vA}\vx^{(0)}+\bar\vb)-\left[f_0(\vx^{(K+1)}) +\bar g(\bar{\vA}{\vx}^{(K+1)} +	
	\bar{\vb})\right]\right) + \left[\bar g(\bar{\vA}{\vx}^{(K+1)} +	
	\bar{\vb})-\bar g(\vy^{(K+1)})\right] \\
	\leq&  \Delta_{F_0} + l_g\|\bar{\vA}{\vx}^{(K+1)} +	
	\bar{\vb}-\vy^{(K+1)}\|\\
	\leq& \Delta_{F_0} +\Delta_{F_0} = 2 \Delta_{F_0}, \quad\forall\, K\ge1,
\end{align*}
where the first inequality is because  $\bar{g}$ is $l_g$-Lipschitz continuous by Assumption~\ref{ass:222} and the second inequality is because 
 $ \|{\vy}^{(K+1)}-
(\bar{\vA}{\vx}^{(K+1)} +	
\bar{\vb})\|
\leq\bar{\delta}_{\epsilon} B_1\leq \frac{\Delta_{F_0}}{l_g}$ according to \eqref{eq:feasibility} and the definition of $\bar{\delta}_{\epsilon}$.
Now substituting the above inequality into \eqref{eq:stationarybound-15} with $K=\bar K_\epsilon=\lceil 192 L_f\Delta_{F_0}\epsilon^{-2} \rceil$, 
we obtain 	
$$\frac{(3L_f)^2}{K}\sum_{k=0}^{K-1} \left\|\vx^{(k+1)}-{\vx}^{(k)}\right\|^2 \le \frac{12 L_f \Delta_{F_0}}{192 L_f\Delta_{F_0}\epsilon^{-2}} +\frac{\epsilon^2}{2} = \frac{\epsilon^2}{16}+\frac{\epsilon^2}{2}=\frac{9\epsilon^2}{16}.$$ 
Since $k'=\argmin_{k=0,\dots,\bar{K}_\epsilon-1}\left\|\vx^{(k+1)}-{\vx}^{(k)}\right\|$, we have
${3L_f}\left\|\vx^{(k'+1)}-{\vx}^{(k')}\right\| \leq \frac{3\epsilon}{4}$. Then, it follows from the same arguments in the proof of Theorem \ref{thm:allcomple} that $(\vx^{(k'+1)},\vy^{(k'+1)})$ is an  $\epsilon$-stationary point  of problem~\eqref{eq:model-spli}. 

Also,
we obtain from~\eqref{eq:fobar-2-1} and the choice of $\delta$ that 
\begin{equation}\label{eq:fobar-2-2}
\left\|\nabla f_0(\bar{\vx}^{(k'+1)}) + [
	\bar{\vA} ;
	\mathbf{A}]\zz \bar{\vz}^{(k'+1)}\right\| \leq \epsilon.
\end{equation}		
On the other hand, we have from \eqref{eq:feasiblexy-2} and~\eqref{eq:bary-2} that	
\begin{equation}
		\label{eq:stabarx}		
			\bar{\vy}^{(k'+1)}= 
			(\bar{\vA}\bar{\vx}^{(k'+1)} +	
			\bar{\vb}),\quad {\vA}\bar{\vx}^{(k'+1)} +
			{\vb} =\vzero ,\quad
			\bar{\vz}_1^{(k'+1)}\in \partial \bar{g}(\bar{\vy}^{(k'+1)}).	
	\end{equation}
Recall $g(\vx) = \bar g(\bar\vA\vx + \bar\vb)$. 	By~\cite{clarke1990optimization}, 
it holds that
	$
	\partial g(\bar{\vx}^{(k'+1)})=\overline{\mathbf{co}}\left(\left\{\bar{\vA}\zz \vxi: \vxi \in \partial\bar{g}\big(\bar{\vA}\bar{\vx}^{(k'+1)} +	
			\bar{\vb}\big)\right\} \right),
	$
and thus from~\eqref{eq:stabarx}, it follows	 $\bar{\vA}\zz\bar{\vz}_1^{(k'+1)} \in \partial g(\bar{\vx}^{(k'+1)}).$ Then by~\eqref{eq:fobar-2-2}, we have 
	$$
	\textstyle \dist\left(\vzero, \nabla {f}_0\big(\bar{\vx}^{(k'+1)}\big)+\partial g\big(\bar{\vx}^{(k'+1)}\big)+{\vA}\zz \bar{\vz}^{(k'+1)}_2\right) \le \left\|\nabla {f}_0\big(\bar{\vx}^{(k'+1)}\big)+\bar{\vA}\zz\bar{\vz}^{(k'+1)}_1+{\vA}\zz \bar{\vz}^{(k'+1)}_2\right\| \le \epsilon,$$
which, together with 	${\vA}\bar{\vx}^{(k'+1)} + {\vb} =\vzero$ from~\eqref{eq:stabarx}, indicates that
	$\bar{\vx}^{(k'+1)}$ is an $\epsilon$-stationary point of~\eqref{eq:model}. 
	
		By \eqref{eq:lemmaineq1}, 	$\|\bar{\vx}^{(k'+1)}-{\vx}^{(k'+1)}\|\leq \frac{1}{\tau}\left\|[\bar{\vA}; \vA]\right\|\delta_\epsilon\leq \frac{\epsilon}{12L_f}$, where the second inequality is from the definition of $\bar\delta_\epsilon$ which ensures $\bar\delta_\epsilon \leq  \frac{\epsilon}{6\|[\bar{\vA}; \vA]\|}$.
%
\end{proof}

\subsection{Number of Inner Iterations for Finding $\vz^{(k+1)}$ Satisfying \eqref{eq:boundz-2}}

		To obtain the total number of inner iterations of Algorithm~\ref{alg:ipga-2} to find an $\epsilon$-stationary point of~\eqref{eq:model-spli}, we still need to evaluate the  number of iterations for computing $\vz^{(k+1)}$ that satisfies the criterion in \eqref{eq:boundz-2} for each $k\ge0$. 
The problem in~\eqref{eq:sublb-2} has the convex composite structure. Additionally, the gradient of the smooth function $\mathcal{D}_k(\vz)-\bar{g}^{\star} (\vz_1)$ in the objective function $\mathcal{D}_k$ in~\eqref{eq:sublb-2} is $L_\mathcal{D}$-Lipschitz continuous with 
$$
L_\mathcal{D}:=\lambda_{\max}([\bar{\vA}; \vA][\bar{\vA}; \vA]\zz)/\tau.
$$
Hence, we apply the APG method in~\cite{beck2009fast} with a standard restarting technique
to~\eqref{eq:sublb-2} to find $\vz^{(k+1)}$.  The restarted APG algorithm instantiated on~\eqref{eq:sublb-2} is presented in Algorithm~\ref{alg:restartedAPG}. The algorithm has double loops and, in addition to the main iterate $\vz^{(j,k)}$, it also generates an auxiliary iterate $\widehat\vz^{(j,k)}$. In the algorithm, we use $\widehat{\mathcal{G}}_1^j$ and $\widehat{\mathcal{G}}_2^j$ to represent the gradients of  $\mathcal{D}_k(\vz)-\bar{g}^{\star} (\vz_1)$ with respect to $\vz_1$ and $\vz_2$, respectively, at $\vz=\widehat\vz^{(j,k)}$. The algorithm is restarted after every $j_k$ APG steps. We call each APG step, i.e., Line~5-9 in Algorithm~\ref{alg:restartedAPG}, as one inner iteration of Algorithm~\ref{alg:ipga-2}. 

		\begin{algorithm}[H]
	\caption{Restarted accelerated proximal gradient method for~\eqref{eq:sublb-2}}\label{alg:restartedAPG}
	\begin{algorithmic}[1]
		\State \textbf{Input:} an initial point $\vz^{\text{ini}}=((\vy^{(0)})\zz, (\vA\vx^{(0)})\zz)\zz$ where $(\vx^{(0)},\vy^{(0)})$ is the same as in Algorithm~\ref{alg:ipga-2}.
		\State $\vz^{(0,k)}\leftarrow \vz^{\text{ini}}$, $\widehat\vz^{(0,k)}\leftarrow \vz^{(0,k)}$, $\alpha_0\leftarrow 1$.
		\For  {$i=0,\dots,i_k-1$}
		\For {$j=0,\dots,j_k-1$}
		\State $\widehat{\mathcal{G}}_1^j\leftarrow\frac{1}{\tau}\bar{\vA}\left(\bar{\vA}\zz\widehat\vz_1^{(j,k)} +\vA\zz\widehat\vz_2^{(j,k)} +\nabla f_0(\vx^{(k)})-\tau \vx^{(k)}\right)-\bar\vb.$
		
		\State
		$\widehat{\mathcal{G}}_2^j\leftarrow\frac{1}{\tau}\vA\left(\bar{\vA}\zz\widehat\vz_1^{(j,k)} +\vA\zz\widehat\vz_2^{(j,k)} +\nabla f_0(\vx^{(k)})-\tau \vx^{(k)}\right)-\vb$.
		
		\State $\vz_1^{(j+1,k)}\leftarrow \mathbf{prox}_{L_\mathcal{D}^{-1}\bar{g}^*}\left(\widehat\vz_1^{(j,k)}-\frac{1}{L_\mathcal{D}}\widehat{\mathcal{G}}_1^j\right)$,~
		$\vz_2^{(j+1,k)}\leftarrow \widehat\vz_2^{(j,k)}-\frac{1}{L_\mathcal{D}}\widehat{\mathcal{G}}_2^j$.
		
		\State $\alpha_{j+1}\leftarrow \frac{1+\sqrt{1+4\alpha_j^2}}{2}$.
			
		\State $\widehat\vz^{(j+1,k)}\leftarrow \vz^{(j+1,k)}+\left(\frac{\alpha_{j}-1}{\alpha_{j+1}}\right)\left(\vz^{(j+1,k)}-\vz^{(j,k)}\right)$.
		
		\EndFor
\State $\vz^{(0,k)}\leftarrow \vz^{(j_k,k)}$, $\widehat\vz^{(0,k)}\leftarrow \vz^{(0,k)}$, $\alpha_0\leftarrow 1$.
		\EndFor
		\State \textbf{Output:} $\vz^{(k+1)}=\vz^{(0,k)}$.
	\end{algorithmic}
\end{algorithm}

The number of APG steps performed in Algorithm~\ref{alg:restartedAPG} to find a point satisfying \eqref{eq:boundz-2} is known when Assumption~\ref{ass:222} and either one of Assumptions~\ref{ass:dual2} and~\ref{ass:polyhedralg} hold. We first present the result when Assumptions	\ref{ass:222} and~\ref{ass:dual2} hold.	
		\begin{theorem}\label{thm:inner-iter-dual2}
			Suppose Assumptions~\ref{assume:problemsetup},~\ref{ass:222} and~\ref{ass:dual2} hold.
			Given any $\delta>0$, if $j_k=\lceil 2\sqrt{2}\kappa ([\bar{\vA}; \vA])\rceil$ and $i_k=\left\lceil \log_2(\frac{2(\mathcal{D}_k(\vz^{\text{ini}})-\mathcal{D}_k^*)}{\mu_\mathcal{D}\delta^2})\right\rceil$ in Algorithm~\ref{alg:restartedAPG}, the output $\vz^{(k+1)}$ satisfies the criteria in \eqref{eq:boundz-2}. Here, $\mu_\mathcal{D}=\lambda_{\min}([\bar{\vA}; \vA][\bar{\vA}; \vA]\zz)/\tau$.
		\end{theorem}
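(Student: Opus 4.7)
The plan is to exploit that Assumption~\ref{ass:dual2} makes $\mathcal{D}_k$ strongly convex, so a standard restart of FISTA converts the $O(1/j^2)$ sublinear rate into a linear rate on the optimality gap, and strong convexity then translates an optimality gap into a distance bound against $\Omega^{(k+1)}$.

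First I would observe that the smooth part of $\mathcal{D}_k$, namely $\vz \mapsto \frac{1}{2\tau}\|[\bar{\vA};\vA]\zz\vz + \nabla f_0(\vx^{(k)})-\tau\vx^{(k)}\|^2 - \vz_1\zz\bar{\vb} - \vz_2\zz\vb$, has Hessian $\frac{1}{\tau}[\bar{\vA};\vA][\bar{\vA};\vA]\zz$, which under Assumption~\ref{ass:dual2} has smallest eigenvalue $\mu_\mathcal{D}>0$ and largest eigenvalue $L_\mathcal{D}$. Adding the convex $\bar g^\star(\vz_1)$ preserves $\mu_\mathcal{D}$-strong convexity and keeps the composite gradient Lipschitz constant at $L_\mathcal{D}$, so $\Omega^{(k+1)}=\{\vz^*_k\}$ is a singleton and $\tfrac{\mu_\mathcal{D}}{2}\|\vz-\vz^*_k\|^2 \le \mathcal{D}_k(\vz)-\mathcal{D}_k^*$ for every $\vz$.

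Next I would apply the classical FISTA guarantee within one restart cycle (see, e.g., Beck--Teboulle): after $j_k$ inner iterations starting from $\vz^{(0,k)}$,
\begin{equation*}
\mathcal{D}_k(\vz^{(j_k,k)})-\mathcal{D}_k^* \le \frac{2L_\mathcal{D}}{(j_k+1)^2}\,\|\vz^{(0,k)}-\vz^*_k\|^2 \le \frac{4L_\mathcal{D}}{\mu_\mathcal{D}(j_k+1)^2}\bigl(\mathcal{D}_k(\vz^{(0,k)})-\mathcal{D}_k^*\bigr),
\end{equation*}
where the second inequality uses strong convexity at the starting point. Since $L_\mathcal{D}/\mu_\mathcal{D}=\kappa^2([\bar{\vA};\vA])$ and $j_k \ge 2\sqrt{2}\kappa([\bar{\vA};\vA])$, the multiplier is at most $\tfrac12$, so one restart halves the optimality gap. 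Iterating over $i_k$ outer restarts gives
\begin{equation*}
\mathcal{D}_k(\vz^{(k+1)})-\mathcal{D}_k^* \le 2^{-i_k}\bigl(\mathcal{D}_k(\vz^{\mathrm{ini}})-\mathcal{D}_k^*\bigr) \le \tfrac{\mu_\mathcal{D}\delta^2}{2},
\end{equation*}
by the choice of $i_k$. Applying strong convexity once more yields $\|\vz^{(k+1)}-\vz^*_k\|^2 \le \tfrac{2}{\mu_\mathcal{D}}(\mathcal{D}_k(\vz^{(k+1)})-\mathcal{D}_k^*) \le \delta^2$, which is the required distance bound in~\eqref{eq:boundz-2}.

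Finally, the domain condition $\vz_1^{(k+1)}\in\dom(\bar g^\star)$ is automatic: the last update of $\vz_1$ in Algorithm~\ref{alg:restartedAPG} is a $\prox_{L_\mathcal{D}^{-1}\bar g^\star}$-step, and the range of any proximal operator of a proper closed convex function is contained in its domain. The only delicate point I anticipate is a clean statement of the FISTA rate in terms of $\|\vz^{(0,k)}-\vz^*_k\|^2$ in the composite/restarted setting; provided one cites the standard Beck--Teboulle bound (valid for the composite objective $\mathcal{D}_k$ with Lipschitz constant $L_\mathcal{D}$ of its smooth part), the rest is an arithmetic chain as above.
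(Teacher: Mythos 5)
Your proposal is correct and follows essentially the same route as the paper's proof: invoke $\mu_\mathcal{D}$-strong convexity of $\mathcal{D}_k$ under Assumption~\ref{ass:dual2}, apply the Beck--Teboulle FISTA bound within each restart cycle, use strong convexity to convert the initial distance into an objective gap so that each cycle of $j_k=\lceil 2\sqrt{2}\kappa([\bar{\vA};\vA])\rceil$ steps halves the gap, and after $i_k$ restarts convert the final gap back into the distance bound of~\eqref{eq:boundz-2}. Your additional remark verifying $\vz_1^{(k+1)}\in\dom(\bar g^\star)$ via the range of the proximal operator is a small but welcome completion of a point the paper leaves implicit.
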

	
	
	\begin{proof}
		With Assumption~\ref{ass:dual2}, the problem in~\eqref{eq:sublb-2} is $\mu_\mathcal{D}$-strongly convex. Consider the end of the first inner loop (i.e., $i=0$) of Algorithm~\ref{alg:restartedAPG}.  According to~\cite[Theorem 4.4.]{beck2009fast}, after $j_k$ APG steps, the APG method generates an output $\vz^{(j_k,k)}$ satisfying 
		$$
		\mathcal{D}_k(\vz^{(j_k,k)})-\mathcal{D}_k^*\leq\frac{2L_\mathcal{D}\dist^2(\vz^{\text{ini}}, \Omega^{(k+1)})}{j_k^2}\leq\frac{4L_\mathcal{D} [\mathcal{D}_k(\vz^{\text{ini}})-\mathcal{D}_k^*]}{\mu_\mathcal{D}j_k^2}=\frac{4 \kappa^2([\bar{\vA}; \vA]) [\mathcal{D}_k(\vz^{\text{ini}})-\mathcal{D}_k^*]}{j_k^2},
		$$
		where the second inequality is by the $\mu_\mathcal{D}$-strong convexity of $\mathcal{D}_k$. Since $j_k=\lceil 2\sqrt{2}\kappa ([\bar{\vA}; \vA])\rceil$, we have $\mathcal{D}_k(\vz^{(j_k,k)})-\mathcal{D}_k^* \le \frac{1}{2}\left(\mathcal{D}_k(\vz^{\text{ini}})-\mathcal{D}_k^*\right)$, meaning that we can reduce the objective gap by a half with each inner loop of  Algorithm~\ref{alg:restartedAPG}. Since the next inner loop is started at $\vz^{(j_k,k)}$ according to Line 11 of Algorithm~\ref{alg:restartedAPG}, we repeat this argument $i_k=\left\lceil \log_2(\frac{2(\mathcal{D}_k(\vz^{\text{ini}})-\mathcal{D}_k^*)}{\mu_\mathcal{D}\delta^2})\right\rceil$ times and show that the final output $\vz^{(k+1)}$ satisfies $\mathcal{D}_k(\vz^{(k+1)})-\mathcal{D}_k^*\leq \frac{\mu_\mathcal{D}\delta^2}{2}$ and thus
		$$
		\dist(\vz^{(k+1)}, \Omega^{(k+1)})\leq  \sqrt{\frac{2(\mathcal{D}_k(\vz^{(k+1)})-\mathcal{D}_k^*)}{\mu_\mathcal{D}}} \leq \delta,
		$$
		which means $\vz^{(k+1)}$ satisfies  \eqref{eq:boundz-2}. 
	\end{proof}

	Below we derive the number of APG steps performed in Algorithm~\ref{alg:restartedAPG}  to find a point satisfying \eqref{eq:boundz-2} under Assumptions~\ref{ass:222} and~\ref{ass:polyhedralg}. In this case, 
	$\bar{g}^{\star}(\vz_1)=\iota_{\vC \vz_1\leq\vd}(\vz_1)$ because the conjugate of $\iota_{\vC \vz_1\leq\vd}(\vz_1)$ is $\max\{\vu\zz\vz_1: \vC \vu\leq\vd\}=\bar g(\vz_1)$ and the conjugate of the conjugate of a closed convex function is just the function itself. In this case, the minimization problem in~\eqref{eq:sublb-2} 
		may not be strongly convex, but it is a quadratic program for which a restarted APG method can still have a linear convergence rate as shown in~\cite{necoara2019linear}.  Recall that $\Omega^{(k+1)}$ is the solution set in~\eqref{eq:sublb-2}. Since $\|\cdot\|^2$ is strongly convex, by the same proof of Eqn.~(40) in~\cite{necoara2019linear}, there exists $\vnu^{(k)}\in\mathbb{R}^d$ such that 
	\begin{eqnarray}
		\label{eq:tk}
	[\bar{\vA}; \vA]\zz\bar\vz^{(k+1)} +\nabla f_0(\vx^{(k)})-\tau \vx^{(k)}=\vnu^{(k)},~\forall\,  \bar\vz^{(k+1)} \in \Omega^{(k+1)}.
		\end{eqnarray}
	As a result, we have 
	\begin{eqnarray}
	\label{eq:sk}
	- (\bar\vz^{(k+1)}_1)^{\top}\bar{\vb}- (\bar\vz^{(k+1)}_2)^{\top}\vb =	\mathcal{D}_k^*-\frac{1}{2\tau}\|\vnu^{(k)}\|^2,~\forall\,  \bar\vz^{(k+1)} \in \Omega^{(k+1)}.
\end{eqnarray}
Therefore, $\Omega^{(k+1)}$ can be characterized  as the solution set of the linear system as follows
\begin{align}
	\label{eq:linearsystem}
\Omega^{(k+1)}=\left\{\vz=(\vz_1\zz,\vz_2\zz)\zz\,\Bigg| \,
\begin{array}{c}
[\bar{\vA}; \vA]\zz\vz=\vnu^{(k)}-\nabla f_0(\vx^{(k)})+\tau \vx^{(k)},\\
 -\vz_1^{\top}\bar{\vb}- \vz_2^{\top}\vb=\mathcal{D}_k^*-\frac{1}{2\tau}\|\vnu^{(k)}\|^2,\\
  \vC \vz_1\leq\vd
\end{array}  
  \right\}.
\end{align}
The Hoffman constant is a constant $\theta(\bar{\vA},\vA,\bar\vb,\vb,\vC)$ such that 
\begin{align}
	\label{eq:defHoffman}
\dist(\vz, \Omega^{(k+1)})\leq \theta(\bar{\vA},\vA,\bar\vb,\vb,\vC)\left\|\left[
\begin{array}{c}
	[\bar{\vA}; \vA]\zz\vz+\nabla f_0(\vx^{(k)})-\tau \vx^{(k)}-\vnu^{(k)}\\
	-\vz_1^{\top}\bar{\vb}- \vz_2^{\top}\vb-\mathcal{D}_k^*+\frac{1}{2\tau}\|\vnu^{(k)}\|^2\\
	 \left(\vC \vz_1-\vd \right)_+
\end{array}
\right]
\right\|, \forall\, \vz.
\end{align}
Note that $\theta(\bar{\vA},\vA,\bar\vb,\vb,\vC)$ does not depend on the right-hand sides of the linear system in~\eqref{eq:linearsystem}. As shown in~\cite{necoara2019linear}, the linear convergence rate of the APG depends on $\theta(\bar{\vA},\vA,\bar\vb,\vb,\vC)$. In a special case where $\bar{\vb}=\mathbf{0}$ and $\vb=\mathbf{0}$, we drop the equality $-\vz_1^{\top}\bar{\vb}- \vz_2^{\top}\vb=\mathcal{D}_k^*-\frac{1}{2\tau}\|\vnu^{(k)}\|^2$ from~\eqref{eq:linearsystem} and denote the corresponding Hoffman constant by $\theta(\bar{\vA},\vA,\vC)$ which satisfies
\begin{align}
	\label{eq:defHoffman_small}
	\dist(\vz, \Omega^{(k+1)})\leq \theta(\bar{\vA},\vA,\vC)\left\|\left[
	\begin{array}{c}
		[\bar{\vA}; \vA]\zz\vz+\nabla f_0(\vx^{(k)})-\tau \vx^{(k)}-\vnu^{(k)}\\
		\left(\vC \vz_1-\vd \right)_+
	\end{array}
	\right]
	\right\| , \forall\, \vz.
\end{align}
With these notations, the number of APG steps to obtain $\vz^{k+1}$ satisfying \eqref{eq:boundz-2} is characterized as follows. 


		\begin{theorem}
			\label{thm:complesub}
				Suppose Assumptions~\ref{assume:problemsetup},~\ref{ass:222} and~\ref{ass:polyhedralg} hold.
			Given any $\delta>0$, if $j_k=\left\lceil 2\sqrt{2 L_\mathcal{D}/\rho_k}\right\rceil$ and $i_k=\left\lceil \log_2(\frac{2(\mathcal{D}_k(\vz^{\text{ini}})-\mathcal{D}_k^*)}{\rho_k\delta^2})\right\rceil$ in Algorithm~\ref{alg:restartedAPG}, the output $\vz^{(k+1)}$ satisfies  the criteria in \eqref{eq:boundz-2}. Here, 
			\begin{align}
				\label{eq:muk}
				\rho_k= \left\{
				\begin{array}{ll}
				\left[\theta^2(\bar{\vA},\vA,\bar\vb,\vb,\vC)(\tau+ \mathcal{D}_k(\vz^{\text{ini}})-\mathcal{D}_k^*+2\tau \|\vnu^{(k)}\|^2)\right]^{-1}&\text{ if }\bar{\vb}\neq\vzero\text{ or } \vb\neq\vzero,\\
		\left[\theta^2(\bar{\vA},\vA,\vC)\tau\right]^{-1} &\text{ if }\bar{\vb}=\vzero\text{ and } \vb=\vzero,
				\end{array}
							\right.
			\end{align}
			with $\vnu^{(k)}$ defined in \eqref{eq:tk} for $k\geq0$.
		\end{theorem}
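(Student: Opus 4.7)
The plan is to mirror the proof of Theorem~\ref{thm:inner-iter-dual2} but, since $[\bar{\vA};\vA]$ need not have full row rank under Assumption~\ref{ass:polyhedralg}, the strong convexity of $\mathcal{D}_k$ is no longer available. I would instead establish a \emph{quadratic growth} property of $\mathcal{D}_k$ around $\Omega^{(k+1)}$ with constant $\rho_k$ and then feed it into the standard restarted-APG recursion. Under Assumption~\ref{ass:polyhedralg}, $\bar{g}^{\star}(\vz_1)=\iota_{\vC\vz_1\le\vd}(\vz_1)$, so $\mathcal{D}_k$ is the restriction of the convex quadratic $\mathcal{F}_k(\vz):=\frac{1}{2\tau}\|[\bar{\vA};\vA]\zz\vz+\nabla f_0(\vx^{(k)})-\tau\vx^{(k)}\|^2-\vz_1\zz\bar{\vb}-\vz_2\zz\vb$ to the polyhedron $\{\vz_1:\vC\vz_1\le\vd\}\times\RR^n$, and its Hessian equals $\tau^{-1}[\bar{\vA};\vA][\bar{\vA};\vA]\zz$.

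First I would establish quadratic growth. For any feasible $\vz$ and any $\bar{\vz}^{(k+1)}\in\Omega^{(k+1)}$, an exact second-order Taylor expansion of $\mathcal{F}_k$ at $\bar{\vz}^{(k+1)}$ combined with first-order optimality for the restriction gives $\mathcal{D}_k(\vz)-\mathcal{D}_k^*\ge\frac{1}{2\tau}\|[\bar{\vA};\vA]\zz(\vz-\bar{\vz}^{(k+1)})\|^2$. Inserting this into the Hoffman inequality \eqref{eq:defHoffman_small} (which is the relevant one when $\bar{\vb}=\vzero$ and $\vb=\vzero$, because then the scalar residual in \eqref{eq:defHoffman} vanishes) directly yields $\dist^2(\vz,\Omega^{(k+1)})\le 2\tau\,\theta^2(\bar{\vA},\vA,\vC)\,(\mathcal{D}_k(\vz)-\mathcal{D}_k^*)$, reproducing the second line of \eqref{eq:muk}. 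In the general case I would use \eqref{eq:defHoffman} and bound the scalar residual $|-\vz_1\zz\bar{\vb}-\vz_2\zz\vb-\mathcal{D}_k^*+\frac{1}{2\tau}\|\vnu^{(k)}\|^2|$ by re-expressing it through $\vw:=[\bar{\vA};\vA]\zz(\vz-\bar{\vz}^{(k+1)})$ and $\mathcal{D}_k(\vz)-\mathcal{D}_k^*$, then applying AM--GM together with the sublevel-set bound $\mathcal{D}_k(\vz)-\mathcal{D}_k^*\le\mathcal{D}_k(\vz^{\text{ini}})-\mathcal{D}_k^*$, which is maintained inductively because Algorithm~\ref{alg:restartedAPG} is monotone across restarts. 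This produces the first line of \eqref{eq:muk}, so $\mathcal{D}_k(\vz)-\mathcal{D}_k^*\ge\frac{\rho_k}{2}\dist^2(\vz,\Omega^{(k+1)})$ holds throughout the sublevel set swept by the algorithm.

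Next I would apply \cite[Theorem~4.4]{beck2009fast} to a single inner loop of Algorithm~\ref{alg:restartedAPG}, which gives
\begin{equation*}
\mathcal{D}_k(\vz^{(j_k,k)})-\mathcal{D}_k^*\le\frac{2L_\mathcal{D}\,\dist^2(\vz^{(0,k)},\Omega^{(k+1)})}{j_k^2}\le\frac{4L_\mathcal{D}}{\rho_k j_k^2}\left(\mathcal{D}_k(\vz^{(0,k)})-\mathcal{D}_k^*\right),
\end{equation*}
the second inequality being the quadratic growth just derived. The prescribed $j_k=\lceil 2\sqrt{2L_\mathcal{D}/\rho_k}\,\rceil$ forces $\frac{4L_\mathcal{D}}{\rho_k j_k^2}\le\frac{1}{2}$, so each restart cycle halves the objective gap; iterating $i_k=\lceil\log_2(2(\mathcal{D}_k(\vz^{\text{ini}})-\mathcal{D}_k^*)/(\rho_k\delta^2))\rceil$ cycles then reduces the final gap to at most $\rho_k\delta^2/2$, and invoking quadratic growth one last time yields $\dist(\vz^{(k+1)},\Omega^{(k+1)})\le\delta$. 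The membership $\vz_1^{(k+1)}\in\dom(\bar{g}^{\star})=\{\vy:\vC\vy\le\vd\}$ is automatic, since the proximal step in line~7 of Algorithm~\ref{alg:restartedAPG} coincides with the Euclidean projection onto this polyhedron; hence both conditions in \eqref{eq:boundz-2} are met.

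The hard part will be the general case of the quadratic growth step, where $(\bar{\vb},\vb)\ne(\vzero,\vzero)$: the scalar residual in Hoffman's bound mixes the second-order term $\frac{1}{2\tau}\|\vw\|^2$, a first-order cross term $\frac{1}{\tau}\langle\vnu^{(k)},\vw\rangle$, and $\Delta:=\mathcal{D}_k(\vz)-\mathcal{D}_k^*$ itself, so squaring yields terms quadratic in $\Delta$ as well as a cross term proportional to $\|\vnu^{(k)}\|\,\|\vw\|$. Controlling these simultaneously with the sublevel-set bound (to replace $\Delta^2$ by $(\mathcal{D}_k(\vz^{\text{ini}})-\mathcal{D}_k^*)\Delta$) and AM--GM on the cross term is precisely what manufactures the $\tau$, $\|\vnu^{(k)}\|^2$, and $\mathcal{D}_k(\vz^{\text{ini}})-\mathcal{D}_k^*$ dependencies appearing in \eqref{eq:muk}.
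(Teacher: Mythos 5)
Your proposal follows the same skeleton as the paper's proof: establish a quadratic-growth inequality $\mathcal{D}_k(\vz)-\mathcal{D}_k^*\ge\frac{\rho_k}{2}\dist^2(\vz,\Omega^{(k+1)})$ on the relevant level set, feed it into the Beck--Teboulle bound $\mathcal{D}_k(\vz^{(j_k,k)})-\mathcal{D}_k^*\le 2L_{\mathcal{D}}\dist^2(\vz^{(0,k)},\Omega^{(k+1)})/j_k^2$ to get a halving of the gap per restart cycle, iterate $i_k$ times, and convert the final gap back to a distance bound. That restart argument, and the final conversion to $\dist(\vz^{(k+1)},\Omega^{(k+1)})\le\delta$, are identical to the paper's; your added observation that $\vz_1^{(k+1)}\in\dom(\bar g^{\star})$ because line~7 is a projection onto $\{\vC\vz_1\le\vd\}$ is correct and is a detail the paper leaves implicit.

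The one genuine difference is how the quadratic growth itself is obtained. The paper simply invokes \cite[Theorem~10]{necoara2019linear}, which supplies the constant $\rho_k$ in \eqref{eq:muk} wholesale; you instead propose to derive it from the exact second-order expansion of the quadratic $\mathcal{F}_k$ at $\bar\vz^{(k+1)}$ plus the Hoffman bound. Your derivation for the case $\bar\vb=\vb=\vzero$ is complete and correct: optimality gives $\mathcal{D}_k(\vz)-\mathcal{D}_k^*\ge\frac{1}{2\tau}\|[\bar\vA;\vA]\zz(\vz-\bar\vz^{(k+1)})\|^2$, and since $[\bar\vA;\vA]\zz\vz+\nabla f_0(\vx^{(k)})-\tau\vx^{(k)}-\vnu^{(k)}=[\bar\vA;\vA]\zz(\vz-\bar\vz^{(k+1)})$ by \eqref{eq:tk} while the inequality residual vanishes for feasible $\vz$, \eqref{eq:defHoffman_small} yields exactly $\rho_k=[\theta^2(\bar\vA,\vA,\vC)\tau]^{-1}$. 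For the general case $(\bar\vb,\vb)\neq(\vzero,\vzero)$, however, you only sketch the argument ("the hard part will be\dots"), and that is precisely where all the extra terms $\mathcal{D}_k(\vz^{\text{ini}})-\mathcal{D}_k^*$ and $2\tau\|\vnu^{(k)}\|^2$ in \eqref{eq:muk} are manufactured; as written your proof is incomplete there and would need either the detailed AM--GM/sublevel-set bookkeeping you allude to or, as the paper does, a direct appeal to the proof of Eqn.~(40) and Theorem~10 in~\cite{necoara2019linear}. Relative to the paper you lose nothing essential (the paper also does not reprove that case), but a self-contained writeup would have to close that step.
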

		
		\begin{proof}
			According to~\cite[Theorem~10]{necoara2019linear}, $\mathcal{D}_k(\vz)$  has a quadratic growth on the level set 
		$$
		\mathcal{Z}_k:=\left\{\vz\big |\vC \vz_1\leq\vd,~\mathcal{D}_k(\vz)\leq \mathcal{D}_k(\vz^{\text{ini}}) \right\}.
		$$
More specifically, it holds that
		\begin{align}
			\label{eq:quadraticgrowth}
		\frac{\rho_k}{2}\dist^2(\vz, \Omega^{(k+1)})\leq \mathcal{D}_k(\vz)-\mathcal{D}_k^*,\quad\forall\, \vz\in \mathcal{Z}_k,
		\end{align}
where $\rho_k$ is defined in \eqref{eq:muk}.

%

Consider the end of the first inner loop (i.e., $i=0$) of Algorithm~\ref{alg:restartedAPG}.  According to~\cite[Theorem 4.4]{beck2009fast}, after $j_k$ APG steps, the APG method generates an output $\vz^{(j_k,k)}$ satisfying 
		$$
		\mathcal{D}_k(\vz^{(j_k,k)})-\mathcal{D}_k^*\leq\frac{2L_\mathcal{D}\dist^2(\vz^{\text{ini}}, \Omega^{(k+1)})}{j_k^2} \overset{\eqref{eq:quadraticgrowth}}\leq\frac{4 L_\mathcal{D} [\mathcal{D}_k(\vz^{\text{ini}})-\mathcal{D}_k^*]}{\rho_k j_k^2}.
		$$
Since $j_k=\left\lceil 2\sqrt{2 L_\mathcal{D}/\rho_k}\right\rceil$, we have $\mathcal{D}_k(\vz^{(j_k,k)})-\mathcal{D}_k^* \le \frac{1}{2}\left(\mathcal{D}_k(\vz^{\text{ini}})-\mathcal{D}_k^*\right)$, meaning that we can reduce the objective gap by a half with each inner loop of  Algorithm~\ref{alg:restartedAPG}. Since the next inner loop is started at $\vz^{(j_k,k)}$ according to Line 11 of Algorithm~\ref{alg:restartedAPG}, we repeat this argument $i_k=\left\lceil\log_2(\frac{2(\mathcal{D}_k(\vz^{\text{ini}})-\mathcal{D}_k^*)}{\rho_k\delta^2})\right\rceil$ times and show that the final output $\vz^{(k+1)}$ satisfies $\mathcal{D}_k(\vz^{(k+1)})-\mathcal{D}_k^*\leq \frac{\rho_k\delta^2}{2}$ and thus
$$
\dist(\vz^{(k+1)}, \Omega^{(k+1)})\leq  \sqrt{\frac{2(\mathcal{D}_k(\vz^{(k+1)})-\mathcal{D}_k^*)}{\rho_k}} \leq \delta,
$$
which means $\vz^{(k+1)}$ satisfies \eqref{eq:boundz-2}.  The proof is then completed. 
%
\end{proof}

\subsection{Total Number of Inner Iterations for Finding an $\epsilon$-stationary Point}

Combining Theorems~\ref{thm:allcomple},~\ref{thm:inner-iter-dual2} and~\ref{thm:complesub}, we derive the total number of inner iterations for computing an $\epsilon$-stationary point of~\eqref{eq:model-spli} as follows. 

\begin{corollary}
	\label{cor:totalcom}
Suppose Assumptions~\ref{assume:problemsetup} and~\ref{ass:222} hold and Algorithm~\ref{alg:ipga-2} uses Algorithm~\ref{alg:restartedAPG} at Step~4. Let $\tau=2L_f$ and $\delta= \delta_\epsilon$ in Algorithm~\ref{alg:ipga-2} with $\delta_\epsilon$ defined in~\eqref{eq:delta}. Then the following statements hold.
\begin{enumerate}	
\item[\textnormal{(a)}] If Assumption~\ref{ass:dual2} holds,  Algorithm~\ref{alg:ipga-2} finds an $\epsilon$-stationary point of problem~\eqref{eq:model-spli} 
	with total number 
	\begin{eqnarray}
	\label{eq:complexity_stronglyconvex}
O\left({\kappa([\bar{\vA}; \vA])} \log\left({\textstyle\frac{\Delta_F}{\epsilon}}\right)\frac{L_f\Delta_F}{\epsilon^2}\right)
	\end{eqnarray}	
of inner iterations (i.e, APG steps),
where $\Delta_F=F(\vx^{(0)}, \vy^{(0)})-\inf_{\vx,\vy}F(\vx, \vy)$.	
\item[\textnormal{(b)}] If Assumption~\ref{ass:polyhedralg} holds and, in addition, $\bar{\vb}=\mathbf{0}, \vb=\mathbf{0}$, 
the same conclusion in statement \textnormal{(a)} holds except that the total number of inner iterations becomes
	\begin{eqnarray}
	\label{eq:complexity_polyhedral_nob}
	O\left(\sqrt{\lambda_{\max}([\bar{\vA}; \vA] [\bar{\vA}; \vA]\zz)\theta^2(\bar{\vA},\vA,\vC)}\log\left({\textstyle\frac{\Delta_F}{\epsilon}}\right)\frac{L_f\Delta_F}{\epsilon^2}\right),
		\end{eqnarray}	
where $\theta(\bar{\vA},\vA,\vC)$ is the Hoffman constant given in~\eqref{eq:defHoffman_small}.		
\item[\textnormal{(c)}]
	If Assumption~\ref{ass:polyhedralg} holds and, in addition, the sequence $\{\vx^{(k)}\}_{k\geq0}$ is bounded, the same conclusion in statement \textnormal{(a)} holds except that the total number of inner iterations becomes
	\begin{eqnarray}
		\label{eq:complexity_polyhedral}
		O\bigg(\sqrt{\lambda_{\max}([\bar{\vA}; \vA] [\bar{\vA}; \vA]\zz)\theta^2(\bar{\vA},\vA,\bar\vb,\vb,\vC)(1+B_6/\tau+2 B_5)}\log\left({\textstyle \frac{B_6}{\epsilon}}\right)\frac{L_f\Delta_F}{\epsilon^2} \bigg),
	\end{eqnarray}
where $ \theta(\bar{\vA},\vA,\bar\vb,\vb,\vC)$ is the Hoffman constant given in~\eqref{eq:defHoffman}, $B_5$ and $B_6$ are constants\footnote{Such $B_5$ and $B_6$ exist because of the boundedness of $\{\vx^{(k)}\}_{k\geq0}$.} independent of $\epsilon$ such that 
\begin{align}
	\label{eq:boundgAx}
	\|\vnu^{(k)}\|^2\leq B_5,\quad \mathcal{D}_k(\vz^{\text{ini}})-\mathcal{D}_k^*\leq B_6,~~~\forall\, k\ge 0, 
\end{align}
with $\vnu^{(k)}$ defined in~\eqref{eq:tk}.
\end{enumerate}	
\end{corollary}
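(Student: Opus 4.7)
The total number of APG steps equals $\sum_{k=0}^{K_\epsilon-1} j_k\cdot i_k$, the product (or summation) of the outer-iteration count $K_\epsilon$ with the per-outer inner-iteration budget $j_k\cdot i_k$. The plan is to invoke Theorem~\ref{thm:allcomple} to obtain $K_\epsilon = O(L_f\Delta_F\epsilon^{-2})$ and to invoke Theorem~\ref{thm:inner-iter-dual2} or Theorem~\ref{thm:complesub} to bound $j_k\cdot i_k$ uniformly (up to a logarithmic factor) in $k$. Multiplying the two bounds will deliver \eqref{eq:complexity_stronglyconvex}, \eqref{eq:complexity_polyhedral_nob} and \eqref{eq:complexity_polyhedral} in turn.

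For part (a), Theorem~\ref{thm:inner-iter-dual2} gives the constant-in-$k$ bound $j_k = O(\kappa([\bar{\vA};\vA]))$, while $i_k = O\!\big(\log\!\big((\mathcal{D}_k(\vz^{\text{ini}})-\mathcal{D}_k^*)/(\mu_\mathcal{D}\delta_\epsilon^2)\big)\big)$. Since $\delta_\epsilon^{-1} = \mathrm{poly}(1/\epsilon)$ by~\eqref{eq:delta}, it suffices to show that $\mathcal{D}_k(\vz^{\text{ini}})-\mathcal{D}_k^*$ is at most polynomial in $1/\epsilon$ for $k\le K_\epsilon$; taking logs then gives $i_k = O(\log(\Delta_F/\epsilon))$. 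To do this, I will combine the per-step bound $\|\vx^{(k+1)}-\vx^{(k)}\| \le B_4 + O(\delta_\epsilon)$ from Lemma~\ref{lem:boundxyz-2} (which is $O(1)$ since $\delta_\epsilon\to 0$) with $K_\epsilon = O(\epsilon^{-2})$ to control $\|\vx^{(k)}\|$ polynomially in $1/\epsilon$, and then plug this bound into the closed form of $\mathcal{D}_k$ and the formulas~\eqref{eq:formula-z-2-k+1} together with $\|\bar\vz_1^{(k+1)}\|\le l_g$ to bound $\mathcal{D}_k^*$ as well. Part (b) is analogous: when $\bar\vb=\vzero$ and $\vb=\vzero$, Theorem~\ref{thm:complesub} gives $\rho_k = [\theta^2(\bar\vA,\vA,\vC)\tau]^{-1}$, independent of $k$, so $j_k = O\!\big(\sqrt{L_\mathcal{D}/\rho_k}\big) = O\!\big(\sqrt{\lambda_{\max}([\bar\vA;\vA][\bar\vA;\vA]\zz)\theta^2(\bar\vA,\vA,\vC)}\big)$, and the same logarithmic bound on $i_k$ applies.

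For part (c), the explicit uniform boundedness of $\{\vx^{(k)}\}$ removes the polynomial-in-$1/\epsilon$ detour of parts (a) and (b). The plan is to read off $B_5$ and $B_6$ directly from~\eqref{eq:boundgAx}: $\|\vnu^{(k)}\|^2$ is bounded because, via~\eqref{eq:tk}, $\vnu^{(k)} = [\bar\vA;\vA]\zz\bar\vz^{(k+1)} + \nabla f_0(\vx^{(k)}) - \tau\vx^{(k)}$, and all three summands are bounded by Assumption~\ref{ass:222}, the $l_f$-Lipschitz property of $f_0$, and the a priori bound $\|\bar\vz_1^{(k+1)}\|\le l_g$ from the proof of Lemma~\ref{lem:boundxyz-2} together with~\eqref{eq:formula-z-2-k+1} for $\bar\vz_2^{(k+1)}$; likewise $\mathcal{D}_k(\vz^{\text{ini}})-\mathcal{D}_k^*$ is bounded by evaluating the closed form of $\mathcal{D}_k$ at $\vz^{\text{ini}}$ and at any $\bar\vz^{(k+1)}$ that is already known to have bounded norm. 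Substituting into $j_k = O\!\big(\sqrt{L_\mathcal{D}/\rho_k}\big)$ with $\rho_k = [\theta^2(\bar\vA,\vA,\bar\vb,\vb,\vC)(\tau+B_6+2\tau B_5)]^{-1}$ and $L_\mathcal{D} = \lambda_{\max}([\bar\vA;\vA][\bar\vA;\vA]\zz)/\tau$ produces the factor $\sqrt{\lambda_{\max}\theta^2(1+B_6/\tau+2B_5)}$, while $i_k = O(\log(B_6/\epsilon))$ follows from $\delta_\epsilon^{-1}=\mathrm{poly}(1/\epsilon)$.

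The main obstacle, as signaled above, is the uniform control of $\mathcal{D}_k(\vz^{\text{ini}})-\mathcal{D}_k^*$ and $\|\vnu^{(k)}\|^2$ across $k$: without the boundedness assumption of part (c), these quantities are controlled only through the cumulative movement $\|\vx^{(k)}-\vx^{(0)}\| = O(k)$, which is $O(\epsilon^{-2})$ over the outer horizon, so that the bound is polynomial rather than constant in $1/\epsilon$. Since the dependence enters only inside a logarithm, this suffices to yield the same $O(\log(\Delta_F/\epsilon))$ factor advertised in \eqref{eq:complexity_stronglyconvex} and \eqref{eq:complexity_polyhedral_nob}, so the final bounds match up to the stated logarithmic factors.
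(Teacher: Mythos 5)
Your proposal is correct and follows essentially the same route as the paper's proof: decompose the total cost as $\sum_{k=0}^{K_\epsilon-1} i_k j_k$, invoke Theorem~\ref{thm:allcomple} for $K_\epsilon$ and Theorems~\ref{thm:inner-iter-dual2}/\ref{thm:complesub} for $i_k,j_k$, handle part (c) via the assumed bounds $B_5,B_6$, and for parts (a)/(b) show $\mathcal{D}_k(\vz^{\text{ini}})-\mathcal{D}_k^*$ is only polynomial in $1/\epsilon$ so that it contributes merely a logarithmic factor. The only (immaterial) deviation is that the paper bounds $\|\vx^{(k)}\|$ by $O(\Delta_F/\epsilon)$ via Cauchy--Schwarz on $\sum_s\|\vx^{(s+1)}-\vx^{(s)}\|^2$ using~\eqref{eq:Kepsilon1}, whereas you use the per-step bound $B_4+O(\delta_\epsilon)$ from Lemma~\ref{lem:boundxyz-2} times $K_\epsilon$, yielding the cruder $O(\epsilon^{-2})$ --- still sufficient since the quantity enters only inside the logarithm.
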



		\begin{proof}
According to Theorem~\ref{thm:allcomple}, Algorithm~\ref{alg:ipga-2} needs $K_\epsilon$ outer iterations to find an $\epsilon$-stationary point of problem~\eqref{eq:model-spli}, where $K_\epsilon$ is given in~\eqref{eq:set-K-eps}. Moreover, in each outer iteration of  Algorithm~\ref{alg:ipga-2}, Algorithm~\ref{alg:restartedAPG} needs $i_kj_k$ APG steps to find a point $\vz^{(k+1)}$  satisfying \eqref{eq:boundz-2} with $i_k$ and $j_k$ specified in  Theorems~\ref{thm:inner-iter-dual2} and~\ref{thm:complesub} for different assumptions. 
Hence, the total number of inner iterations by Algorithm~\ref{alg:ipga-2} is $\sum_{k=0}^{K_\epsilon-1}i_kj_k.$

Below we prove statement (c) first. In this case,  $i_k$ and $j_k$ are given in  Theorem~\ref{thm:complesub} and they depend on $\rho_k$ in \eqref{eq:muk} which further depends on $\|\vnu^{(k)}\|$ and $\mathcal{D}_k(\vz^{\text{ini}})-\mathcal{D}_k^*$. Using \eqref{eq:boundgAx} and the fact that $\log\frac{1}{\delta} = O(\log\frac{1}{\epsilon})$. We can bound $i_k$ and $j_k$ from above and obtain \eqref{eq:complexity_polyhedral}.

To  prove statements (a) and (b), we only need to derive an upper bound for $\mathcal{D}_k(\vz^{\text{ini}})-\mathcal{D}_k^*$ that appears in the formula of $i_k$ in  Theorems~\ref{thm:inner-iter-dual2} and~\ref{thm:complesub}. First, we observe that, for any $k\leq K_\epsilon $, it holds that
\begin{align}
\nonumber
\|\vx^{(k)}\| \le &\|\vx^{(0)}\| + \sum_{s=0}^{k-1}\|\vx^{(s)}-\vx^{(s+1)}\|\leq \|\vx^{(0)}\| + \sum_{s=0}^{K_\epsilon-1}\|\vx^{(s)}-\vx^{(s+1)}\|\\\label{eq:xkx0}
\leq &\|\vx^{(0)}\| + \sqrt{K_\epsilon}\cdot \sqrt{\sum_{s=0}^{K_\epsilon-1}\|\vx^{(s)}-\vx^{(s+1)}\|^2} \leq \|\vx^{(0)}\| + \sqrt{K_\epsilon}\cdot \sqrt{\frac{K_\epsilon\cdot \epsilon^2}{9L_f^2}} \leq \|\vx^{(0)}\| + \frac{4\Delta_F}{\epsilon},
\end{align}
where the third inequality is by the Cauchy–Schwarz inequality, the fourth one is by~\eqref{eq:Kepsilon1}, and the last one follows from \eqref{eq:set-K-eps}. Recall that $\|\bar{\vz}_1^{(k+1)}\|\leq l_g$ and thus from \eqref{eq:formula-z-2-k+1} and~\eqref{eq:xkx0}, we have
\begin{equation}\label{eq:bd-bar-z2-k+1}
\|\bar{\vz}_2^{(k+1)}\| \le l_g + \left(l_f + \tau \|\vx^{(0)}\| + \frac{4\tau\Delta_F}{\epsilon} \right)\left\|\left( \vA \vA\zz\right)^{-1} \vA\right\| + \tau \left\|\left( \vA \vA\zz\right)^{-1} \vb\right\|, 
\end{equation}
where we have used the $l_f$-Lipschitz continuity of $f_0$ from Assumption~\ref{ass:222}.

Moreover,  for any $\vz$, we have
\begin{align}
\|\nabla (\mathcal{D}_k - \bar g^\star)(\vz)\| = &~\left\|\frac{1}{\tau}\left([\bar \vA; \vA] \big([\bar \vA; \vA] ^\top \vz +\nabla f_0(\vx^{(k)})-\tau \vx^{(k)}\big)\right)- [\bar\vb; \vb]\right\| \cr
\le &~\frac{1}{\tau} \|[\bar \vA; \vA]\|^2 \cdot \|\vz\| + \frac{1}{\tau}\|[\bar \vA; \vA]\|\left(l_f + \tau \|\vx^{(0)}\| + \frac{4\tau\Delta_F}{\epsilon}\right) + \|[\bar\vb; \vb]\| =: \widehat{\mathcal{D}}(\vz).
\end{align}
Hence, it follows from the convexity of $\mathcal{D}_k$ that for any $\vxi \in \partial \bar g^\star(\vz^{\text{ini}})$,
$$
\mathcal{D}_k(\vz^{\text{ini}})-\mathcal{D}_k^*\leq \left\langle \nabla (\mathcal{D}_k - \bar g^\star)(\vz^{\text{ini}}) + \vxi, \vz^{\text{ini}}-\bar\vz^{(k+1)}\right\rangle 
\leq \big(\widehat{\mathcal{D}}(\vz^{\text{ini}}) + \|\vxi\|\big) \left(\|\vz^{\text{ini}}\| + l_g + \|\bar{\vz}_2^{(k+1)}\|\right).
$$
Keeping only the key quantities $\epsilon$ and $\Delta_F$, we then have $\mathcal{D}_k(\vz^{\text{ini}})-\mathcal{D}_k^*= O\left( \frac{\Delta_F}{\epsilon}\right)$.  Applying this bound of $\mathcal{D}_k(\vz^{\text{ini}})-\mathcal{D}_k^*$ to the $i_k$'s in Theorems~\ref{thm:inner-iter-dual2} and~\ref{thm:complesub}, we obtain~\eqref{eq:complexity_stronglyconvex} and~\eqref{eq:complexity_polyhedral_nob}.
\end{proof}

In the following corollary, we also derive from Theorems~\ref{thm:allcompleforp},~\ref{thm:inner-iter-dual2} and~\ref{thm:complesub} the required total number of inner iterations for obtaining a near $\epsilon$-stationary point of~\eqref{eq:model}. We skip its proof because it is essentially the same as that of Corollary~\ref{cor:totalcom}.
					
\begin{corollary}
	\label{cor:totalcom2}
		Suppose Assumptions~\ref{assume:problemsetup} and~\ref{ass:222} hold and Algorithm~\ref{alg:ipga-2} uses Algorithm~\ref{alg:restartedAPG} at Step~4.  Let $\tau=2L_f$ and $\delta= \bar{\delta}_\epsilon$ in Algorithm~\ref{alg:ipga-2} with $\bar{\delta}_\epsilon$ defined in Theorem~\ref{thm:allcompleforp}.  Then the following statements hold.
		\begin{enumerate}	
			\item[\textnormal{(a)}] If Assumption~\ref{ass:dual2} holds, Algorithm~\ref{alg:ipga-2} finds a point that is $\frac{\epsilon}{12L_f}$-close to an $\epsilon$-stationary point of problem~\eqref{eq:model} with total number  
			\begin{eqnarray}
				\label{eq:complexity_stronglyconvex2}
				O\left({\kappa([\bar{\vA}; \vA])} \log\left({\textstyle\frac{\Delta_{F_0}}{\epsilon}}\right)\frac{L_f\Delta_{F_0}}{\epsilon^2}\right)
			\end{eqnarray}	
			of inner iterations (i.e, APG steps), where $\Delta_{F_0}=F_0(\vx^{(0)})-\inf_{\vx}F_0(\vx)$.	
			\item[\textnormal{(b)}] If Assumption~\ref{ass:polyhedralg} holds and, in addition, $\bar{\vb}=\mathbf{0}, \vb=\mathbf{0}$, 
			 the same conclusion in statement \textnormal{(a)} holds except that the total number of inner iterations  becomes
			\begin{eqnarray}
				\label{eq:complexity_polyhedral_nob2}
				O\left(\sqrt{\lambda_{\max}([\bar{\vA}; \vA] [\bar{\vA}; \vA]\zz)\theta^2(\bar{\vA},\vA,\vC)}\log\left({\textstyle\frac{\Delta_{F_0}}{\epsilon}}\right)\frac{L_f\Delta_{F_0}}{\epsilon^2}\right),
			\end{eqnarray}	
			where $\theta(\bar{\vA},\vA,\vC)$ is the Hoffman constant given in~\eqref{eq:defHoffman_small}.		
			\item[\textnormal{(c)}]
			If Assumption~\ref{ass:polyhedralg} holds and, in addition, the sequence $\{\vx^{(k)}\}_{k\geq0}$ is bounded, the same conclusion in statement \textnormal{(a)} holds except that the total number of inner iterations  becomes
			\begin{eqnarray}
				\label{eq:complexity_polyhedral2}
				O\bigg(\sqrt{\lambda_{\max}([\bar{\vA}; \vA] [\bar{\vA}; \vA]\zz)\theta^2(\bar{\vA},\vA,\bar\vb,\vb,\vC)(1+B_6/\tau+2 B_5)}\log\left({\textstyle \frac{B_6}{\epsilon}}\right)\frac{L_f\Delta_{F_0}}{\epsilon^2} \bigg),
			\end{eqnarray}
			where $ \theta(\bar{\vA},\vA,\bar\vb,\vb,\vC)$ is the Hoffman constant given in~\eqref{eq:defHoffman}, $B_5$ and $B_6$ are given in Corollary \ref{cor:totalcom}.
		\end{enumerate}	
\end{corollary}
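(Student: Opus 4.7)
The plan is to replicate the proof of Corollary~\ref{cor:totalcom}, with Theorem~\ref{thm:allcompleforp} taking the role of Theorem~\ref{thm:allcomple}. By Theorem~\ref{thm:allcompleforp}, Algorithm~\ref{alg:ipga-2} run with $\delta=\bar\delta_\epsilon$ produces a point that is $\tfrac{\epsilon}{12L_f}$-close to an $\epsilon$-stationary point of problem~\eqref{eq:model} within $\bar K_\epsilon=\lceil 192 L_f\Delta_{F_0}\epsilon^{-2}\rceil$ outer iterations. Each outer iteration performs $i_k j_k$ APG steps, where $(i_k,j_k)$ is prescribed by Theorem~\ref{thm:inner-iter-dual2} under Assumption~\ref{ass:dual2} and by Theorem~\ref{thm:complesub} under Assumption~\ref{ass:polyhedralg}. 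Hence the total number of inner iterations equals $\sum_{k=0}^{\bar K_\epsilon - 1} i_k j_k$, and the task reduces to bounding $i_k j_k$ uniformly in $k$.

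The principal technical step, exactly as in the proof of Corollary~\ref{cor:totalcom}, is an iteration-uniform bound of the form $\mathcal{D}_k(\vz^{\text{ini}})-\mathcal{D}_k^* = O(\Delta_{F_0}/\epsilon)$, which enters the logarithmic factor in $i_k$. I would recover~\eqref{eq:xkx0} verbatim, starting now from the telescoping estimate $\tfrac{(3L_f)^2}{\bar K_\epsilon}\sum_{k=0}^{\bar K_\epsilon-1}\|\vx^{(k+1)}-\vx^{(k)}\|^2\le \tfrac{9\epsilon^2}{16}$ established inside the proof of Theorem~\ref{thm:allcompleforp}. Combining this with the Cauchy–Schwarz inequality and the definition of $\bar K_\epsilon$ yields $\|\vx^{(k)}\|\le \|\vx^{(0)}\|+O(\Delta_{F_0}/\epsilon)$ for all $k\le \bar K_\epsilon$. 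Feeding this into the closed-form expression~\eqref{eq:formula-z-2-k+1} for $\bar\vz_2^{(k+1)}$ and using $\|\bar\vz_1^{(k+1)}\|\le l_g$ from Assumption~\ref{ass:222} then gives $\|\bar\vz_2^{(k+1)}\|=O(\Delta_{F_0}/\epsilon)$, and the convexity argument in the proof of Corollary~\ref{cor:totalcom} promotes this to $\mathcal{D}_k(\vz^{\text{ini}})-\mathcal{D}_k^* = O(\Delta_{F_0}/\epsilon)$.

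With this bound in hand, plugging $i_k, j_k$ from Theorem~\ref{thm:inner-iter-dual2} into $\bar K_\epsilon\cdot i_k j_k$ and absorbing constants yields~\eqref{eq:complexity_stronglyconvex2} under Assumption~\ref{ass:dual2}; applying Theorem~\ref{thm:complesub} with $\rho_k$ as in~\eqref{eq:muk} and specializing to $\bar\vb=\vzero,\vb=\vzero$ yields~\eqref{eq:complexity_polyhedral_nob2}; and in the general polyhedral case, boundedness of $\{\vx^{(k)}\}$ supplies the constants $B_5, B_6$ in~\eqref{eq:boundgAx}, giving~\eqref{eq:complexity_polyhedral2}. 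The only non-routine checks are that the slightly smaller choice $\bar\delta_\epsilon\le \delta_\epsilon$ still keeps $\log(1/\bar\delta_\epsilon)=O(\log(1/\epsilon))$, and that the systematic replacement $\Delta_F\leadsto \Delta_{F_0}$ is consistent throughout; both hold by direct inspection. I therefore expect no obstacle beyond the bookkeeping already carried out in Corollary~\ref{cor:totalcom}, which is why the authors skip the proof.
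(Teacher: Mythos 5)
Your proposal is correct and is exactly the route the paper intends: the authors skip the proof precisely because it is the proof of Corollary~\ref{cor:totalcom} with Theorem~\ref{thm:allcompleforp} (outer count $\bar K_\epsilon$, tolerance $\bar\delta_\epsilon$, and $\Delta_F\leadsto\Delta_{F_0}$) substituted for Theorem~\ref{thm:allcomple}. Your recovery of the $O(\Delta_{F_0}/\epsilon)$ bound on $\mathcal{D}_k(\vz^{\text{ini}})-\mathcal{D}_k^*$ from the telescoping estimate in Theorem~\ref{thm:allcompleforp} matches the paper's bookkeeping in~\eqref{eq:xkx0}--\eqref{eq:bd-bar-z2-k+1}.
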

					
\subsection{Oracle Complexity Matching the Lower Bounds}
					
Suppose Algorithm~\ref{alg:ipga-2} uses Algorithm~\ref{alg:restartedAPG} at Step~4 to find a point $\vz^{(k+1)}$  satisfying \eqref{eq:boundz-2}. We can show that Algorithm~\ref{alg:ipga-2} satisfies Assumption~\ref{ass:linearspan3}, i.e., the iterate points of Algorithm~\ref{alg:ipga-2} can be generated by only querying the oracles and performing the operations given in Assumption~\ref{ass:linearspan3}. Then, we show that the oracle  complexity of Algorithm~\ref{alg:ipga-2} matches the lower complexity bound presented in Section \ref{sec:extension}. To do so, we first present the following observations. 




\begin{lemma}
	\label{lem:spanz}
	Let $\vz^{(j,k)}$ be the solution generated in the $j$-th inner iteration of the $k$-th outer iteration of Algorithm~\ref{alg:restartedAPG}. Let $\vx^{(j,k)}=\vA\zz\vz_2^{(j,k)}$. 
	It holds that, for all $j\geq1$ and $k\ge0$,
	\begin{subequations}	\label{eq:spanz2}
		\begin{align}
			\label{eq:spanz2-Az2}
			&\vx^{(j,k)}\in  \vA^\top\vA\cdot\mathbf{span} \left( \left\{\vx^{(k)},\nabla f_0(\vx^{(k)}),\vx^{(0)},\bar{\vA}^\top\bar{\vb}\right\}\bigcup\cup_{i=0}^{j-1}\left\{\vx^{(i,k)}, \bar{\vA}\zz\vz_1^{(i,k)} \right\} \right)\\
			\label{eq:spanz2-z1}
			&\vz_1^{(j,k)}\in \mathbf{span} \left( \big\{ \vzeta^{(j,k)}, \vxi^{(j,k)} \big\}\right),  
		\end{align}			
	\end{subequations}		
	where $\vzeta^{(j,k)}\in \left\{\prox_{\eta \bar{g}}(\vxi^{(j,k)})~|~\eta>0\right\}$ and
	\begin{equation}\label{eq:spanz2-xi}
		\vxi^{(j,k)}\in \mathbf{span} \left( \left\{\bar{\vA}\vx^{(k)},\bar{\vA}\nabla f_0(\vx^{(k)}),\bar{\vb}\right\}\bigcup\cup_{i=0}^{j-1}\left\{\vz_1^{(i,k)}, \bar{\vA}\bar{\vA}\zz\vz_1^{(i,k)}, \bar{\vA} \vx^{(i,k)}\right\}  \right).
	\end{equation}	
\end{lemma}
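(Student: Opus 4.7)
\textbf{Plan for proving Lemma~\ref{lem:spanz}.} I will argue by induction on $j$, tracking simultaneously the evolution of the two blocks $\vA\zz\vz_2^{(j,k)}$ and $\vz_1^{(j,k)}$ produced by the APG updates in Lines~5--9 of Algorithm~\ref{alg:restartedAPG}. Two preliminary identities will be used repeatedly. First, since $(\vx^{(0)},\vy^{(0)})$ is feasible for~\eqref{eq:model-spli}, we have $\vA\vx^{(0)}+\vb=\vzero$ and $\vy^{(0)}=\bar\vA\vx^{(0)}+\bar\vb$, so $\vA\zz\vb=-\vA\zz\vA\vx^{(0)}$ can be absorbed into $\vA\zz\vA\cdot\mathbf{span}(\{\vx^{(0)}\})$, and $\vy^{(0)}=\vz_1^{(0,k)}$ is directly available. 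Second, the initialization $\vz^{(0,k)}=\vz^{\text{ini}}=((\vy^{(0)})\zz,(\vA\vx^{(0)})\zz)\zz$ gives $\vx^{(0,k)}=\vA\zz\vA\vx^{(0)}$ immediately in the required form. Throughout the proof I will also invoke the Moreau identity $\prox_{\eta\bar g^{\star}}(\vw)=\vw-\eta\,\prox_{\eta^{-1}\bar g}(\vw/\eta)$ so that every call of $\prox_{\eta\bar g^{\star}}$ in Line~7 is converted into a single call of $\prox_{\eta\bar g}$, matching the form $\vzeta^{(j,k)}\in\{\prox_{\eta\bar g}(\vxi^{(j,k)}):\eta>0\}$ required in the lemma statement.

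For the base case $j=1$, the fact that $\widehat\vz^{(0,k)}=\vz^{(0,k)}$ turns Lines~5--9 into explicit formulas in terms of $\vx^{(0)}$, $\vz_1^{(0,k)}$, $\nabla f_0(\vx^{(k)})$, $\vx^{(k)}$, $\bar\vb$ and $\vb$. Expanding $\vA\zz\vz_2^{(1,k)}=\vA\zz\vz_2^{(0,k)}-L_{\mathcal{D}}^{-1}\vA\zz\widehat{\mathcal{G}}_2^0$, each of the resulting terms is $\vA\zz\vA$ times an element of $\{\vx^{(0)},\vx^{(0,k)},\bar\vA\zz\vz_1^{(0,k)},\nabla f_0(\vx^{(k)}),\vx^{(k)}\}$ after using $\vA\zz\vb=-\vA\zz\vA\vx^{(0)}$, which establishes~\eqref{eq:spanz2-Az2} for $j=1$. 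For $\vz_1^{(1,k)}$, applying Moreau leads me to set $\vxi^{(1,k)}:=L_{\mathcal{D}}\vz_1^{(0,k)}-\widehat{\mathcal{G}}_1^0$ and $\vzeta^{(1,k)}:=\prox_{L_{\mathcal{D}}\bar g}(\vxi^{(1,k)})$, so that $\vz_1^{(1,k)}=L_{\mathcal{D}}^{-1}(\vxi^{(1,k)}-\vzeta^{(1,k)})$, giving~\eqref{eq:spanz2-z1}; a term-by-term inspection of $\vxi^{(1,k)}$ verifies~\eqref{eq:spanz2-xi}.

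For the inductive step, assume~\eqref{eq:spanz2-Az2}--\eqref{eq:spanz2-xi} hold for all indices up to $j$. Since $\widehat\vz^{(j,k)}$ is an affine combination of $\vz^{(j,k)}$ and $\vz^{(j-1,k)}$, the quantities $\vA\zz\widehat\vz_2^{(j,k)}$ and $\bar\vA\vA\zz\widehat\vz_2^{(j,k)}$ are linear combinations of $\{\vx^{(j,k)},\vx^{(j-1,k)}\}$ and $\{\bar\vA\vx^{(j,k)},\bar\vA\vx^{(j-1,k)}\}$ respectively, while $\bar\vA\zz\widehat\vz_1^{(j,k)}$ and $\bar\vA\bar\vA\zz\widehat\vz_1^{(j,k)}$ are linear combinations of $\{\bar\vA\zz\vz_1^{(j,k)},\bar\vA\zz\vz_1^{(j-1,k)}\}$ and $\{\bar\vA\bar\vA\zz\vz_1^{(j,k)},\bar\vA\bar\vA\zz\vz_1^{(j-1,k)}\}$ respectively. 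Plugging these into $\vA\zz\vz_2^{(j+1,k)}=\vA\zz\widehat\vz_2^{(j,k)}-L_{\mathcal{D}}^{-1}\vA\zz\widehat{\mathcal{G}}_2^j$ and into $\vxi^{(j+1,k)}:=L_{\mathcal{D}}\widehat\vz_1^{(j,k)}-\widehat{\mathcal{G}}_1^j$, the inductive hypothesis (combined with $\vA\zz\vb=-\vA\zz\vA\vx^{(0)}$ on the $\vz_2$-side) shows that every resulting term lies in the respective enlarged spans for index $j+1$. A second application of Moreau and the setting $\vzeta^{(j+1,k)}:=\prox_{L_{\mathcal{D}}\bar g}(\vxi^{(j+1,k)})$ then yields $\vz_1^{(j+1,k)}=L_{\mathcal{D}}^{-1}(\vxi^{(j+1,k)}-\vzeta^{(j+1,k)})$, completing the induction. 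The restart step in Line~11 simply relabels $\vz^{(j_k,k)}$ as $\vz^{(0,k)}$ for the next $i$-loop, and so does not introduce any new information outside the span.

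The principal obstacle is pure bookkeeping: the two blocks are coupled through the APG recursion, and one must be careful to distinguish the $\vA\zz\vA$-prefactor structure needed for~\eqref{eq:spanz2-Az2} from the $\bar\vA$-factor structure needed in~\eqref{eq:spanz2-xi}, while also correctly passing the momentum combination from $\vz^{(j,k)},\vz^{(j-1,k)}$ to $\widehat\vz^{(j,k)}$. The Moreau conversion of $\prox_{\eta\bar g^{\star}}$ to $\prox_{\eta\bar g}$ and the feasibility-based elimination of $\vb$ (and $\bar\vb$ via $\vz_1^{(0,k)}=\bar\vA\vx^{(0)}+\bar\vb$) are the two non-routine ingredients that make all the required containments close up.
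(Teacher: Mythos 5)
Your proposal is correct and follows essentially the same route as the paper's proof: both arguments unfold Lines 5--9 of Algorithm~\ref{alg:restartedAPG}, use that $\widehat\vz^{(j,k)}$ is an affine combination of $\vz^{(j,k)}$ and $\vz^{(j-1,k)}$ (so its relevant transforms already lie in the stated spans over $i\le j-1$), convert $\prox_{L_{\mathcal D}^{-1}\bar g^{\star}}$ into a single call of $\prox_{L_{\mathcal D}\bar g}$ via the Moreau identity with $\vxi^{(j,k)}=L_{\mathcal D}\widehat\vz_1^{(j-1,k)}-\widehat{\mathcal G}_1^{j-1}$, and eliminate $\vA\zz\vb=-\vA\zz\vA\vx^{(0)}$ using feasibility of $\vx^{(0)}$. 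The induction framing is only a cosmetic difference from the paper's direct verification, and your side remark about eliminating $\bar\vb$ is unnecessary (it sits in the spans explicitly) but harmless.
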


\begin{proof}
	Consider any $j\geq0$ and $k\geq0$. By Steps 2 and 9 in  Algorithm~\ref{alg:restartedAPG}, we have $\widehat\vz^{(0,k)}=\vz^{(0,k)}$ and $\widehat\vz^{(j+1,k)}=\vz^{(j+1,k)}+\left(\frac{\alpha_{j}-1}{\alpha_{j+1}}\right)\left(\vz^{(j+1,k)}-\vz^{(j,k)}\right)$ for $j\geq0$. This further implies 
	$\vA\zz\widehat\vz_2^{(0,k)}=\vx^{(0,k)}$ and $\vA\zz\widehat\vz_2^{(j+1,k)}=\vx^{(j+1,k)}+\left(\frac{\alpha_{j}-1}{\alpha_{j+1}}\right)\left(\vx^{(j+1,k)}-\vx^{(j,k)}\right)$ for $j\geq0$.
	
	Hence, by Steps 5 and 7 in  Algorithm~\ref{alg:restartedAPG},
	$$
	\widehat\vz_1^{(j-1,k)}-\frac{1}{L_\mathcal{D}}\widehat{\mathcal{G}}_1^{j-1}\in \mathbf{span} \left( \left\{\bar{\vA}\vx^{(k)},\bar{\vA}\nabla f_0(\vx^{(k)}),\bar{\vb}\right\}\bigcup\cup_{i=0}^{j-1}\left\{\vz_1^{(i,k)}, \bar{\vA}\bar{\vA}\zz\vz_1^{(i,k)}, \bar{\vA} \vx^{(i,k)}\right\}  \right)
	$$
	and 
	$$
	\vz_1^{(j,k)}=\mathbf{prox}_{L_\mathcal{D}^{-1}\bar{g}^*}\left(\widehat\vz_1^{(j-1,k)}-L_\mathcal{D}^{-1}\widehat{\mathcal{G}}_1^{j-1}\right)
	=
	\widehat\vz_1^{(j-1,k)}-L_\mathcal{D}^{-1}\widehat{\mathcal{G}}_1^{j-1}-L_\mathcal{D}^{-1}\mathbf{prox}_{L_\mathcal{D}\bar{g}}\left(	L_\mathcal{D}\widehat\vz_1^{(j-1,k)}-\widehat{\mathcal{G}}_1^{j-1}\right)
	$$
	which means \eqref{eq:spanz2-z1} holds. Recall that $\vb=-\vA\vx^{(0)}$. By Steps 6 and 7 in  Algorithm~\ref{alg:restartedAPG},
	$$
	\vA\zz\widehat{\mathcal{G}}_2^{j-1}\in \vA^\top\vA\cdot\mathbf{span} \left( \left\{\vx^{(k)},\nabla f_0(\vx^{(k)}),\vx^{(0)},\bar{\vA}^\top\bar{\vb}\right\}\bigcup\cup_{i=0}^{j-1}\left\{\vx^{(i,k)}, \bar{\vA}\zz\vz_1^{(i,k)} \right\} \right)
	$$
	and 
	\begin{align*}
		\vx_2^{(j,k)}=&\vA\zz\vz_2^{(j,k)}=\vA\zz\widehat\vz_2^{(j-1,k)}-L_\mathcal{D}^{-1}\vA\zz\widehat{\mathcal{G}}_2^{j-1}\\
		\in&
		\vA^\top\vA\cdot\mathbf{span} \left( \left\{\vx^{(k)},\nabla f_0(\vx^{(k)}),\vx^{(0)},\bar{\vA}^\top\bar{\vb}\right\}\bigcup\cup_{i=0}^{j-1}\left\{\vx^{(i,k)}, \bar{\vA}\zz\vz_1^{(i,k)} \right\} \right)
	\end{align*}
	which means \eqref{eq:spanz2-Az2} holds. 
%
\end{proof}

With these observations, we can formally show that Algorithm~\ref{alg:ipga-2} follows Assumption~\ref{ass:linearspan3}.
\begin{proposition}
	\label{lem:spanz_subseq}
The updating schemes in Algorithm~\ref{alg:ipga-2} using Algorithm~\ref{alg:restartedAPG} at Step~4 satisfy Assumption~\ref{ass:linearspan3}.
\end{proposition}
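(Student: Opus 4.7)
The strategy is a double induction: an outer induction on the iteration index $k$ of Algorithm~\ref{alg:ipga-2} plus an inner induction on the index $j$ of Algorithm~\ref{alg:restartedAPG}, combined with Lemma~\ref{lem:spanz} and the Moreau identity $\prox_{\eta\bar g^\star}(\vw)=\vw-\eta\,\prox_{\eta^{-1}\bar g}(\vw/\eta)$ noted below~\eqref{eq:sublb-2}. The base case $k=0$ is vacuous, since $(\vx^{(0)},\vy^{(0)})$ are given inputs and Assumption~\ref{ass:linearspan3} only constrains $t\geq 1$. For the inductive step we fix $k\geq 0$, assume the claim holds for $s=0,1,\dots,k$, and verify it for $s=k+1$.

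For the iterate $\vx^{(k+1)}$, the update~\eqref{eq:xupdate-2} gives $\vx^{(k+1)}=\vx^{(k)}-\tfrac{1}{\tau}\bigl(\bar{\vA}\zz \vz_1^{(k+1)}+\vA\zz \vz_2^{(k+1)}+\nabla f_0(\vx^{(k)})\bigr)$, so by the outer inductive hypothesis the task reduces to showing that the composite $\vs^{(k+1)}:=\bar{\vA}\zz \vz_1^{(k+1)}+\vA\zz \vz_2^{(k+1)}$ lies in the span appearing in Assumption~\ref{ass:linearspan3} at $t=k+1$. I will track the combined quantity $\vs^{(j,k)}:=\bar{\vA}\zz\vz_1^{(j,k)}+\vA\zz\vz_2^{(j,k)}$ through the APG loop by induction on $j$. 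At initialization, $\vz^{(0,k)}=((\vy^{(0)})\zz,(\vA\vx^{(0)})\zz)\zz$, so $\vs^{(0,k)}=\bar{\vA}\zz\vy^{(0)}+\vA\zz\vA\vx^{(0)}$, both of whose summands are explicitly allowed. For the inductive step, the APG update in Algorithm~\ref{alg:restartedAPG}, after applying Moreau to $\prox_{L_{\mathcal D}^{-1}\bar g^\star}$, yields a recursion of the form
\begin{equation*}
\vs^{(j+1,k)}=\widehat\vs^{(j,k)}-\tfrac{1}{\tau L_{\mathcal D}}\bigl(\bar{\vA}\zz\bar{\vA}+\vA\zz\vA\bigr)\bigl(\widehat\vs^{(j,k)}+\nabla f_0(\vx^{(k)})-\tau\vx^{(k)}\bigr)+L_{\mathcal D}^{-1}\bigl(\bar{\vA}\zz\bar{\vb}+\vA\zz\vb\bigr)-L_{\mathcal D}^{-1}\bar{\vA}\zz\prox_{L_{\mathcal D}\bar g}(\vxi^{(j,k)}),
\end{equation*}
where $\vxi^{(j,k)}=L_{\mathcal D}\widehat\vz_1^{(j,k)}-\widehat{\mathcal G}_1^j$. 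The output of $\prox_{L_{\mathcal D}\bar g}$ is of the form $\vzeta$ allowed by Assumption~\ref{ass:linearspan3}, with its $\bar{\vA}\zz$-projection falling into $\bar{\vA}\zz\vy^{(s)}$-type terms once $\vzeta$ is identified with $\vy^{(s)}$ for a past outer iteration via the $\vy$-update \eqref{eq:yupdate-2} and the inductive hypothesis.

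For $\vy^{(k+1)}$ via~\eqref{eq:yupdate-2}, I set $\vxi^{(k+1)}:=\sigma^{-1}\vz_1^{(k+1)}+\bar{\vA}\vx^{(k+1)}+\bar{\vb}$ and $\vzeta^{(k+1)}:=\prox_{\sigma^{-1}\bar g}(\vxi^{(k+1)})$, so $\vy^{(k+1)}=\vzeta^{(k+1)}\in\mathbf{span}(\{\vxi^{(k+1)},\vzeta^{(k+1)}\})$. It then remains to check that $\vxi^{(k+1)}\in\mathbf{span}(\{\bar{\vb}\}\cup\cup_{s=0}^{k}\{\vy^{(s)},\bar{\vA}\bar{\vA}\zz\vy^{(s)},\bar{\vA}\vx^{(s)}\})$. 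The term $\bar{\vA}\vx^{(k+1)}$ is handled by applying $\bar{\vA}$ to the allowed decomposition of $\vx^{(k+1)}$ already established, using $\bar{\vA}\vx^{(s)}$ for the $\vx^{(s)}$-summands and $\bar{\vA}\bar{\vA}\zz\vy^{(s)}$ for the $\bar{\vA}\zz\vy^{(s)}$-summands; the term $\sigma^{-1}\vz_1^{(k+1)}$ is treated by a parallel inner induction that tracks $\vz_1^{(j,k)}$ itself in the appropriate $\vxi$-span via \eqref{eq:spanz2-z1}, \eqref{eq:spanz2-xi}, and Moreau.

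The main obstacle is the \emph{nested-span reconciliation}: Lemma~\ref{lem:spanz} describes the span of the inner-loop objects $\vz_1^{(i,k)},\vx^{(i,k)}=\vA\zz\vz_2^{(i,k)}$ using intra-iteration quantities such as $\bar{\vA}\bar{\vA}\zz\vz_1^{(i,k)}$ and $\bar{\vA}\vx^{(i,k)}$, whereas Assumption~\ref{ass:linearspan3} only admits spans in the outer-iteration objects $\vx^{(s)},\vy^{(s)}$. The bridge is provided by the Moreau identity, which replaces each $\prox_{\bar g^\star}$ call inside the APG loop by a $\prox_{\bar g}$ call on a vector $\vxi$ whose components decompose into allowed oracle outputs; pushing this decomposition through the secondary induction on $j$ carries every intermediate computation back into the span permitted by Assumption~\ref{ass:linearspan3}, completing the proof.
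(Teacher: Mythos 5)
Your proposal takes essentially the same route as the paper's proof: a double induction over the outer index $k$ and the inner APG index $j$, driven by Lemma~\ref{lem:spanz} and the Moreau identity, together with the key observation that the algorithm never needs $\vz_2^{(j,k)}$ itself but only $\vz_1^{(j,k)}$ and $\vA\zz\vz_2^{(j,k)}$ (equivalently, your combined quantity $\vs^{(j,k)}$). The one phrase to reword is the claim that the inner-loop prox output is ``identified with $\vy^{(s)}$ for a past outer iteration via~\eqref{eq:yupdate-2}'' — those are generally different vectors; what is actually needed (and what the paper implicitly does) is that each $\prox_{L_{\mathcal{D}}\bar g}(\vxi^{(j,k)})$ is registered as a \emph{new} intermediate $\vy^{(t)}$-type iterate of the sequence in Assumption~\ref{ass:linearspan3} (legitimate because its argument lies in the admissible $\vxi$-span by~\eqref{eq:spanz2-xi}), after which $\bar{\vA}\zz\vy^{(t)}$ becomes an admissible oracle output for subsequent $\vx$-updates.
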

\begin{proof}
	We first claim that Algorithms~\ref{alg:ipga-2} and~\ref{alg:restartedAPG} can be both implemented by computing and updating $\vz_1^{(j,k)}$ and $\vx^{(j,k)}=\vA\zz\vz_2^{(j,k)}$ without explicitly computing $\vz_2^{(j,k)}$. For Algorithm~\ref{alg:restartedAPG}, this claim can be verified by its updating steps. For Algorithms~\ref{alg:ipga-2}, updating steps \eqref{eq:xupdate-2} and \eqref{eq:yupdate-2} can be also implemented directly based on $\vz_1^{(j,k)}$ and $\vx^{(j_k,k)}$ without knowing $\vz_2^{(k+1)}$ because $\vA\zz\vz_2^{(k+1)}=\vA\zz\vz_2^{(j_k,k)}=\vx^{(j_k,k)}$. 
	
	Comparing  \eqref{eq:spanz2} and \eqref{eq:spanz2-xi} with the updating schemes allowed by Assumption~\ref{ass:linearspan3}, we can prove that 	Algorithm~\ref{alg:ipga-2} satisfies Assumption~\ref{ass:linearspan3} by proving that  $\vx^{(0,k)}=\vA\zz\vz_2^{(0,k)}$ and $\vx^{(k)}$ for any $k$ are generated only  by the updating schemes allowed by Assumption~\ref{ass:linearspan3}. We will prove this statement by induction on $k$.
	
	Suppose  $k=0$. Consider $\vx^{(0,k)}=\vA\zz\vz_2^{(0,k)}$ in the first inner loop (i.e., $i=0$) of Algorithm~\ref{alg:restartedAPG}. Since $\vx^{(0,k)}=\vA\zz\vz_2^{(0,k)}=\vA\zz\vA\vx^{(0)}$, $\vx^{(0,k)}$ is generated by the schemes allowed by Assumption~\ref{ass:linearspan3}. According to  \eqref{eq:spanz2} and \eqref{eq:spanz2-xi} and the fact that $k=0$, we conclude that all iterates during the first inner loop of  Algorithm~\ref{alg:restartedAPG}  are generated only by the updating schemes allowed by Assumption~\ref{ass:linearspan3}. Consider the second inner loop (i.e., $i=1$) of Algorithm~\ref{alg:restartedAPG}. Recall that the initial solution of this inner loop is created as $\vx^{(0,k)}=\vA\zz\vz_2^{(0,k)}=\vA\zz\vz_2^{(j_k,k)}=\vx^{(j_k,k)}$, where $\vx^{(j_k,k)}$ is the outputs of the first inner loop of Algorithm~\ref{alg:restartedAPG}. We then also conclude that all iterates during the second inner loop of  Algorithm~\ref{alg:restartedAPG} are generated only by the schemes allowed by Assumption~\ref{ass:linearspan3}. Specifically, all iterates during the second inner loop of  Algorithm~\ref{alg:restartedAPG} can be generated by accessing two oracles given in Assumption~\ref{ass:linearspan3}.
	Repeating this argument for all inner loops  of  Algorithm~\ref{alg:restartedAPG}, we prove the statement for $k=0$. 
	
	Suppose the statement is true for $k\geq0$. Then $\vz_1^{(k+1)}$ and $\vA\zz\vz_2^{(k+1)}$ are generated by the updating schemes allowed by Assumption~\ref{ass:linearspan3}, so are $\vx^{(k+1)}$ and $\vy^{(k+1)}$ according to \eqref{eq:xupdate-2} and \eqref{eq:yupdate-2}. By the same argument as when $k=0$, we can prove that the statement is true for $k+1$. The proof is completed by induction.  
\end{proof}

By Proposition \ref{lem:spanz_subseq} and its proof, we have that the oracle complexity of Algorithm \ref{alg:ipga-2} equals $O(1)$ times of the total number of inner iterations (i.e., APG steps) of Algorithm \ref{alg:ipga-2}. 
We then finish the main body of this paper with several remarks to point out that the lower bound of oracle complexity matches, up to a difference of logarithmic factors, the oracle complexity in Corollaries~\ref{cor:totalcom} and~\ref{cor:totalcom2} under two cases: (i)  Assumptions~\ref{ass:222} and~\ref{ass:dual2} hold; (ii) Assumptions~\ref{ass:222} and~\ref{ass:polyhedralg} hold and in addition, $\bar{\vb}=\mathbf{0}$ and $\vb=\mathbf{0}$.
					
\begin{remark} \label{rem:matchsp}
 A few remarks follow on the oracle complexity of FOMs for solving problem~\eqref{eq:model-spli}. 
\begin{enumerate}
\item[\textnormal{(i)}] 
By Proposition~\ref{lem:spanz_subseq}, 	the oracle complexity of Algorithm~\ref{alg:ipga-2} using Algorithm~\ref{alg:restartedAPG} at Step~4 is subject to the  lower bound established in Theorem~\ref{thm:lbcomposite} for finding an $\epsilon$-stationary point of~\eqref{eq:model-spli}. We then assert that this lower bound and the oracle complexity\footnote{The dependency of oracle complexity in~\eqref{eq:complexity_stronglyconvex} on $l_f$ and $l_g$ is only logarithmic and has been suppressed in $O(\cdot)$.} in~\eqref{eq:complexity_stronglyconvex} are both not improvable (up to a logarithmic factor) under Assumptions~\ref{ass:222} and~\ref{ass:dual2} by comparing their dependency on $\epsilon$, $L_f$ and $\Delta_F$ and $\kappa([\bar{\vA}; \vA])$. 
	
\item[\textnormal{(ii)}] Suppose Algorithm~\ref{alg:ipga-2} is applied to the reformulation~\eqref{eq:model-spli} of instance~$\mathcal{P}$ given in Definition~\ref{def:hardinstance}. By Lemma~\ref{cor:boundf}, instance~$\mathcal{P}$ satisfies Assumptions~\ref{ass:222} and~\ref{ass:polyhedralg}. In particular, for instance~$\mathcal{P}$,  it holds that $\bar g(\vy)=\frac{\beta}{mL_f}\|\vy\|_1=\max_{\|\vz_1\|_\infty\leq\frac{\beta}{mL_f}}\vz_1^\top\vy$ and  $\bar g^\star(\vz_1)=\iota_{\|\vz_1\|_\infty\leq \frac{\beta}{mL_f}}$. Moreover, since $\bar{\vb}=\mathbf{0}$ and $\vb=\mathbf{0}$ in  instance~$\mathcal{P}$,  Algorithm~\ref{alg:ipga-2} finds an $\epsilon$-stationary point of~\eqref{eq:model-spli} with oracle complexity given in \eqref{eq:complexity_polyhedral_nob}. In this case,  $\theta(\bar{\vA},\vA,\vC)$ becomes the Hoffman constant of the linear system
$$
\left\{\vz=(\vz_1\zz,\vz_2\zz)\zz\Bigg| 
\begin{array}{c}
	[\bar{\vA}; \vA]\zz\vz=\vnu^{(k)}-\nabla f_0(\vx^{(k)})+\tau \vx^{(k)},\\
 -\frac{\beta}{mL_f}\leq[\vz_{1}]_i\leq \frac{\beta}{mL_f}, \forall i
\end{array}  
\right\},
$$
for some $\vnu^{(k)}$. According to Proposition 6 and the discussion on page 12 of~\cite{pena2021new}, we can define a reference polyhedron 
$$
\mathcal{R} = \left\{\vz=[\vz_1;\vz_2]:  -\frac{\beta}{mL_f}\leq[\vz_{1}]_i\leq \frac{\beta}{mL_f}, \forall i\right\}
$$
and characterize $\theta(\bar{\vA},\vA,\vC)$ as
$$
\frac{1}{\theta(\bar{\vA},\vA,\vC)}=\min_{\mathcal{K}\in\mathcal{S}}
	\min\limits_{\vu,\vv}
\left\{ \|\vu\|~\Big|~\|\vH^\top \vv\|= 1,~\vv\in \mathcal{K},~\vH\vH^\top \vv-\vu\in \mathcal{K}^*
\right\},
$$
where $\vH$ is defined in~\eqref{eq:matrixAstar}, $\mathcal{K}^*$ is the dual cone of $\mathcal{K}$, and
$$
\mathcal{S}=\{\mathcal{K}: \mathcal{K} \text{ is a tangent cone of }\mathcal{R}\text{ at some point of }\mathcal{R}\text{, and }  \vH^\top \mathcal{K} \text{  is a linear space}\}.
$$

Next we show $\mathcal{S}=\{\mathbb{R}^{d}\}$. By the definition of $\mathcal{R}$, any tangent cone of $\mathcal{R}$ must look like 
$$
\mathcal{K}=\left\{\vz=[\vz_1;\vz_2]~\big|~[\vz_{1}]_i\geq 0, \forall i\in \mathcal{J}_1, [\vz_{1}]_j\leq 0, \forall j\in \mathcal{J}_2\right\}
$$
for some {disjoint} index sets $\mathcal{J}_1$ and $\mathcal{J}_2$. Suppose  $\mathcal{K}\in\cS$. 
Then, for any $\vv\in \mathcal{K}$, we must have $-\vH^\top\vv\in \vH^\top \mathcal{K}$, namely, there exists $\vv'\in\mathcal{K}$ such that $-\vH^\top\vv=\vH^\top\vv'$. Since $\vH^\top$ has a full-column rank, $\vv+\vv'=\vzero$. This means for any $\vv\in \mathcal{K}$,  we must have $-\vv\in \mathcal{K}$. {Since $\mathcal{J}_1\cap\mathcal{J}_2=\emptyset$, this happens only if} $\mathcal{J}_1=\mathcal{J}_2=\emptyset$. Thus $\mathcal{K}=\mathbb{R}^{ d}$ and $\mathcal{K}^*=\{\vzero\}$. 

With this fact, we have
$$
\theta(\bar{\vA},\vA,\vC)=\frac{1}{\min\limits_{\vv: \|\vH^\top \vv\|= 1}\|\vH\vH^\top \vv\|}=\frac{1}{\sqrt{\lambda_{\min}(\vH\vH^\top)}}.
$$ 
Hence by Corollary~\ref{cor:totalcom}(b), the oracle complexity of Algorithm~\ref{alg:ipga-2} becomes
$$
O\left(\sqrt{\frac{\lambda_{\max}(\vH \vH\zz)}{\lambda_{\min}(\vH \vH\zz)}}\log({\textstyle \frac{1}{\epsilon}})\frac{L_f\Delta_F}{\epsilon^2}\right)=
O\left(\kappa([\bar{\vA};\vA])\log({\textstyle \frac{1}{\epsilon} })\frac{L_f\Delta_F}{\epsilon^2}\right).
$$
Again, we conclude that the lower bound given in Theorem~\ref{thm:lbcomposite} and the oracle complexity above are neither improvable (up to a logarithmic factor) under  Assumptions~\ref{ass:222} and~\ref{ass:polyhedralg} when $\bar{\vb}=\mathbf{0}$ and $\vb=\mathbf{0}$.
\end{enumerate}
\end{remark}			

\begin{remark} A few remarks follow on the oracle complexity of FOMs for solving problem~\eqref{eq:model}. 
	\begin{enumerate}
		\item[\textnormal{(i)}] 
		By Proposition~\ref{lem:spanz_subseq} again, the oracle  complexity of Algorithm~\ref{alg:ipga-2} using Algorithm~\ref{alg:restartedAPG} at Step~4  is subject to the  lower bound established in Corollary~\ref{cor:lower2} for finding a point $\vx^{(k)}$ that is $\omega$-close to  an $\epsilon$-stationary point of problem~\eqref{eq:model} for some $\omega\in[0,\frac{150\pi\epsilon}{L_f})$. We then assert that this lower bound and the oracle complexity in~\eqref{eq:complexity_stronglyconvex2} (with $\omega=\frac{\epsilon}{12L_f}$) are both not improvable, up to a logarithmic factor, under Assumptions~\ref{ass:222} and~\ref{ass:dual2} by comparing their dependency on $\epsilon$, $L_f$ and $\Delta_{F_0}$ and $\kappa([\bar{\vA}; \vA])$. 
		
		\item[\textnormal{(ii)}] Suppose Algorithm~\ref{alg:ipga-2} is applied to instance~$\mathcal{P}$ given in Definition~\ref{def:hardinstance}. We obtain from Remark \ref{rem:matchsp}(ii) that if $\bar{\vb}=\mathbf{0}$ and $\vb=\mathbf{0}$, then Algorithm~\ref{alg:ipga-2} finds a near $\epsilon$-stationary point of~\eqref{eq:model} with oracle complexity~\eqref{eq:complexity_polyhedral_nob2}, which equals $O\left(\kappa([\bar{\vA};\vA])\log({\textstyle \frac{1}{\epsilon} })\frac{L_f\Delta_{F_0}}{\epsilon^2}\right)$ under  Assumptions~\ref{ass:222} and~\ref{ass:polyhedralg}. Hence, the lower bound given in Corollary~\ref{cor:lower2} and the oracle complexity above are neither improvable (up to a logarithmic factor) under  Assumptions~\ref{ass:222} and~\ref{ass:polyhedralg} when $\bar{\vb}=\mathbf{0}$ and $\vb=\mathbf{0}$.
	\end{enumerate}
\end{remark}	


\section{Conclusion and Open Questions}\label{sec:conclusion}
We present a lower bound for the oracle complexity of first-order methods for finding a (near) $\epsilon$-stationary point of a non-convex composite non-smooth optimization problem with affine equality constraints. We also show that the same-order lower bound holds for first-order methods applied to a reformulation of the original problem. 
In addition, we design an inexact proximal gradient method for the reformulation. To find an $\epsilon$-stationary point (that is also a near $\epsilon$-stationary point of the original problem), our designed method has an oracle complexity that matches the established lower bounds under two scenarios, up to a difference of a logarithmic factor. This shows that both the lower bound and the oracle complexity of the  inexact proximal gradient method are nearly not improvable. 


Though we make the first attempt on establishing lower complexity bounds of first-order methods for solving affine-constrained non-convex composite non-smooth problems, there are still a few open questions that are worth further exploration. 
First, under Assumption~\ref{ass:linearspan}, can the lower bound $O({\kappa([\bar{\vA};\vA]) L_f \Delta_{F_0}} \epsilon^{-2})$ for problem~\eqref{eq:model} be achieved? The second question is whether the lower bound $O({\kappa([\bar{\vA};\vA]) L_f \Delta_{F}} \epsilon^{-2})$ for problem~\eqref{eq:model-spli} can be achieved by an algorithm satisfying Assumption~\ref{ass:linearspan3} without Assumptions~\ref{ass:222},~\ref{ass:dual2} or~\ref{ass:polyhedralg}. Third, what will the lower bound look like if there are convex nonlinear inequality constraints? 


\section*{Acknowledgements}

This work is partly supported by NSF awards DMS-2053493 and IIS-2147253 and the ONR award N00014-22-1-2573. 


				
			\bibliographystyle{plain}
			\bibliography{optim}

\appendix

\section{Proofs of Theorem~\ref{thm:lbcomposite} and Corollary \ref{cor:lower2}}		
\label{sec:appen1}
In this section, we give a complete proof of Theorem~\ref{thm:lbcomposite} and Corollary \ref{cor:lower2}.
We first show a lemma and a proposition.			
			According to the structure of $\vA$ and $\bar{\vA}$ given in~\eqref{eq:AandAbar},  $\textnormal{supp}((\vA^{\top}\vA\vx)_i)$ and $\textnormal{supp}((\bar{\vA}^{\top}\bar{\vA}\vx)_i)$ are determined by $\textnormal{supp}(\vx_{i-1})$, $\textnormal{supp}(\vx_{i})$ and $\textnormal{supp}(\vx_{i+1})$. Also, $\textnormal{supp}(\prox_{\eta \bar{g}}(\vy))$, $\bar{\vA}\zz\vy$ and $\vA\vx$ have a similar property. They are stated in the following lemma.

\begin{lemma}
	\label{lem:supp2}
	Let $\vx$ be the structured vector given in~\eqref{eq:xblock}, $\vA$ in~\eqref{eq:AandAbar}, and $\bar{g}$ be given in~\eqref{eq:gbar}. Define $\vx_0=\vx_{m+1}=\mathbf{0}\in\mathbb{R}^{\bar d}$. The following statements hold:
	\begin{enumerate}
		\item[\textnormal{(a)}] Let  $\widehat\vx=(\widehat\vx_1\zz, \ldots,\widehat\vx_m\zz)\zz\in\textnormal{\textbf{span}}\{\vA^{\top}\vA\vx, \bar{\vA}^{\top}\bar{\vA}\vx\}$ with $\widehat\vx_i\in\RR^{{\bar{d}}}$.  Then 
		\begin{equation}\label{eq:supp-relation2}
			\textnormal{supp}(\widehat\vx_i)\subset \textnormal{supp}(\vx_{i-1})\cup\textnormal{supp}(\vx_{i})\cup\textnormal{supp}(\vx_{i+1}),\, \forall\, i\in [1, m].
		\end{equation}		
		
		\item[\textnormal{(b)}] Let  $\vy=(\vy^{\top}_1, \ldots,\vy^{\top}_{3m_2-1})\zz$ with $\vy_j\in\RR^{{\bar{d}}}$ and $\widetilde \vx=\bar\vA^\top \vy$. Then 
		\begin{eqnarray}
			\label{eq:suppAy}
			\textnormal{supp}(\widetilde\vx_i)&\subset&
			\left\{
			\begin{array}{ll}
				\emptyset&\text{ if }i-1,i\notin\mathcal{M},\\
				\textnormal{supp}(\vy_{j})&\text{ if }i-1=jm_1\in\mathcal{M},\\
				\textnormal{supp}(\vy_{j})&\text{ if }i=jm_1\in\mathcal{M},
			\end{array}
			\right.\,\,\forall i\in[1,m].
		\end{eqnarray} 
		
		\item[\textnormal{(c)}]	Let $\widehat\vy=\bar\vA\vx$ and $\widehat\vy=(\widehat\vy_1\zz, \ldots,\widehat\vy_{3m_2-1}\zz)\zz$ with $\widehat\vy_j\in\RR^{{\bar{d}}}$. Then 
		\begin{eqnarray}
			\label{eq:suppAbar3}
			\textnormal{supp}(\widehat\vy_j)
			&\subset&\textnormal{supp}(\vx_{jm_1})\cup\textnormal{supp}(\vx_{jm_1+1}),\,\,\forall j\in[1,3m_2-1].
		\end{eqnarray} 
		
		\item[\textnormal{(d)}] It holds that 
		\begin{eqnarray}
			\label{eq:suppAbarAbary}
			\textnormal{supp}(\vy)&=&\textnormal{supp}(\bar\vA\bar\vA\zz\vy),\,\,\forall \vy\in\mathbb{R}^{\bar n}.
		\end{eqnarray}
		
		\item[\textnormal{(e)}] For any given $\eta>0$, let $\widetilde\vy=\prox_{\eta \bar g}(\vy)=(\widetilde\vy_1\zz, \ldots,\widetilde\vy_{3m_2-1}\zz)\zz$ with $\widetilde\vy_j\in\RR^{{\bar{d}}}$.   Then 
		\begin{eqnarray}
			\label{eq:suppg3}
			\textnormal{supp}(\widetilde\vy_j)&\subset&\textnormal{supp}(\vy_j),\,\,\forall j\in[1,3m_2-1].
		\end{eqnarray}
		
	\end{enumerate}
\end{lemma}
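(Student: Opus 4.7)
The plan is to prove each of the five claims by exploiting the Kronecker block structure inherited from $\vJ_m \otimes \vI_{\bar d}$ and the separable $\ell_1$ form of $\bar g$. The combinatorial fact I will use repeatedly is that consecutive indices in $\mathcal{M}=\{im_1:i=1,\ldots,3m_2-1\}$ differ by $m_1\geq 2$, so the two active coordinates $\{jm_1, jm_1+1\}$ of each row of $\vJ_\mathcal{M}$ are pairwise disjoint across $j$.

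For part (a), I would write out $\vA^\top\vA$ and $\bar\vA^\top\bar\vA$ explicitly; both are block tridiagonal with each $\bar d \times \bar d$ block a scalar multiple of $\vI_{\bar d}$ (following the same pattern as the display~\eqref{eq:ata-struc} in the proof of Lemma~\ref{lem:supp}). Any element of $\textbf{span}\{\vA^\top\vA\vx, \bar\vA^\top\bar\vA\vx\}$ is therefore obtained from a block-tridiagonal matrix with $\bar d \times \bar d$ identity-proportional blocks applied to $\vx$, so $\widehat\vx_i$ is a linear combination of $\vx_{i-1}, \vx_i, \vx_{i+1}$, and the support inclusion~\eqref{eq:supp-relation2} follows.

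For part (b), I would use $\bar\vA = mL_f\, \vJ_\mathcal{M}\otimes \vI_{\bar d}$ and unpack the transpose coordinate-wise: $(\bar\vA^\top\vy)_i$ equals $-mL_f\,\vy_j$ if $i=jm_1\in\mathcal{M}$, equals $+mL_f\,\vy_j$ if $i-1=jm_1\in\mathcal{M}$, and equals $\vzero$ otherwise. Because $m_1\geq 2$ the three cases are exhaustive and mutually exclusive, which yields~\eqref{eq:suppAy}. Part (c) is then immediate from the dual computation $\widehat\vy_j = mL_f(\vx_{jm_1+1}-\vx_{jm_1})$. For part (d), the disjoint-support observation gives $\vJ_\mathcal{M}\vJ_\mathcal{M}^\top = 2\vI_{3m_2-1}$, hence $\bar\vA\bar\vA^\top = 2m^2L_f^2 \vI_{(3m_2-1)\bar d}$, which is a positive scalar multiple of the identity and therefore preserves supports exactly. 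Finally, for part (e), since $\bar g$ is a positive multiple of $\|\cdot\|_1$, the operator $\prox_{\eta\bar g}$ is coordinate-wise soft-thresholding, which can only zero out entries; restricting to each block $\vy_j$ then delivers~\eqref{eq:suppg3}.

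I do not anticipate a genuinely hard step: the proof is essentially linear-algebraic bookkeeping once the block structure of $\vA, \bar\vA$ and the identity $\bar g(\vy) = \frac{\beta}{mL_f}\|\vy\|_1$ are in hand. The mildest obstacle is keeping the index cases in parts (b) and (d) disjoint, and that is handled cleanly by the $m_1\geq 2$ separation.
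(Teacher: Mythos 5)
Your proposal is correct and follows essentially the same route as the paper: block-tridiagonal structure of $\vA^\top\vA$ and $\bar\vA^\top\bar\vA$ for (a), direct coordinate unpacking of $\bar\vA$ for (b) and (c), the identity $\bar\vA\bar\vA^\top = 2m^2L_f^2\vI$ for (d), and coordinate-wise soft-thresholding for (e). The only cosmetic difference is that the paper obtains $\bar\vA^\top\bar\vA$ via $\vH^\top\vH - \vA^\top\vA$ rather than computing it directly, which changes nothing of substance.
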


\begin{proof}
	(a) 
	Recall that $\vH$ are split into $\bar{\vA}$ and $\vA$ in rows. The relation in~\eqref{eq:supp-relation2} immediately follows from \eqref{eq:ata-struc} the observation
	\begin{equation}\label{eq:ata-struc2}
		\newcommand{\Hmat}{\left[ 
			\begin{array}{rrrrr} 
				\vI_{\bar d} & - \vI_{\bar d} & & & \\
				- \vI_{\bar d} & 2 \vI_{\bar d} & - \vI_{\bar d} & \\
				& \ddots & \ddots & \ddots & \\
				&  &	- \vI_{\bar d} & 2 \vI_{\bar d} & - \vI_{\bar d}  \\
				& & & - \vI_{\bar d} & \vI_{\bar d}
			\end{array} \right]
		}
		\vH\zz\vH = m^2 L_f^2 
		\left.
		\,\smash[b]{\underbrace{\!\Hmat\!}_{\textstyle\text{$m-1$ blocks}}}\,
		\right\}\text{$m-1$ blocks}
		\vphantom{\underbrace{\Hmat}_{\textstyle\text{$m$ blocks}}}
			~\text{  and  }~\bar{\vA}^{\top}\bar{\vA}=	\vH\zz\vH- 	\vA^{\top}\vA.
	\end{equation}
	
	(b)  The relation in~\eqref{eq:suppAy} immediately follows from the definitions of $\bar\vA$ and $\mathcal{M}$ in~\eqref{eq:indexsetM}.
	
	(c) The relation in~\eqref{eq:suppAbar3} immediately follows from the definition of $\bar\vA$ and $\mathcal{M}$ in~\eqref{eq:indexsetM}.
	
	(d) The relation in~\eqref{eq:suppAbarAbary} immediately follows from the fact that $\bar\vA\bar\vA\zz=2m^2L_f^2\vI_{\bar{n}}$ by
	the definition of $\bar\vA$ and $\mathcal{M}$ in~\eqref{eq:indexsetM}.
	
	(e) Given any $y\in\mathbb{R}$ and any $c>0$, consider the following optimization problem in $\mathbb{R}$:
	\begin{eqnarray}
		\label{eq:2dproxg3}
		\widetilde y=\argmin_{z\in\mathbb{R}}\frac{1}{2}(z-y)^2+c|z|=\text{sign}(y)\cdot (|y|-c)_+.
	\end{eqnarray}
	Recall the definition of $\bar{g}$ in \eqref{eq:gbar}, we obtain that 
	$$
	\widetilde\vy=\prox_{\eta \bar g}(\vy) = \argmin_{\vy'} \frac{\beta}{mL_f} \|\vy'\|_1 + \frac{1}{2}\|\vy'-\vy\|^2.
	$$	
Applying \eqref{eq:2dproxg3} to each coordinate of $\widetilde\vy$ above, we have~\eqref{eq:suppg3}
	for $j=1,\dots,3m_2-1$. 
	The proof is then completed.
\end{proof}

Now we are ready to show the following result on how fast $\textnormal{supp}(\bar{\vx}^{(t)})$ and $\textnormal{supp}(\bar{\vy}^{(t)})$ can expand with $t$.

\begin{proposition}
	\label{thm:iterateguess3}
	Suppose an algorithm is applied to the reformulation \eqref{eq:model-spli} of instance~$\mathcal{P}$
	started from an initial solution $\vx^{(0)}=\mathbf{0}$ and $\vy^{(0)}=\mathbf{0}$, and generates a sequence $\{(\vx^{(t)},\vy^{(t)})\}_{t\geq0}$ that satisfies Assumption~\ref{ass:linearspan3}. By notations in~\eqref{eq:t-th-iter} and 
	$\vy^{(t)}=(\vy^{(t)\top}_1, \ldots,\vy^{(t)\top}_{3m_2-1})\zz$ with $\vy^{(t)}_j\in\RR^{{\bar{d}}}$. It holds, for any $\bar{j}\in\{2,3,\dots,\bar{d}\}$,  that 
	\begin{align}
		\label{eq:outerinduction3}
		\textnormal{supp}(\vx_i^{(t)})\subset\{1,\dots,\bar{j}-1\} \text{ and }	\textnormal{supp}(\vy_j^{(t)})\subset\{1,\dots,\bar{j}-1\}
	\end{align}
	$\text{ for }i=1,\dots,m,~j=1,\dots,3m_2-1 \text{ and }t\leq 1+ m(\bar{j}-2)/3.$
\end{proposition}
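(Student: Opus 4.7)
The plan is to run the same outer induction on $\bar j$ as in the proof of Proposition~\ref{thm:iterateguess}, now carrying a joint invariant on the block-wise supports of both $\vx^{(t)}$ and $\vy^{(t)}$. The crucial structural difference from the previous setting is that $\prox_{\eta g}$ has been replaced by $\prox_{\eta \bar g}$, which by Lemma~\ref{lem:supp2}(e) acts coordinate-wise on $\vy$ and no longer produces the extra one-block shift along the $\vx$-chain provided by Lemma~\ref{lem:supp}(b). Instead, coupling between adjacent $\vx$-blocks across an $\mathcal{M}$-gap is realized either through $\bar\vA^\top \bar\vA$ or through the two-step route $\bar\vA$ then $\bar\vA^\top$ that must cross an iteration boundary. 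Consequently, within one outer iteration the $\vx$-support grows by at most one block rather than two, which is exactly why~\eqref{eq:outerinduction3} has the looser denominator $1+m(\bar j-2)/3$ in place of $1+m(\bar j-2)/6$.

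For the base case $\bar j = 2$, I would observe that $\vx^{(0)}=\vzero$, $\vy^{(0)}=\vzero$, $\vb=\vzero$, and $\bar\vb=\vzero$, so every generator listed in Assumption~\ref{ass:linearspan3} for $\vx^{(1)}$ and $\vxi^{(1)}$ vanishes except for the gradients $\nabla f_i(\vzero)$, and Lemma~\ref{lem:iterateguess} (in the $\bar j = 1$ case) forces $\textnormal{supp}(\nabla f_i(\vzero)) \subset \{1\}$. This immediately yields $\textnormal{supp}(\vx_i^{(1)}) \subset \{1\}$ for each $i$ and $\vxi^{(1)} = \vzero$, hence $\vy^{(1)} = \prox_{\eta \bar g}(\vzero) = \vzero$, establishing~\eqref{eq:outerinduction3} at $\bar j = 2$.

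For the inductive step from $\bar j$ to $\bar j + 1$, I would split on the parity of $\bar j$ as in Proposition~\ref{thm:iterateguess} and run an inner induction on $s \in \{0,1,\dots,m/3\}$ over a joint invariant: in the even-$\bar j$ case, for every $r \le \bar t + s$ with $\bar t := 1 + m(\bar j-2)/3$,
\begin{align*}
\textnormal{supp}(\vx_i^{(r)}) &\subset \{1,\dots,\bar j\} \text{ for } i \in [1, m/3 + s], \\
\textnormal{supp}(\vx_i^{(r)}) &\subset \{1,\dots,\bar j-1\} \text{ for } i \in [m/3 + s + 1, m], \\
\textnormal{supp}(\vy_j^{(r)}) &\subset \{1,\dots,\bar j\} \text{ for } jm_1 \le m/3 + s, \\
\textnormal{supp}(\vy_j^{(r)}) &\subset \{1,\dots,\bar j-1\} \text{ for } jm_1 > m/3 + s,
\end{align*}
and symmetrically in the odd case with the boundary sweeping from the right. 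Advancing from $s$ to $s+1$ would combine Lemma~\ref{lem:iterateguess} at parameter $\bar j + 1$ on the $\nabla f_i$ terms, Lemma~\ref{lem:supp2}(a) so that $\vA^\top \vA$ and $\bar\vA^\top \bar\vA$ each push the $\vx$-boundary rightward by one block, Lemma~\ref{lem:supp2}(b) so that $\bar\vA^\top \vy^{(r)}$ deposits $\vy$-support only into blocks $jm_1$ and $jm_1+1$ that the $\vy$-half of the invariant already controls, and Lemma~\ref{lem:supp2}(c),(d),(e) so that $\vy^{(r+1)}$ inherits its support from $\bar\vA\vx^{(r)}$ and earlier $\vy$-iterates with no coordinate-wise widening. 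Taking $s = m/3$ then yields~\eqref{eq:outerinduction3} at level $\bar j + 1$, and the arithmetic $1 + m(\bar j-2)/3 + m/3 = 1 + m(\bar j-1)/3$ closes the outer induction.

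The main obstacle will be synchronizing the $\vy$-boundary with the $\vx$-boundary so that the $\bar\vA^\top \vy^{(r)}$ contribution to the $\vx$-update never introduces coordinate $\bar j$ into an $\vx$-block that lies beyond the currently permitted $\vx$-boundary. Since $\bar\vA^\top$ maps $\vy_j$ exclusively into $\vx_{jm_1}$ and $\vx_{jm_1+1}$, the threshold $jm_1 \le m/3 + s$ in the $\vy$-part of the invariant is chosen precisely to cover these two $\vx$-blocks without overshoot; the symmetric condition in the odd case, together with Lemma~\ref{lem:supp2}(d) certifying that $\bar\vA\bar\vA^\top$ preserves $\vy$-support coordinate-wise, yields a clean propagation of the invariant. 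All remaining steps will be bookkeeping following the template of Proposition~\ref{thm:iterateguess}.
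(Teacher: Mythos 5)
Your proposal is correct and follows essentially the same route as the paper's proof: the same base case, the same parity-split outer induction with an inner induction over $s=0,\dots,m/3$, the same joint invariant (your $\vy$-threshold $jm_1\le m/3+s$ is equivalent to the paper's $j\le m_2+\lfloor s/m_1\rfloor$ since $m/3=m_1m_2$), and the same use of Lemmas~\ref{lem:iterateguess} and~\ref{lem:supp2}(a)--(e). Your explanation of why the denominator degrades from $6$ to $3$ (loss of the one-block shift from $\prox_{\eta g}$) matches the paper's construction.
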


\begin{proof}
	We prove this claim by induction on $\bar{j}$. 
	Let $\vxi^{(t)}=(\vxi^{(t)\top}_1, \ldots,\vxi^{(t)\top}_{3m_2-1})\zz$ with $\vxi^{(t)}_j\in\RR^{{\bar{d}}}$ and $\vzeta^{(t)}=(\vzeta^{(t)\top}_1, \ldots,\vzeta^{(t)\top}_{3m_2-1})\zz$  with $\vzeta^{(t)}_j\in\RR^{{\bar{d}}}$ defined as in Assumption~\ref{ass:linearspan3} for $t\geq1$.
	Since the algorithm is initialized with $\vx^{(0)}=\mathbf{0}$ and $\vy^{(0)}=\mathbf{0}$, we have $\textnormal{supp}(\nabla f_i(\vx^{(0)}_i))\subset\{1\}$ for any $i$ according to Lemma~\ref{lem:iterateguess}. Notice $\vb=\textbf{0}$ and $\bar{\vb}=\textbf{0}$. By Assumption~\ref{ass:linearspan3} and \eqref{eq:suppg3},  we have $\textnormal{supp}(\vx_i^{(1)})\subset\{1\}$ for any $i$. Meanwhile, we have $\vxi^{(1)}_j=\vzero$ and $\vzeta^{(1)}_j=\vzero$ for any $j$. This implies $\vy_j^{(1)}=\mathbf{0}$.  Thus the claim in \eqref{eq:outerinduction3} holds for $\bar{j}=2$. Suppose that we have proved the claim in \eqref{eq:outerinduction3}  for all $\bar{j}\geq 2$. We next prove it for $\bar{j}+1$. 
	According to the hypothesis of the induction, we have
	\begin{equation}
		\begin{aligned}
			\label{eq:outerinduction3-inner}
			&\textnormal{supp}(\vx_i^{(t)})\subset\{1,\dots,\bar{j}-1\} \text{ and }	\textnormal{supp}(\vy_j^{(t)})\subset\{1,\dots,\bar{j}-1\},
			\\&\forall\, i\in[1,m],\,\,\forall j\in[1,3m_2-1] \text{ and }r\leq \bar{t}:= 1+ m(\bar{j}-2)/3.
		\end{aligned}
	\end{equation}
	
	Below we let $\widehat\vx^{(s)}$ be any vector in $\mathbf{span}\left\{\vA^{\top}\vA\vx^{(s)},\bar\vA^{\top}\bar\vA\vx^{(s)}\right\}$, 
	 $\widetilde \vx^{(s)}=\bar\vA^\top \vy^{(s)}$ and $\widehat\vy^{(s)}=\bar\vA\vx^{(s)}$ for any $s\geq0$, 
and	we  consider two cases: $\bar{j}$ is even and $\bar{j}$ is odd. 
	
	\textbf{Case 1}: Suppose $\bar{j}$ is even.  We claim that, for $s=0,1,\dots,\frac{m}{3}$,
	\begin{eqnarray}
		\label{eq:innerinduction13}
		&\textnormal{supp}(\vx_i^{(r)})\subset\left\{
		\begin{array}{ll}
			\{1,\dots,\bar{j}\},& \text{ if }i\in\left[1, \frac{m}{3}+s\right],\\
			\{1,\dots,\bar{j}-1\},&\text{ if } i\in\left[\frac{m}{3}+s+1, m\right],
		\end{array}
		\right.\,\, \forall r\in[\bar{t}+s]
		\text{ and } \\ \label{eq:innerinduction23}
		&\textnormal{supp}( \vy_j^{(r)})\subset\left\{
		\begin{array}{ll}
			\{1,\dots,\bar{j}\},& \text{ if }j\in\left[1, m_2+\lfloor \frac{s}{m_1}\rfloor \right],\\
			\{1,\dots,\bar{j}-1\},& \text{ if }j\in\left[m_2+\lfloor \frac{s}{m_1}\rfloor+1, 3m_2-1\right],
		\end{array}
		\right.\,\, \forall r\in[\bar{t}+s].
	\end{eqnarray} 
	Notice~\eqref{eq:outerinduction3-inner} implies~\eqref{eq:innerinduction13} and~\eqref{eq:innerinduction23} for $s=0$. 
	Suppose~\eqref{eq:innerinduction13} and~\eqref{eq:innerinduction23} hold for an integer  $s$ satisfying $0\leq s\leq\frac{m}{3}$. According to Lemma~\ref{lem:iterateguess} and $\frac{m}{3}+s \leq \frac{2m}{3}$, 
	$$
	\textnormal{supp}(\nabla f_i(\vx_i^{(r)}))\subset\left\{
	\begin{array}{ll}
		\{1,\dots,\bar{j}\},& \text{ if }i\in\left[1, \frac{m}{3}+s\right],\\
		\{1,\dots,\bar{j}-1\},&\text{ if } i\in\left[\frac{m}{3}+s+1, m\right],
	\end{array}
	\right.\,\, \forall r\in[\bar{t}+s].
	$$
	In addition, by Lemma~\ref{lem:supp2}(a), we have from~\eqref{eq:suppAy} that
	$$
	\textnormal{supp}(\widehat\vx_i^{(r)}),~\textnormal{supp}(\widetilde\vx_i^{(r)}) \subset\left\{
	\begin{array}{ll}
		\{1,\dots,\bar{j}\},&\text{ if } i\in\left[1, \frac{m}{3}+s+1\right],\\
		\{1,\dots,\bar{j}-1\},&\text{ if } i\in\left[\frac{m}{3}+s+2, m\right],
	\end{array}
	\right.\,\, \forall r\in[\bar{t}+s].
	$$
	Hence, by Assumption~\ref{ass:linearspan3}, we have
	$$
	\textnormal{supp}(\vx_i^{(\bar{t}+s+1)})\subset\left\{
	\begin{array}{ll}
		\{1,\dots,\bar{j}\},&\text{ if } i\in\left[1, \frac{m}{3}+s+1\right],\\
		\{1,\dots,\bar{j}-1\},&\text{ if } i\in\left[\frac{m}{3}+s+2, m\right].
	\end{array}
	\right.
	$$
	This means the claim in \eqref{eq:innerinduction13} holds for $s+1$ as well. 
	
	In addition, by the relation in \eqref{eq:suppAbar3}, we have
	$$
	\textnormal{supp}(\widehat\vy_j^{(r)})\subset\left\{
	\begin{array}{ll}
		\{1,\dots,\bar{j}\},& \text{ if }j\in\left[1, m_2+\lfloor \frac{s}{m_1}\rfloor \right],\\
		\{1,\dots,\bar{j}-1\},&\text{ if } j\in\left[m_2+\lfloor \frac{s}{m_1}\rfloor+1, 3m_2-1\right],
	\end{array}
	\right.\,\, \forall r\in[\bar{t}+s].
	$$
	Together with \eqref{eq:innerinduction23} and \eqref{eq:suppAbarAbary}, the inclusion above implies that 
	$$
	\textnormal{supp}(\vxi_j^{(\bar{t}+s+1)})\subset\left\{
	\begin{array}{ll}
		\{1,\dots,\bar{j}\},&\text{ if } j\in\left[1, m_2+\lfloor \frac{s}{m_1}\rfloor \right],\\
		\{1,\dots,\bar{j}-1\},&\text{ if } j\in\left[m_2+\lfloor \frac{s}{m_1}\rfloor+1, 3m_2-1\right].
	\end{array}
	\right.
	$$
	It then follows from~\eqref{eq:suppg3} that
	$$
	\textnormal{supp}(\vzeta_j^{(\bar{t}+s+1)})\subset\left\{
	\begin{array}{ll}
		\{1,\dots,\bar{j}\},&\text{ if } j\in\left[1, m_2+\lfloor \frac{s}{m_1}\rfloor \right],\\
		\{1,\dots,\bar{j}-1\},&\text{ if } j\in\left[m_2+\lfloor \frac{s}{m_1}\rfloor+1, 3m_2-1\right].
	\end{array}
	\right.
	$$
	By Assumption~\ref{ass:linearspan3}, we have
	$$
	\textnormal{supp}(\vy_j^{(\bar{t}+s+1)})\subset\left\{
	\begin{array}{ll}
		\{1,\dots,\bar{j}\},&\text{ if } j\in\left[1, m_2+\lfloor \frac{s}{m_1}\rfloor \right],\\
		\{1,\dots,\bar{j}-1\},&\text{ if } j\in\left[m_2+\lfloor \frac{s}{m_1}\rfloor+1, 3m_2-1\right].
	\end{array}
	\right.
	$$
	This means the claim in \eqref{eq:innerinduction23} holds for $s+1$ as well. By induction, \eqref{eq:innerinduction13} and~\eqref{eq:innerinduction23} hold for $s=0,1,\dots,\frac{m}{3}$. Let $s=\frac{m}{3}$ in them. We have $\textnormal{supp}(\vx_i^{(r)})\subset\{1,\dots,\bar{j}\}$ and $\textnormal{supp}(\vy_j^{(r)})\subset\{1,\dots,\bar{j}\}$ for any $i$, $j$ and $r\leq \bar{t}+\frac{m}{3}= 1+ m(\bar{j}-2)/3+\frac{m}{3}=1+m(\bar{j}-1)/3$. 
	
	\textbf{Case 2}: Suppose $\bar{j}$ is odd.  We claim that, for $s=0,1,\dots,\frac{m}{3}$,
	\begin{eqnarray}
		\label{eq:innerinduction33}
		\textnormal{supp}(\vx_i^{(r)})\subset\left\{
		\begin{array}{ll}
			\{1,\dots,\bar{j}-1\},&\text{ if } i\in\left[1, \frac{2m}{3}-s\right],\\
			\{1,\dots,\bar{j}\},&\text{ if } i\in\left[\frac{2m}{3}-s+1, m\right],
		\end{array}
		\right.\,\, \forall r\in[\bar{t}+s],
		\text{ and }\\\label{eq:innerinduction43}
		\textnormal{supp}( \vy_j^{(r)})\subset\left\{
		\begin{array}{ll}
			\{1,\dots,\bar{j}-1\},&\text{ if } j\in\left[1, 2m_2-\lceil \frac{s}{m_1}\rceil \right],\\
			\{1,\dots,\bar{j}\},&\text{ if } j\in\left[2m_2-\lceil \frac{s}{m_1}\rceil+1, 3m_2-1\right],
		\end{array}
		\right.\,\, \forall r\in[\bar{t}+s].
	\end{eqnarray} 
	Again~\eqref{eq:outerinduction3-inner} implies~\eqref{eq:innerinduction33} and~\eqref{eq:innerinduction43} for $s=0$. 
	Suppose~\eqref{eq:innerinduction33} and~\eqref{eq:innerinduction43} hold for an integer $s$ satisfying $0\leq s\leq\frac{m}{3}$. According to Lemma~\ref{lem:iterateguess} and $\frac{2m}{3}-s\geq \frac{m}{3}$,
	$$
	\textnormal{supp}(\nabla f_i(\vx_i^{(r)}))\subset\left\{
	\begin{array}{ll}
		\{1,\dots,\bar{j}-1\},&\text{ if } i\in\left[1, \frac{2m}{3}-s\right],\\
		\{1,\dots,\bar{j}\},&\text{ if } i\in\left[\frac{2m}{3}-s+1, m\right],
	\end{array}
	\right.\,\, \forall r\in[\bar{t}+s].
	$$
	In addition, by Lemma~\ref{lem:supp2}(a) and~\eqref{eq:suppAy}, we have 
	$$
	\textnormal{supp}(\widehat\vx_i^{(r)}),~\textnormal{supp}(\widetilde\vx_i^{(r)}) \subset\left\{
	\begin{array}{ll}
		\{1,\dots,\bar{j}-1\},&\text{ if } i\in\left[1, \frac{2m}{3}-s-1\right],\\
		\{1,\dots,\bar{j}\},&\text{ if } i\in\left[\frac{2m}{3}-s, m\right],
	\end{array}
	\right.\,\, \forall r\in[\bar{t}+s].
	$$
	Hence, by Assumption~\ref{ass:linearspan3}, we have
	$$
	\textnormal{supp}(\vx_i^{(\bar{t}+s+1)})\subset\left\{
	\begin{array}{ll}
		\{1,\dots,\bar{j}-1\},&\text{ if } i\in\left[1, \frac{2m}{3}-s-1\right]\\
		\{1,\dots,\bar{j}\},&\text{ if } i\in\left[\frac{2m}{3}-s, m\right].
	\end{array}
	\right.
	$$
	This means claim~\eqref{eq:innerinduction33} holds for $s+1$ as well. 
	
	In addition, by \eqref{eq:suppAbar3}, we have
	$$
	\textnormal{supp}(\widehat\vy_j^{(\bar{t}+s+1)})\subset\left\{
	\begin{array}{ll}
		\{1,\dots,\bar{j}-1\},&\text{ if } j\in\left[1, 2m_2-\lceil \frac{s+1}{m_1}\rceil \right],\\
		\{1,\dots,\bar{j}\},&\text{ if } j\in\left[2m_2-\lceil \frac{s+1}{m_1}\rceil+1, 3m_2-1\right].
	\end{array}
	\right.
	$$
	Together with \eqref{eq:innerinduction43}, the inclusion above implies that 
	$$
	\textnormal{supp}(\vxi_j^{(\bar{t}+s+1)})\subset\left\{
	\begin{array}{ll}
		\{1,\dots,\bar{j}-1\},&\text{ if } j\in\left[1, 2m_2-\lceil \frac{s+1}{m_1}\rceil \right],\\
		\{1,\dots,\bar{j}\},&\text{ if } j\in\left[2m_2-\lceil \frac{s+1}{m_1}\rceil+1, 3m_2-1\right].
	\end{array}
	\right.
	$$
	It then follows from~\eqref{eq:suppg3} that
	$$
	\textnormal{supp}(\vzeta_j^{(\bar{t}+s+1)})\subset\left\{
	\begin{array}{ll}
		\{1,\dots,\bar{j}-1\},&\text{ if } j\in\left[1, 2m_2-\lceil \frac{s+1}{m_1}\rceil \right],\\
		\{1,\dots,\bar{j}\},&\text{ if } j\in\left[2m_2-\lceil \frac{s+1}{m_1}\rceil+1, 3m_2-1\right].
	\end{array}
	\right.
	$$
	By Assumption~\ref{ass:linearspan3}, we have
	$$
	\textnormal{supp}(\vy_j^{(\bar{t}+s+1)})\subset\left\{
	\begin{array}{ll}
		\{1,\dots,\bar{j}-1\},&\text{ if } j\in\left[1, 2m_2-\lceil \frac{s+1}{m_1}\rceil \right],\\
		\{1,\dots,\bar{j}\},&\text{ if } j\in\left[2m_2-\lceil \frac{s+1}{m_1}\rceil+1, 3m_2-1\right],
	\end{array}
	\right.
	$$
	which means \eqref{eq:innerinduction43} holds for $s+1$ as well. By induction, \eqref{eq:innerinduction33} and~\eqref{eq:innerinduction43} holds for $s=0,1,\dots,\frac{m}{3}$. Let $s=\frac{m}{3}$ in~\eqref{eq:innerinduction43}. We have $\textnormal{supp}(\vx_i^{(r)})\subset\{1,\dots,\bar{j}\}$ and $\textnormal{supp}(\vy_j^{(r)})\subset\{1,\dots,\bar{j}\}$ for any $i$, $j$ and $r\leq \bar{t}+\frac{m}{3}= 1+ m(\bar{j}-2)/3+\frac{m}{3}=1+m(\bar{j}-1)/3$. 
	
	Therefore, we have proved that~\eqref{eq:outerinduction3-inner} holds for $\bar{j}+1$, when $\bar{j}$ is either even or odd. By induction,~\eqref{eq:outerinduction3-inner} holds for any integer $\bar{j}\in[2,\bar{d}]$, and we complete the proof.	
\end{proof}

Now, we are ready to prove Theorem \ref{thm:lbcomposite}.


\begin{proof}[of Theorem \ref{thm:lbcomposite}]
	As we discussed below~\eqref{eq:t-th-iter}, we assume $\vx^{(0)}=\mathbf{0}$ and $\vy^{(0)}=\mathbf{0}$ without loss of generality.  Thus by notation in~\eqref{eq:t-th-iter}, Proposition~\ref{thm:iterateguess3} indicates that 	$\textnormal{supp}(\vx_i^{(t)})\subset\{1,\dots,\bar{d}-1\}$ and $\textnormal{supp}(\vy_j^{(t)})\subset\{1,\dots,\bar{d}-1\}$ for $i=1,\dots,m$ and $j=1,2,\ldots,3m_2-1$ for all $t\leq 1+ m(\bar{d}-2)/3$, which means $[\bar{\vx}^{(t)}]_{\bar{d}}=0$ if  $t\leq 1+ m(\bar{d}-2)/3$. 
	Hence, by  Lemmas~\ref{lem:nablaf} and~\ref{lem:kktvio}, we have
	$$
	\max\left\{\left\|\vH\vx^{(t)}\right\|, \min _{\vgamma}\left\|\nabla  f_0(\vx^{(t)})+\vH^{\top} \vgamma\right\|\right\} \geq \frac{\sqrt{m}}{2}\left\|\frac{1}{m}\sum_{i=1}^m\nabla f_i(\bar{\vx}^{(t)})\right\|>\epsilon,\,\,\forall t\leq 1+ m(\bar{d}-2)/3.
	$$ 						
	Hence, $\vx^{(t)}$ is not an $\epsilon$-stationary point of problem~\eqref{eq:model3} if $t\leq m({\bar{d}}-1)/3$. Thus, $(\vx^{(t)},\vy^{(t)})$
	cannot be an $\epsilon/2$-stationary point of the reformulation~\eqref{eq:model-spli} of instance $\cP$ according to Corollary~\ref{cor:kktequiv3}. Moreover, by 				
	Lemma~\ref{cor:boundf}(a) and the facts that $\inf_\vy\bar g(\vy)=\bar g(\vy^{(0)})=0$, $\inf_{\vx,\vy} [f_0\left(\vx\right)+\bar g\left(\vy\right)]\geq \inf_{\vx} f_0\left(\vx\right)$, it holds that
	$$ {\bar{d}}\geq \frac{L_f \left( f_0(\vx^{(0)})-\inf_{\vx} f_0\left(\vx\right)\right)}{3000\pi^2} \epsilon^{-2}
	\geq \frac{L_f \left( f_0(\vx^{(0)})+\bar g(\vy^{(0)})-\inf_{\vx,\vy} \left[f_0\left(\vx\right)+\bar g\left(\vy\right)\right]\right)}{3000\pi^2} \epsilon^{-2}
	=\frac{L_f\Delta_F}{3000\pi^2}\epsilon^{-2}.
	$$
	In other words, in order for $(\vx^{(t)},\vy^{(t)})$ to be an $\epsilon/2$-stationary point of~\eqref{eq:model-spli}, 
	the algorithm needs at least $t= 2+m({\bar{d}}-2)/3$ oracles. Notice
	\begin{equation}
		\label{eq:statement}
		2+m({\bar{d}}-2)/3\geq m{\bar{d}}/6\geq\frac{m L_f \Delta_F}{18000\pi^2} \epsilon^{-2}
		>\frac{\kappa([\bar{\vA}; \vA]) L_f \Delta_F}{18000\pi^2}\epsilon^{-2},
	\end{equation}
	where the second inequality is because ${\bar{d}}\geq 5$ and the last inequality is by Lemma~\ref{lem:condH}. The conclusion is then proved by replacing $\epsilon$ in \eqref{eq:statement} to $2\epsilon$.
\end{proof}

Finally, we give the proof to Corollary \ref{cor:lower2}.


					\begin{proof}[of Corollary \ref{cor:lower2}]
	As we discussed in Section \ref{sec:extension}, 	we assume $\vx^{(0)}=\mathbf{0}$ and $\vy^{(0)}=\mathbf{0}$ without loss of generality. 
	Thus by notation in~\eqref{eq:t-th-iter}, Proposition~\ref{thm:iterateguess3} indicates that 	$\textnormal{supp}(\vx_i^{(t)})\subset\{1,\dots,\bar{d}-1\}$ for any $i \in [1,m]$ and any $t\leq 1+ m(\bar{d}-2)/3$, which means $[\bar{\vx}^{(t)}]_{\bar{d}}=0$ if  $t\leq 1+ m(\bar{d}-2)/3$, where $\bar{\vx}^{(t)}= \frac{1}{m} \sum_{i=1}^m \vx_i^{(t)}$.  
	
	On the other hand, suppose $\vx^*$ with the structure as in~\eqref{eq:t-th-iter} is an $\epsilon$-stationary point of instance~$\mathcal{P}$. Then by 	Lemma~\ref{cor:kktvio2}, it must also be an $\epsilon$-stationary point of~\eqref{eq:model3}. Hence, by  Lemmas~\ref{lem:nablaf} and~\ref{lem:kktvio}, we have $\left|[\bar\vx^*]_j\right| \ge \frac{150\pi\epsilon}{\sqrt{m} L_f}$ for all $j=1,\ldots,\bar d$, where $\bar{\vx}^*= \frac{1}{m} \sum_{i=1}^m \vx_i^*$.  Therefore, by the convexity of the square function, it follows that
	$$\|\vx^{(t)} - \vx^*\|^2 \ge \sum_{i=1}^m \left([\vx_i^{(t)}]_{\bar d} - [\vx_i^*]_{\bar d}\right)^2 \ge m \left([\bar\vx^{(t)}]_{\bar d} - [\bar\vx^*]_{\bar d}\right)^2 \ge m \left(\frac{150\pi\epsilon}{\sqrt{m} L_f}\right)^2 > \omega^2,$$
	and thus $\vx^{(t)}$ is not $\omega$-close to $\vx^*$ if  $t\leq 1+ m(\bar{d}-2)/3$.

	Moreover, by 				
	Lemma~\ref{cor:boundf}(a) and the fact that $g(\vx^{(0)})=0$ and $g(\vx) \ge 0, \forall\, \vx$, it holds that
	$$ {\bar{d}}\geq \frac{L_f \left( F_0(\vx^{(0)})-\inf_{\vx} F_0\left(\vx\right)\right)}{3000\pi^2} \epsilon^{-2}=\frac{L_f\Delta_{F_0}}{3000\pi^2}\epsilon^{-2}.
	$$
	In other words, in order for $\vx^{(t)}$ to be $\omega$-close to an $\epsilon$-stationary point of instant~$\mathcal{P}$, the algorithm needs at least $t=2+m({\bar{d}}-2)/3$ oracles.
The proof is then completed, by observing	
					\begin{equation*}
		2+m({\bar{d}}-2)/3\geq m{\bar{d}}/6\geq\frac{m L_f \Delta_{F_0}}{18000\pi^2} \epsilon^{-2}>\frac{\kappa([\bar{\vA}; \vA]) L_f \Delta_{F_0}}{18000\pi^2},
	\end{equation*}
	where the first inequality is because ${\bar{d}}\geq 5$, and the last one is by Lemma~\ref{lem:condH}. 
\end{proof}

			\end{document}